\documentclass[11pt]{article}
%\documentclass[twoside]{scrartcl}

%\setlength{\vfuzz}{11cm}
%\setlength{\topmargin}{11cm}
%\setlength{\textheight}{11cm}
%\setlength{\oddsidemargin}{11pt}
%\setlength{\textwidth}{11cm}
%\setlength{\evensidemargin}{11pt}

%\usepackage[bindingoffset=0.5cm, 
%twoside, left=25mm, right=30mm , bottom=3200mm
%]{geometry}

\usepackage[top=1.0in, bottom=1.0in, left=1.1in, right=1in]{geometry}

\usepackage{amssymb, stmaryrd, mathrsfs, amsthm, amsmath}

\usepackage{titletoc}

%\usepackage{ogonek}

%\useoutertheme{noslidenum}

\usepackage{enumitem}  
\usepackage{verbatim}
\usepackage[nobysame, alphabetic, initials]{amsrefs}
\usepackage[english]{babel}

%breite
%\setlength\oddsidemargin{1.3cm}
%\setlength\evensidemargin{0cm}
%\setlength\textwidth{14.5cm}

\usepackage{hyperref}

\hypersetup{colorlinks=false}

\makeatletter
\renewcommand{\BibLabel}{%
  \Hy@raisedlink{\hyper@anchorstart{cite.\CurrentBib}\relax\hyper@anchorend}%
  [\thebib]%
}
\makeatother

\usepackage[
open,
openlevel=2,
atend, numbered, 
]{bookmark}[2009/08/13]

%%%%%%%%%%%%%%%%%%%%%%%%%%

%\pagestyle{fancy}
%\fancyhf{}

%\fancyhead[C]{\nouppercase{\righttmark}}
%\fancyhead[CO]{\textsc{Irakli Patchkoria}}
%\fancyhead[CE]{\textsc{Rigidity in equivariant stable homotopy theory}}

%\fancyhf[CF]{\vspace{0.5em}\thepage}

%\setkomafont{pagehead}{\scshape}
%\chead{}
%\ohead{\headmark}
%\pagestyle{scrheadings}	

%\newcommand{\sect}[1]{\sectionmark{#1}\section{#1}}

%\setlength{\parindent}{0in}
%\setlength{\headheight}{15.2pt}
%\pagestyle{fancy}
%\renewcommand{\sectionmark}[1]{ \markright{#1}{} }
%\fancyhf{}
%\fancyhead[C]{\nouppercase{\rightmark}}
%\fancyfoot[C]{\thepage}

%%%%%%%%%%%%%%%%%%%%%%%%%%%%%%%

\usepackage[arrow,matrix,curve]{xy}

\def\id{\textrm{id}}

\def\xto#1{\xrightarrow{#1}}

\newcommand{\nc}{\newcommand}

\newcommand{\myMO}[1]{{\fontshape{rm}{\textbf{#1}}}}

\DeclareMathOperator{\Mod}{\myMO{Mod}\hspace{+0.25ex}-\hspace{-0.25ex}}
\DeclareMathOperator{\Top}{\hspace{+0.25ex}-\hspace{-0.25ex}\myMO{Top}}

\newcommand*\cocolon{%
        \nobreak
        \mskip6mu plus1mu
        \mathpunct{}%
        \nonscript
        \mkern-\thinmuskip
        {:}%
        \mskip2mu
        \relax
}

\nc{\Ch}{\operatorname{Ch}}
\nc{\sA}{{\mathscr A}}
\nc{\wt}{\widetilde} \nc{\bl}{\bullet} \nc{\al}{\alpha}
\nc{\sg}{\sigma} \nc{\vf}{\varphi} \nc{\om}{\omega}
\nc{\ve}{\varepsilon} \nc{\ol}{\overline} \nc{\lb}{\lambda}
\nc{\Lb}{\Lambda} \nc{\Gm}{\Gamma} \nc{\cP}{{\mathscr P}}
\nc{\sB}{{\mathscr B}}
\nc{\ul}{\underline} \nc{\os}{\overset} \nc{\us}{\underset}
\nc{\pa}{\partial} \nc{\wh}{\widehat} \nc{\sbs}{\subset} \nc{\br}{\breve}
\nc{\lra}{\longrightarrow} \nc{\all}{\allowdisplaybreaks}
\nc{\Ker}{\operatorname{Ker}} \nc{\Img}{\operatorname{Im}}
\nc{\Kan}{\operatorname{Kan}} \nc{\Hom}{\operatorname{Hom}}
\nc{\Imm}{\operatorname{Im}}   \nc{\Ho}{\operatorname{Ho}}
\nc{\Ext}{\operatorname{Ext}}    \nc{\Cone}{\operatorname{Cone}}
\nc{\pr}{\operatorname{pr}} \nc{\cls}{\operatorname{cls}}
\nc{\cof}{\operatorname{cof}}
\nc{\sSet}{\operatorname{\textbf{sSet}}}
\nc{\Map}{\operatorname{Map}}
\nc{\Lan}{\operatorname{Lan}}
\nc{\incl}{\operatorname{incl}}
\nc{\Hocolim}{\operatorname{Hocolim}}
\nc{\colim}{\operatorname{colim}}
\nc{\Endd}{\operatorname{End}}
\nc{\const}{\operatorname{const}}
\nc{\inn}{\operatorname{in}}
\nc{\sk}{\operatorname{sk}}
\nc{\tr}{\operatorname{tr}}
\nc{\res}{\operatorname{res}}
\nc{\Res}{\operatorname{Res}}
\nc{\adj}{\operatorname{adj}}
\nc{\counit}{\operatorname{counit}}
\nc{\unit}{\operatorname{unit}}
\nc{\Fun}{\operatorname{Fun}}
\nc{\Fix}{\operatorname{Fix}}
\nc{\proj}{\operatorname{proj}}
\nc{\card}{\operatorname{card}}
\nc{\Tr}{\operatorname{Tr}}
\nc{\st}{\operatorname{st}}
\nc{\fib}{\operatorname{fib}}
\nc{\lv}{\operatorname{lv}}
\nc{\loc}{\operatorname{loc}}
\nc{\Or}{\operatorname{Or}}
\nc{\Aut}{\operatorname{Aut}}

\newcommand\even{\makebox(0,0){\text{even}}}

\nc{\rr}{\operatorname{\textbf{r}}}
\nc{\cop}{\operatorname{\textbf{l}}}

\nc{\Ev}{\operatorname{Ev}}

\nc{ \Sp}{\operatorname{Sp}}

%\nc{\Top}{\operatorname{\textbf{Top}}}

% non-italic font, enable the following line

\makeatletter
\@addtoreset{equation}{section}
\makeatother

\numberwithin{equation}{subsection}
\newtheorem{theo}[equation]{Theorem}
\newtheorem{lem}[equation]{Lemma}
\newtheorem{prop}[equation]{Proposition}
\newtheorem{coro}[equation]{Corollary}
\theoremstyle{definition}
\newtheorem{defi}[equation]{Definition}
\newtheorem{conv}[equation]{Convention}
\newtheorem{remk}[equation]{Remark}

\newtheorem{conj}[equation]{Conjecture}

\newtheorem*{quest}{Question}

\swapnumbers
\newtheorem{numbered paragraph}[equation]{}

\newenvironment{proof_ohne_pt}{{\noindent \bf Proof}}{{\hspace*{\fill}$\Box$\par\bigskip}}

\makeatletter \renewenvironment{proof}[1][\proofname] {\par\pushQED{\qed}\normalfont\topsep6\p@\@plus6\p@\relax\trivlist\item[\hskip\labelsep\bfseries#1\@addpunct{.}]\ignorespaces}{\popQED\endtrivlist\@endpefalse} \makeatother

%\let\oldproofname=\proofname
%\newcommand{\prf}{\rm\bf{\oldproofname}}

%\pagestyle{fancy}
%\fancyhf{}

%\fancyhead[C]{\nouppercase{\leftmark}}
%\fancyhead[CO]{\textsc{Irakli Patchkoria}}
%\fancyhead[CE]{\textsc{Rigidity in equivariant stable homotopy theory}}

%\fancyhf[CF]{\vspace{0.5em}\thepage}

%\renewcommand{\headrule}{}

\begin{document}

\def\N{{\mathbb N}}
\def\Z{{\mathbb Z}}
\def\S{{\mathbb S}}
\def\FF{{\mathbb F}}

\def\L{{\mathscr L}}
\def\D{{\mathscr D}}
\def\B{{\mathscr B}}
\def\BB{{\mathcal B}}
\def\Q{{\mathscr Q}}
\def\M{{\mathscr M}}
\def\D{{\mathscr D}}

\def\E{{\mathscr E}}
\def\K{{\mathscr K}}
\def\W{{\mathscr W}}
\def\F{{\mathscr F}}
\def\U{{\mathscr U}}

\def\T{{\mathscr T}}
\def\A{{\mathscr A}}
\def\C{{\mathscr C}}
\def\P{{\mathscr P}}
\def\V{{\mathscr V}}
\def\G{{\mathscr G}}
\def\X{{\mathscr X}}
\def\LL{{\mathbf L}}
\def\R{{\mathbf R}}
\def\RR{{\mathbb R}}
\def\ZZ{{\mathscr Z}}
\def\FF{{\mathbb F}}

\let\xto\xrightarrow

\title{\normalsize \bf RIGIDITY IN \\ EQUIVARIANT STABLE HOMOTOPY THEORY}

\author{\small IRAKLI PATCHKORIA}

%\thanks{This research was supported by the Deutsche Forschungsgemeinschaft Graduiertenkolleg 1150 ``Homotopy and Cohomology''}

\date{}

\maketitle

\begin{abstract}
For any finite group $G$, we show that the $2$-local $G$-equivariant stable homotopy category, indexed on a complete $G$-universe, has a unique equivariant model in the sense of Quillen model categories. This means that the suspension functor, homotopy cofiber sequences and the stable Burnside category determine all ``higher order structure'' of the $2$-local $G$-equivariant stable homotopy category, such as the equivariant homotopy types of function $G$-spaces. The theorem can be seen as an equivariant version of Schwede's rigidity theorem at the prime $2$.
\end{abstract}

\setcounter{section}{0}

\section{Introduction}

\setcounter{subsection}{1} 

One of the most difficult problems of algebraic topology is to calculate the stable homotopy groups of spheres. There has been extensive research in this direction establishing some remarkable results. A very important object used to do these kind of computations is the classical \emph{stable homotopy category} $SHC$. This category was first defined in \cite{Kanss} by Kan. Boardman in his thesis \cite{Boa64} constructed the (derived) smash product on $SHC$ whose monoids represent multiplicative cohomology theories. In \cite{BF78}, Bousfield and Friedlander introduced a stable model category $\Sp$ of spectra with $\Ho(\Sp)$ triangulated equivalent to $SHC$. The category $\Sp$ enjoys several nice point-set level properties. However, it does not possess a symmetric monoidal product that descends to Boardman's smash product on $SHC$. This initiated the search for new models for $SHC$ that possess symmetric monoidal products. In the 1990's several such models appeared: $S$-modules \cite{EKMM}, symmetric spectra \cite{HSS00}, simplicial (continuous) functors \cite{Lyd} and orthogonal spectra \cite{MMSS}. All these models turned out to be Quillen equivalent to $\Sp$ (and hence, to each other) and this naturally motivated the following

\begin{quest} How many models does $SHC$ admit up to Quillen equivalence? \end{quest}

In \cite{Sch07}, Schwede answered this question. He proved that the stable homotopy category is \emph{rigid}, i.e., if $\C$ is a stable model category with $\Ho(\C)$ triangulated equivalent to $SHC$, then the model categories $\C$ and $\Sp$ are Quillen equivalent. In other words, up to Quillen equivalence, there is a unique stable model category whose homotopy category is triangulated equivalent to the stable homotopy category. This theorem implies that all ``higher order structure'' of the stable homotopy theory, like, for example, homotopy types of function spaces, is determined by the suspension functor and the class of homotopy cofiber sequences.

Generally, when passing from a model category $\C$ to its homotopy category $\Ho(\C)$, one loses ``higher homotopical information'' such as homotopy types of mapping spaces in $\C$ or the algebraic $K$-theory of $\C$. In particular, the existence of a triangulated equivalence of homotopy categories does not necessarily imply that two given models are Quillen equivalent to each other. Here is an easy example of such a loss of information. Let $\Mod K(n)$ denote the model category of right modules over the $n$-th Morava $K$-theory $K(n)$ and let $\textbf{dg}\Mod \pi_*K(n)$ denote the model category of differential graded modules over the graded homotopy ring $\pi_*K(n)$. Then the homotopy categories $\Ho(\Mod K(n))$ and $\Ho(\textbf{dg}\Mod \pi_*K(n))$ are triangulated equivalent, whereas the model categories $\Mod K(n)$ and $\textbf{dg}\Mod \pi_*K(n)$ are not Quillen equivalent. The reason is that the homotopy types of function spaces in $\textbf{dg}\Mod \pi_*K(n)$ are products of Eilenberg-MacLane spaces which is not the case for $\Mod K(n)$ (see e.g. \cite[A.1.10]{Pat12}). 

Another important example which we would like to recall is due to Schlichting. It is easy to see that for any  prime $p$, the homotopy categories $\Ho(\Mod \Z/p^2)$ and $\Ho(\Mod \FF_p[t]/(t^2))$ are triangulated equivalent. In \cite{Schlich} Schlichting shows that the algebraic $K$-theories of the subcategories of compact objects of $\Mod \Z/p^2$ and $\Mod \FF_p[t]/(t^2)$ are different for $p \geq 5$. It then follows from \cite[Corollary 3.10]{DugShi04} that the model categories $\Mod \Z/p^2$ and $\Mod \FF_p[t]/(t^2)$ are not Quillen equivalent. Note that there is also a reinterpretation of this example in terms of differential graded alegbras \cite{DugShi09}.  

Initiated by Schwede's result, in recent years much research has been done on establishing essential uniqueness of models for certain homotopy categories. In \cite{Roi07}, Roitzheim shows that the $K_{(2)}$-local stable homotopy category has a unique model. For other theorems of this type see \cite{BR12} and \cite{Hut12}. 

The present work establishes a new uniqueness result. It proves an \emph{equivariant version} of Schwede's rigidity theorem at the prime $2$. Before formulating our main result, we would like to say a few words on equivariant stable homotopy theory. 

The $G$-\emph{equivariant stable homotopy category} (indexed on a complete $G$-universe), for any compact Lie group $G$, was introduced in the book \cite{LMS}. Roughly speaking, the objects of this category are $G$-spectra indexed on finite dimensional $G$-repre\-\-sentations. In this paper we will work with the stable model category $\Sp_G^O$ of $G$-equivariant orthogonal spectra indexed on a complete $G$-universe \cite{MM02}. The homotopy category of $\Sp_G^O$ is the $G$-equivariant stable homotopy category. The advantage of this model is that it possesses a symmetric monoidal product compatible with the model structure. As in the non-equivariant case, the $G$-equivariant stable homotopy category has some other monoidal models, like, for example, the category of orthogonal $G$-spectra equipped with the $\S$-model structure (flat model structure) \cite[Theorem 2.3.27]{Sto11}, the model category of $S_G$-modules \cite[IV.2]{MM02} and the model category of $G$-equivariant continuous functors \cite{Blum05}. For a finite group $G$, the model categories of $G$-equivariant topological symmetric spectra in the sense of \cite{Man04} and \cite{Haus} are also monoidal models for the $G$-equivariant stable homotopy category. Note that all these model categories are known to be $G \Top_*$-Quillen equivalent to each other (see \cite[IV.1.1]{MM02}, \cite[1.3]{Blum05}, \cite[2.3.31]{Sto11}, \cite{Man04} and \cite{Haus}).

\;

\;

Now we return to the actual content of this paper. Suppose $G$ is a finite group and $H$ a subgroup of $G$. For any $g \in G$, let ${}^g H$ denote the conjugate subgroup $gHg^{-1}$. Then the map 
$$g \colon \Sigma_{+}^{\infty}G/{}^g H \lra \Sigma_{+}^{\infty}G/H$$
in the homotopy category $\Ho(\Sp_G^O)$, given by $[x] \mapsto [xg]$ on the point-set level, is called the \emph{conjugation} map associated to $g$ and $H$. Further, if $K$ is another subgroup of $G$ such that $K \leq H$, then we have the \emph{restriction} map
$$\res_{K}^H \colon \Sigma_{+}^{\infty}G/{} K \lra \Sigma_{+}^{\infty}G/H$$
which is just the obvious projection on the point-set level. Moreover, there is also a map backwards, called the \emph{transfer} map
$$\tr_{K}^H \colon \Sigma_{+}^{\infty}G/{} H \lra \Sigma_{+}^{\infty}G/K,$$
given by the Pontryagin-Thom construction (see e.g. \cite[IV.3]{LMS} or \cite[II.8]{tom}). These morphisms generate the \emph{stable Burnside (orbit) category} which is the full preadditive subcategory of $\Ho(\Sp_G^O)$ with objects the stable orbits $\Sigma_{+}^{\infty}G/H$, $H \leq G$ \cite[V.9]{LMS} (see also \cite{Lew96}).  

Let $G$ be a finite group. We say that a model category $\C$ is a $G$-\emph{equivariant stable model category} if it is enriched, tensored and cotensored over the category $G \Top_*$ of pointed $G$-spaces in a compatible way (i.e., the pushout-product axiom holds) and if the adjunction
$$\xymatrix{ S^V \wedge -  \colon \C \ar@<0.5ex>[r] & \C \cocolon \Omega^V(-) \ar@<0.5ex>[l]}.$$
is a Quillen equivalence for any finite dimensional orthogonal $G$-representation $V$. 

All the models for the $G$-equivariant stable homotopy category mentioned above are $G$-equivariant stable model categories. Different kinds of equivariant spectra indexed on incomplete universes provide examples of $G \Top_*$-model categories which are not $G$-equivariant stable model categories but are stable as underlying model categories. 

\;

\;

Here is the main result of this paper:

\begin{theo} \label{damtkicebuli} Let $G$ be a finite group, $\C$ a cofibrantly generated, proper, $G$-equi\-\-variant stable model category, and let $\Sp_G^O,_{(2)}$ denote the $2$-localization of $ \Sp_G^O$. Suppose that 
$$\xymatrix{\Psi \colon \Ho( \Sp_G^O,_{(2)}) \ar[r]^-{\sim} & \Ho(\C)}$$
is an equivalence of triangulated categories such that 
$$\Psi(\Sigma_{+}^{\infty} G/H) \cong G/H_{+} \wedge^{\mathbf{L}} \Psi(\S),$$
for any $ H \leq G$. Suppose further that the latter isomorphisms are natural with respect to the restrictions, conjugations and transfers. Then there is a zigzag of $G \Top_*$-Quillen equivalences between $\C$ and $ \Sp_G^O,_{(2)}$.
\end{theo}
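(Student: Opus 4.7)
My plan is to follow the blueprint of Schwede's non-equivariant rigidity theorem at the prime $2$, transplanted into the $G$-equivariant setting. The first step is to construct from $\Psi$ a $G\Top_*$-enriched left Quillen functor $\Phi \colon \Sp_G^O,_{(2)} \to \C$ whose total derived functor is naturally isomorphic to $\Psi$. Pick a cofibrant-fibrant $X \in \C$ representing $\Psi(\S)$. Since $\C$ is tensored over $G\Top_*$ in a Quillen-compatible way and $\Psi$ commutes, up to natural isomorphism, with $S^V \wedge (-)$ for every finite-dimensional orthogonal $G$-representation $V$, one obtains a coherent system of equivalences $S^V \wedge X \simeq \Psi(\S^V)$ in $\Ho(\C)$. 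Rectifying these via cofibrant replacement and the small-object argument, one lifts $X$ to a compatible family of bonding maps indexed by the $G$-representation category (essentially an orthogonal $G$-spectrum object in $\C$), and thereby extracts the desired $G\Top_*$-Quillen adjunction.

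To verify that $\Phi$ is a Quillen equivalence, I would use that $\Ho(\Sp_G^O,_{(2)})$ is compactly generated by the stable orbits $\Sigma_{+}^{\infty} G/H$, which by hypothesis are sent under $\Psi$ to a set of generators of $\Ho(\C)$. The naturality of the prescribed isomorphisms with respect to restrictions, conjugations, and transfers ensures that $\LL \Phi$ agrees with $\Psi$ on the full preadditive subcategory spanned by the stable orbits, i.e., on the stable Burnside category. A standard compact-generation argument then promotes this agreement to the entire homotopy category, so that $\LL \Phi$ is fully faithful and essentially surjective. The zigzag in the conclusion then comes from chaining $\Phi$ with any of the known $G\Top_*$-Quillen equivalences between the various monoidal models of the equivariant stable category.

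The main obstacle, and the reason $2$-locality is essential, lies in the rectification implicit in the first step: the triangulated equivalence $\Psi$ and the Burnside-category compatibility only constrain first-order coherences, and one must rule out the possibility that successive choices of bonding equivalences can drift by units in the equivariant stable stems, most notably by powers of Hopf-type elements. Mirroring Schwede's argument, the crux will be to show that the equivariant Hopf maps $\eta_H$, one for each subgroup $H \leq G$, are rigidly determined by their interaction with restrictions, conjugations, and transfers; at the prime $2$, this is forced by the $2$-torsion nature of $\eta$ together with nontrivial Toda brackets tying each $\eta_H$ to the Burnside-category generators. Verifying these equivariant Toda-bracket relations, and showing that they suffice to obstruct any unit-scalar discrepancy in the construction of $\Phi$, is where the bulk of the technical work will lie.
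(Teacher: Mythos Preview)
Your proposal has a genuine structural gap, and it also misplaces where the hard work lies.

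On the construction of the Quillen functor: you propose to rectify bonding maps directly inside $\C$ to build $\Phi \colon \Sp_G^O,_{(2)} \to \C$. This is both unnecessary and fraught, since $\C$ is only $G\Top_*$-enriched, not $\Sp^O_G$-enriched. The paper instead replaces $\C$ by the Quillen-equivalent category $\Sp^O_G(\C)$ of orthogonal $G$-spectra internal to $\C$, which is automatically an $\Sp^O_G$-model category. A single cofibrant object $X$ representing $\LL\Sigma^\infty\Psi(\S)$ then yields a strict $G\Top_*$-Quillen adjunction $-\wedge X \colon \Sp^O_G,_{(2)} \rightleftarrows \Sp^O_G(\C)$, with no rectification of coherences required.

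More seriously: your second paragraph asserts that once $\LL\Phi$ agrees with $\Psi$ on the stable Burnside category, ``a standard compact-generation argument'' promotes this to full faithfulness. This is false, and is precisely the content of the theorem. The Burnside category records only the degree-zero maps between the generators $\Sigma^\infty_+G/H$; what must be shown is that the induced maps on the \emph{graded} groups $[\Sigma^\infty_+G/H,\Sigma^\infty_+G/K]^G_*$ are isomorphisms for all $*$. Compact generation reduces the problem to this statement, but does not solve it. Your third paragraph seems to sense that something more is needed, but locates the difficulty in the rectification step rather than here, and the talk of ``equivariant Hopf maps $\eta_H$'' and ``unit-scalar discrepancies in bonding maps'' does not match what actually has to be proved.

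What is required, and entirely absent from your outline, is the reduction to an exact endofunctor $F$ of $\Ho(\Sp^O_G,_{(2)})$ fixing the Burnside category, followed by an induction on $|H|$ to show $F$ is an isomorphism on $[\Sigma^\infty_+G/H,\Sigma^\infty_+G/H]^G_*$. The base case (free $G$-spectra) does use Schwede-style cell-structure arguments together with $\eta$, $\nu$, $\sigma$ and Toda brackets in $\pi_*\S[G]$, so that part of your intuition is right. But the inductive step needs geometric fixed points $\Phi^H$ and the isotropy-separation short exact sequence
\[
[\Sigma^\infty_+G/H,\, G\ltimes_H\Sigma^\infty_+E\P[H]]^G_* \hookrightarrow [\Sigma^\infty_+G/H,\Sigma^\infty_+G/H]^G_* \twoheadrightarrow [\Sigma^\infty_+W(H),\Sigma^\infty_+W(H)]^{W(H)}_*
\]
to pass from $H$ to its proper subgroups and to its Weyl group, together with a separate split-injection argument (via Spanier--Whitehead self-duality of $\Sigma^\infty_+G/L$ and the double coset formula) reducing $[\Sigma^\infty_+G/H,\Sigma^\infty_+G/K]^G_*$ to endomorphism rings. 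None of this machinery appears in your plan.
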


In fact, we strongly believe that the following integral version of Theorem \ref{damtkicebuli} should be true:

\begin{conj} \label{mainconj} Let $G$ be a finite group and let $\C$ be a cofibrantly generated, proper, $G$-equivariant stable model category. Suppose that 
$$\xymatrix{\Psi \colon \Ho( \Sp_G^O) \ar[r]^-{\sim} & \Ho(\C)}$$
is an equivalence of triangulated categories such that 
$$\Psi(\Sigma_{+}^{\infty} G/H) \cong G/H_{+} \wedge^{\mathbf{L}} \Psi(\S),$$
for any $ H \leq G$. Suppose further that the latter isomorphisms are natural with respect to the restrictions, conjugations and transfers. Then there is a zigzag of $G \Top_*$-Quillen equivalences between $\C$ and $ \Sp_G^O$.

\end{conj}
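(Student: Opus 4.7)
The plan is to reduce the integral statement to Theorem \ref{damtkicebuli} together with a rational and an odd-primary counterpart, patched together via an arithmetic fracture square for the $G$-equivariant sphere spectrum. Given an equivalence $\Psi$ as in the conjecture, I would first Bousfield-localize $\C$ at the appropriate $G$-equivariant Moore spectra to obtain rationalized and $p$-localized model categories $\C_{\mathbb{Q}}$ and $\C_{(p)}$ together with induced triangulated equivalences $\Psi_{\mathbb{Q}}$ and $\Psi_{(p)}$ still compatible with the stable Burnside category. If each localized piece is shown to be $G \Top_*$-Quillen equivalent to the corresponding localization of $\Sp_G^O$, then a standard homotopy-pullback assembly of model categories (using properness of $\C$) splices these into a zigzag of $G \Top_*$-Quillen equivalences between $\C$ and $\Sp_G^O$ itself.

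Of the three local pieces, the $2$-local case is precisely Theorem \ref{damtkicebuli}. The rational case should be essentially formal: by the Greenlees--May splitting theorem, the rational $G$-equivariant stable homotopy category decomposes as a finite product of derived categories of $\mathbb{Q}[W_G H]$-modules indexed over conjugacy classes of subgroups, so rigidity there reduces to Shipley-type statements about rational stable model categories, which are well-understood. For each odd prime $p$, I would try to run the scheme of Theorem \ref{damtkicebuli} verbatim: exploit the universal property of $\Sp_G^O$ as the free $G$-equivariant stable model category on the stable Burnside category to lift $\Psi_{(p)}$ to a $G \Top_*$-Quillen adjunction, and then verify it is a Quillen equivalence by checking on the compact generators $\Sigma_{+}^{\infty} G/H$, where the hypothesis on $\Psi$ already supplies the required isomorphisms.

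The principal obstacle is the odd-primary step. The argument at the prime $2$ relies, at a decisive point, on $2$-primary detection involving the equivariant Hopf element $\eta$ and the structure of low-dimensional Toda brackets in the stable Burnside Mackey functor, which together provide enough rigidity to promote the abstract equivalence $\Psi$ to point-set data. At odd primes $p$ the analogous input must come from the $\alpha_1$ family and higher Massey products in $\ul{\pi}_{*}^{G} \S_{(p)}$, and a sufficiently explicit description of these brackets for an arbitrary finite group $G$ is not currently available. Developing this odd-primary equivariant bracket analysis, and verifying that it plays the same rigidifying role that $\eta$ plays at the prime $2$, is the main difficulty one would have to overcome in order to establish Conjecture \ref{mainconj} in full generality.
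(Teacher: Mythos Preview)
This statement is a conjecture in the paper, not a theorem; the paper proves only the $2$-local version (Theorem~\ref{damtkicebuli}). There is therefore no proof in the paper to compare against, and your proposal is likewise not a proof but a strategy outline that, to its credit, ends by correctly identifying the odd-primary case as the outstanding obstacle.

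Your proposed route, however, diverges from the approach the paper itself suggests. The categorical reduction in Subsection~\ref{outl} already works integrally: without any prior localization one passes to $\Sp_G^O(\C)$, builds a $G\Top_*$-Quillen adjunction $-\wedge X \colon \Sp_G^O \rightleftarrows \Sp_G^O(\C)$, and reduces the conjecture to showing that a single exact endofunctor $F$ of the integral $\Ho(\Sp_G^O)$ is an equivalence. No arithmetic-fracture assembly of model categories is required, and that step in your plan is a genuine additional difficulty you have not addressed. More importantly, the paper explicitly states (end of Subsection~\ref{p-locred}) that an odd-primary analogue of the axiomatic Theorem~\ref{reduction} is \emph{not} expected to hold; following Schwede~\cite{Sch07}, the odd-primary argument should instead exploit the explicit construction of $F$ as a derived smash product and generalize his obstruction theory for coherent $M(p)$-actions (the relevant obstruction is $\alpha_1$, reflecting that $M(p)$ carries an $A_{p-1}$- but not an $A_p$-structure). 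So your plan to ``run the scheme of Theorem~\ref{damtkicebuli} verbatim'' via Toda-bracket analysis at odd primes is headed in a direction the paper itself regards as unlikely to succeed; the anticipated fix is not axiomatic but uses the concrete form of~$F$.
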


Note that if $G$ is trivial, then the statement of Conjecture \ref{mainconj} is true. This is Schwede's rigidity theorem \cite{Sch07}. (Or, more precisely, a special case of it, as the model category in Schwede's theorem need not be cofibrantly generated, topological or proper.) The solution of Conjecture \ref{mainconj} would in particular imply that all ``higher order structure'' of the $G$-equivariant stable homotopy theory such as, for example, equivariant homotopy types of function $G$-spaces, is determined by the suspension functor, the class of homotopy cofiber sequences and the basic $\pi_0$-information of $\Ho(\Sp_G^O)$, i.e., the stable Burnside (orbit) category.

The proof of Theorem \ref{damtkicebuli} is divided into two main parts: The first is categorical and the second is computational. The categorical part of the proof is mainly discussed in Section \ref{catinp} and essentially reduces the proof of Conjecture \ref{mainconj} to showing that a certain exact endofunctor 
$$F \colon \Ho(\Sp^O_G) \lra \Ho(\Sp^O_G)$$ 
is an equivalence of categories. The computational part shows that $2$-locally the endofunctor is indeed an equivalence of categories. The proof starts by generalizing Schwede's arguments from \cite{Sch01} to free (naive) $G$-spectra. From this point on, classical techniques of equivariant stable homotopy theory enter the proof. These include the double coset formula, Wirthm\"uller isomorphism, geometric fixed points, isotropy separation and the tom Dieck splitting. The central idea is to do induction on the order of subgroups and use the case of free $G$-spectra as the induction basis.

The only part of the proof of Theorem \ref{damtkicebuli} which uses that we are working $2$-locally is the part about free $G$-spectra in Section \ref{free}. The essential fact one needs here is that the self map $2 \cdot \id \colon M(2) \lra M(2)$ of the mod $2$ Moore spectrum is not zero in the stable homotopy category. For $p$ an odd prime, the map $p \cdot \id \colon M(p) \lra M(p)$ is equal to zero and this makes a big difference between the $2$-primary and odd primary cases. Observe that the nontriviality of $2 \cdot \id \colon M(2) \lra M(2)$ amounts to the fact that $M(2)$ does not possess an $A_2$-structure with respect to the canonical unit map $\S \lra M(2)$. In fact, for any prime $p$, the mod $p$ Moore spectrum $M(p)$ has an $A_{p-1}$-structure but does not admit an $A_p$-structure \cite{Ang08}. The obstruction for the latter is the element $\alpha_1 \in \pi_{2p-3}\S_{(p)}$. This is used by Schwede to obtain the integral rigidity result for the stable homotopy category in \cite{Sch07}. It seems to be rather nontrivial to generalize Schwede's obstruction theory arguments about coherent actions of Moore spaces \cite{Sch07} to the equivariant case.

This paper is organized as follows. Section \ref{prel} contains some basic facts about model categories and $G$-equivariant orthogonal spectra. We also review the level and stable model structures on the category of orthogonal $G$-spectra. In Section \ref{catinp} we discuss the categorical part of the proof. Here we introduce the category of orthogonal $G$-spectra $\Sp^O_G(\C)$ internal to an equivariant model category $\C$ and show that if $\C$ is stable in an equivariant sense and additionally satisfies certain technical conditions, then $\C$ and  $\Sp^O_G(\C)$ are Quillen equivalent. This allows us to reduce the proof of  Theorem \ref{damtkicebuli} to showing that a certain exact endofunctor $F$ of $\Ho(\Sp_G^O,_{(2)})$ is an equivalence of categories. In Section \ref{free} we show that $F$ becomes an equivalence when restricted to the full subcategory of free $G$-spectra. 

In Section \ref{redtrres} we prove that it is sufficient to check that the induced map
$$F \colon [\Sigma_{+}^{\infty} G/H, \Sigma_{+}^{\infty} G/H]_*^G \lra [F(\Sigma_{+}^{\infty} G/H), F(\Sigma_{+}^{\infty} G/H)]_*^G$$
is an isomorphism for any subgroup $H$ of $G$. This is then verified inductively in Section~\ref{prfmain}. The results of Section \ref{free} are used for the induction basis. The induction step uses geometric fixed points and a certain short exact sequence which we discuss in Section~\ref{geofixsec}.

\section*{Acknowledgements}

This paper is based on my PhD thesis at the University of Bonn. I would like to thank my advisor Stefan Schwede for suggesting the project, for introducing me to the beautiful world of equivariant stable homotopy theory, and for his help and encouragement throughout the way. I owe special thanks to Justin Noel for many hours of helpful mathematical discussions and for his help in simplifying some of the proofs in this paper. I am also indebted to Markus Hausmann, Kristian Moi and Karol Szumi\l o  for reading earlier drafts of the paper and for giving me valuable comments and suggestions. Further, I benefited from discussions with David Barnes, John Greenlees, Michael Hill, Michael Hopkins, Peter May, Lennart Meier, Constanze Roitzheim, Steffen Sagave, Brooke Shipley, Markus Szymik and many other people.

\;

This research was supported by the Deutsche Forschungsgemeinschaft Graduiertenkolleg 1150 ``Homotopy and Cohomology''. During the final revision of the paper the author was supported by the Danish National Research Foundation through the Centre for Symmetry and Deformation (DNRF92).

\section{Preliminaries} \label{prel}

\setcounter{subsection}{0} 

\subsection{Model categories} \label{modcatI}

A \emph{model category} is  a bicomplete category $\C$ equipped with three classes of morphisms called weak equivalences, fibrations and cofibrations, satisfying certain axioms. We will not list these axioms here. The point of this structure is that it allows one to ``do homotopy theory'' in $\C$. Good references for model categories include \cite{DS95}, \cite{Hov99} and \cite{Q67}.

The fundamental example of a model category is the category of topological spaces (\cite{Q67}, \cite[2.4.19]{Hov99}). Further important examples are the category of simplicial sets (\cite{Q67}, \cite[I.11.3]{GJ99}) and the category of chain complexes of modules over a ring \cite[2.3.11]{Hov99}.

For any model category $\C$, one has the associated \emph{homotopy category} $\Ho(\C)$ which is defined as the localization of $\C$ with respect to the class of weak equivalences (see e.g., \cite[1.2]{Hov99} or \cite{DS95}). The model structure guarantees that we do not face set theoretic problems when passing to localization, i.e., $\Ho(\C)$ has $\Hom$-sets.

A \emph{Quillen adjunction} between two model categories $\C$ and $\D$ is a pair of adjoint functors
$$\xymatrix{ F \colon \C \ar@<0.5ex>[r] & \D \cocolon E \ar@<0.5ex>[l]},$$
where the left adjoint $F$ preserves cofibrations and acyclic cofibrations (or, equivalently, $E$ preserves fibrations and acyclic fibrations). We refer to $F$ as a left Quillen functor and to $E$ as a right Quillen functor. Quillen's total derived functor theorem (see e.g., \cite{Q67} or \cite[II.8.7]{GJ99}) says that any such pair of adjoint functors induces an adjunction
$$\xymatrix{ \textbf{L}F \colon \Ho(\C) \ar@<0.5ex>[r] & \Ho(\D) \cocolon \textbf{R}E \ar@<0.5ex>[l]}.$$
The functor $\textbf{L}F$ is called the left derived functor of $F$ and $\textbf{R}E$ the right derived functor of $E$. If $\textbf{L}F$  is an equivalence of categories (or, equivalently, $\textbf{R}E$ is an equivalence), then the Quillen adjunction is called a \emph{Quillen equivalence}.

\noindent Next, recall (\cite{Q67}, \cite[6.1.1]{Hov99}) that the homotopy category $\Ho(\C)$ of a pointed model category $\C$ supports a \emph{suspension} functor
$$\Sigma \colon \Ho(\C) \longrightarrow \Ho(\C)$$
with a right adjoint \emph{loop} functor
$$\Omega \colon \Ho(\C) \longrightarrow \Ho(\C).$$
If the functors $\Sigma$ and $\Omega$ are inverse equivalences, then the pointed model category $\C$ is called a \emph{stable model category}. For any stable model category  $\C$, the homotopy category $\Ho(\C)$ is naturally triangulated \cite[7.1]{Hov99}. The suspension functor is the shift and the distinguished triangles come from the cofiber sequences. (We do not recall triangulated categories here and refer to \cite[Chapter IV]{GM} or \cite[10.2]{Wei} for the necessary background.) 

Examples of stable model categories are the model category of chain complexes and also various model categories of spectra ($S$-modules \cite{EKMM}, orthogonal spectra \cite{MMSS}, symmetric spectra \cite{HSS00}, sequential spectra \cite{BF78}).

\;

\;

For any stable model category $\C$ and objects $X, Y \in \C$, we will denote the abelian group of morphisms from $X$ to $Y$ in $\Ho(\C)$ by $[X,Y]^{\Ho(\C)}$.

\;

\;

Next, let us quickly review cofibrantly generated model categories. Here we mainly follow \cite[Section 2.1]{Hov99}. Let $I$ be a set of morphisms in an arbitrary cocomplete category. A \emph{relative $I$-cell complex} is a morphism that is a (possibly transfinite) composition of coproducts of pushouts of maps in $I$. A map is called \emph{$I$-injective} if it has the right lifting property with respect to $I$. An \emph{$I$-cofibration} is map that has the left lifting property with respect to $I$-injective maps. The class of $I$-cell complexes will be denoted by $I$-cell. Next, $I$-inj will stand for the class of $I$-injective maps and $I$-cof for the class of $I$-cofibrations. It is easy to see that $I$-cell $\subset$ $I$-cof. Finally, let us recall the notion of smallness. An object $K$ of a cocomplete category is small with respect to a given class $\D$ of morphisms if the representable functor associated to $K$ commutes with colimits of large enough transfinite sequences of morphisms from $\D$. See \cite[Definition 2.13]{Hov99} for more details.

\begin{defi}[{\cite[Definition 2.1.17]{Hov99}}] \label{cofgen} Let $\C$ be a model category. We say that $\C$ is \emph{cofibrantly generated}, if there are sets $I$ and $J$ of maps in $\C$ such that the following hold:

{\rm (i)} The domains of $I$ and $J$ are small relative to $I$-cell and $J$-cell, respectively.

{\rm (ii)} The class of fibrations is $J$-inj.

{\rm (iii)} The class of acyclic fibrations is $I$-inj.

\end{defi}

Here is a general result that will be used in this paper:

\begin{prop}[see e.g. {\cite[Theorem 2.1.19]{Hov99}}] \label{hovprop} Let $\C$ be a category with small limits and colimits. Suppose $\W$ is a subcategory of $\C$ and $I$ and $J$ are sets of morphisms of $\C$. Assume that the following conditions are satisfied:

{\rm (i)} The subcategory $\W$ satisfies the two out of three property and is closed under retracts.

{\rm (ii)} The domains of $I$ and $J$ are small relative to $I$-cell and $J$-cell, respectively.

{\rm (iii)} $J$-cell $\subset$ $\W \cap I$-cof.

{\rm (iv)} $I$-inj = $\W \cap J$-inj.
 
Then $\C$ is a cofibrantly generated model category with $\W$ the class of weak equivalences, $J$-inj the class fibrations and $I$-cof the class of cofibrations. 

\end{prop}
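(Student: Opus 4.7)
The plan is to verify the standard model-category axioms (bicompleteness, two-out-of-three, closure under retracts, lifting, and factorization) with $\W$ as the weak equivalences, $J$-inj as the fibrations, and $I$-cof as the cofibrations. Bicompleteness is assumed, while two-out-of-three and closure of $\W$ under retracts are part of (i). That $I$-cof and $J$-inj are each closed under retracts is automatic, since any class defined by a one-sided lifting property is closed under retracts (standard diagram chase). So the real work is producing the two factorizations and verifying the two lifting axioms.

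The factorizations come straight from Quillen's small object argument, which by hypothesis (ii) can be run against both $I$ and $J$. Applied to $I$, it factors an arbitrary $f$ as $p \circ i$ with $i$ a relative $I$-cell complex (hence $i \in I$-cof) and $p$ an $I$-injective map; by (iv), $p \in \W \cap J$-inj, so $p$ is an acyclic fibration. Applied to $J$, it factors $f$ as $q \circ j$ with $j$ a relative $J$-cell complex and $q$ a $J$-injective map; by (iii), $j \in \W \cap I$-cof, so $j$ is an acyclic cofibration, and $q$ is by construction a fibration. This gives both factorization axioms.

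For the lifting axioms, one direction is built in: $I$-cof consists, by definition, of maps with the left lifting property against $I$-inj, which by (iv) equals $\W \cap J$-inj, i.e.\ the acyclic fibrations. The other direction — that every acyclic cofibration $i \in \W \cap I$-cof lifts against every fibration $p \in J$-inj — is the crux. I would handle it via the standard retract trick. Factor $i = q \circ j$ using the second factorization, with $j$ a relative $J$-cell and $q \in J$-inj. Hypothesis (iii) gives $j \in \W$, and combined with $i \in \W$ and two-out-of-three this forces $q \in \W \cap J$-inj, which by (iv) equals $I$-inj. Since $i \in I$-cof, it lifts against this $q$, and the lift in the square determined by $i = q \circ j$ realizes $i$ as a retract of $j$. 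But $j$, being a relative $J$-cell, has the left lifting property against every $J$-injective map — in particular against $p$ — and this property passes to retracts. Hence $i$ lifts against $p$, as required.

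The only nontrivial step is this last retract argument; everything else is bookkeeping around the output of the small object argument. It is worth noting that conditions (iii) and (iv) are tailored precisely for this step: (iii) ensures that relative $J$-cells are weak equivalences, enabling two-out-of-three, while (iv) converts the resulting $\W \cap J$-inj map $q$ into an $I$-injective map, so that the cofibration $i \in I$-cof actually lifts against it.
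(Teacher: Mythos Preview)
Your proof is correct and is precisely the standard argument (small object argument for the two factorizations, plus the retract trick to show $\W \cap I\text{-cof} \subset J\text{-cof}$). The paper does not actually prove this proposition: it is stated with a reference to \cite[Theorem 2.1.19]{Hov99} and used as a black box, so there is no paper proof to compare against beyond noting that your argument is exactly the one found in Hovey.
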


Note that the set $I$ is usually referred to as a  set of generating cofibrations and $J$ as a set of generating acyclic cofibrations. 

\;

\;

Further, we recall the definitions of monoidal model categories and enriched model categories.

\begin{defi}[{see e.g. \cite[Definition 4.2.6]{Hov99}}] A \emph{monoidal model category} is a closed symmetric monoidal category $\V$ together with a model structure such that the following conditions hold:

{\rm (i)} (The pushout-product axiom) Let $ i \colon K \lra L$ and $ j \colon A \lra B$ be cofibrations in the model category $\V$. Then the induced map
$$i \boxempty j \colon K \wedge B \bigvee_{K \wedge A} L \wedge A \lra L \wedge B$$ 
is a cofibration in $\V$. Furthermore, if either $i$ or $j$ is an acyclic cofibration, then so is $i \boxempty j$.

{\rm (ii)} Let $q \colon QI \lra I$ be a cofibrant replacement for the unit $I$. Then the maps
$$q \wedge 1 \colon QI \wedge X \lra I \wedge X \;\; \text{and} \;\; 1 \wedge q \colon X \wedge QI \lra X \wedge I$$
are weak equivalences for any cofibrant $X$. \end{defi}

\begin{defi} [{see e.g. \cite[Definition 4.2.18]{Hov99}}] \label{Vmod} Let $\V$ be a monoidal model category. A \emph{$\V$-model category} is a model category $\C$ with the following data and properties:

{\rm (i)} The category $\C$ is enriched, tensored and cotensored over $\V$ (see \cite[Section 1.2 and Section 3.7]{Kel05}). This means that we have \emph{tensors} $K \wedge X$ and \emph{cotensors} $X^K$ and mapping objects $\Hom(X,Y) \in \V$ for $K \in \V$ and $X, Y \in \C$ and all these functors are related by  $\V$-enriched adjunctions
$$\Hom(K \wedge X, Y) \cong \Hom(X, Y^K) \cong \Hom(K, \Hom(X,Y)). $$

{\rm (ii)} (The pushout-product axiom) Let $ i \colon K \lra L$ be a cofibration in the model category $\V$ and $ j \colon A \lra B$ a cofibration in the model category $\C$. Then the induced map
$$i \boxempty j \colon K \wedge B \bigvee_{K \wedge A} L \wedge A \lra L \wedge B$$ 
is a cofibration in $\C$. Furthermore, if either $i$ or $j$ is an acyclic cofibration, then so is $i \boxempty j$.

{\rm (iii)} If $q \colon QI \lra I$ is a cofibrant replacement for the unit $I$ in $\V$, then the induced map $q \wedge 1 \colon QI \wedge X \lra I \wedge X$ is a weak equivalence in $\C$ for any cofibrant $X$.  \end{defi}

Finally, let us recall the definition of a proper model category.

\begin{defi} A model category is called \emph{left proper} if weak equivalences are preserved by pushouts along cofibrations. Dually, a model category is called \emph{right proper} if weak equivalences are preserved by pullbacks along fibrations. A model category which is left proper and right proper is said to be \emph{proper}. \end{defi}

\subsection{G-equivariant spaces} \label{Gspace}

\begin{conv} In this paper $G$ will always denote a finite group. \end{conv}

\begin{conv} By a topological space we will always mean a compactly generated weak Hausdorff space.\end{conv}

The category $G \Top_*$ of pointed topological $G$-spaces admits a proper and cofibrantly generated model structure such that $f \colon X \lra Y$ is a weak equivalence (resp. fibration) if the induced map on $H$-fixed points 
$$f^H \colon X^H \lra Y^H$$ 
is a weak homotopy equivalence (resp. Serre fibration) for any subgroup $H \leq G$ (see e.g. \cite[III.1]{MM02}). The set
$$(G/H \times S^{n-1})_+ \lra (G/H \times D^n)_+, \; \;\; n \geq 0 \;\;\;, H \leq G $$
of $G$-maps generates cofibrations in this model structure. The acyclic cofibrations are generated by the maps
$$ \incl_0 \colon (G/H \times D^n)_+ \lra (G/H \times D^n \times I)_+, \; \;\; n \geq 0 \;\;\;, H \leq G. $$

The model category $G \Top_*$ is a closed symmetric monoidal model category \cite[III.1]{MM02}. The monoidal product on $G \Top_*$ is given by the smash product $X \wedge Y$,   with the diagonal $G$-action, for any $X, Y \in G \Top_*$, and the mapping object is the nonequivariant pointed mapping space $\Map(X,Y)$ with the conjugation $G$-action. 

\subsection{G-equivariant orthogonal spectra} \label{GorthoII} We start by reminding the reader about the definition of an orthogonal spectrum \cite{MMSS}:

\begin{defi} \label{orthoI} An \emph{orthogonal spectrum} $X$ consists of the following data:

$\quad \bullet$ a sequence of pointed spaces $X_n$, for $n \geq 0$;

$\quad \bullet$ a base-point preserving continuous action of the orthogonal group $O(n)$ on $X_n$ for each $n \geq 0$;

$\quad \bullet$ continuous based maps $\sigma_n \colon X_n \wedge S^1 \lra X_{n+1}$.

This data is subject to the following condition: For all $n,m \geq 0$, the iterated structure map $X_n \wedge S^m \lra X_{n+m}$ is $O(n) \times O(m)$-equivariant.
 
\end{defi}

Next, let us recall the definition of $G$-equivariant orthogonal spectra (here we mainly follow \cite{Scheq}. See also \cite{MM02} which is the original source for $G$-equivariant orthogonal spectra): 

\begin{defi} \label{GorthoI} An \emph{orthogonal} $G$-\emph{spectrum} (\emph{$G$-equivariant orthogonal spectrum}) is an orthogonal spectrum $X$ equipped with a categorical $G$-action, i.e., with a group homomorphism $G \lra \Aut(X)$. \end{defi}

The category of orthogonal $G$-spectra is denoted by $\Sp_G^O$. Any orthogonal $G$-spectrum $X$ can be evaluated on an arbitrary finite dimensional orthogonal $G$-representation $V$. The $G$-space $X(V)$ is defined by 
$$X(V) = \LL(\RR^n, V)_+ \wedge_{O(n)} X_n,$$
where the number $n$ is the dimension of $V$, the vector space $\RR^n$ is equipped with the standard scalar product and $\LL(\RR^n, V)$ is the space of (not necessarily equivariant) linear isometries from $\RR^n$ to $V$. The $G$-action on $X(V)$ is given diagonally:
$$g \cdot [\varphi, x]= [g \varphi , g x], \;\; g \in G, \;\; \varphi \in \LL(\RR^n, V), \;\; x \in X_n.$$
For the trivial $G$-representation $\RR^n$, the pointed $G$-space $X(\RR^n)$ is canonically isomorphic to the pointed $G$-space $X_n$. Next, let $S^V$ denote the representation sphere of $V$, i.e., the one-point compactification of $V$. Using the iterated structure maps of $X$, for any finite dimensional orthogonal $G$-representations $V$ and $W$, one can define $G$-equivariant \emph{generalized structure maps} 
$$\sigma_{V,W} \colon X(V) \wedge S^W \lra X(V \oplus W).$$ 
These are then used to define $G$-\emph{equivariant homotopy groups} 
$$\pi_k^G X = \colim_n{[S^{k+n\rho_G}, X(n\rho_G)]}^G, \;\; k \in \Z,$$
where $\rho_G$ denotes the regular representation of $G$. Furthermore, for any subgroup $H \leq G$, one defines $\pi_k^HX,\; k \in \Z$, to be the $k$-th $H$-equivariant homotopy group of $X$ considered as an $H$-spectrum.

\begin{defi} \label{GorthoIII} A map $f \colon X \lra Y$ of $G$-equivariant orthogonal spectra is called a \emph{stable equivalence} if the induced map
$$\pi_k^H(f) \colon \pi_k^H X \lra \pi_k^H Y$$
is an isomorphism for any integer $k$ and any subgroup $H \leq G$.
 
\end{defi}

\subsection{Comparison of different definitions} \label{difdef}

Before continuing the recollection, let us explain the relation of Definition \ref{GorthoI} with the orginal definition of $G$-equivariant  orthogonal spectra due to Mandell and May. For this we first recall the category $O_G$. The objects of $O_G$ are finite dimensional orthogonal $G$-representations. For any orthogonal $G$-representations $V$ and $W$, the pointed morphism $G$-space $O_G(V,W)$ is defined to be the Thom space of the $G$-equivariant vector bundle
$$\xi(V, W) \lra \LL(V,W),$$
where $\LL(V,W)$ is the space of linear isometric embeddings from $V$ to $W$ and 
$$\xi(V, W) = \{(f, x) \in \LL(V,W) \times W | x \perp f(V) \}.$$
For more details about this category see \cite[II.4]{MM02}. It follows from \cite[Theorem V.1.5]{MM02} (see also \cite[Proposition A.18]{HHR}) that the category of $O_G$-spaces (which is the category of $G \Top_*$-enriched functors from $O_G$ to $G \Top_*$) is equivalent to the category of $G$-equivariant orthogonal spectra.

\begin{remk} In \cite[II.2]{MM02}, Mandell and May define $G$-equivariant orthogonal spectra indexed on a \emph{$G$-universe} $\U$ (which is a countably infinite dimensional real inner product space with certain properties \cite[Definition II.1.1]{MM02}). Such a $G$-spectrum is a collection of $G$-spaces indexed on those representations that embed into $\U$ together with certain equivariant structure maps. It follows from \cite[Theorem II.4.3, Theorem V.1.5]{MM02} that for any $G$-universe $\U$, the category of orthogonal $G$-spectra indexed on $\U$ and the category $\Sp^O_G$ are equivalent. This shows that universes are not really relevant for the point-set level definition of an orthogonal $G$-spectrum. However, they become really important when one considers the homotopy theory of orthogonal $G$-spectra. We will use the homotopy theory of orthogonal $G$-spectra where all finite dimensional orthogonal G-representations are built in. This means that we will work with the \emph{genuine $G$-spectra} or in other words with the homotopy theory of orthogonal $G$-spectra indexed on a complete universe (as a model of such a universe one can take the sum $\infty \rho_G$ of countable copies of the regular representation $\rho_G$). \end{remk}
 
Next, the category $\Sp^O_G$ is a closed symmetric monoidal category. The symmetric monoidal structure on $\Sp^O_G$ is given by the smash product of underlying orthogonal spectra \cite{MMSS} with the diagonal $G$-action. Further, for any universe $\U$, the category of $G$-equivariant orthogonal spectra indexed on $\U$ as well as the category of $O_G$-spaces are closed symmetric monoidal categories. It follows from \cite[Theorem II.4.3, Theorem V.1.5]{MM02} (see also \cite[Proposition A.18]{HHR}) that all the equivalences discussed above are in fact equivalences of closed symmetric monoidal categories.  

From this point on we will freely use all the results of \cite{MM02} for the category $\Sp^O_G$ having the above equivalences in mind.

\subsection{The level model structure on $\Sp^O_G$} \label{Gortholev} In this subsection we closely follow \cite[III.2]{MM02}. 

For any finite dimensional orthogonal $G$-representation $V$, the evaluation functor 
$$\Ev_V \colon \Sp^O_G \lra  G \Top_*,$$ 
given by $X \mapsto X(V)$, has a left adjoint $G \Top_*$-functor
$$F_V \colon G \Top_* \lra \Sp^O_G$$
which is defined by (see \cite[II.4]{MM02})
$$F_VA(W) = O_G(V, W) \wedge A.$$
We fix (once and for all) a small skeleton $\sk O_G$ of the category $O_G$. Let $I^{G}_{\lv}$ denote the set of morphisms
$$\{F_V (G/H \times S^{n-1})_+ ) \lra F_V((G/H \times D^n)_+) \; | \; V \in \sk O_G, \;\; n \geq 0, \;\; H \leq G \}$$
and $J^{G}_{\lv}$ denote the set of morphisms
$$\{F_V ((G/H \times D^n)_+) \lra F_V((G/H \times D^n \times I)_+) \; | \; V \in \sk O_G, \;\; n \geq 0, \;\; H \leq G \}.$$
In other words, the sets $I^{G}_{\lv}$ and $J^{G} _{\lv}$  are obtained by applying the functors $F_V$, $V \in \sk O_G$, to the generating cofibrations and generating acyclic cofibrations of $G \Top_*$, respectively. Further, we recall

\begin{defi} Let $ f \colon X \lra Y$ be a morphism in $\Sp^O_G$. The map $f$ is called a \emph{level equivalence} if $f(V) \colon X(V) \lra Y(V)$ is a weak equivalence in $G \Top_*$ for any $V \in \sk O_G$. It is called a \emph{level fibration} if $f(V) \colon X(V) \lra Y(V)$ is a fibration in $G \Top_*$ for any $V \in \sk O_G$. A map in $\Sp^O_G$ is called a \emph{cofibration} if it has the left lifting property with respect to all maps that are level fibrations and level equivalences (i.e., level acyclic fibrations). 
 
\end{defi}

\begin{prop}[{\cite[III.2.4]{MM02}}] The category $\Sp^O_G$ together with level equivalences, level fibrations and cofibrations forms a cofibrantly generated, proper model category. The set $I^{G}_{\lv}$ serves as a set of generating cofibrations and the set $J^{G}_{\lv}$ serves as a set of generating acyclic cofibrations.
 
\end{prop}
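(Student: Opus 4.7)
My plan is to apply Proposition \ref{hovprop} to the classes $\W$ of level equivalences and to the sets $I = I^G_\lv$ and $J = J^G_\lv$, and then deduce properness separately. The hinge throughout is that each pair $(F_V, \Ev_V)$ is an adjunction, and that all relevant constructions (limits, colimits, lifting problems) in $\Sp^O_G$ are detected by the functors $\Ev_V$ for $V \in \sk O_G$.

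First, condition (i) of Proposition \ref{hovprop} is immediate: two-out-of-three and closure under retracts hold for weak equivalences in $G\Top_*$ and both properties are detected levelwise, so they pass to level equivalences. For condition (ii), each $\Ev_V$ preserves filtered colimits (colimits in $\Sp^O_G$ are computed levelwise), so by adjunction the smallness question for $F_V(A)$ in $\Sp^O_G$ reduces to smallness of $A$ in $G\Top_*$; since the pointed $G$-CW complexes $(G/H \times S^{n-1})_+$, $(G/H \times D^n)_+$ and $(G/H \times D^n \times I)_+$ are compact, they are small with respect to relative $I^G_\lv$-cell and $J^G_\lv$-cell complexes respectively, because such complexes are levelwise transfinite compositions of closed $T_1$-inclusions. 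Condition (iv) is the most conceptual step: by the adjunctions $(F_V, \Ev_V)$, a map $f$ lies in $I^G_\lv$-inj if and only if $\Ev_V(f)$ has the right lifting property against every generating cofibration of $G\Top_*$ for each $V$, i.e.\ $f$ is a level acyclic fibration; similarly $J^G_\lv$-inj consists exactly of the level fibrations. Since $G\Top_*$ is a model category, the equality level acyclic fibrations $=$ level equivalences $\cap$ level fibrations transports to give $I^G_\lv$-inj $= \W \cap J^G_\lv$-inj.

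For condition (iii), I would first verify that each element of $J^G_\lv$ is an $I^G_\lv$-cofibration. Since $\{0\} \hookrightarrow I$ is a relative $I$-cell complex in $\Top_*$, its image under $F_V$ applied after the standard $G/H$-tensoring is a relative $I^G_\lv$-cell complex, hence an $I^G_\lv$-cofibration; this class is closed under pushouts, coproducts and transfinite composition, so $J^G_\lv$-cell $\subset I^G_\lv$-cof. For the inclusion $J^G_\lv$-cell $\subset \W$, each map in $J^G_\lv$ becomes a level acyclic cofibration after evaluation (the functor $F_V$ is the left adjoint in a Quillen pair to $\Ev_V$ in the sense of \cite[III.1]{MM02}), and since the underlying maps of $J^G_\lv$-cells are levelwise closed inclusions, pushouts, coproducts and transfinite compositions thereof preserve level equivalences (using the corresponding fact for $G\Top_*$).

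Finally, properness follows because pushouts and pullbacks in $\Sp^O_G$ are computed levelwise, cofibrations in the above structure are levelwise cofibrations (being retracts of relative $I^G_\lv$-cell complexes, which are levelwise $G\Top_*$-cell inclusions), and level fibrations are levelwise fibrations by construction; properness of $G\Top_*$ then transports directly. The main obstacle in the plan is the passage from $F_V$ applied to a $G\Top_*$-cofibration to a genuine $\Sp^O_G$-cofibration with good point-set properties (closed $T_1$-inclusion levelwise), but this is a standard property of the functors $F_V$ established via the explicit formula $F_V A(W) = O_G(V,W) \wedge A$ together with the fact that $O_G(V,W)$ is a $G$-CW space.
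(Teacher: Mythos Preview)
The paper does not give its own proof of this proposition; it is simply quoted from \cite[III.2.4]{MM02}. Your outline is the standard argument and matches both Mandell--May's proof and the sketch the paper later gives for the generalization in Proposition~\ref{levmod}.

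One presentational issue: in verifying condition (iii) you justify that maps in $J^G_\lv$ are levelwise acyclic cofibrations by saying that $(F_V,\Ev_V)$ is a Quillen pair, but that is circular---you are in the middle of building the model structure. The non-circular argument is precisely the one you defer to your last paragraph: the explicit formula $F_VA(W)=O_G(V,W)\wedge A$ together with Illman's theorem that $O_G(V,W)$ is a $G$-CW complex shows directly that $F_V$ takes generating (acyclic) cofibrations of $G\Top_*$ to levelwise (acyclic) cofibrations. This is exactly the point the paper singles out in its proof of Proposition~\ref{levmod}; you should invoke it where you need it rather than after the fact.
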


\subsection{The stable model structure on $\Sp^O_G$} \label{Gorthostab} The reference for this subsection is \cite[III.4]{MM02}. 

Recall that for any $G$-equivariant orthogonal spectrum $X$ we have the generalized structure maps
$$\sigma_{V, W} \colon  X(V) \wedge S^W \lra X(V \oplus W).$$
Let $\wt{\sigma}_{V, W} \colon X(V) \lra \Omega^{W}X(V \oplus W)$ denote the adjoint of $\sigma_{V, W}$.

\begin{defi} \label{Gomega} An orthogonal $G$-spectrum $X$ is called a $G$-$\Omega$-spectrum if the maps $\wt{\sigma}_{V, W}$ are weak equivalences in $G \Top_*$ for any $V$ and $W$ in $O_G$. \end{defi}

Before formulating the theorem about the stable model structure on $\Sp^O_G$, let us recall certain morphisms in $\Sp^O_G$ that will form a generating set of acyclic cofibrations for this model structure. Let $V, W \in \sk O_G$ and
$$\lambda_{V,W} \colon F_{V \oplus W}S^W \lra F_VS^0$$ 
denote the map of $G$-equivariant orthogonal spectra that is adjoint to the map
$$S^W \lra \Ev_{V \oplus W}(F_VS^0) = O_G(V, V \oplus W)$$
which sends $z \in W$ to $(\xymatrix{V \; \ar@{^{(}->}[r]^-{(1, 0)} & V \oplus W}, z)$ (see \cite[III.4.3]{MM02}). Using the mapping cylinder construction, the map $\lambda_{V,W}$ factors as a composite
$$\xymatrix{F_{V \oplus W}S^W \ar[r]^{\kappa_{V,W}} & M\lambda_{V \oplus W} \ar[r]^{r_{V \oplus W}} & F_VS^0,}$$
where $r_{V \oplus W}$ is a $G$-equivariant homotopy equivalence and $\kappa_{V,W}$ a cofibration and a stable equivalence \cite[III.4.5-4.6]{MM02}. Now consider any generating cofibration 
$$i \colon (G/H \times S^{n-1})_+  \lra (G/H \times D^n)_+.$$ 
Let $i \boxempty \kappa_{V, W}$ denote the pushout-product induced from the commutative square:
$$\xymatrix{(G/H \times S^{n-1})_+  \wedge F_{V \oplus W}S^W  \ar[r] \ar[d] & (G/H \times S^{n-1})_+ \wedge  M\lambda_{V \oplus W} \ar[d] & \\  (G/H \times D^n)_+ \wedge F_{V \oplus W}S^W \ar[r] & (G/H \times D^n)_+ \wedge  M\lambda_{V \oplus W}.}$$ 
Define
$$K^{G} = \{i \boxempty \kappa_{V, W} \; | \; H \leq G, \;\; n \geq 0, \;\; V, W \in \sk O_G \}.$$
Let $J_{\st}^{G}$ stand for the union $J_{\lv}^{G} \cup K^{G}$. For convenience, we will also introduce the notation $I^{G}_{\st}= I^{G}_{\lv}$. 

Finally, before formulating the main theorem of this subsection we need the following definition:

\begin{defi} A map $f \colon X \lra Y$ of orthogonal $G$-spectra is called a \emph{stable fibration}, if it has the right lifting property with respect to the maps that are cofibrations and stable equivalences. \end{defi}

\begin{theo} [{\cite[III.4.2]{MM02}}] The category $\Sp^O_G$ together with cofibrations, stable equivalences and stable fibrations forms a proper, cofibrantly generated, stable model category. The set $I^{G}_{\st}$ generates cofibrations and the set $J_{\st}^{G}$ generates acyclic cofibrations. Furthermore, the fibrant objects are precisely the $G$-$\Omega$-spectra. \end{theo}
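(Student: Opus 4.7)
The plan is to invoke Proposition~\ref{hovprop} with $\W$ the class of stable equivalences, $I = I^{G}_{\st}$ and $J = J^{G}_{\st}$. Two-out-of-three and retract closure for $\W$ are immediate from the functoriality of each $\pi_k^H$, and smallness of the domains of $I^{G}_{\st}$ and $J^{G}_{\st}$ is inherited from the level model structure, since the mapping cylinders $M\lambda_{V \oplus W}$ entering $K^{G}$ are built from finitely many $F_V$-cells. What remains are conditions (iii) and (iv) of Proposition~\ref{hovprop}, together with the identification of fibrant objects, properness and stability.

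\textbf{Condition (iii).} The crucial technical input is that each $\lambda_{V,W} \colon F_{V \oplus W}S^W \lra F_V S^0$ is a stable equivalence. I would prove this by computing $\pi_*^H$ directly from the colimit definition, using a cofinality argument over multiples of the regular representation $\rho_G$ that matches the suspension $\wedge S^W$ on the source with the shift built into evaluation on $V \oplus W$. Two-out-of-three applied to the factorization $\lambda_{V,W} = r_{V\oplus W} \circ \kappa_{V,W}$, with $r_{V\oplus W}$ a $G$-homotopy equivalence, then shows that $\kappa_{V,W}$ is a cofibration which is also a stable equivalence. The pushout-product axiom of the level model structure makes each $i \boxempty \kappa_{V,W}$ a cofibration; to see that it is also a stable equivalence, I would verify directly from the definition of $\pi_*^H$ that smashing a stable equivalence between cofibrant $G$-spectra with a cofibrant based $G$-CW-complex preserves stable equivalences. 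Closure of $\W$ under transfinite composition and coproducts (via long exact sequences in $\pi_*^H$) then yields $J^{G}_{\st}$-cell $\subset \W \cap I^{G}_{\st}$-cof.

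\textbf{Condition (iv) and fibrant objects.} For $I^{G}_{\st}$-inj $= \W \cap J^{G}_{\st}$-inj I would first unpack $J^{G}_{\st}$-injectivity: being $J^{G}_{\lv}$-injective is equivalent to being a level fibration, and adjoining the lifting problem against each $i \boxempty \kappa_{V,W}$ via the $G$-homotopy equivalence $r_{V\oplus W}$ unravels to the condition that the commutative square formed by $\wt{\sigma}_{V,W}$ for $X$ and $Y$ is a homotopy pullback of $G$-spaces. Specialising to $Y = \ast$ recovers precisely the $G$-$\Omega$-spectrum condition, which gives the final clause of the theorem. The nonobvious inclusion in (iv) is that every $J^{G}_{\st}$-injective stable equivalence is a level acyclic fibration; by the long exact $\pi_*^H$-sequence on fibers this reduces to the lemma that any stable equivalence between $G$-$\Omega$-spectra is already a level equivalence, which follows because in a $G$-$\Omega$-spectrum the colimit defining $\pi_*^H$ stabilises after finitely many terms.

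\textbf{Properness, stability, and the main obstacle.} Left properness is inherited from the level structure combined with the long exact $\pi_*^H$-sequence for pushouts along cofibrations; right properness uses that stable fibrations are in particular level fibrations, together with the long exact sequence on fibers. Stability reduces to checking that the $S^1 \wedge -$ and $\Omega$ adjunction descends to inverse equivalences on $\Ho(\Sp^{O}_G)$; this is forced by the $G$-$\Omega$-spectrum condition on fibrant replacements together with the fact that the definition of $\pi_*^H$ already incorporates all representation spheres $S^{k + n\rho_G}$. I expect the main obstacle to be the twin foundational inputs above — the stable equivalence of $\lambda_{V,W}$ and the lemma that stable equivalences between $G$-$\Omega$-spectra are level equivalences — both of which ultimately rest on a careful cofinality argument among the indexing $G$-representations. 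Once they are secured, the rest of the verification is bookkeeping.
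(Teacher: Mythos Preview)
The paper does not give a proof of this theorem: it is stated as a citation of \cite[III.4.2]{MM02}, with the surrounding subsections only recalling the definitions of $I^{G}_{\st}$, $J^{G}_{\st}$, $\lambda_{V,W}$, and $G$-$\Omega$-spectra needed to parse the statement. So there is no argument in this paper to compare your proposal against.

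That said, your outline is essentially the standard route taken in the cited source. Mandell--May establish the stable model structure by verifying the recognition criterion (your Proposition~\ref{hovprop}) for the pair $(I^{G}_{\st}, J^{G}_{\st})$, with the two substantive ingredients being exactly the ones you isolate: that the maps $\lambda_{V,W}$ are stable equivalences (their Lemma~III.4.5), and that a $J^{G}_{\st}$-injective map is a level fibration whose associated squares are homotopy pullbacks, so that stable equivalences between $G$-$\Omega$-spectra are level equivalences (their III.4.8--4.12). Your sketches of properness and stability also match their III.4.13 and the discussion following it. The only place I would tighten your account is the claim that smashing a stable equivalence between cofibrant $G$-spectra with a cofibrant based $G$-CW complex is again a stable equivalence: in \cite{MM02} this is handled via an explicit long exact sequence and cofiber argument (III.3.5, III.3.11) rather than ``directly from the definition of $\pi_*^H$'', and that is where the actual work hides.
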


The category $\Sp^O_G$ together with the latter model structure is referred to as the \emph{stable model category of orthogonal $G$-spectra (indexed on a complete $G$-universe}). From now on the symbol $\Sp^O_G$ will always stand for this model category.

Finally, we recall that the stable model category $\Sp^O_G$ together with the smash product forms a closed symmetric monoidal model category \cite[III.7]{MM02}. In particular, the following holds:

\begin{prop} \label{Gorthomon} Suppose that $i \colon K \lra L$ and $j \colon A \lra B$ are cofibrations in $\Sp^O_G$. Then the pushout-product
$$i \boxempty j \colon K \wedge B \bigvee_{K \wedge A} L \wedge A \lra L \wedge B$$
is a cofibration in $\Sp^O_G$. The map $i \boxempty j$ is also a stable equivalence if in addition $i$ or $j$  is a stable equivalence.
 
\end{prop}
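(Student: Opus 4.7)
The plan is to reduce to generating (acyclic) cofibrations and then use the fundamental compatibility between the smash product and the free functors $F_V$.

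For the cofibration statement, I would first invoke the standard principle that the pushout-product $i \boxempty j$ commutes (in each variable separately) with colimits, retracts, cobase changes, and transfinite compositions. Since cofibrations are precisely the retracts of $I^G_\st$-cell complexes, it suffices to verify the claim when both $i$ and $j$ lie in the generating set $I^G_\st$. These generators have the form $F_V(\alpha)$ with $\alpha$ a generating cofibration of $G\Top_*$. The key computational input is the natural isomorphism
$$F_V A \wedge F_W B \;\cong\; F_{V \oplus W}(A \wedge B),$$
which comes from the description of the smash product of orthogonal spectra and the universal property of $F_V$. Applied to the pushout-product square, this identifies $F_V(\alpha) \boxempty F_W(\beta)$ with $F_{V \oplus W}(\alpha \boxempty \beta)$. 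Since $G\Top_*$ is a monoidal model category, $\alpha \boxempty \beta$ is a cofibration of pointed $G$-spaces, and $F_{V \oplus W}$ carries such maps to cofibrations of orthogonal $G$-spectra by Proposition on the level model structure (generators of the level cofibrations are exactly the $F_U$ applied to generators of cofibrations in $G\Top_*$, and these coincide with the stable cofibrations by construction).

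For the second claim, suppose $j$ is an acyclic cofibration. Again by the pushout-product's compatibility with colimits and retracts, we may reduce to $i \in I^G_\st$ and $j \in J^G_\st = J^G_\lv \cup K^G$. If $j \in J^G_\lv$, it is of the form $F_V(\gamma)$ for an acyclic cofibration $\gamma$ in $G\Top_*$, and the argument above yields $F_{V \oplus W}(\alpha \boxempty \gamma)$, which is an acyclic cofibration because $G\Top_*$ is a monoidal model category and $F_{V \oplus W}$ is a left Quillen functor (sending acyclic cofibrations of $G\Top_*$ to level, hence stable, acyclic cofibrations). If $j \in K^G$, it is already the pushout-product of a generating cofibration of $G\Top_*$ with the map $\kappa_{V,W}$, so by associativity of $\boxempty$ the map $i \boxempty j$ is again of the form $\delta \boxempty \kappa_{V,W}$ for some cofibration $\delta$ of orthogonal $G$-spectra, which lies in the saturated class generated by $K^G$.

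The genuinely hard step, and the one that cannot be bypassed by formal manipulation, is to see that such maps $\delta \boxempty \kappa_{V,W}$ are stable equivalences. Here the standard argument, which I would follow from \cite[III.7]{MM02}, proceeds in two stages: first verify that every cofibration of orthogonal $G$-spectra is a levelwise based closed inclusion (an $h$-cofibration), and then use a gluing/long exact sequence argument for equivariant stable homotopy groups $\pi_*^H$ along $h$-cofibrations to show that smashing with a cofibrant spectrum preserves $\pi_*^H$-isomorphisms (a mild equivariant generalization of the corresponding non-equivariant fact for orthogonal spectra). Combined with the fact that $\kappa_{V,W}$ is itself a stable equivalence by the cited Lemma from \cite[III.4.5--4.6]{MM02}, this yields the claim. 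The main obstacle is this last $\pi_*^H$-preservation property; once it is in place, the symmetric case (where $i$ rather than $j$ is an acyclic cofibration) follows by the symmetry of the smash product.
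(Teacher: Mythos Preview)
The paper does not prove this proposition; it simply records it as a consequence of \cite[III.7]{MM02}. Your proposal is therefore not being compared against an argument in the paper but against the cited reference, and your outline is essentially the standard reduction found there: invoke \cite[Corollary 4.2.5]{Hov99} to reduce to generators, use $F_VA \wedge F_WB \cong F_{V\oplus W}(A \wedge B)$ for the cofibration and $J^G_{\lv}$ parts, and handle $K^G$ separately.

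One comment on the internal logic of your write-up. You assert that for $i \in I^G_{\st}$ and $j \in K^G$ the pushout-product $i \boxempty j$ ``lies in the saturated class generated by $K^G$''. If that sentence were fully justified, you would be done immediately, since $K^G \subset J^G_{\st}$ consists of acyclic cofibrations by the very theorem establishing the stable model structure; your final paragraph would then be unnecessary. The justification requires the identification $F_US^0 \wedge \kappa_{V,W} \cong \kappa_{U\oplus V,W}$ (which follows from $F_US^0 \wedge \lambda_{V,W} \cong \lambda_{U\oplus V,W}$ and the fact that smashing with $F_US^0$ preserves mapping cylinders), together with the observation that $\alpha \boxempty i'$ is a cofibration in $G\Top_*$. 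You do not spell this out, and instead fall back on the $\pi_*^H$-preservation argument, which is in fact closer to what Mandell--May actually do. Either route works; just be aware that you have sketched two logically independent finishes and should commit to one.
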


\subsection{The equivariant stable homotopy category} \label{hosp} In this subsection we list some well known properties of the homotopy category $\Ho(\Sp^O_G)$. Note that the category $\Ho(\Sp^O_G)$ is equivalent to the Lewis-May $G$-equivariant stable homotopy category of genuine $G$-spectra (see \cite[IV.1]{MM02}) introduced in \cite{LMS}.

As noted in the previous subsection, the model category $\Sp^O_G$ is stable and hence the homotopy category $\Ho(\Sp^O_G)$ is naturally triangulated. Further, since the maps 
$$\lambda_V=\lambda_{0, V} \colon F_VS^V \lra F_0S^0$$ 
are stable equivalences \cite[Lemma III.4.5]{MM02}, it follows that the functor
$$S^V \wedge - \colon \Ho(\Sp^O_G) \lra \Ho(\Sp^O_G)$$
is an equivalence of categories for any finite dimensional orthogonal $G$-representation $V$.

\;

\;

Next, before continuing, let us introduce the following notational convention. For any $G$-equivariant orthogonal spectra $X$ and $Y$, the abelian group $[X,Y]^{\Ho(\Sp^O_G)}$ of morphisms from $X$ to $Y$ in $\Ho(\Sp^O_G)$ will be denoted by $[X,Y]^G$. 

\;

\;

An adjunction argument immediately implies that for any subgroup $H \leq G$ and an orthogonal $G$-spectrum $X$, there is a natural isomorphism
$$[\Sigma_{+}^\infty G/H, X]^G_* \cong \pi_*^H X.$$
As a consequence, we see that the set
$$\{\Sigma_{+}^\infty G/H \; | \; H \leq G \}$$
is a set of compact generators for the triangulated category $\Ho(\Sp^O_G)$. Note that since $G$ is finite, for $\ast > 0$ and any subgroups $H, H' \leq G$, the abelian group
$$[\Sigma_{+}^\infty G/H, \Sigma_{+}^\infty G/H']_*^G$$
is finite (see e.g. \cite[Proposition A.3]{GM95}). 

Finally, recall from the introduction that the stable Burnside category which is the full preadditive subcategory of $\Ho(\Sp_G^O)$ with objects the stable orbits $\Sigma_{+}^{\infty}G/H$, $H \leq G$, is generated by the conjugations, transfers and restrictions. The stable Burnside category plays an important role in equivariant stable homotopy theory as well as in representation theory. The contravariant functors from this category to abelian groups are exactly \emph{Mackey functors}. Note that the stable Burnside category shows up in the formulation and proof of Theorem \ref{damtkicebuli}.   

\section{Categorical Input} \label{catinp}

\setcounter{subsection}{0}

\subsection{Outline} \label{outl}

Recall that $G$ is a finite group. We start with 

\begin{defi} \label{eqmodel} A a $G \Top_*$-model category $\C$ (see Definition \ref{Vmod} and Subsection \ref{Gspace}) is said to be a $G$-\emph{equivariant stable model category} if the adjunction
$$\xymatrix{ S^V \wedge -  \colon \C \ar@<0.5ex>[r] & \C \cocolon \Omega^V(-)=(-)^{S^V} \ar@<0.5ex>[l]}$$
is a Quillen equivalence for any finite dimensional orthogonal $G$-representation $V$.

\end{defi}

Examples of $G$-equivariant stable model categories are the model category $\Sp_G^O$ of $G$-equivariant orthogonal spectra
\cite[II-III]{MM02}, the model category of $G$-equivariant orthogonal spectra equipped with the $\S$-model structure \cite{Sto11}, the model category of $S_G$-modules \cite[IV.2]{MM02}, the model category of $G$-equivariant continuous functors \cite{Blum05} and the model categories of $G$-equivariant topological symmetric spectra (\cite{Man04}, \cite{Haus}).

\;

\;

The following proposition is an equivariant version of \cite[Theorem 3.8.2]{SS03}.

\begin{prop}\label{spectrific} Let $\C$ be a cofibrantly generated (Definition \ref{Gcofgen}), proper, $G$-equivariant stable model category. Then the category ${ \Sp}^O_G(\C)$ of internal orthogonal $G$-spectra in $\C$ (Definition \ref{Gmodcat}) possesses a $G$-equivariant stable model structure and the $G \Top_*$-adjunction
$$\xymatrix{ \Sigma^{\infty}  \colon \C \ar@<0.5ex>[r] & { \Sp}^O_G(\C) \cocolon \Ev_0 \ar@<0.5ex>[l]}$$
is a Quillen equivalence.

\end{prop}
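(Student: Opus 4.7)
The strategy is to transport the Schwede--Shipley stabilization machine \cite{SS03} into the $G$-equivariant orthogonal setting, run internally in $\C$: first build a level model structure on $\Sp^O_G(\C)$, then refine to a stable structure whose fibrant objects are internal $G$-$\Omega$-spectra, and finally identify the derived adjunction $(\mathbf{L}\Sigma^\infty, \mathbf{R}\Ev_0)$ by invoking the hypothesis that $\C$ is already a $G$-equivariant stable model category.

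For the level structure one declares $f \colon X \lra Y$ to be a level equivalence (resp.\ level fibration) when $f(V) \colon X(V) \lra Y(V)$ is a weak equivalence (resp.\ fibration) in $\C$ for every $V \in \sk O_G$. The generating (acyclic) cofibrations are the maps $F_V i$ obtained by applying the free functors $F_V \colon \C \lra \Sp^O_G(\C)$, $V \in \sk O_G$, to chosen generating (acyclic) cofibrations $i$ of $\C$. Smallness passes from $\C$ since colimits and filtered limits in $\Sp^O_G(\C)$ are computed levelwise, and the pushout-product axiom follows from the $G\Top_*$-enrichment of $\C$ together with the known behaviour of the $F_V$'s (cf.\ \cite[III.2]{MM02}). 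Proposition \ref{hovprop} then delivers the level model structure, and properness is inherited from $\C$.

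The stable model structure is constructed in exact analogy with \cite[III.4.2]{MM02}: factor the internal analogues of $\lambda_{V,W} \colon F_{V \oplus W} S^W \lra F_V S^0$ through mapping cylinders to obtain level acyclic cofibrations $\kappa_{V,W}$, form pushout-products $i \boxempty \kappa_{V, W}$ with generating cofibrations $i$ of $\C$, adjoin these to the level generating acyclic cofibrations, and apply Proposition \ref{hovprop} again---properness of $\C$ is what makes the small-object and mapping-cylinder arguments go through. A direct check identifies the fibrant objects as precisely the \emph{internal $G$-$\Omega$-spectra}, i.e., those $X$ for which $\wt\sigma_{V,W} \colon X(V) \lra X(V \oplus W)^{S^W}$ is a weak equivalence in $\C$ for all $V, W \in \sk O_G$.

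It remains to verify that $(\Sigma^\infty, \Ev_0)$ is a Quillen equivalence. That $\Sigma^\infty = F_0$ is left Quillen is built into the construction. For the derived equivalence, fix a fibrant $X \in \Sp^O_G(\C)$. Since $\C$ is $G$-equivariantly stable, $(S^V \wedge -, \Omega^V)$ is a Quillen self-equivalence of $\C$ for every finite dimensional orthogonal $G$-representation $V$; combined with the $\Omega$-spectrum condition on $X$ this implies that the canonical comparison $S^V \wedge^{\mathbf{L}} X(0) \lra X(V)$ is an isomorphism in $\Ho(\C)$ for every $V \in \sk O_G$. From this one deduces both that the derived unit $A \lra \mathbf{R}\Ev_0 \mathbf{L}\Sigma^\infty A$ is an isomorphism for cofibrant $A \in \C$ and that the derived counit $\mathbf{L}\Sigma^\infty \mathbf{R}\Ev_0 X \lra X$ is a stable equivalence. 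The main obstacle is this reconstruction step: one must recover the entire $O_G$-indexed diagram underlying $X$, up to stable equivalence, from the single object $X(0)$ with its $G$-action. The equivariant stability of $\C$ is precisely the input that inverts $S^V \wedge -$ coherently across all representations $V$ at once, rather than only for trivial ones, which is what makes the reconstruction actually work.
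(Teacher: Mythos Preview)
Your proposal is correct and follows essentially the same approach as the paper: both transport the Schwede--Shipley stabilization machinery of \cite{SS03} to the $G$-equivariant orthogonal setting, building first a level and then a stable model structure on $\Sp^O_G(\C)$ with $\Omega$-spectra as fibrant objects, and the paper explicitly defers the final Quillen equivalence step to a ``verbatim translation'' of \cite[Theorem~3.8.2]{SS03}, which is exactly what you sketch. The one point you leave implicit is that the resulting stable structure on $\Sp^O_G(\C)$ is itself $G$-equivariant stable; the paper handles this by first showing $\Sp^O_G(\C)$ is an $\Sp^O_G$-model category (Proposition~\ref{stablespec}) and then that any $\Sp^O_G$-model category is automatically $G$-equivariant stable (Proposition~\ref{gspec->eq}).
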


The proof of this proposition is a straightforward equivariant generalization of the arguments in \cite[3.8]{SS03}. It will occupy a significant part of this section.

The point of Proposition \ref{spectrific} is that one can replace (under some technical assumptions) any $G$-equivariant stable model category by a $G$-spectral one (Definition \ref{GSpec}), i.e., by an $\Sp_G^O$-model category. This in particular implies that $\Ho(\C)$ is tensored over the $G$-equivariant stable homotopy category $\Ho( \Sp_G^O)$. 

To stress the importance of Proposition \ref{spectrific}, we will now give a general strategy for how one should try to prove Conjecture \ref{mainconj}. Recall that we are given a triangulated equivalence 
$$\xymatrix{\Psi \colon \Ho( \Sp_G^O) \ar[r]^-{\sim} & \Ho(\C)}$$
with certain properties. By Proposition \ref{spectrific}, there is a $G \Top_*$-Quillen equivalence 
$$\xymatrix{ \Sigma^{\infty}  \colon \C \ar@<0.5ex>[r] & {\Sp}^O_G(\C) \cocolon \Ev_0. \ar@<0.5ex>[l]}$$
Let $X$ be a cofibrant replacement of $(\mathbf{L}\Sigma^{\infty} \circ \Psi) (\S)$. Since $\Sp^O_G(\C)$ is $G$-spectral (Definition \ref{GSpec}), there is a $G \Top_*$-Quillen adjunction 
$$\xymatrix{-\wedge X \colon \Sp_G^O \ar@<0.5ex>[r] & {\Sp}^O_G(\C) \cocolon \Hom(X,-).  \ar@<0.5ex>[l]}$$
Hence, in order to prove Conjecture \ref{mainconj}, it suffices to show that the latter Quillen adjunction is a Quillen equivalence. Next, it follows from the properties of $\Psi$ that we have isomorphisms
$$\Psi(\Sigma^{\infty}_+G/H) \cong \R \Ev_0 (\Sigma^{\infty}_+G/H \wedge^{\LL} X)$$
which are natural with respect to transfers, conjugations, and restrictions. Using these isomorphisms, we can choose an inverse of $\Psi$
$$\Psi^{-1} \colon \Ho(\C) \lra \Ho(\Sp^O_G)$$
such that $\Psi^{-1}(\R \Ev_0 (\Sigma^{\infty}_+G/H \wedge^{\LL} X))=\Sigma^{\infty}_+G/H$. Moreover, since the isomorphisms above are natural with respect to the maps in the stable Burnside category, we get the identities
$$\Psi^{-1} (\R \Ev_0 (g\wedge^{\LL} X)) = g, \; \;  \Psi^{-1}( \R \Ev_0 (\res_{K}^H \wedge^{\LL} X))=\res_{K}^H, \; \; \Psi^{-1}(\R \Ev_0 (\tr_{K}^H \wedge^{\LL} X))=\tr_{K}^H,$$
where $g \in G$ and $K \leq H \leq G$. Now let us consider the composite 
$$\xymatrix{ F \colon \Ho(\Sp_G^O) \ar[r]^-{-\wedge^{\mathbf{L}} X} & \Ho(\Sp^O_G(\C)) \ar[r]^-{\mathbf{R}\Ev_0} & \Ho(\C) \ar[r]^-{\Psi^{-1}} & \Ho(\Sp_G^O).}$$
Since the functors $\mathbf{R}\Ev_0$ and $\Psi^{-1}$ are equivalences, to prove that $(-\wedge X, \Hom(X,-))$ is a Quillen equivalence is equivalent to showing that the endofunctor 
$$\xymatrix{ F \colon \Ho(\Sp_G^O) \ar[r] & \Ho(\Sp_G^O)}$$ 
is an equivalence of categories. By the assumptions of Conjecture \ref{mainconj} and the properties of $\Psi^{-1}$, we see that $F$ enjoys the following properties:

\;

\;

{\rm (i)} $F(\Sigma_{+}^{\infty} G/H) = \Sigma_{+}^{\infty} G/H$, $H \leq G$; 

\;

\;

{\rm (ii)} $F$ preserves transfers, conjugations, and restrictions (and hence the stable Burnside category);

\;

\;

{\rm (iii)} $F$ is an exact functor of triangulated categories and preserves infinite coproducts.

\;

\;

Similarly, if we start with the $2$-localized genuine $G$-equivariant stable homotopy category $\Ho(\Sp^O_G,_{(2)})$ and an equivalence $\Ho(\Sp^O_G,_{(2)}) \sim \Ho(\C)$ as in the formulation of Theorem \ref{damtkicebuli}, we obtain an endofunctor $\xymatrix{\Ho(\Sp_G^O,_{(2)}) \ar[r] & \Ho(\Sp_G^O,_{(2)})}$ which also satisfies the properties (i), (ii) and (iii) above. The following theorem which is one of the central results of this paper, immediately implies Theorem \ref{damtkicebuli}:

\begin{theo} \label{reduction} Let $G$ be a finite group and $\xymatrix{ F \colon \Ho(\Sp_G^O,_{(2)}) \ar[r] & \Ho(\Sp_G^O,_{(2)})}$ an exact functor of triangulated categories that preserves arbitrary coproducts and such that 
$$F(\Sigma_{+}^{\infty} G/H) = \Sigma_{+}^{\infty} G/H, \;\; H \leq G,$$ 
and
$$F(g)=g, \;\; F(\res_{K}^H)= \res_{K}^H, \;\; F(\tr_{K}^H)=\tr_{K}^H, \;\; g \in G, \;\; K \leq H \leq G.$$
Then $F$ is an equivalence of categories. \end{theo}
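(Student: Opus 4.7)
The plan is to use the standard triangulated-category fact that an exact, coproduct-preserving endofunctor of a compactly generated triangulated category which sends a chosen set of compact generators to itself bijectively is an equivalence as soon as it is fully faithful on that set. As recalled in Subsection \ref{hosp}, the stable orbits $\{\Sigma_+^\infty G/H : H \leq G\}$ form a set of compact generators of $\Ho(\Sp_G^O)$, hence their images also generate $\Ho(\Sp_G^O,_{(2)})$ and remain compact. Since by hypothesis $F$ fixes each $\Sigma_+^\infty G/H$ on the nose, the task reduces to showing that $F$ induces isomorphisms on $[\Sigma_+^\infty G/H, \Sigma_+^\infty G/K]_*^G$ for all subgroups $H, K$. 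The usual localizing-subcategory argument (fix one variable, note that the collection of objects on which $F$ is an iso in the other variable is localizing, and conclude it is everything because it contains the generators) then upgrades this to full faithfulness on all of $\Ho(\Sp_G^O,_{(2)})$, after which essential surjectivity is automatic: the essential image is a localizing subcategory containing the generators.

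The first structural reduction, carried out in Section \ref{redtrres}, further shrinks the groups one has to check: it suffices to verify that $F$ is an isomorphism on each diagonal group $[\Sigma_+^\infty G/H, \Sigma_+^\infty G/H]_*^G$. The reason is that the stable Burnside category is generated (as a preadditive category) by restrictions, conjugations and transfers, all of which $F$ fixes by hypothesis. Using the Wirthmüller isomorphism together with the double coset formula, any morphism in the mixed group $[\Sigma_+^\infty G/H, \Sigma_+^\infty G/K]_*^G$ can be written as a finite sum of composites of the form $\tr \circ c_g \circ \res$ factoring through orbits $\Sigma_+^\infty G/L$ with $L$ subconjugate to both $H$ and $K$. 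Once $F$ is an isomorphism on each diagonal group, this decomposition forces it to be an isomorphism on the mixed groups as well.

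What remains, then, is the isomorphism on each $[\Sigma_+^\infty G/H, \Sigma_+^\infty G/H]_*^G$, proved in Section \ref{prfmain} by induction on $|H|$. The base case $H = 1$ (free $G$-spectra) is handled in Section \ref{free}; this is the only place where $2$-locality enters, via the nonvanishing of the self map $2 \cdot \mathrm{id}$ on the mod $2$ Moore spectrum, which allows an equivariant generalization of Schwede's argument from \cite{Sch01}. For the induction step one exploits the isotropy separation cofiber sequence associated to the family of proper subgroups of $H$, together with the geometric fixed point functor $\Phi^H$, producing the short exact sequence of Section \ref{geofixsec} that relates $[\Sigma_+^\infty G/H, \Sigma_+^\infty G/H]_*^G$ to a piece controlled by proper subgroups and a geometric fixed-point piece. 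The inductive hypothesis handles the former and the tom Dieck splitting identifies the latter, so the five lemma finishes the step.

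The main obstacle I expect is the base case. Reducing to free $G$-spectra still leaves a substantial rigidity problem, since one must transplant Schwede's $2$-primary non-equivariant argument into a genuine-equivariant setting; the key technical input is the $G$-equivariant behavior of the mod $2$ Moore spectrum together with the absence of an $A_2$-structure on it. By contrast, the inductive step is largely a structural consequence of the tom Dieck splitting, the isotropy separation sequence, and standard triangulated bookkeeping, and the categorical reduction at the beginning is essentially formal once one invokes the compact-generation and Burnside-generation statements from Subsection \ref{hosp}.
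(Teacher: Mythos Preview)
Your overall architecture matches the paper closely: reduce to full faithfulness on the generators, then to the diagonal groups, then induct on $|H|$ with the free case as base and the isotropy separation/geometric fixed point short exact sequence for the step. Two points, however, deserve correction.

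First, your mechanism for the reduction to diagonals is not what the paper does and, as stated, is incomplete. You claim that every class in $[\Sigma_+^\infty G/H, \Sigma_+^\infty G/K]_*^G$ is a finite sum of composites $\tr \circ c_g \circ \res$; even granting this, it would only yield \emph{surjectivity} of $F$ on the mixed groups, since $F$ fixes those particular maps. The paper instead builds, via Spanier--Whitehead self-duality of $\Sigma_+^\infty G/L$ and the double coset formula, a \emph{split monomorphism}
\[
(\kappa_g)_{[g]} \colon [\Sigma_+^\infty G/H, \Sigma_+^\infty G/K]_*^G \hookrightarrow \bigoplus_{[g]\in K\backslash G/H} [\Sigma_+^\infty G/({}^gH\cap K), \Sigma_+^\infty G/({}^gH\cap K)]_*^G
\]
which commutes with $F$ (because each $\kappa_g$ is built from $g^*$, $(\tr)_*$, $(\res)^*$). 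A diagram chase then gives injectivity of $F$ on the mixed group, and one concludes by the equal finite cardinality of source and target in positive degrees. This is Proposition~\ref{splinj}; the splitting is produced by an explicit $G$-set map, not by a factorization of arbitrary classes.

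Second, in the inductive step the geometric fixed point piece is not ``identified by the tom Dieck splitting.'' The tom Dieck splitting enters earlier (Proposition~\ref{tomexact1}) to establish the short exact sequence, but to show $F$ is an isomorphism on the quotient $[\Sigma_+^\infty W(H), \Sigma_+^\infty W(H)]_*^{W(H)}$ the paper forms the auxiliary functor $\hat F = \Phi^H \circ \Res^G_{N(H)} \circ F \circ (G\ltimes_{N(H)}-) \circ \varepsilon^*$ on $\Ho(\Sp^O_{W(H)})$, checks it satisfies the hypotheses of Proposition~\ref{gfree} for the group $W(H)$, and thereby \emph{re-applies the free-spectra base case} to the Weyl group. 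One also needs Lemma~\ref{presclass} to identify $F(G\ltimes_H \Sigma_+^\infty E\P[H])$ compatibly with the projection, so that the three-row diagram commutes; only then does the diagram chase (your ``five lemma'') go through.
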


The proof of this theorem will be completed at the very end of this paper. In this section we will concentrate on the proof of Proposition \ref{spectrific} and on the $p$-localization of the stable model category $\Sp_G^O$.

\;

\;

Before starting the preparation for the proof of Proposition \ref{spectrific}, let us outline the plan that will lead to the proof of Proposition \ref{spectrific}. We first define the category $\Sp^O_G(\C)$ of orthogonal $G$-spectra internal to  a $G \Top_*$-model category $\C$ and discuss its categorical properties. Next, for any cofibrantly generated $G \Top_*$-model category $\C$ we construct the level model structure on $\Sp^O_G(\C)$. Finally, using the same strategy as in \cite{SS03}, we establish the $G$-equivariant stable model structure on $\Sp^O_G(\C)$ for any proper, cofibrantly generated, $G \Top_*$-model category $\C$ that is stable as an ordinary model category.

\subsection{Orthogonal $G$-spectra in equivariant model categories} \label{orthmodI}

Recall from Subsection \ref{difdef} the $G \Top_*$-category $O_G$. The objects of $O_G$ are finite dimensional orthogonal $G$-representations. For any finite dimensional orthogonal $G$-representations $V$ and $W$, the pointed morphism $G$-space from $V$ to $W$ is the Thom space $O_G(V,W)$.
Recall also that the category $\Sp_G^O$ is equivalent to the category of $O_G$-spaces (which is the category of $G \Top_*$-enriched functors from $O_G$ to $G \Top_*$). 
  
Now suppose that $\C$ is a $G \Top_*$-model category (in particular, $\C$ is pointed). We remind the reader that this means that we have tensors $K \wedge X$, cotensors $X^K$ and pointed mapping $G$-spaces $\Map(X,Y)$ for $K \in G \Top_*$ and $X, Y \in \C$, which are related by adjunctions and satisfy certain properties (Definition \ref{Vmod}). In particular, the pushout-product axiom holds: Let $ i \colon K \lra L$ be a cofibration in the model category $G \Top_*$ and $ j \colon A \lra B$ a cofibration in the model category $\C$. Then the induced map
$$i \boxempty j \colon K \wedge B \bigvee_{K \wedge A} L \wedge A \lra L \wedge B$$ 
is a cofibration in $\C$. Furthermore, if either $i$ or $j$ is an acyclic cofibration, then so is $i \boxempty j$.

\begin{defi} \label{Gmodcat} Let $\C$ be a $G \Top_*$-model category. An \emph{orthogonal $G$-spectrum} in $\C$ is a $G \Top_*$-enriched functor (\cite[1.2]{Kel05}) from the category $O_G$ to $\C$. \end{defi}
 
The category of orthogonal $G$-spectra in $\C$ will be denoted by $\Sp^O_G(\C)$. Note that by \cite[II.4.3]{MM02} (see also Subsection \ref{difdef}), the category $\Sp^O_G(G \Top_*)$ is equivalent to $\Sp^O_G$. Next, since $\C$ is complete and cocomplete, so is the category $\Sp^O_G(\C)$ (see \cite[3.3]{Kel05}) and limits and colimits are constructed levelwise.

\begin{remk} \label{setth} The category $O _G$ is skeletally small. We can fix once and for all a small skeleton of $O _G$. In particular, when talking about ends and coends over $O_G$ and using the notations $\int_{V \in O_G}$ and $\int^{V \in O_G}$, we will always implicitly mean that the indexing category is the chosen small skeleton of $O _G$. \end{remk}

\begin{prop} \label{sp_gmodcat} Let $\C$ be a $G \Top_*$-model category. The category $\Sp^O_G(\C)$ is enriched, tensored and cotensored over the symmetric monoidal category $\Sp^O_G$ of equivariant orthogonal $G$-spectra. \end{prop}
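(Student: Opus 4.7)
The plan is to transport the Day-convolution machinery used to make $\Sp^O_G = \Sp^O_G(G\Top_*)$ closed symmetric monoidal onto $\Sp^O_G(\C)$. The starting observation is that $O_G$ is itself a symmetric monoidal $G\Top_*$-enriched category, with tensor product $\oplus$ (direct sum of representations), unit the zero representation, and structure maps on morphism spaces induced by direct-summing linear isometric embeddings. Since $\Sp^O_G(\C)$ is defined as the category of $G\Top_*$-enriched functors $O_G \to \C$, everything will be built by coends and ends indexed on the small skeleton of $O_G$ fixed in Remark \ref{setth}; these exist in $\C$ because $\C$ is bicomplete.

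Concretely, for $X, Y \in \Sp^O_G(\C)$ and $K \in \Sp^O_G$ I would define
\[
(X \wedge K)(V) \;=\; \int^{V_1, V_2 \in O_G} O_G(V_1 \oplus V_2, V) \wedge X(V_1) \wedge K(V_2),
\]
\[
(X^K)(V) \;=\; \int_{W \in O_G} X(V \oplus W)^{K(W)},
\]
\[
\Hom_{\Sp^O_G(\C)}(X, Y)(V) \;=\; \int_{W \in O_G} \Map_{\C}\bigl(X(W),\, Y(V \oplus W)\bigr),
\]
where the outer $\wedge$ and $(-)^{(-)}$ on the right use the $G\Top_*$-tensoring and cotensoring of $\C$, and $\Map_{\C}$ is the $G\Top_*$-valued enriched hom. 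One first checks that these assemble into $G\Top_*$-enriched functors $O_G \to \C$ (or $O_G \to G\Top_*$) and are functorial in $X$, $Y$, $K$; the orthogonal-spectrum structure maps come from the canonical maps $O_G(V_1 \oplus V_2, V) \wedge O_G(V, V') \to O_G(V_1 \oplus V_2, V')$ and from the direct-sum monoidal structure of $O_G$.

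The heart of the argument is to verify the $\Sp^O_G$-enriched adjunction chain
\[
\Sp^O_G(\C)(X \wedge K,\, Y) \;\cong\; \Sp^O_G(\C)(X,\, Y^K) \;\cong\; \Sp^O_G(K,\, \Hom_{\Sp^O_G(\C)}(X, Y)),
\]
together with the associativity isomorphism $(X \wedge K) \wedge L \cong X \wedge (K \wedge L)$ and the unit isomorphism $X \wedge F_0 S^0 \cong X$. Each of these is a formal coend calculation that interchanges the coend defining $\wedge$ with the end defining $\Map_\C$, applies Fubini for ends/coends, and collapses one variable by the enriched co-Yoneda lemma using the Yoneda-type identity $\int^{V} O_G(V, -) \wedge Z(V) \cong Z$ valid for any enriched functor $Z$ on $O_G$. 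I expect the main obstacle to be bookkeeping rather than conceptual: one must keep careful track of which $\wedge$ uses the $G\Top_*$-tensoring of $\C$ versus the smash in $G\Top_*$, and verify that the pushout-product hypothesis on $\C$ guarantees the relevant maps out of coends over $O_G$ are well-defined and natural; once one has the tensor-cotensor adjunction, the coherent action of $\Sp^O_G$ (i.e.\ that the resulting enrichment is monoidal over the Day-convolution structure on $\Sp^O_G$) is formal from the fact that $O_G$ with $\oplus$ is symmetric monoidal.
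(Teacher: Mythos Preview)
Your proposal is correct and follows essentially the same approach as the paper: define the tensor, cotensor, and hom by the obvious Day-convolution coends and ends, then derive the adjunctions and coherence from Fubini and the enriched (co-)Yoneda lemma. One small correction: the pushout-product axiom on $\C$ plays no role in establishing that these coends and ends are well-defined and natural---that follows purely from $\C$ being bicomplete and tensored/cotensored over $G\Top_*$; the pushout-product axiom is a homotopical condition used only later for the model structure.
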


\begin{proof} Let $K \in \Sp^O_G$ and $X \in \Sp^O_G(\C)$. We define $ K \wedge X \in \Sp^O_G(\C)$ by the following $G \Top_*$-enriched coend
$$K \wedge X = \int^{V, W \in O_G} O_G(V \oplus W, -) \wedge K(V) \wedge X(W).$$
This product is unital and coherently associative. The proof uses the enriched Yoneda Lemma \cite[Section 3.10, (3.71)]{Kel05} and the Fubini theorem \cite[Section 3.10, (3.63)]{Kel05}. We do not provide the details here as they are standard and well-known. Next, one defines cotensors by a $G \Top_*$-enriched end
$$X^K (V) = \int_{W \in O_G}  X(W \oplus V)^{K(W)}.$$
Finally, for any $X, Y \in \Sp^O_G(\C)$, one can define Hom-$G$-spectra by a $G \Top_*$-enriched end
$$\Hom(X, Y)(V) = \int_{W \in O_G} \Map(X(W), Y(W\oplus V)).$$
It is an immediate consequence of \cite[Section 3.10, (3.71)]{Kel05} that these functors satisfy all the necessary adjointness properties:
\[\Hom(K \wedge X, Y) \cong \Hom(X, Y^K) \cong \Hom(K, \Hom(X,Y)). \qedhere \] \end{proof}

\subsection{The level model structure on $\Sp^O_G(\C)$}  \label{levelsect}

In order to establish the stable model structure on $\Sp^O_G(\C)$, one needs the additional assumption that $\C$ is a cofibrantly generated $G \Top_*$-model category. 

\begin{defi} \label{Gcofgen} Let $\C$ be a $G \Top_*$-model category. We say that $\C$ is a \emph{cofibrantly generated $G \Top_*$-model category}, if there are sets $I$ and $J$ of maps in $\C$ such that the following hold:
 
{\rm (i)} Let $A$ be the domain or codomain of a morphism from $I$. Then for any subgroup $H \leq G$ and any $n \geq 0$, the object
$$(G/H \times D^n)_+ \wedge A$$
is small relative to $I$-cell (and hence relative to $I$-cof by \cite[2.1.16]{Hov99}).

{\rm (ii)} Domains of morphisms in $J$ are small relative to $J$-cell and $I$-cell.

{\rm (iii)} The class of fibrations is $J$-inj.

{\rm (iv)} The class of acyclic fibrations is $I$-inj.

\end{defi}

The model category $G \Top_*$ is a cofibrantly generated $G \Top_*$-model category \cite[Theorem III.1.8]{MM02}. Other important examples of cofibrantly generated $G \Top_*$-model categories are the model category $\Sp_G^O$ of $G$-equivariant orthogonal spectra \cite[Theorem III.4.2]{MM02}, the model category of $G$-equivariant orthogonal spectra equipped with the $\S$-model structure \cite[Theorem 2.3.27]{Sto11}, the model category of $S_G$-modules \cite[Theorem IV.2.8]{MM02}, the model category of $G$-equivariant continuous functors \cite[Theorem 1.3]{Blum05} and the model categories of $G$-equivariant topological symmetric spectra (\cite{Man04}, \cite{Haus}).

\begin{remk} If a $G \Top_*$-model category $\C$ is cofibrantly generated as an ordinary model category (Definition \ref{cofgen}), then it doesn't necessarily follow that $\C$ is a cofibrantly generated $G \Top_*$-model category in the sense of Definition \ref{Gcofgen}. 

The conditions in  Definition \ref{Gcofgen} are essentially needed for the proof of Proposition \ref{stmod}. In fact, all the claims in this section that come before Proposition \ref{stmod} do not really use that $\C$ satisfies all the conditions of Definition \ref{Gcofgen}. They still hold if we only assume that  $\C$ is a $G \Top_*$-model category and cofibrantly generated as an underlying model category. However, for the rest of the paper, we decided to concentrate only on cofibrantly generated $G \Top_*$-model categories in the sense of Definition \ref{Gcofgen} since more general model categories are irrelevant here. \end{remk}

Now suppose that $\C$ is a cofibrantly generated $G \Top_*$-model category with $I$ and $J$ generating cofibrations and acyclic cofibrations

\begin{defi} \label{levstr} Let $ f \colon X \lra Y$ be a morphism in $\Sp^O_G(\C)$. The map $f$ is called a \emph{level equivalence} if $f(V) \colon X(V) \lra Y(V)$ is a weak equivalence in $\C$ for any $V \in O_G$. It is called a \emph{level fibration} if $f(V) \colon X(V) \lra Y(V)$ is a fibration in $\C$ for any $V \in O_G$. A map in $\Sp^O_G(\C)$ is called a \emph{cofibration} if it has the left lifting property with respect to all maps that are level fibrations and level equivalences (i.e., level acyclic fibrations). \end{defi}

The level model structure on $\Sp^O_G(\C)$ which we will construct now is a cofibrantly generated model structure. Before stating the main proposition of this subsection we would like to introduce the set of morphisms that will serve as generators of (acyclic) cofibrations in the level model structure on $\Sp^O_G(\C)$.

The evaluation functor $\Ev_V \colon \Sp^O_G(\C) \lra \C$, given by $X \mapsto X(V)$, has a left adjoint $G \Top_*$-functor
$$F_V \colon \C \lra \Sp^O_G(\C)$$
which is defined by 
$$F_VA = O_G(V, -) \wedge A.$$
For any finite dimensional orthogonal $G$-representation $V$, consider the following sets of morphisms
$$F_VI = \{ F_Vi \; | \; i \in I\} \;\;\;\; \text{and} \;\;\;\; F_VJ = \{ F_Vj \; | \; j \in J\}.$$
Next, fix (once and for all) a small skeleton $\sk O_G$ of the category $O_G$. We finally define
$$FI = \bigcup_{V \in \sk O_G} F_VI \;\;\;\; \text{and} \;\;\; FJ= \bigcup_{V \in \sk O_G} F_VJ.$$

The following proposition is an equivariant analog of \cite[Proposition 3.7.2]{SS03} (cf. \cite[Theorem 2.12]{MG11}).

\begin{prop} \label{levmod} Suppose $\C$ is a cofibrantly generated $G \Top_*$-model category. Then the category $\Sp^O_G(\C)$ of orthogonal $G$-spectra in $\C$ together with the level equivalences, cofibrations and level fibrations described in Definition \ref{levstr} forms a cofibrantly generated model category. The set $FI$ generates cofibrations and the set $FJ$ generates acyclic cofibrations. \end{prop}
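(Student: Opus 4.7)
The plan is to apply Hovey's recognition theorem (Proposition \ref{hovprop}) with $\W$ the class of level equivalences and with $FI$, $FJ$ as the candidate generating sets of cofibrations and acyclic cofibrations. Two-out-of-three and closure under retracts for $\W$ are checked levelwise and so are inherited directly from the corresponding properties of weak equivalences in $\C$. The rest of the argument is an equivariant adaptation of \cite[Proposition~3.7.2]{SS03}; the new inputs are the $G\Top_*$-enrichment and the cell structure of the indexing category $O_G$.

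The workhorse is the $G\Top_*$-adjunction $F_V \dashv \Ev_V$. Because limits and colimits in $\Sp^O_G(\C)$, as a $G\Top_*$-enriched functor category, are formed pointwise, $\Ev_V$ preserves all colimits, and the adjunction identifies $FI$-inj with the class of $f$ such that each $\Ev_V f$ lies in $I$-inj, and likewise $FJ$-inj with the class of $f$ such that each $\Ev_V f$ lies in $J$-inj. Combined with clauses (iii) and (iv) of Definition \ref{Gcofgen}, this shows $FI$-inj is exactly the class of level acyclic fibrations and $FJ$-inj is the class of level fibrations, so condition (iv) of Proposition \ref{hovprop} is immediate: $FI$-inj $=\W\cap FJ$-inj. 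For the smallness condition (ii), fix a $G$-CW decomposition of each Thom space $O_G(V',W)$ by orbit cells $(G/H\times D^n)_+$. Then $\Ev_W(F_{V'} i') = O_G(V',W)\wedge i'$ is built as a transfinite cell complex of the maps $(G/H\times D^n)_+\wedge i'$, so each transition in an $FI$-cell sequence, after applying $\Ev_W$, lies in the closure of these maps under pushout and transfinite composition. Definition \ref{Gcofgen}(i) supplies the required smallness of the relevant objects, and Definition \ref{Gcofgen}(ii) handles $FJ$ analogously.

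It remains to verify condition (iii) of Proposition \ref{hovprop}, namely $FJ$-cell $\subset \W\cap FI$-cof. Inclusion in $FI$-cof is formal: for $j\in J$, the adjunction combined with the identification of $FI$-inj as level acyclic fibrations shows $F_V j \in FI$-cof, and $FI$-cof is closed under pushout and transfinite composition. For the level equivalence part, evaluated at $W$ the map $F_V j$ is $O_G(V,W)\wedge j$. I claim that $O_G(V,W)$ is cofibrant as a pointed $G$-space: it is the Thom space of a $G$-equivariant vector bundle over the $G$-CW complex $\LL(V,W)$ and inherits a $G$-CW structure built from orbit cells. Applying the pushout-product axiom of the $G\Top_*$-enrichment of $\C$ to the cofibration $\ast \to O_G(V,W)$ and the acyclic cofibration $j$ then shows each $O_G(V,W)\wedge j$ is an acyclic cofibration in $\C$, so $F_V j$ is a levelwise acyclic cofibration. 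Since acyclic cofibrations in $\C$ are closed under pushout and transfinite composition, every map in $FJ$-cell is a level equivalence, as required.

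The main obstacle I anticipate is precisely the cofibrancy of each Thom space $O_G(V,W)$ in $G\Top_*$: the remainder of the argument is standard enriched-adjunction bookkeeping, but the level-equivalence half of condition (iii) depends entirely on this cofibrancy, which in turn rests on an explicit $G$-CW decomposition of the Thom space built from a $G$-CW structure on the isometric-embeddings space $\LL(V,W)$. Once this cofibrancy (which is essentially already present in \cite{MM02} for the case $\C = G\Top_*$) is in hand, both the level-equivalence verification in (iii) and the transfer of smallness in (ii) go through uniformly across all $G\Top_*$-model categories.
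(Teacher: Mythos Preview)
Your proof is correct and follows essentially the same route as the paper: both reduce to \cite[Proposition~3.7.2]{SS03} via Proposition~\ref{hovprop}, and both hinge on the cofibrancy of $O_G(V,W)$ in $G\Top_*$ to push condition~(iii) through via the pushout-product axiom. The one point where you are vaguer than the paper is the justification of that cofibrancy: you assert that $\LL(V,W)$ is a $G$-CW complex and that the Thom space inherits one, but you do not cite a source. The paper invokes Illman's equivariant triangulation theorem \cite[Theorem~7.1, Corollary~7.2]{Ill} here, which provides a $G$-CW structure on any smooth $G$-manifold (such as the Stiefel manifold $\LL(V,W)$) and hence on the Thom complex $O_G(V,W)$; you should make this citation explicit, since the $G$-CW structure is not otherwise obvious for a general finite group action.
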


\begin{proof} Illman's results \cite[Theorem 7.1, Corollary 7.2]{Ill} imply that for any finite dimensional orthogonal $G$-representations $V$ and $W$ that the space $O_G(V, W)$ is a $G$-CW complex. Since for any object $A$ in $\C$,
$$F_VA(W)= O_G(V, W) \wedge A$$
and the evaluation functors preserve colimits, it follows that any morphism in $FI$-cell is a levelwise cofibration and any morphism in $FJ$-cell is a levelwise acyclic cofibration. The rest of the proof is a verbatim translation of the proof of \cite[Proposition 3.7.2]{SS03} to our case and we don't provide the details. \end{proof}

\subsection{The stable model structure on $\Sp^O_G(\C)$}

This subsection establishes the stable model structure on $\Sp^O_G(\C)$. For this one needs more assumptions than in Proposition \ref{levmod}. More precisely, we have to assume that the cofibrantly generated $G \Top_*$-model category $\C$ is proper and stable as an ordinary model category. The strategy is to follow the arguments given in \cite[3.8]{SS03}.

Let $W$ be a finite dimensional orthogonal $G$-representation and
$$\lambda_W=\lambda_{0, W} \colon F_WS^W \lra F_0S^0=\S$$ 
denote the stable equivalence of $G$-equivariant orthogonal spectra that is adjoint to the identity map
$$\id \colon S^W \lra \Ev_W(\S) = S^W$$
(see \cite[III.4.3, III.4.5]{MM02} or Subsection \ref{Gorthostab}). 

\begin{defi} \label{omega} Let $\C$ be a $G \Top_*$-model category. An object $Z$ of $\Sp^O_G(\C)$ is called an $\Omega$-spectrum if it is level fibrant and for any finite dimensional orthogonal $G$-representation $W$, the induced map
$$\lambda_W^* \colon Z \cong Z^{F_0S^0} \lra Z^{F_WS^W}$$
is a level equivalence.
 
\end{defi}

Since $Z^{F_WS^W} \cong Z(W \oplus -)^{S^W},$ this definition recovers the definition of a $G$-$\Omega$-spectrum in the sense of \cite[Definition III.3.1]{MM02} when $\C = G \Top_*$ (see also Definition \ref{Gomega}).

Now suppose again that $\C$ is a cofibrantly generated $G \Top_*$-model category. By Proposition \ref{levmod}, the level model structure on $\Sp^O_G(\C)$ is cofibrantly generated. Hence we can choose (and fix once and for all) a cofibrant replacement functor 
$$(-)^c \colon \Sp^O_G(\C) \lra \Sp^O_G(\C).$$
\begin{defi} \label{stequi} A morphism $f \colon A \lra B$ in $\Sp^O_G(\C)$ is a stable equivalence if for any $\Omega$-spectrum $Z$, the map
$$\Hom(f^c, Z) \colon \Hom(B^c, Z) \lra \Hom(A^c, Z)$$
is a level equivalence of $G$-equivariant orthogonal spectra. \end{defi}

The following proposition is an equivariant analog of \cite[Proposition 3.8.5]{SS03}. Again we don't provide details here as the proof is completely analogous to the nonequivariant counterpart from \cite{SS03}.  

\begin{prop} \label{stablespec} Let $\C$ be a left proper and cofibrantly generated $G \Top_*$-model category. Suppose that $i \colon K \lra L$ is a cofibration in $\Sp_G^O$ and $j \colon A \lra B$ a cofibration in $\Sp^O_G(\C)$. Then the pushout-product
$$i \boxempty j \colon K \wedge B \bigvee_{K \wedge A} L \wedge A \lra L \wedge B$$
is a cofibration in $\Sp^O_G(\C)$. The map $i \boxempty j$ is also a stable equivalence if in addition $i$ or $j$  is a stable equivalence. \end{prop}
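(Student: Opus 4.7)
The plan is to mirror the strategy of \cite[Proposition 3.8.5]{SS03} in the equivariant setting, treating the cofibration statement and the stable equivalence statement separately. For the cofibration part I would first reduce to the case of generating cofibrations: both the pushout-product construction and the class of cofibrations are preserved by coproducts, pushouts, transfinite composition and retracts in each variable separately, so it suffices to verify the claim when $i = F_V i_0$ with $i_0$ a generating cofibration of $G\Top_*$ and $j = F_W j_0$ with $j_0 \in I$. A direct computation from the coend formula in Proposition \ref{sp_gmodcat}, combined with the enriched Yoneda lemma, gives a natural isomorphism $F_V K \wedge F_W A \cong F_{V \oplus W}(K \wedge A)$ for $K \in G\Top_*$ and $A \in \C$, and hence
$$F_V i_0 \;\boxempty\; F_W j_0 \;\cong\; F_{V \oplus W}(i_0 \boxempty j_0).$$
The pushout-product $i_0 \boxempty j_0$ is a cofibration in $\C$ by the $G\Top_*$-pushout-product axiom, and $F_{V\oplus W}$ is left Quillen for the level model structure of Proposition \ref{levmod}, so the result is a cofibration of $\Sp_G^O(\C)$.

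For the stable equivalence statement I would invoke Definition \ref{stequi} and observe that, using functorial cofibrant replacements together with left properness and Ken Brown's lemma, one may assume the relevant domains and codomains are already cofibrant. It then suffices to prove that $\Hom(i \boxempty j, Z)$ is a level equivalence in $\Sp_G^O$ for every $\Omega$-spectrum $Z$ in $\Sp_G^O(\C)$ whenever $i$ or $j$ is a stable equivalence. The two-variable adjunction from Proposition \ref{sp_gmodcat} rewrites $\Hom(i \boxempty j, Z)$ as the pullback-corner associated to the maps $\Hom(i, \Hom(j, Z))$ in $\Sp_G^O$, so the question becomes a Hom-pushout-product assertion inside the symmetric monoidal model category $\Sp_G^O$ (Proposition \ref{Gorthomon}). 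When $j$ is a trivial cofibration, reducing to the generating set $FJ$ and working levelwise shows that $\Hom(j, Z)$ is a level acyclic fibration between $\Omega$-spectra, and the pushout-product axiom for $\Sp_G^O$ then gives a level equivalence.

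The main obstacle will be the remaining case, in which $i$ is a trivial cofibration in $\Sp_G^O$ while $j$ is merely a cofibration in $\Sp_G^O(\C)$. The generating trivial cofibrations of $\Sp_G^O$ include the maps $i' \boxempty \kappa_{V,W} \in K^G$, which are genuinely stable equivalences rather than level equivalences, detected only against $\Omega$-spectra via the shift maps $\lambda_{V,W}$. To handle them one must show that $\Hom(j, Z)$ is itself an $\Omega$-spectrum in $\Sp_G^O$, so that $\lambda_{V,W}^{\ast}$ acts as a level equivalence on it, and then control the mapping-cylinder factorizations defining $\kappa_{V,W}$. This is precisely the step that exploits the full strength of the hypotheses: stability of $\C$ as an underlying model category supplies the Quillen equivalences $S^V \wedge - \dashv \Omega^V$ needed to make $\Hom(j, Z)$ into an $\Omega$-spectrum, while left properness ensures that the strict $\Hom$ continues to represent the derived mapping object on the cofibrant inputs produced by the pushout-corner construction.
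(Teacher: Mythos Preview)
Your overall strategy---reduce the cofibration claim to generating cofibrations via the isomorphism $F_Vi_0 \boxempty F_Wj_0 \cong F_{V\oplus W}(i_0 \boxempty j_0)$, and handle the stable-equivalence claim by rewriting $\Hom(i \boxempty j, Z)$ as the pullback-corner $\Hom_\square(i, \Hom(j,Z))$ in $\Sp_G^O$---is exactly the route of \cite[3.8.5]{SS03} that the paper invokes. Two points in your stable-equivalence discussion need correction, however.

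First, when $j$ is a cofibration and a stable equivalence you cannot ``reduce to the generating set $FJ$''. The set $FJ$ generates the \emph{level} acyclic cofibrations, and at this point no stable model structure on $\Sp^O_G(\C)$ exists yet; a stable equivalence in the sense of Definition~\ref{stequi} need not be a retract of an $FJ$-cell map. The argument instead uses the definition directly: for cofibrant $A,B$ and any $\Omega$-spectrum $Z$, the map $\Hom(j,Z)\colon \Hom(B,Z)\to \Hom(A,Z)$ is a level equivalence by Definition~\ref{stequi} and a level fibration by the (dual of the) level pushout-product axiom; hence it is a level acyclic fibration in $\Sp_G^O$, and then $\Hom_\square(i,\Hom(j,Z))$ is a level acyclic fibration by Proposition~\ref{Gorthomon}.

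Second, and more seriously, in the case where $i$ is a stable acyclic cofibration you invoke ``stability of $\C$ as an underlying model category'' and the Quillen equivalences $S^V\wedge - \dashv \Omega^V$. Neither is a hypothesis of the proposition, which assumes only that $\C$ is left proper and a cofibrantly generated $G\Top_*$-model category. Fortunately neither is needed. To see that $\Hom(B,Z)$ is a $G$-$\Omega$-spectrum in $\Sp_G^O$ one uses only that $Z$ is an $\Omega$-spectrum in $\Sp_G^O(\C)$ and $B$ is cofibrant: the adjunction of Proposition~\ref{sp_gmodcat} gives $\Hom(B,Z)^{F_WS^W}\cong \Hom(B,Z^{F_WS^W})$, and since $Z\to Z^{F_WS^W}$ is a level equivalence between level fibrant objects and $\Hom(B,-)$ is right Quillen for the level structure, the induced map is a level equivalence. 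Thus $\Hom(j,Z)$ is a level fibration between $G$-$\Omega$-spectra, hence a stable fibration in $\Sp_G^O$, and Proposition~\ref{Gorthomon} applied to the stable acyclic cofibration $i$ finishes the argument. Left properness enters only in the preliminary reduction to cofibrant sources and targets, as you say.
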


Next, we introduce the set $J_{\st}$ which will serve as a set of generating acyclic cofibrations for the stable model structure on $\Sp^O_G(\C)$ that we are going to establish. Let $W$ be a finite dimensional orthogonal $G$-representation. Consider the levelwise mapping cylinder $M\lambda_W$ of the map $\lambda_W \colon F_WS^W \lra F_0S^0$. The map $\lambda_W$ factors as a composite
$$\xymatrix{F_WS^W \ar[r]^{\kappa_W} & M\lambda_W \ar[r]^{r_W} & F_0S^0,}$$
where $r_W$ is a $G$-equivariant homotopy equivalence and $\kappa_W$ a cofibration and a stable equivalence \cite[III.4.5-4.6]{MM02} (see also Subsection \ref{Gorthostab}). Define
$$K = \{\kappa_W \boxempty F_Vi \; | \; i \in I, \;V , W \in \sk O_G\},$$
where $\boxempty$ is the pushout-product, $I$ is the fixed set of generating cofibrations in $\C$ (see Definition \ref{Gcofgen}) and $\sk O_G$ the fixed small skeleton of $O_G$ as in Subsection \ref{levelsect}. Next, recall from Proposition \ref{levmod} that we have sets $FI$ and $FJ$, generating cofibrations and acyclic cofibrations, respectively, in the level model structure. Define  
$$J_{\st}=FJ \cup K.$$ 
For the convenience we will denote the set $FI$ by $I_{\st}$. The cofibrations in the stable model structure on $\Sp^O_G(\C)$ will be the same as in the level model structure and thus $I_{\st}=FI$ will serve as a set of generating cofibrations for the stable model structure.

The following three propositions are again equivariant analogs of \cite[3.8.6-8]{SS03}. Once again we omit the proofs as they are very similar to those in \cite{SS03}.  

\begin{prop} \label{j-cof->st} Let $\C$ be a left proper and cofibrantly generated $G \Top_*$-model category. Then any morphism in $J_{\st}$-cell is an $I_{\st}$-cofibration (i.e., a cofibration) and a stable equivalence. \end{prop}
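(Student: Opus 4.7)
The plan is to split the statement into two pieces. First, I would verify that every element of $J_{\st}$ is both an $I_{\st}$-cofibration and a stable equivalence. Second, I would show that these two properties propagate through the coproducts, pushouts, and transfinite compositions that build up $J_{\st}$-cell. The $I_{\st}$-cofibration half of the second step is immediate: $I_{\st}$-cof is defined by a left lifting property and is therefore automatically closed under these operations, so the real content concerns the stable equivalences.

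For the generator check, split $J_{\st} = FJ \cup K$. Elements of $FJ$ are generating acyclic cofibrations of the level model structure from Proposition \ref{levmod}, hence level equivalences, and a standard Ken Brown style argument shows that any level equivalence is a stable equivalence: cofibrant replacement preserves level equivalences, and $\Hom(-, Z)$ carries level equivalences between cofibrant objects to level equivalences of orthogonal $G$-spectra when $Z$ is an $\Omega$-spectrum. For elements of $K$, each map has the form $\kappa_W \boxempty F_Vi$, where $\kappa_W$ is a cofibration and stable equivalence in $\Sp_G^O$ (from the factorization of $\lambda_W$ recalled in the previous subsection) and $F_Vi$ is a cofibration in $\Sp^O_G(\C)$ by Proposition \ref{levmod}. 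Proposition \ref{stablespec} then yields at once the cofibration and the stable-equivalence property of the pushout-product.

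For the propagation step, I would follow the nonequivariant template of \cite[Section 3.8]{SS03}. Stable equivalences are detected, via Definition \ref{stequi}, by $\Hom(-, Z)$ being a level equivalence for every $\Omega$-spectrum $Z$, so closure under the cell-complex operations reduces to closure of level equivalences under the corresponding operations after applying $\Hom(-, Z) \circ (-)^c$. The main obstacle, and the place where the left properness hypothesis on $\C$ is essential, is the compatibility of cofibrant replacement $(-)^c$ with pushouts along cofibrations: one must replace a pushout in $\Sp^O_G(\C)$ by a level-equivalent pushout of cofibrant objects whose pushed-out map remains a cofibration, so that $\Hom(-, Z)$ converts it into a level pullback of level fibrations in $\Sp_G^O$ whose base change preserves the level equivalence property. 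Transfinite compositions are handled analogously by exploiting that level equivalences along cofibrations are stable under sequential colimits, and coproducts are then a special case. Once this setup is in place, the remaining arguments are straightforward equivariant adaptations of the corresponding lemmas of \cite{SS03}.
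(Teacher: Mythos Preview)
Your proposal is correct and follows essentially the same approach as the paper: the paper itself omits the proof entirely, stating only that it is completely analogous to the nonequivariant argument in \cite[3.8]{SS03}, and your sketch is precisely a faithful outline of that argument---check the generators via Proposition~\ref{stablespec} and the level model structure, then propagate through cell constructions using left properness to control cofibrant replacements of pushouts.
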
 

\begin{prop} \label{omega2} Let $\C$ be a cofibrantly generated $G \Top_*$-model category and $X$ an object of $\Sp^O_G(\C)$. Then the map $X \lra \ast$ is $J_{\st}$-injective if and only if $X$ is an $\Omega$-spectrum. \end{prop}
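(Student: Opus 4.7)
The plan is to unfold $J_{\st}$-injectivity via the $F_V \dashv \Ev_V$ adjunctions and the $\Sp^O_G$-enrichment of Proposition \ref{sp_gmodcat}, and then match the resulting conditions with the definition of an $\Omega$-spectrum. Since $J_{\st} = FJ \cup K$, injectivity splits into two pieces. The adjunctions $F_V \dashv \Ev_V$ together with the fact that $J$ generates the fibrations of $\C$ imply that $X \to \ast$ is $FJ$-injective if and only if every $X(V) \to \ast$ is a fibration in $\C$, that is, $X$ is level fibrant.

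For the $K$ part I would use the two-variable adjunction from the $\Sp^O_G$-enrichment: the RLP of $X \to \ast$ against $\kappa_W \boxempty F_V i$ is equivalent to the RLP of the cotensor map
\[
X^{\kappa_W} \colon X^{M\lambda_W} \lra X^{F_W S^W}
\]
against $F_V i$ in $\Sp^O_G(\C)$, which by $F_V \dashv \Ev_V$ reduces to $\Ev_V(X^{\kappa_W})$ being $I$-injective in $\C$, i.e., an acyclic fibration. Letting $V$ and $W$ vary, this says that $X$ is $K$-injective if and only if $X^{\kappa_W}$ is a level acyclic fibration in $\Sp^O_G(\C)$ for every $W \in \sk O_G$. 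Altogether, $J_{\st}$-injectivity of $X \to \ast$ amounts to $X$ being level fibrant together with each $X^{\kappa_W}$ being a level acyclic fibration.

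To identify these conditions with the $\Omega$-spectrum condition I would exploit the factorization $\lambda_W = r_W \circ \kappa_W$ from Subsection \ref{Gorthostab}. Because $r_W$ is a $G$-equivariant homotopy equivalence in $\Sp^O_G$ and cotensoring preserves $G \Top_*$-enriched homotopies, the induced map $X^{r_W} \colon X \cong X^{F_0 S^0} \to X^{M\lambda_W}$ is always a level equivalence in $\Sp^O_G(\C)$, with no hypothesis on $X$. Applying 2-out-of-3 to $X^{\lambda_W} = X^{\kappa_W} \circ X^{r_W}$ shows that $X^{\kappa_W}$ is a level equivalence if and only if $X^{\lambda_W}$ is. Thus if $X$ is $J_{\st}$-injective, then $X$ is level fibrant and $X^{\lambda_W}$ is a level equivalence, so $X$ is an $\Omega$-spectrum; conversely, if $X$ is an $\Omega$-spectrum, then $X^{\kappa_W}$ is a level equivalence.

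The remaining piece for the converse, and the main technical obstacle, is to show that $X^{\kappa_W}$ is actually a level \emph{fibration} when $X$ is level fibrant. By the same two-variable adjunction, this is equivalent to $\kappa_W \boxempty F_V j$ being a level acyclic cofibration in $\Sp^O_G(\C)$ for every $j \in J$ and every $V$. Since $\kappa_W$ is a cofibration in $\Sp^O_G$ (by the mapping cylinder factorization of \cite[III.4.6]{MM02}), each $\kappa_W(V')$ is a $G \Top_*$-cofibration between $G$-CW complexes, using Illman's triangulation \cite{Ill} as in the proof of Proposition \ref{levmod}. A levelwise pushout-product argument in the $G \Top_*$-model category $\C$, exactly parallel to \cite[3.8.7--3.8.8]{SS03}, then produces the required level acyclic cofibration and completes the proof.
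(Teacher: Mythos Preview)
Your argument is correct and follows the same route the paper defers to: the paper omits the proof and points to \cite[3.8.6--8]{SS03}, and your proposal reproduces exactly the SS03 strategy (split $J_{\st}=FJ\cup K$, handle $FJ$ by the $F_V\dashv\Ev_V$ adjunction, handle $K$ by the two-variable adjunction, and use the factorization $\lambda_W=r_W\kappa_W$ together with a pushout-product argument). One minor remark: for the ``remaining piece'' that $X^{\kappa_W}$ is a level fibration, the cleanest justification is the level-model analogue of Proposition~\ref{stablespec} (i.e.\ \cite[3.8.5]{SS03}) rather than \cite[3.8.7--3.8.8]{SS03}: reduce to generating maps, use $F_{V_1}(i_1)\boxempty F_{V_2}(j_2)\cong F_{V_1\oplus V_2}(i_1\boxempty j_2)$ and the $G\Top_*$-pushout-product axiom in $\C$, then pass to the general cofibration $\kappa_W$ by closure under cell attachments and retracts.
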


\begin{prop} \label{j-inj->lev} Let $\C$ be a right proper and cofibrantly generated $G \Top_*$-model category which is stable as an ordinary model category. Then a map in $\Sp^O_G(\C)$ is $J_{\st}$-injective and a stable equivalence if and only if it is a level acyclic fibration.  \end{prop}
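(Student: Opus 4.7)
The plan is to prove the two implications separately, adapting the strategy of the nonequivariant \cite[Proposition 3.8.8]{SS03} to the present setting.

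For the forward direction, suppose $p \colon X \to Y$ is a level acyclic fibration. The inclusion $FJ \subset J_{\st}$ and the $FJ$-injectivity of $p$ give half of $J_{\st}$-injectivity, and by Proposition \ref{stablespec} every element $\kappa_W \boxempty F_V i$ of $K$ is a cofibration in $\Sp^O_G(\C)$, so by Definition \ref{levstr} the map $p$ lifts against the remaining elements of $J_{\st}$ as well; hence $p$ is $J_{\st}$-injective. To see that $p$ is a stable equivalence, I would apply Ken Brown's lemma to the functor $Z \mapsto \Hom((-)^c, Z)$ for any $\Omega$-spectrum $Z$, which sends level equivalences between cofibrant objects to level equivalences of $G$-equivariant orthogonal spectra.

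For the reverse direction, assume $p$ is $J_{\st}$-injective and a stable equivalence. Since $FJ \subset J_{\st}$, $p$ is a level fibration. The pushout-product/pullback-cotensor adjunction translates lifting of $p$ against $K$ into the statement that, for each $W \in \sk O_G$, the pullback-corner
$$p^{\kappa_W} \colon X^{M\lambda_W} \lra X^{F_WS^W} \times_{Y^{F_WS^W}} Y^{M\lambda_W}$$
is a level acyclic fibration. The $G$-equivariant homotopy equivalence $r_W \colon M\lambda_W \to \S$ identifies the cotensors $(-)^{M\lambda_W}$ with the identity functor up to level equivalence on level-fibrant objects, so this reduces to the claim that the commutative square with vertical edges $p$ and $p^{F_WS^W}$ and horizontal edges $\lambda_W^* \colon X \to X^{F_WS^W}$, $\lambda_W^* \colon Y \to Y^{F_WS^W}$ is a levelwise homotopy pullback. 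Taking strict fibers of $p$, which coincide with homotopy fibers because $p$ is a level fibration and $\C$ is right proper, yields a level-fibrant $F \in \Sp^O_G(\C)$ for which $\lambda_W^* \colon F \to F^{F_WS^W}$ is a level equivalence for every $W$; thus $F$ is an $\Omega$-spectrum in the sense of Definition \ref{omega}.

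Next, the stable equivalence $p$ combined with the stability of $\C$ implies that $F$ is stably trivial, since in $\Ho(\Sp^O_G(\C))$ the fiber $F$ is a shift of the homotopy cofiber of $p$. A stably trivial $\Omega$-spectrum is then level-trivial: self-testing the stable equivalence $F \to \ast$ via Definition \ref{stequi} against the $\Omega$-spectrum $Z = F$, and unpacking the $\Omega$-spectrum condition, forces the identity of $F$ to be null at every level. Finally, level-triviality of the fiber, combined with right properness and the long exact sequence in the triangulated $\Ho(\C)$ arising from the fiber sequence $F \to X \to Y$, forces $p$ to be a level weak equivalence and therefore a level acyclic fibration. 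The main technical obstacle will be making the step \emph{``stably trivial $\Omega$-spectrum implies level-trivial''} rigorous in the abstract setting of $\C$, where we lack a direct presentation of stable homotopy groups and must work indirectly through Definition \ref{stequi}; a secondary subtlety is the systematic reliance on right properness, both to identify strict and homotopy fibers of $p$ and to transfer level-triviality of the fiber into a weak equivalence statement about $p$ itself.
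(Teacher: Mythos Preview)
Your approach matches that of \cite[Proposition~3.8.8]{SS03}, to which the paper defers without giving its own argument; the overall structure (fiber is an $\Omega$-spectrum, stably trivial by stability of the level model structure, hence level trivial by self-testing against $Z=F$, hence $p$ is a level equivalence) and the subtleties you flag are exactly right. One simplification worth noting: rather than the homotopy-pullback computation, observe that $F \to \ast$ is a base change of the $J_{\st}$-injective map $p$ and hence itself $J_{\st}$-injective, so Proposition~\ref{omega2} immediately yields that $F$ is an $\Omega$-spectrum.
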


Finally, we are ready to establish the stable model structure. The following proposition constructs the desired model structure. The proof of the fact that this model structure is stable is postponed to the next subsection. 

\begin{prop} \label{stmod} Let $\C$ be a proper and cofibrantly generated $G \Top_*$-model category which is stable as an ordinary model category. Then the category $\Sp^O_G(\C)$ admits a cofibrantly generated model structure with stable equivalences as weak equivalences. The sets $I_{\st}$ and $J_{\st}$ generate cofibrations and acyclic cofibrations, respectively. Furthermore, the fibrant objects are precisely the $\Omega$-spectra  \end{prop}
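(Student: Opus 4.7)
The plan is to apply Proposition \ref{hovprop} with $\W$ the class of stable equivalences from Definition \ref{stequi}, with generating cofibrations $I_{\st}=FI$ and generating acyclic cofibrations $J_{\st}=FJ\cup K$. Once the four conditions of Proposition \ref{hovprop} are verified, the cofibrantly generated model structure follows, and the characterization of fibrant objects is immediate from Proposition \ref{omega2}: $X$ is fibrant if and only if $X\to\ast$ is $J_{\st}$-injective, if and only if $X$ is an $\Omega$-spectrum. The stability of this model structure, as noted in the preceding paragraph, is deferred to the next subsection.

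First, I verify condition (i), that $\W$ satisfies the two-out-of-three property and is closed under retracts. This is immediate from Definition \ref{stequi}: the class of level equivalences in $\Sp^O_G$ enjoys both properties, and the functor $\Hom((-)^c,Z)$ sends two-out-of-three configurations and retract diagrams in $\Sp^O_G(\C)$ to the corresponding configurations in $\Sp^O_G$.

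Second, I verify condition (ii), the smallness of the domains of $I_{\st}$ and $J_{\st}$. The domains of $I_{\st}=FI$ have the form $F_V A$ with $A$ a domain of some map in $I$; since $F_VA(W)=O_G(V,W)\wedge A$ and the spaces $O_G(V,W)$ are $G$-CW complexes of a fixed cell structure (Illman, as exploited already in Proposition \ref{levmod}), relative $FI$-cell complexes are built from levelwise cellular attachments of copies of $(G/H\times D^n)_+\wedge A$; the smallness hypothesis built into Definition \ref{Gcofgen}(i) therefore transfers to the spectrum level. The domains of $FJ$ are handled identically using Definition \ref{Gcofgen}(ii). For $K=\{\kappa_W\boxempty F_Vi\}$, the domains are pushouts of free spectra smashed with a domain of some $i\in I$ and with the mapping cylinder $M\lambda_W$; since pushouts and smashing with a fixed object preserve smallness with respect to levelwise cellular colimits, this again reduces to the smallness built into Definition \ref{Gcofgen}.

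Third, conditions (iii) and (iv) are essentially repackagings of results already in hand. Condition (iii), that every morphism in $J_{\st}$-cell is both an $I_{\st}$-cofibration and a stable equivalence, is precisely Proposition \ref{j-cof->st}. Condition (iv), that $I_{\st}$-inj coincides with $\W\cap J_{\st}$-inj, combines two inputs: Proposition \ref{levmod} identifies $I_{\st}$-inj with the class of level acyclic fibrations, and Proposition \ref{j-inj->lev} identifies the level acyclic fibrations with exactly those maps that are simultaneously $J_{\st}$-injective and stable equivalences (here is where right properness and the stability of $\C$ as an underlying model category are consumed). Applying Proposition \ref{hovprop} now yields the desired cofibrantly generated model structure, with cofibrations equal to the cofibrations of the level model structure. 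The main obstacle is really the smallness bookkeeping in condition (ii); everything else is either a direct appeal to one of the preceding propositions or a routine diagram chase.
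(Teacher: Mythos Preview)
Your proposal is correct and follows essentially the same approach as the paper: both apply Proposition~\ref{hovprop}, obtain conditions (iii) and (iv) directly from Propositions~\ref{j-cof->st}, \ref{levmod}, and \ref{j-inj->lev}, and identify the smallness verification as the only remaining work. The paper's treatment of smallness for the domains of $K$ is slightly more explicit than yours---it reduces via the adjunction isomorphism $\Hom(F_WL \wedge F_VD, X) \cong \Map(L \wedge D, X(V \oplus W))$ (for $L$ a finite $G$-CW complex) directly to Definition~\ref{Gcofgen}(i), and it makes explicit the intermediate step that it suffices to check smallness relative to levelwise cofibrations since $J_{\st}\text{-cell} \subset I_{\st}\text{-cof}$---but your sketch points at the same mechanism.
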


\begin{proof} The strategy of the proof is to verify the conditions of Proposition \ref{hovprop}. The only things that still have to be checked are the smallness conditions. The rest follows from the previous three propositions. 

That the domains of morphisms from $I_{\st}$ are small relative to $I_{\st}$-cell follows from the equality $I_{\st}=FI$ and Proposition \ref{levmod}. Next, recall that $J_{\st}= FJ \cup K$. We will now verify that the domains of morphisms from $J_{\st}$ are small relative to levelwise cofibrations. This will immediately imply that the domains of morphisms in $J_{\st}$ are small relative to $J_{\st}$-cell since 
$$J_{\st}\text{-cell} \subset I_{\st} \text{-cof}$$
by Proposition \ref{j-cof->st} and any morphism in $I_{\st}$-cof is a levelwise cofibration as we saw in the proof of Proposition \ref{levmod}. That the domains of morphisms in $FJ$ are small relative to levelwise cofibrations follows from an adjunction argument, Definition \ref{Gcofgen} (ii) and \cite[2.1.16]{Hov99}. It remains to show that the domains of morphisms from $K$ are small relative to levelwise cofibrations. Any morphism in $K$ is a pushout-product of the form
$$\kappa_W \boxempty F_Vi \colon  (M\lambda_W \wedge F_VA) \bigvee_{F_WS^W \wedge F_VA} (F_WS^W \wedge F_VB) \lra M\lambda_W \wedge F_VB.$$
where the morphism $i \colon A \lra B$ is from the set $I$ and $V$ and $W$ are finite dimensional orthogonal $G$-representations. For any finite $G$-CW complex $L$ and any object $D$ which is the domain or codomain of a map from $I$, the spectrum $F_WL \wedge F_VD$ is small relative to levelwise cofibrations. Indeed, we have an isomorphism
$$\Hom(F_WL \wedge F_VD, X) \cong \Map(L \wedge D , X(V \oplus W)).$$
Since a pushout of small objects is small, Definition \ref{Gcofgen} (i) implies that $L \wedge D$ is small with respect to $I$-cof and hence $F_WL \wedge F_VD$ is small relative to levelwise cofibrations. Now we use twice that pushouts of small objects are small. First we conclude that $M\lambda_W \wedge F_VA$ is small relative to levelwise cofibrations and then we also see that
$$(M\lambda_W \wedge F_VA) \bigvee_{F_WS^W \wedge F_VA} (F_WS^W \wedge F_VB)$$
is small relative to levelwise cofibrations. \end{proof}

\subsection{$G$-equivariant stable model categories and completing the proof of Proposition \ref{spectrific}}

We start with the following

\begin{defi} \label{GSpec} An $\Sp^O_G$-model category is called \emph{$G$-spectral}. In other words, a model $\C$ category is $G$-spectral if it is  enriched, tensored and cotensored over the model category $\Sp^O_G$ and the pushout-out product axiom for tensors holds (see Definition~\ref{Vmod}).\end{defi}

By Proposition \ref{Gorthomon} the model category $\Sp^O_G$ is $G$-spectral. Next, Proposition \ref{stablespec} shows that the model structure of Proposition \ref{stmod} on $\Sp^O_G(\C)$ is $G$-spectral.

Recall from Definition \ref{eqmodel} that a $G$-equivariant stable model category is a $G \Top_*$-model category such that the Quillen adjunction
$$\xymatrix{ S^V \wedge -  \colon \C \ar@<0.5ex>[r] & \C \cocolon \Omega^V(-) \ar@<0.5ex>[l]}$$
is a Quillen equivalence for any finite dimensional orthogonal $G$-representation $V$. Before stating the next proposition, note that every $G$-spectral model category is obviously a $G \Top_*$-model category.

\begin{prop} \label{gspec->eq} Let $\C$ be a $G$-spectral model category. Then $\C$ is a $G$-equivariant stable model category. \end{prop}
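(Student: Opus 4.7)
The strategy is to exhibit an explicit homotopy inverse to $\LL(S^V \wedge -)$ on $\Ho(\C)$ using the $\Sp^O_G$-tensoring, rather than to analyse the derived adjunction $(S^V \wedge -, \Omega^V)$ directly. First one observes that the $G \Top_*$-tensoring on $\C$ is induced from the $\Sp^O_G$-tensoring by restriction along the strong symmetric monoidal left Quillen functor $F_0 = \Sigma^{\infty} \colon G \Top_* \lra \Sp^O_G$ of Subsection \ref{Gortholev}; consequently the $G \Top_*$-tensor $S^V \wedge X$ coincides canonically with the $\Sp^O_G$-tensor $F_0 S^V \wedge X$, and $S^V \wedge -$ is a left Quillen endofunctor of $\C$ since $F_0 S^V$ is cofibrant in $\Sp^O_G$.

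Next I would introduce the candidate inverse $L_V = F_V S^0 \wedge - \colon \C \lra \C$, with right adjoint $\Hom(F_V S^0, -)$. The spectrum $F_V S^0$ is cofibrant because $F_V \colon G \Top_* \lra \Sp^O_G$ is left Quillen (its image of a generating cofibration of $G \Top_*$ lies in the generators $I^G_{\lv} = I^G_{\st}$ of Subsection \ref{Gortholev}), and so the pushout-product axiom for the $\Sp^O_G$-tensoring makes $L_V$ a left Quillen endofunctor of $\C$. Using the coherence isomorphism $F_V A \wedge F_W B \cong F_{V \oplus W}(A \wedge B)$ inside $\Sp^O_G$ together with the associativity of the $\Sp^O_G$-action on $\C$, for any cofibrant $X \in \C$ there are natural isomorphisms
$$L_V(S^V \wedge X) \;\cong\; F_V S^V \wedge X \;\cong\; S^V \wedge L_V X,$$
each sitting over the map $\lambda_V \wedge X \colon F_V S^V \wedge X \lra \S \wedge X \cong X$, where $\lambda_V$ is the stable equivalence of \cite[Lemma III.4.5]{MM02}.

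Since $\lambda_V$ is a stable equivalence between cofibrant objects of $\Sp^O_G$ and $X$ is cofibrant in $\C$, the pushout-product axiom implies that $-\wedge X \colon \Sp^O_G \lra \C$ is a left Quillen functor, so by Ken Brown's lemma $\lambda_V \wedge X$ is a weak equivalence. This shows that $\LL L_V$ and $\LL(S^V \wedge -)$ are mutually inverse self-equivalences of $\Ho(\C)$, and a Quillen adjunction whose left derived functor is an equivalence is automatically a Quillen equivalence; applied to $(S^V \wedge -, \Omega^V)$ this finishes the proof.

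The argument is essentially formal, and the only minor obstacle is bookkeeping: one must confirm carefully that the $G \Top_*$-enrichment on $\C$ is the restriction of the $\Sp^O_G$-enrichment along $\Sigma^{\infty}$, and that the smash isomorphism $F_V S^0 \wedge F_0 S^V \cong F_V S^V$ transports correctly through the $\Sp^O_G$-action on $\C$. Both facts are standard consequences of the definitions in Subsections \ref{orthmodI}--\ref{Gorthostab} and the enriched Yoneda lemma; once they are in place the conclusion is immediate.
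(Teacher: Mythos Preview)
Your proposal is correct and follows essentially the same approach as the paper: both exhibit $F_V S^0 \wedge^{\mathbf{L}} -$ as a homotopy inverse to $S^V \wedge^{\mathbf{L}} -$ by using the stable equivalence $\lambda_V \colon F_V S^V \to \S$ and the isomorphism $F_V S^0 \wedge F_0 S^V \cong F_V S^V$ together with the pushout-product axiom. Your version is simply more explicit about the bookkeeping (cofibrancy of $F_V S^0$, Ken Brown's lemma, the identification of the $G\Top_*$-tensor with the restricted $\Sp^O_G$-tensor), but the argument is the same.
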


\begin{proof} Consider the left Quillen functors
$$S^V \wedge - \colon \C \lra \C \;\; \text{and}\;\; F_VS^0 \wedge - \colon \C \lra \C$$
and their derived functors
$$S^V \wedge^{\mathbf{L}} - \colon \Ho(\C) \lra \Ho(\C) \;\; \text{and}\;\; F_VS^0 \wedge^{\mathbf{L}} -\colon \Ho(\C) \lra \Ho(\C).$$
Since the map $\lambda_V \colon F_VS^V \lra \S$ is a stable equivalence \cite[III.4.5]{MM02}, for every cofibrant $X$ in $\C$, one has the following weak equivalences
$$\xymatrix{S^V \wedge F_V S^0 \wedge X \cong F_VS^V \wedge X \ar[r]_-{\simeq}^-{\lambda_V \wedge 1} & X}$$
and
$$\xymatrix{F_VS^0 \wedge S^V \wedge X \cong F_VS^V \wedge X \ar[r]_-{\simeq}^-{\lambda_V \wedge 1} & X.}$$ 
This implies that the functors $S^V \wedge^{\mathbf{L}} - $ and $F_VS^0 \wedge^{\mathbf{L}} -$ are mutually inverse equivalences of categories. \end{proof}

\begin{coro} \label{SpGst} Let $\C$ be a proper and cofibrantly generated $G \Top_*$-model category which is stable as an ordinary model category. Then the category $\Sp^O_G(\C)$ together with the model structure of Proposition \ref{stmod} is a $G$-equivariant stable model category. 
\end{coro}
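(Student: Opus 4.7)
The proof will simply assemble results already established. The plan is to observe that the model structure of Proposition \ref{stmod} on $\Sp^O_G(\C)$ is in fact $G$-spectral, and then invoke Proposition \ref{gspec->eq} to conclude that it is $G$-equivariantly stable.

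First, I would note that Proposition \ref{sp_gmodcat} already provides the enrichment, tensoring, and cotensoring of $\Sp^O_G(\C)$ over the symmetric monoidal category $\Sp^O_G$, together with the necessary adjunction isomorphisms. Next, Proposition \ref{stablespec} verifies the pushout-product axiom: given a cofibration $i \colon K \lra L$ in $\Sp^O_G$ and a cofibration $j \colon A \lra B$ in $\Sp^O_G(\C)$, the map $i \boxempty j$ is a cofibration in $\Sp^O_G(\C)$, and an acyclic cofibration whenever $i$ or $j$ is. The unit condition of Definition \ref{Vmod} is automatic because the unit $\S = F_0 S^0$ of the monoidal structure on $\Sp^O_G$ is already cofibrant, so any cofibrant replacement $q \colon Q\S \lra \S$ is a weak equivalence between cofibrant objects, and smashing with a cofibrant object in $\Sp^O_G(\C)$ preserves weak equivalences between cofibrant objects by Ken Brown's lemma applied to the left Quillen functor $- \wedge X$ (itself a consequence of the pushout-product axiom). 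Thus $\Sp^O_G(\C)$, equipped with the model structure of Proposition \ref{stmod}, is a $G$-spectral model category in the sense of Definition \ref{GSpec}.

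Finally, I would apply Proposition \ref{gspec->eq} to conclude that the underlying $G \Top_*$-model structure of this $G$-spectral model category is a $G$-equivariant stable model category: for each finite-dimensional orthogonal $G$-representation $V$, the derived smash with $F_V S^0$ furnishes a two-sided inverse to $S^V \wedge^{\mathbf{L}} -$ on $\Ho(\Sp^O_G(\C))$ because $\lambda_V \colon F_V S^V \lra \S$ is a stable equivalence in $\Sp^O_G$.

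There is no substantive obstacle here; the work has all been done in earlier propositions. The only minor point to be careful about is confirming that the unit axiom of Definition \ref{Vmod}~(iii) is inherited from the fact that the unit $\S$ is cofibrant in $\Sp^O_G$; once this is noted, the corollary is a direct composition of Propositions \ref{sp_gmodcat}, \ref{stablespec}, \ref{stmod}, and \ref{gspec->eq}.
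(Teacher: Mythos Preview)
Your proposal is correct and follows exactly the paper's approach: the paper notes (just before the corollary) that Proposition~\ref{stablespec} shows the model structure of Proposition~\ref{stmod} on $\Sp^O_G(\C)$ is $G$-spectral, and then the corollary follows immediately from Proposition~\ref{gspec->eq}. Your treatment is slightly more explicit about the unit axiom, but this is a harmless elaboration since $\S$ is cofibrant in $\Sp^O_G$.
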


From this point on, the model structure of Proposition \ref{stmod} will be referred to as \emph{the stable model structure on $\Sp^O_G(\C)$} and the symbol $\Sp^O_G(\C)$ will always denote this model category. 

\;

\;

Finally, we observe that the $G \Top_*$-adjunction
$$\xymatrix{ F_0=\Sigma^{\infty}  \colon \C \ar@<0.5ex>[r] & { \Sp}^O_G(\C) \cocolon \Ev_0 \ar@<0.5ex>[l]}$$
is a Quillen equivalence for every cofibrantly generated (in the sense of Definition \ref{Gcofgen}) and proper $G$-equivariant stable model category $\C$. The proof of this fact is a verbatim translation of the last part of the proof of \cite[Theorem 3.8.2]{SS03} to our case. This finishes the proof of Proposition \ref{spectrific}.

\begin{remk} \label{remenrich} The Quillen equivalence
$$\xymatrix{\Sigma^{\infty}  \colon \C \ar@<0.5ex>[r] & { \Sp}^O_G(\C) \cocolon \Ev_0 \ar@<0.5ex>[l]}$$
is in fact a $G \Top_*$-Quillen equivalence. Indeed, $(\Sigma^{\infty}, \Ev_0)$ is a $G \Top_*$-enriched adjunction and an enriched adjunction which is an underlying Quillen equivalence is an enriched Quillen equivalence by definition. Next, since enriched left adjoints preserve tensors \cite[Sections 3.2 and 3.7]{Kel05}, the functor $\Sigma^{\infty}$ preserves tensors. Similarly, the right adjoint $\Ev_0$ preserves cotensors. Further, the equivalence
$$\xymatrix{\LL\Sigma^{\infty}  \colon \Ho(\C) \ar@<0.5ex>[r] & \Ho({ \Sp}^O_G(\C)) \cocolon \R\Ev_0 \ar@<0.5ex>[l]}$$
is a $\Ho(G \Top_*)$-enriched equivalence. Finally, we note that the functor $\LL\Sigma^{\infty}$ preserves derived tensors and since $\R \Ev_0$ is an inverse of $\LL\Sigma^{\infty}$, it is also compatible with derived tensors. \end{remk}

\subsection{The $p$-local model structure on $G$-equivariant orthogonal spectra}
This subsection reviews the $p$-localization of the stable model structure on $\Sp^O_G$ for any prime $p$. Note that one can construct the $p$-local model structure on $\Sp^O_G$ by using general localization techniques of \cite{Hir} or \cite{Bou01}. Another possibility is to translate the arguments of \cite[Section 4]{SS02} to the equivariant context.

\begin{defi} {\rm (i)}  A map $f \colon X \lra Y$ of orthogonal $G$-spectra is called a $p$-local equivalence if the induced map
$$\pi_*^H(f) \otimes \Z_{(p)} \colon \pi_*^HX \otimes \Z_{(p)} \lra \pi_*^HY \otimes \Z_{(p)}$$
is an isomorphism for any subgroup $H$ of $G$. 

{\rm (ii)} A map $f \colon X \lra Y$ of orthogonal $G$-spectra is called a $p$-local fibration if it has the right lifting property with respect to all maps that are cofibrations and $p$-local equivalences. \end{defi}

Recall from Section \ref{prel} that the stable model structure on $\Sp^O_G$ is cofibrantly generated with $I_{\st}^G = I_{\lv}^G$ and $J_{\st}^G = K^G \cup J_{\lv}^G$ generating cofibrations and acyclic cofibrations, respectively. Further, we also recall that the mod $l$ Moore space $M(l)$ is defined by the following pushout
$$\xymatrix{S^1 \ar[r]^{\cdot l} \ar[d] & S^1 \ar[d] \\ CS^1 \ar[r] & M(l).}$$
($C(-)= (I, 0) \wedge -$ is the pointed cone functor.) Let $\iota \colon M(l) \lra CM(l)$ denote the inclusion of $M(l)$ into the cone $CM(l)$. Define $J_{(p)}^G$ to be the set of maps of orthogonal $G$-spectra
$$F_n(G/H_{+} \wedge \Sigma^m \iota) \colon F_n(G/H_{+} \wedge \Sigma^m M(l)) \lra F_n(G/H_{+} \wedge \Sigma^m CM(l)),$$
where $n, m \geq 0$, $H \leq G$ and $l$ is prime to $p$, i.e., invertible in $\Z_{(p)}$. We let $J_{\loc}^G$ denote the union $J_{\st}^G \cup J_{(p)}^G$. 

\begin{prop} \label{p-localmod} Let $G$ be a finite group and $p$ a prime. Then the category $\Sp^O_G$ of $G$-equivariant orthogonal spectra together with $p$-local equivalences, cofibrations and $p$-local fibrations forms a cofibrantly generated stable model category. The set $I^{G}_{\st}$ generates cofibrations and the set $J_{\loc}^{G}$ generates acyclic cofibrations. Furthermore, the fibrant objects are precisely the $G$-$\Omega$-spectra whose $H$-equivariant homotopy groups are $p$-local for any $H \leq G$. \end{prop}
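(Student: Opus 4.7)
The plan is to verify the conditions of Proposition \ref{hovprop} with $\W$ the class of $p$-local equivalences, $I = I^G_{\st}$ and $J = J_{\loc}^G$. The two-out-of-three property and closure under retracts for $\W$ are immediate from the corresponding properties of isomorphisms of $\Z_{(p)}$-modules, applied to $\pi_*^H(-) \otimes \Z_{(p)}$ for each $H \leq G$. Smallness of the domains of $I^G_{\st}$ is inherited from the stable model structure of Subsection \ref{Gorthostab}. For $J_{\loc}^G = J_{\st}^G \cup J_{(p)}^G$, the domains of $J_{\st}^G$ are small from the stable structure, while the domains of $J_{(p)}^G$ are spectra of the form $F_n(G/H_+ \wedge \Sigma^m M(l))$, which are finite cell objects and hence small with respect to all cofibrations in $\Sp^O_G$.

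The central computation is condition (iii), $J_{\loc}^G$-cell $\subset \W \cap I^G_{\st}$-cof. The cofibration part is clear since $J_{\st}^G \subset I^G_{\st}$-cof already in the stable structure, and each $F_n(G/H_+ \wedge \Sigma^m \iota)$ is a cofibration because $\iota \colon M(l) \to CM(l)$ is a cofibration in $G\Top_*$ and $F_n$ preserves cofibrations. For the weak equivalence part, the maps in $J_{\st}^G$ are stable equivalences, hence $p$-local equivalences. For $F_n(G/H_+ \wedge \Sigma^m \iota)$, contractibility of $CM(l)$ reduces the claim to showing that $\pi_*^{H'}\bigl(F_n(G/H_+ \wedge \Sigma^m M(l))\bigr)$ is $l$-torsion for every $H' \leq G$; this follows from the cofiber sequence defining $M(l)$, which yields the exact sequence $\pi_*^{H'}(X) \xto{l} \pi_*^{H'}(X) \to \pi_*^{H'}(X \wedge M(l))$ for $X = F_n(G/H_+ \wedge \Sigma^m S^0)$. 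Since $l$ is a unit in $\Z_{(p)}$, this group vanishes after $\otimes \Z_{(p)}$. One then invokes left properness of $\Sp^O_G$, the long exact sequence of a cofiber sequence, and the fact that equivariant homotopy groups commute with filtered colimits of cofibrations and with wedges, to conclude that $p$-local equivalences are closed under pushouts along cofibrations, transfinite compositions, and coproducts.

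For condition (iv), the inclusion $I^G_{\st}$-inj $\subset \W \cap J_{\loc}^G$-inj is easy: an $I^G_{\st}$-injective map is a level acyclic fibration, hence a stable equivalence (thus a $p$-local equivalence), and lifts against every cofibration, in particular against $J_{\loc}^G$. Conversely, suppose $f \colon X \to Y$ is in $\W \cap J_{\loc}^G$-inj. Being $J_{\st}^G$-injective, $f$ is a stable fibration; it suffices to show it is a stable equivalence. Let $F$ denote the homotopy fiber. The $p$-local equivalence assumption gives $\pi_*^H(F) \otimes \Z_{(p)} = 0$ for every $H \leq G$. On the other hand, $J_{(p)}^G$-injectivity of $f$, combined with the stable fibration property, translates via adjunction (using $F_n \dashv \Ev_n$ and the defining pushout of $M(l)$) into the statement that multiplication by $l$ is an isomorphism on $\pi_*^H(F)$ for every $l$ prime to $p$, that is, $\pi_*^H(F)$ is a $\Z_{(p)}$-module. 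A $\Z_{(p)}$-module that vanishes after tensoring with $\Z_{(p)}$ is zero, so $F$ is stably contractible and $f$ is a stable equivalence, hence a level acyclic fibration.

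Stability is inherited from the stable structure on $\Sp^O_G$ since the cofibrations are unchanged and $\Sigma, \Omega$ remain a Quillen equivalence. The characterization of fibrant objects follows by applying Proposition \ref{omega2} together with the $J_{(p)}^G$-injectivity analysis above to the map $X \to \ast$. The main obstacle will be the technical closure argument for $p$-local equivalences under transfinite cell attachments in step (iii); this is where one must carefully exploit left properness of $\Sp^O_G$ and the compatibility of the functors $\pi_*^H(-) \otimes \Z_{(p)}$ with the relevant colimits.
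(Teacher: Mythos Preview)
Your proposal is correct and follows precisely the approach of \cite[Section 4]{SS02}, which is exactly what the paper invokes (the paper's own proof consists only of a reference to that source). In fact you supply considerably more detail than the paper does: the verification of condition (iv) via the fiber $F$ being a $G$-$\Omega$-spectrum (so that the unstable $l$-divisibility statements coming from $J_{(p)}^G$-injectivity translate to the stable groups $\pi_*^H(F)$) is the heart of the SS02 argument and you have identified it correctly. One minor remark: your reference to Proposition~\ref{omega2} for the fibrant-object characterization is slightly misplaced, since that proposition concerns $\Sp^O_G(\C)$; the relevant input here is the description of stably fibrant objects in $\Sp^O_G$ itself as $G$-$\Omega$-spectra from \cite[III.4.2]{MM02}.
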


\begin{proof} We don't give the details here and only observe that it is completely analogous to the proof in \cite[Section 4]{SS02}. \end{proof}

\begin{prop} \label{p-locmonoid} Suppose that $i \colon K \lra L$ and $j \colon A \lra B$ are cofibrations in $\Sp_G^O,_{(p)}$. Then the pushout-product
$$i \boxempty j \colon K \wedge B \bigvee_{K \wedge A} L \wedge A \lra L \wedge B$$
is a cofibration in $\Sp_G^O,_{(p)}$. Moreover, if in addition $i$ or $j$  is a $p$-local equivalence (i.e., a weak equivalence in $\Sp_G^O,_{(p)}$), then so is $i \boxempty j$.
 
\end{prop}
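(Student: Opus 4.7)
The statement splits into two claims. The first---that $i \boxempty j$ is a cofibration in $\Sp_G^O,_{(p)}$---is immediate from Proposition \ref{Gorthomon}: by Proposition \ref{p-localmod}, the cofibrations of $\Sp_G^O,_{(p)}$ are generated by the same set $I^G_{\st}$ as those of $\Sp_G^O$, so the two classes of cofibrations coincide.

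For the second claim, using the canonical symmetry $i \boxempty j \cong j \boxempty i$ I may assume that $i$ is the $p$-local equivalence, so that $i$ is a $p$-local acyclic cofibration; it then remains to show that $i \boxempty j$ is a $p$-local equivalence. The plan is to exploit stability: a cofibration $f$ in $\Sp_G^O$ is a $p$-local equivalence if and only if its cofiber is zero in $\Ho(\Sp_G^O,_{(p)})$, as follows from the long exact sequence of equivariant homotopy groups associated to the cofiber sequence after tensoring with $\Z_{(p)}$. In particular, the hypothesis on $i$ implies that $L/K$ is $p$-locally trivial.

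By the standard identification of the cofiber of a pushout-product in a stable symmetric monoidal model category (iterating cofiber sequences around the defining pushout square and using that the smash product preserves colimits in each variable), after suitable cofibrant replacements there is a weak equivalence
$$\cof(i \boxempty j) \simeq (L/K) \wedge (B/A).$$
The right-hand side computes the derived smash product in $\Ho(\Sp_G^O,_{(p)})$, which is biadditive; since $L/K$ is zero there, so is $(L/K) \wedge (B/A)$. Hence the cofiber of $i \boxempty j$ is $p$-locally trivial, and since $i \boxempty j$ is already known to be a cofibration, this shows it is a $p$-local equivalence, completing the argument.

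The main obstacle is making the cofiber identification $\cof(i \boxempty j) \simeq (L/K) \wedge (B/A)$ precise at the point-set level: one must ensure that the spectra involved are cofibrant enough for the pointwise smash product to represent its derived version, and that the iterated cofiber-sequence manipulation produces the claimed weak equivalence. This is a classical computation valid in any closed symmetric monoidal stable model category whose monoidal structure satisfies Proposition \ref{Gorthomon}, so no new difficulty arises beyond careful bookkeeping.
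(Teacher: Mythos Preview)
Your argument has the right shape, but the decisive step is circular. You write that ``the right-hand side computes the derived smash product in $\Ho(\Sp_G^O,_{(p)})$, which is biadditive; since $L/K$ is zero there, so is $(L/K)\wedge(B/A)$.'' But the very fact that the smash product descends well to $\Ho(\Sp_G^O,_{(p)})$ --- in particular, that smashing with a cofibrant object takes $p$-local equivalences to $p$-local equivalences --- is what this proposition is meant to establish. Until you have the pushout-product axiom, you cannot invoke a derived smash product on the $p$-local homotopy category and appeal to its biadditivity; the implication ``$L/K\simeq 0$ in $\Ho(\Sp_G^O,_{(p)})$ $\Rightarrow$ $(L/K)\wedge(B/A)\simeq 0$'' is exactly the content in question. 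The obstacle you flag in your last paragraph (making the cofiber identification precise) is not the real issue: $L/K$ and $B/A$ are automatically cofibrant as cofibers of cofibrations, and $\cof(i\boxempty j)\cong(L/K)\wedge(B/A)$ is a point-set identity in any closed symmetric monoidal category, so the identification is unproblematic.

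The repair is to work in $\Ho(\Sp_G^O)$, where the derived smash product already exists by Proposition~\ref{Gorthomon}, and to prove separately that $p$-locally acyclic objects form a $\wedge$-ideal there. One clean way: realize $S\Z_{(p)}$ as a sequential homotopy colimit of copies of $\S$ along multiplication by integers prime to $p$, so that $\pi_*^H(X\wedge^{\mathbf L}S\Z_{(p)})\cong\pi_*^H(X)\otimes\Z_{(p)}$ for every $H\le G$; then $X$ is $p$-locally acyclic iff $X\wedge^{\mathbf L}S\Z_{(p)}\simeq *$, a condition visibly closed under $-\wedge^{\mathbf L}Y$. With that lemma in hand your cofiber argument goes through.

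The paper proceeds differently: it invokes \cite[Corollary~4.2.5]{Hov99} to reduce to generating (acyclic) cofibrations. Since $J^G_{\loc}=J^G_{\st}\cup J^G_{(p)}$ and the $J^G_{\st}$ case is already Proposition~\ref{Gorthomon}, the only new check is $i\in J^G_{(p)}$ against $j\in I^G_{\st}$; there the cofiber of $i\boxempty j$ is explicitly a shifted free spectrum smashed with $M(l)$ for some $l$ prime to $p$, hence $l$-torsion, hence $p$-locally acyclic. This sidesteps the $\wedge$-ideal lemma by exploiting the concrete form of the generators, while your corrected argument is more conceptual but needs that extra input.
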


\begin{proof} By \cite[Corollary 4.2.5]{Hov99} it suffices to check the claim for generating cofibrations and acyclic cofibrations. We don't give details because they are straightforward. \end{proof}

\;

\;

Since every stable equivalence of $G$-equivariant orthogonal spectra is a $p$-local equivalence, one obtains the following corollary:

\begin{coro} The model category $\Sp^O_G,_{(p)}$ is $G$-spectral, i.e., an $\Sp^O_G$-model category (see Definition \ref{GSpec}). \end{coro}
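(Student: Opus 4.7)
The plan is to verify the three conditions of Definition \ref{Vmod} for $\Sp^O_G,_{(p)}$ as an $\Sp^O_G$-module. Since $\Sp^O_G,_{(p)}$ and $\Sp^O_G$ share the same underlying symmetric monoidal category, the enriched, tensored and cotensored structure from Section \ref{hosp} carries over verbatim. So the only actual work is the pushout-product axiom (ii) and the unit axiom (iii).

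For the pushout-product axiom, let $i \colon K \lra L$ be a cofibration in $\Sp^O_G$ and $j \colon A \lra B$ a cofibration in $\Sp^O_G,_{(p)}$. By Proposition \ref{p-localmod}, cofibrations in $\Sp^O_G,_{(p)}$ coincide with cofibrations in $\Sp^O_G$, so both $i$ and $j$ are cofibrations in $\Sp^O_G$. Proposition \ref{Gorthomon} then gives that $i \boxempty j$ is a cofibration in $\Sp^O_G$, and hence in $\Sp^O_G,_{(p)}$. For the acyclic half, there are two cases. If $j$ is a $p$-local acyclic cofibration, then Proposition \ref{p-locmonoid} applied directly yields that $i \boxempty j$ is a $p$-local acyclic cofibration. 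If instead $i$ is a stable acyclic cofibration in $\Sp^O_G$, I use the remark highlighted just before the statement: every stable equivalence is a $p$-local equivalence. Thus $i$ is also a $p$-local acyclic cofibration, and again Proposition \ref{p-locmonoid} gives the conclusion.

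For the unit axiom, take a cofibrant replacement $q \colon Q\S \lra \S$ of the sphere spectrum in $\Sp^O_G,_{(p)}$. Then $q$ is a $p$-local acyclic fibration (or at least a $p$-local equivalence). Since any cofibrant $X$ in $\Sp^O_G,_{(p)}$ is also cofibrant in $\Sp^O_G$, Proposition \ref{p-locmonoid} applied to $q$ (viewed as a map with cofibrant domain and codomain) and the cofibration $\varnothing \lra X$ shows that $q \wedge 1_X$ is a $p$-local equivalence; alternatively, this follows by factoring through the stable unit axiom of Proposition \ref{Gorthomon} together with the fact that stable equivalences are $p$-local equivalences.

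No step is really hard here; the only mild subtlety is making sure to invoke both Propositions \ref{Gorthomon} and \ref{p-locmonoid} in the right cases, depending on which of the two input cofibrations is the acyclic one, and to use the passage from stable to $p$-local equivalences exactly once to merge the two pushout-product statements into a single pushout-product axiom for $\Sp^O_G,_{(p)}$ as an $\Sp^O_G$-model category.
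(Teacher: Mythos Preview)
Your proof is correct and follows the same reasoning as the paper: the paper's entire argument is the single observation preceding the corollary that every stable equivalence is a $p$-local equivalence, which together with Proposition \ref{p-locmonoid} immediately gives the pushout-product axiom for the $\Sp^O_G$-enrichment, and you have simply unpacked the two acyclic cases explicitly. One minor simplification: since $\S = F_0 S^0$ is already cofibrant in $\Sp^O_G$, the unit axiom (iii) is vacuous and needs no separate argument.
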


In view of Proposition \ref{gspec->eq}, we also obtain

\begin{coro} The model category $\Sp^O_G,_{(p)}$ is a $G$-equivariant stable model category (see Definition \ref{eqmodel}). \end{coro}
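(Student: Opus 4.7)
The plan is to deduce this directly from the combination of Proposition~\ref{gspec->eq} and the immediately preceding corollary. That corollary asserts that $\Sp^O_G,_{(p)}$ is $G$-spectral, meaning it is an $\Sp^O_G$-model category whose tensors satisfy the pushout–product axiom. Since Proposition~\ref{gspec->eq} promotes any $G$-spectral model category to a $G$-equivariant stable model category, the statement follows with essentially no additional argument; the work is already contained in the previous two results. In particular we do not need to re-verify the $G\Top_*$-model category structure, because the $\Sp^O_G$-enrichment restricts along $\S\wedge -\colon G\Top_*\to\Sp^O_G$ to a $G\Top_*$-enrichment.

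If one wished to produce a self-contained verification rather than invoke Proposition~\ref{gspec->eq}, the key step would be to check that for every finite dimensional orthogonal $G$-representation $V$ the Quillen adjunction
$$\xymatrix{ S^V \wedge - \colon \Sp^O_G,_{(p)} \ar@<0.5ex>[r] & \Sp^O_G,_{(p)} \cocolon \Omega^V(-) \ar@<0.5ex>[l]}$$
is a Quillen equivalence. It is a Quillen adjunction by Proposition~\ref{p-locmonoid}, applied to the cofibration $\ast\to S^V$. To see that it is a Quillen equivalence one uses the stable equivalence $\lambda_V \colon F_V S^V \to \S$ of \cite[III.4.5]{MM02}, which is in particular a $p$-local equivalence. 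For any cofibrant $X$ in $\Sp^O_G,_{(p)}$ the smash maps
$$\xymatrix{S^V \wedge F_V S^0 \wedge X \cong F_V S^V \wedge X \ar[r]^-{\lambda_V \wedge 1} & X}$$
and
$$\xymatrix{F_V S^0 \wedge S^V \wedge X \cong F_V S^V \wedge X \ar[r]^-{\lambda_V \wedge 1} & X}$$
are then $p$-local equivalences, exhibiting $F_VS^0 \wedge^{\mathbf L} -$ as a two-sided inverse of $S^V \wedge^{\mathbf L} -$ on $\Ho(\Sp^O_G,_{(p)})$. This is the same mechanism that drives the proof of Proposition~\ref{gspec->eq}.

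The main obstacle—if there is one—is simply ensuring that the pushout–product axiom is still available at the localized level so that $S^V\wedge -$ remains left Quillen, and that stable equivalences such as $\lambda_V$ remain weak equivalences after $p$-localization. Both facts are immediate: cofibrations in $\Sp^O_G,_{(p)}$ coincide with stable cofibrations, and $p$-local equivalences contain all stable equivalences. Thus every ingredient needed for Proposition~\ref{gspec->eq} is already in place, and the corollary follows.
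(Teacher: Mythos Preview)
Your proposal is correct and matches the paper's approach exactly: the corollary is obtained immediately by applying Proposition~\ref{gspec->eq} to the preceding corollary that $\Sp^O_G,_{(p)}$ is $G$-spectral. The additional self-contained verification you sketch is fine but unnecessary, as it simply replays the proof of Proposition~\ref{gspec->eq}.
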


We end this subsection with some useful comments and remarks about the homotopy category $\Ho(\Sp^O_G,_{(p)})$. Since the model category $\Sp^O_G,_{(p)}$ is stable, the homotopy category $\Ho(\Sp^O_G,_{(p)})$ is naturally triangulated. Further, the set
$$\{\Sigma_{+}^{\infty} G/H \; | \; H \leq G\}$$
is a set of compact generators for $\Ho(\Sp^O_G,_{(p)})$. This follows from the natural isomorphism
$$[\Sigma_{+}^{\infty} G/H, X]^{\Ho(\Sp^O_G,_{(p)})}_* \cong \pi_*^H X \otimes \Z_{(p)}.$$
Finally, we note that for any $G$-equivariant orthogonal spectra $X$ and $Y$, the abelian group of morphisms $[X,Y]^{\Ho(\Sp^O_G,_{(p)})}$ in $\Ho(\Sp^O_G,_{(p)})$ (which will be also denoted by $[X,Y]^G$ abusing notation) is $p$-local. This follows from the fact that for any integer $l$ which is prime to $p$, the map $l \cdot \id \colon X \lra X$ is an isomorphism in $\Ho(\Sp^O_G,_{(p)})$.

\subsection{Reduction to Theorem \ref{reduction}} \label{p-locred}

Now we are finally ready to explain why the arguments of Subsection \ref{outl} carry over to the $p$-local case.

Let $\C$ be a cofibrantly generated (in the sense of Definition \ref{Gcofgen}), proper, $G$-equivariant stable model category. Suppose that 
$$\xymatrix{\Psi \colon \Ho( \Sp_G^O,_{(p)}) \ar[r]^-{\sim} & \Ho(\C)}$$
is an equivalence of triangulated categories such that 
$$\Psi(\Sigma_{+}^{\infty} G/H) \cong G/H_{+} \wedge^{\mathbf{L}} \Psi(\S),$$
for any $ H \leq G$. Suppose further that the latter isomorphisms are natural with respect to the restrictions, conjugations and transfers. By Proposition \ref{spectrific}, there is a $G \Top_*$-Quillen equivalence
$$\xymatrix{ \Sigma^{\infty}  \colon \C \ar@<0.5ex>[r] & { \Sp}^O_G(\C) \cocolon \Ev_0. \ar@<0.5ex>[l]}$$
Next, as in Subsection \ref{outl}, let $X$ be a cofibrant replacement of $(\mathbf{L}\Sigma^{\infty} \circ \Psi) (\S)$. Since $\Sp^O_G(\C)$ is $G$-spectral (Proposition \ref{stablespec}), there is a $G \Top_*$-Quillen adjunction 
$$\xymatrix{-\wedge X \colon \Sp_G^O \ar@<0.5ex>[r] & {\Sp}^O_G(\C) \cocolon \Hom(X,-).  \ar@<0.5ex>[l]}$$

\noindent Since the Hom groups of $\Ho(\C)$ are $p$-local, it is easy to see that the latter Quillen adjunction yields a $G \Top_*$-Quillen adjunction
$$\xymatrix{-\wedge X \colon \Sp^O_G,_{(p)} \ar@<0.5ex>[r] & {\Sp}^O_G(\C) \cocolon \Hom(X,-).  \ar@<0.5ex>[l]}$$
Next, choose $\Psi^{-1}$ as in Subsection \ref{outl} and consider the composite  
$$\xymatrix{ F \colon \Ho(\Sp^O_G,_{(p)}) \ar[r]^-{-\wedge^{\mathbf{L}} X} & \Ho(\Sp^O_G(\C)) \ar[r]^-{\mathbf{R}\Ev_0} & \Ho(\C) \ar[r]^-{\Psi^{-1}} & \Ho(\Sp^O_G,_{(p)}).}$$
Since the functors $\mathbf{R}\Ev_0$ and $\Psi^{-1}$ are equivalences, to prove that $(-\wedge X, \Hom(X,-))$ is a Quillen equivalence is equivalent to showing that the endofunctor 
$$\xymatrix{ F \colon \Ho(\Sp^O_G,_{(p)}) \ar[r] & \Ho(\Sp^O_G,_{(p)}) }$$ 
is an equivalence of categories. By the assumptions and the properties of $\Psi^{-1}$, we see that $F$ enjoys the following properties:

\;

\;

{\rm (i)} $F(\Sigma_{+}^{\infty} G/H) = \Sigma_{+}^{\infty} G/H, \;\; H \leq G$; 

\;

\;

{\rm (ii)} $F(g)=g, \;\; F(\res_{K}^H)= \res_{K}^H, \;\; F(\tr_{K}^H)=\tr_{K}^H, \;\; g \in G, \;\; K \leq H \leq G$;

\;

\;

{\rm (iii)} $F$ is an exact functor of triangulated categories and preserves infinite coproducts.

\;

\;

So finally, we see that in order to prove Theorem \ref{damtkicebuli}, it suffices to prove Theorem \ref{reduction}. Note that we do not expect that an odd primary version of Theorem \ref{reduction} is true. However, we still think that Conjecture \ref{mainconj} holds. Schwede's paper \cite{Sch07} suggests that the proof in the odd primary case should use the explicit construction of the endofunctor $F$, whereas in the $2$-local case certain axiomatic properties of $F$ are enough to get the desired result as Theorem \ref{reduction} shows. This is a generic difference between the $2$-local case and the $p$-local case for $p$ an odd prime.

\section{Free G-spectra} \label{free}

Since the set $\{\Sigma_{+}^{\infty} G/H \; | \; H \leq G\}$ is a set of compact generators for the triangulated category $\Ho(\Sp_G^O,_{(2)})$, to prove Theorem \ref{reduction} it suffices to show that for any subgroups $H$ and $K$ of $G$, the map
$$\xymatrix{F \colon [\Sigma_{+}^{\infty} G/H, \Sigma_{+}^{\infty} G/K]_*^G \ar[r] & [F(\Sigma_{+}^{\infty} G/H), F(\Sigma_{+}^{\infty} G/K)]_*^G = [\Sigma_{+}^{\infty} G/H, \Sigma_{+}^{\infty} G/K]_*^G} $$
induced by $F$ is an isomorphism.

\setcounter{subsection}{0}

In this section we show that under the assumptions of \ref{reduction} the map
$$F \colon [\Sigma_{+}^{\infty} G, \Sigma_{+}^{\infty} G]_*^G \lra [F(\Sigma_{+}^{\infty} G), F(\Sigma_{+}^{\infty} G)]_*^G = [\Sigma_{+}^{\infty} G, \Sigma_{+}^{\infty} G]_*^G$$ 
is an isomorphism. Note that the graded endomorphism ring $[\Sigma_{+}^{\infty} G, \Sigma_{+}^{\infty} G]_*^G$ is isomorphic to the graded group algebra $\pi_*\S[G]$ and the localizing subcategory generated by $\Sigma_{+}^{\infty} G$ in $\Ho(\Sp_G^O)$ is equivalent to $\Ho(\Mod \Sigma_{+}^{\infty} G)$, where $\Sigma_{+}^{\infty}G$ is considered as the group ring spectrum of $G$. 

\;

\;

We say that an object $X \in \Ho(\Sp^O_G)$ is a \emph{free $G$-spectrum} if it is contained in the localizing subcategory generated by $\Sigma_{+}^{\infty} G$. 

\;

\;

In the rest of the paper everything will be $2$-localized and hence we will mostly omit the subscript $2$.

\newpage

\subsection{Cellular structures} 

We start with the following

\begin{defi} \label{cell} Let $R$ be an orthogonal ring spectrum, $X$ an $R$-module and $n$ and $m$ integers such that $n \leq m$. We say that $X$ admits a \emph{finite $(n,m)$-cell structure} if there are sequences of distinguished triangles
$$\xymatrix{\bigvee_{I_k} \Sigma^{k-1}R \ar[r] & \sk_{k-1} X \ar[r] & \sk_k X \ar[r] & \bigvee_{I_k} \Sigma^k R }$$
in $\Ho(\Mod R)$, $k=n, n+1,...,m$, such that the sets $I_k$ are finite and $\sk_{n-1} X = \ast$ and $\sk_m X = X$.
 
\end{defi}

In other words, an $R$-module $X$ admits a finite $(n,m)$-\emph{cell structure} if and only if it admits a structure of a finite $R$-cell complex with all possible cells in dimensions between $n$ and $m$. 

Recall that there is a Quillen adjunction
$$\xymatrix{G_+ \wedge - \colon \Mod \S \ar@<0.5ex>[r] & \Mod \Sigma_{+}^{\infty} G \cocolon U  \ar@<0.5ex>[l]}$$
and that $[\Sigma_{+}^{\infty} G, \Sigma_{+}^{\infty} G]_*^G \cong \pi_*\S[G]$. The following proposition can be considered as a $2$-local naive equivariant version of \cite[Lemma 4.1]{Sch01} (cf. \cite[4.2]{Coh68}). 

\begin{prop} \label{factorization} Any $\alpha \in [\Sigma_{+}^{\infty} G, \Sigma_{+}^{\infty} G]_{n}^{\Ho(\Sp_G^O,_{(2)})}$, $n \geq 8$, factors over an $\Sigma_{+}^{\infty} G$-module that admits a finite $(1, n-1)$-cell structure. \end{prop}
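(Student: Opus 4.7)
The plan is to reduce the equivariant factorization to Schwede's non-equivariant Lemma~4.1 of \cite{Sch01} via the free-forgetful Quillen adjunction
$$\xymatrix{G_{+} \wedge - \colon \Mod \S \ar@<0.5ex>[r] & \Mod \Sigma_{+}^{\infty} G \cocolon U \ar@<0.5ex>[l]}$$
together with the identification of the underlying non-equivariant spectrum $U(\Sigma_{+}^{\infty}G) \cong \bigvee_{g \in G}\S$. Under this adjunction, the given element $\alpha$ corresponds to $\tilde\alpha \in \pi_n(\Sigma_{+}^{\infty}G)_{(2)} \cong \bigoplus_{g \in G} \pi_n\S_{(2)}$ with coordinates $\alpha_g \in \pi_n \S_{(2)}$. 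I will factor each $\alpha_g$ non-equivariantly, assemble the factorizations via wedge sum, and then transport the result back along the adjunction.

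First, applying the $2$-local version of \cite[Lemma~4.1]{Sch01} (whose hypothesis $n \geq 8$ is exactly the assumption on $\alpha$) to each $\alpha_g$ yields a factorization
$$\Sigma^n \S \xrightarrow{\iota_g} X_g \xrightarrow{p_g} \S$$
in $\Ho(\Mod \S)$ with $X_g$ a finite $\S$-module admitting a $(1, n-1)$-cell structure. Setting $X := \bigvee_{g \in G} X_g$, I use that finite wedges are biproducts in a triangulated category to assemble a map $\iota \colon \Sigma^n \S \to X$ with $g$-component $\iota_g$, and a map $p \colon X \to U(\Sigma_{+}^{\infty}G) = \bigvee_g \S$ whose restriction to $X_g$ is the wedge-inclusion of $p_g$. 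Then $p \circ \iota = \tilde\alpha$, and the skeleta $\sk_k X := \bigvee_g \sk_k X_g$, with attaching cells indexed by the disjoint union $\bigsqcup_g I_k^g$, exhibit $X$ as a finite $(1, n-1)$-cell $\S$-module. Next I apply $G_+ \wedge -$ to $\iota$ and take the adjoint of $p$ to obtain a factorization
$$\Sigma^n \Sigma_{+}^{\infty}G = G_+ \wedge \Sigma^n \S \xrightarrow{G_+ \wedge \iota} G_+ \wedge X \xrightarrow{\tilde{p}} \Sigma_{+}^{\infty}G$$
in $\Ho(\Mod \Sigma_{+}^{\infty}G)$; by naturality of the adjunction this composite is adjoint to $p \circ \iota = \tilde\alpha$, hence equals $\alpha$. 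Since $G_+ \wedge -$ is a left Quillen functor, it preserves cofiber sequences and sends the free cells $\bigvee_{I_k}\Sigma^{k-1}\S$ to the free cells $\bigvee_{I_k}\Sigma^{k-1}\Sigma_{+}^{\infty}G$, so $G_+ \wedge X$ inherits the desired finite $(1, n-1)$-cell structure over $\Sigma_{+}^{\infty}G$.

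The main substantive step — the non-equivariant factorization of each $\alpha_g$ through a short cell complex — is deferred entirely to Schwede's lemma, which itself relies on the $2$-local features of $\pi_*\S_{(2)}$ highlighted in the introduction (in particular the non-nullity of $2 \cdot \id \colon M(2) \to M(2)$). Beyond this, nothing genuinely new is required: the argument is a formal consequence of the free-forgetful adjunction together with the splitting $U(\Sigma_{+}^{\infty}G) \cong \bigvee_g \S$, which reduces the equivariant problem to $|G|$ independent copies of the non-equivariant one.
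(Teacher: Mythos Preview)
Your proof is correct and follows essentially the same route as the paper: pass along the free--forgetful adjunction to the underlying map $\tilde\alpha\colon \S^n \to \bigvee_{g\in G}\S$, apply Schwede's Lemma~4.1 to each component, wedge the resulting factorizations, and induce back up with $G_+\wedge -$. One small correction to your closing remark: the input making Schwede's lemma applicable here is not the non-nullity of $2\cdot\id$ on $M(2)$, but rather that for $n\geq 8$ every $\alpha_g\in\pi_n\S_{(2)}$ has $\FF_2$-Adams filtration at least~$2$, which is a consequence of the Hopf invariant one theorem; the paper makes this intermediate step explicit.
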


\begin{proof} We will omit the subscript $2$. Under the derived adjunction
$$\xymatrix{G_+ \wedge^{\mathbf{L}} - \colon \Ho(\Mod \S) \ar@<0.5ex>[r] & \Ho(\Mod \Sigma_{+}^{\infty} G) \cocolon \mathbf{R}U,  \ar@<0.5ex>[l]}$$
the element $\alpha$ corresponds to some map $\wt{\alpha} \colon \S^n \lra \mathbf{R} U (\Sigma_{+}^{\infty} G) \cong \bigvee_{G} \S$. By the proof of \cite[Lemma 4.1]{Sch01}, for any $g \in G$, we have a factorization
$$\xymatrix{\S^n \ar[r]^-{\wt{\alpha}} \ar[dr] & \mathbf{R} U (\Sigma_{+}^{\infty} G) \cong \bigvee_{G} \S \ar[r]^-{\proj_g} & \S \\ & Z_g \ar[ur] & }$$
in the stable homotopy category, where $Z_g$ has $\S$-cells in dimensions between $1$ and $n-1$. This uses essentially that $n \geq 8$. Indeed, since $n \geq 8$, for any $g \in G$, the morphism $\proj_g \circ \wt{\alpha}$ has $\FF_2$-Adams filtration at least $2$ by the Hopf invariant one Theorem \cite{Ada60} and hence, one of the implications of \cite[Lemma 4.1]{Sch01} applies to $\proj_g \circ \wt{\alpha}$. Assembling these factorizations together, we get a commutative diagram
$$\xymatrix{\S^n \ar[rr]^-{\wt{\alpha}} \ar[dr] & & \mathbf{R} U (\Sigma_{+}^{\infty} G) \\ & \bigvee_{g \in G} Z_{g}. \ar[ur] & }$$
Finally, by adjunction, one obtains the desired factorization
$$\xymatrix{\Sigma^n\Sigma_{+}^{\infty} G \ar[rr]^-{\alpha} \ar[dr] & & \Sigma_{+}^{\infty} G \\ & G_{+}\wedge^{\mathbf{L}}(\bigvee_{g \in G} Z_{g}).  \ar[ur] & } \vspace{-0.7cm}$$  \qedhere 
\end{proof}

Next, we use Proposition \ref{factorization} to prove the following important

\begin{lem} \label{techlem} Suppose that the map of graded rings
$$F \colon [\Sigma_{+}^{\infty} G, \Sigma_{+}^{\infty} G]_*^G \lra [F(\Sigma_{+}^{\infty} G), F(\Sigma_{+}^{\infty} G)]_*^G = [\Sigma_{+}^{\infty} G, \Sigma_{+}^{\infty} G]_*^G$$ 
is an isomorphism below and including dimension $n$ for some $n \geq 0$. Then the following hold:

{\rm (i)} Let $K$ and $L$ be $\Sigma_{+}^{\infty} G$-modules that admit finite $(\beta_K, \tau_K)$ and $(\beta_L, \tau_L)$-cell structures, respectively, and assume that $\tau_K-\beta_L \leq n$. Then the map
$$F \colon [K,L]^G \lra [F(K), F(L)]^G$$
is an isomorphism.

{\rm (ii)} Let $K$ be an $\Sigma_{+}^{\infty} G$-module admitting a finite $(\beta_K, \tau_K)$-cell structure with $\tau_K-\beta_K \leq n+1$. Then there is an $\Sigma_{+}^{\infty} G$-module $K'$ with a finite $(\beta_{K'}, \tau_{K'})$-cell structure such that $\beta_K \leq \beta_{K'}$, $\tau_{K'} \leq  \tau_K$ and $F(K') \cong K$.

{\rm (iii)} If $n+1 \geq 8$, then the map
$$F \colon [\Sigma_{+}^{\infty} G, \Sigma_{+}^{\infty} G]_{n+1}^G \lra [\Sigma_{+}^{\infty} G, \Sigma_{+}^{\infty} G]_{n+1}^G$$
is an isomorphism.

\end{lem}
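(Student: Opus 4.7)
My plan is to prove the three parts in order, using (i) to bootstrap (ii) and then (i) and (ii) to bootstrap (iii).

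For part (i), I would run a double induction on the total number of cells of $K$ and $L$, combined with the cellular cofiber triangles and the Five Lemma. The base is $K \simeq \Sigma^a R$, $L \simeq \Sigma^b R$, where $[K,L]^G \cong [R,R]^G_{a-b}$ and $a - b \le \tau_K - \beta_L \le n$, so the hypothesis applies. For the inductive step, assume $L$ has more than one non-empty layer and split off the top layer via the triangle $\sk_{\tau_L - 1} L \to L \to W_{\tau_L} \to \Sigma \sk_{\tau_L - 1} L$. Applying $[K, -]^G$ and comparing the long exact sequence with its image under $F$ produces four flanking terms $[K, \sk_{\tau_L - 1} L]^G$, $[K, W_{\tau_L}]^G$, $[K, \Sigma \sk_{\tau_L - 1} L]^G$ and $[K, \Sigma^{-1} W_{\tau_L}]^G$. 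Each has strictly fewer cells on the $L$-side, and the strict inequality $\tau_L \ge \beta_L + 1$ (non-trivial top layer) implies $\tau_K - (\beta_L + 1) \le n - 1$ and $\tau_K - (\tau_L - 1) \le n$, so the inductive hypothesis applies to all four; the Five Lemma then finishes the step. The case where $L$ has only the bottom layer reduces by symmetry to a sub-induction on cells of $K$.

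For part (ii), I would construct $K'$ skeleton by skeleton, starting from $K'_{\beta_K} := W_{\beta_K}$, which is fixed by $F$ because it is a wedge of $R$-shifts. Inductively, given $K'_{k-1}$ equipped with a chosen iso $\psi_{k-1} \colon F(K'_{k-1}) \xrightarrow{\sim} \sk_{k-1} K$, take the attaching map $a_k \colon \Sigma^{-1} W_k \to \sk_{k-1} K$ of the $k$th layer of $K$. Since $(k-1) - \beta_K \le \tau_K - 1 - \beta_K \le n$, part (i) makes $F \colon [\Sigma^{-1} W_k, K'_{k-1}]^G \to [\Sigma^{-1} W_k, F(K'_{k-1})]^G$ an isomorphism, so $\psi_{k-1}^{-1} \circ a_k$ lifts uniquely to $a_k' \colon \Sigma^{-1} W_k \to K'_{k-1}$. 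Define $K'_k$ to be the cofiber of $a_k'$; applying the exact functor $F$ and invoking the fill-in axiom of triangulated categories extends $\psi_{k-1}$ to an iso $\psi_k \colon F(K'_k) \xrightarrow{\sim} \sk_k K$. Iterating up to $k = \tau_K$ produces the required $K'$.

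For part (iii), I would exploit that $[R,R]^G_{n+1}$ is a finite abelian group (the recollection of Subsection \ref{hosp} yields finiteness for $\ast > 0$, and this persists after $2$-localization), so any surjective endomorphism is automatically an isomorphism; it thus suffices to show surjectivity of $F$ in this degree. Given $\alpha \in [R,R]^G_{n+1}$, Proposition \ref{factorization} (applicable since $n+1 \ge 8$) gives a factorization $\alpha = \gamma \circ \beta$ through some $R$-module $Z$ with a finite $(1, n)$-cell structure. Since $\tau_Z - \beta_Z = n - 1 \le n+1$, part (ii) furnishes $Z'$ with an iso $\phi \colon F(Z') \xrightarrow{\sim} Z$. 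By part (i) applied with constraints $(n+1) - 1 = n$ and $n - 0 = n$ respectively, both $F \colon [\Sigma^{n+1} R, Z']^G \to [\Sigma^{n+1} R, F(Z')]^G$ and $F \colon [Z', R]^G \to [F(Z'), R]^G$ are isomorphisms, so $\phi^{-1} \circ \beta$ and $\gamma \circ \phi$ lift along $F$ to some $\beta' \colon \Sigma^{n+1} R \to Z'$ and $\gamma' \colon Z' \to R$. Then $F(\gamma' \circ \beta') = F(\gamma') \circ F(\beta') = \gamma \circ \phi \circ \phi^{-1} \circ \beta = \alpha$, proving surjectivity. The main obstacle is the tight book-keeping in part (i): the hypothesis $\tau_K - \beta_L \le n$ is only just large enough to keep every shift produced in the Five Lemma step inside the inductive hypothesis, which is what forces the ``split at the top'' strategy.
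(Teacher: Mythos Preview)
Your proposal is correct and follows essentially the same approach as the paper: a Five Lemma induction for (i), a skeleton-by-skeleton lift using (i) for (ii), and the factorization of Proposition~\ref{factorization} combined with (i), (ii), and finiteness for (iii). The only cosmetic difference is in part (i): the paper first inducts on $\tau_K-\beta_K$ with $L$ a single-dimension wedge (splitting off the top cell of $K$) and then inducts on $\tau_L-\beta_L$ by splitting off the \emph{bottom} layer of $L$, whereas you split off the \emph{top} layer of $L$ and defer the $K$-induction to the single-layer base case; both decompositions keep all four flanking terms inside the hypothesis $\tau_K-\beta_L\le n$, so the bookkeeping works out either way.
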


\begin{proof} {\rm (i)} When $K$ and $L$ are both finite wedges of type $\bigvee \Sigma^{l_0}\Sigma_{+}^{\infty} G$ ($l_0$ is fixed), then the claim holds. 

We start with the case when $L$ is a finite wedge of copies of $\Sigma^{l_0}\Sigma_{+}^{\infty} G$, for some integer $l_0$, and proceed by induction on $\tau_K- \beta_K$. As already noted, the claim holds when $\tau_K- \beta_K=0$. Now suppose we are given $K$ with $\tau_K- \beta_K=r$, $r \geq 1$, and assume that the claim holds for all $\Sigma_{+}^{\infty} G$-modules $M$ that have a finite $(\beta_M, \tau_M)$-cell structure with $\tau_M- \beta_M < r$. Consider the distinguished triangle
$$\xymatrix{\bigvee_{I_{\tau_K}} \Sigma^{\tau_K-1} \Sigma_{+}^{\infty} G \ar[r] & \sk_{\tau_K-1}K \ar[r] & K \ar[r] & \bigvee_{I_{\tau_K}} \Sigma^{\tau_K} \Sigma_{+}^{\infty} G.}$$
The $\Sigma_{+}^{\infty} G$-module $\sk_{\tau_K-1}K$ has a finite $(\beta_K, \tau_K-1)$-cell structure. For convenience, let $P$ denote the wedge $\bigvee_{I_{\tau_K}} \Sigma^{\tau_K-1} \Sigma_{+}^{\infty} G$. The latter distinguished triangle induces a commutative diagram
{\tiny $$\hspace{-0.2cm}\xymatrix{[\Sigma \sk_{\tau_K-1}K, L]^G \ar[r] \ar[d]^-F & [\Sigma P, L]^G \ar[r] \ar[d]^-F & [K,L]^G \ar[r] \ar[d]^-F & [\sk_{\tau_K-1}K,L]^G \ar[r] \ar[d]^-F & [P, L]^G \ar[d]^-F \\ [F(\Sigma \sk_{\tau_K-1}K), F(L)]^G \ar[r] & [F( \Sigma P), F(L)]^G \ar[r] & [F(K),F(L)]^G \ar[r] & [F(\sk_{\tau_K-1}K),F(L)]^G \ar[r] & [F(P), F(L)]^G}$$}
\hspace{-0.2cm}with exact rows (The functor $F$ is exact.). By the induction basis, the second and the last vertical morphisms in this diagram are isomorphisms. The fourth morphism is an isomorphism by the induction assumption. Finally, since $\Sigma \sk_{\tau_K-1}K$ has a finite $(\beta_K+1, \tau_K)$-cell structure, the first vertical map is also an isomorphism by the induction assumption. Hence, the claim follows by the Five lemma.

Next, we do a similar induction with respect to $\tau_L - \beta_L$. The case $\tau_L- \beta_L=0$ is the previous paragraph. For the inductive step we choose a distinguished triangle
$$\xymatrix{\sk_{\beta_L}L \ar[r] & L \ar[r] & L' \ar[r] & \Sigma \sk_{\beta_L} L.}$$
The octahedral axiom implies that the $\Sigma_{+}^{\infty} G$-module $L'$ admits a finite $(\beta_L+1, \tau_L)$-cell structure. Now, as in the previous case, a five lemma argument finishes the proof.

{\rm (ii)} We do induction on $\tau_K- \beta_K$. If $\tau_K- \beta_K=0$, then $K$ is stably equivalent to a finite wedge  $\bigvee \Sigma^{l_0}\Sigma_{+}^{\infty} G$, for a fixed integer $l_0$, and the claim holds since $F(\Sigma_{+}^{\infty} G) = \Sigma_{+}^{\infty} G$. For the induction step, choose a distinguished triangle 
$$\xymatrix{\bigvee_{I_{\tau_K}} \Sigma^{\tau_K-1} \Sigma_{+}^{\infty} G \ar[r]^-{\alpha} & \sk_{\tau_K-1}K \ar[r] & K \ar[r] & \bigvee_{I_{\tau_K}} \Sigma^{\tau_K} \Sigma_{+}^{\infty} G.}$$
as above. By the induction assumption, there is an $\Sigma_{+}^{\infty} G$-module $M$ with a finite $(\beta_M, \tau_M)$-cell structure such that $\beta_K \leq \beta_M$, $\tau_M \leq \tau_K-1$ and $F(M) \cong \sk_{\tau_K-1}K$. Consider the composite
$$\xymatrix{F( \bigvee_{I_{\tau_K}}\Sigma^{\tau_K-1} \Sigma_{+}^{\infty} G) \ar[r]^-{\cong} & \bigvee_{I_{\tau_K}}\Sigma^{\tau_K-1} \Sigma_{+}^{\infty} G \ar[r]^-{\alpha} & \sk_{\tau_K-1}K \ar[r]^-{\cong} & F(M). }$$
Since $\tau_K-1 -\beta_M \leq \tau_K-1-\beta_K \leq n$, part (i) yields that there exists 
$$\alpha' \in [\bigvee_{I_{\tau_K}}\Sigma^{\tau_K-1} \Sigma_{+}^{\infty} G, M]^G$$ 
such that $F(\alpha')$ equals the latter composition. Next, choose a distinguished triangle
$$\xymatrix{\bigvee_{I_{\tau_K}} \Sigma^{\tau_K-1} \Sigma_{+}^{\infty} G \ar[r]^-{\alpha'} & M \ar[r] & K' \ar[r] & \bigvee_{I_{\tau_K}} \Sigma^{\tau_K} \Sigma_{+}^{\infty} G.}$$
The $\Sigma_{+}^{\infty} G$-module $K'$ has a finite $(\beta_M, \tau_K)$-cell structure. On the other hand, since $F$ is exact, one of the axioms for triangulated categories implies that there is a morphism $K \lra F(K')$ which makes the diagram
$$\xymatrix{\bigvee_{I_{\tau_K}} \Sigma^{\tau_K-1} \Sigma_{+}^{\infty} G \ar[r]^-{\alpha} \ar[d]^-{\cong} & \sk_{\tau_K-1}K \ar[r] \ar[d]^-{\cong} & K \ar[r] \ar[d] & \bigvee_{I_{\tau_K}} \Sigma^{\tau_K} \Sigma_{+}^{\infty} G \ar[d]^-{\cong} \\ F(\bigvee_{I_{\tau_K}} \Sigma^{\tau_K-1} \Sigma_{+}^{\infty} G) \ar[r]^-{F(\alpha')} & F(M) \ar[r] & F(K') \ar[r] & F(\bigvee_{I_{\tau_K}} \Sigma^{\tau_K} \Sigma_{+}^{\infty} G).}$$
commute. Now another five lemma argument shows that in fact the map $K \lra F(K')$ is an isomorphism in $\Ho(\Mod \Sigma_{+}^{\infty} G)$ and thus the proof of part (ii) is completed.

(iii) By Proposition \ref{factorization}, any morphism $\alpha \in [\Sigma^{n+1}\Sigma_{+}^{\infty} G, \Sigma_{+}^{\infty} G]^G$ factors over some $\Sigma_{+}^{\infty} G$-module $K$ which has a finite $(1, n)$-cell structure. By part (ii), there exists an $\Sigma_{+}^{\infty} G$-module $K'$ admitting a finite $(\beta_{K'}, \tau_{K'})$-cell structure and such that $1 \leq \beta_{K'}$, $\tau_{K'} \leq n$ and $F(K') \cong K$. Hence we get a commutative diagram
$$\xymatrix{F(\Sigma^{n+1}\Sigma_{+}^{\infty} G)  \cong \Sigma^{n+1}\Sigma_{+}^{\infty} G \ar[dr] \ar[rr]^-{\alpha} & & \Sigma_{+}^{\infty} G =  F(\Sigma_{+}^{\infty} G)\\ & F(K'). \ar[ur] &}$$
Since $n+1 - \beta_{K'} \leq n+1-1 =n$ and $\tau_{K'} -0=\tau_{K'} \leq n$, part (i) implies that both maps in the latter factorization are in the image of $F$. Hence, the map $\alpha$ is also in the image of the functor $F$ yielding that
$$F \colon [\Sigma_{+}^{\infty} G, \Sigma_{+}^{\infty} G]_{n+1}^G \lra [\Sigma_{+}^{\infty} G, \Sigma_{+}^{\infty} G]_{n+1}^G$$
is surjective. As the source and target of this morphism are finite of the same cardinality, we conclude that it is an isomorphism. \end{proof}

\begin{coro} \label{redlow} Let $F$ be as in \ref{reduction}. If the morphism
$$F \colon [\Sigma_{+}^{\infty} G, \Sigma_{+}^{\infty} G]_*^G \lra [\Sigma_{+}^{\infty} G, \Sigma_{+}^{\infty} G]_*^G$$ 
is an isomorphism for $\ast \leq 7$, then the functor $F$ restricts to an equivalence on the full subcategory of free $G$-spectra.\end{coro}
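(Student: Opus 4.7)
My plan is to treat the corollary as formal triangulated-category packaging built on top of Lemma \ref{techlem}. First I would use part (iii) of that lemma as an inductive engine: if $F$ is known to be an isomorphism on $[\Sigma_{+}^{\infty}G, \Sigma_{+}^{\infty}G]^G_\ast$ in degrees $\leq n$ with $n \geq 7$, then part (iii) applies (since $n+1 \geq 8$) and upgrades the isomorphism to degree $n+1$. Starting from the hypothesis of the corollary at $n = 7$, induction yields an isomorphism in every non-negative degree. For negative $n$, the group $[\Sigma_{+}^{\infty} G, \Sigma_{+}^{\infty} G]_n^G \cong \pi_n(\S[G])_{(2)}$ vanishes, so the claim there is automatic. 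Thus $F$ is an isomorphism on the entire graded endomorphism ring of $\Sigma_{+}^{\infty}G$.

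With this in hand I would invoke parts (i) and (ii) of Lemma \ref{techlem} with $n$ taken arbitrarily large. Part (i) immediately gives that $F \colon [K,L]^G \to [F(K), F(L)]^G$ is an isomorphism for any pair $K, L$ of finite cell $\Sigma_{+}^{\infty} G$-modules, while part (ii) shows that every finite cell $\Sigma_{+}^{\infty} G$-module is isomorphic to $F(K')$ for some finite cell $K'$. So the restriction of $F$ to finite cell $\Sigma_{+}^{\infty}G$-modules is already an equivalence onto its essential image.

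The final step is to propagate this to the full localizing subcategory $\mathcal{F}$ of free $G$-spectra. For fully faithfulness I would first show that the class $\mathcal{D}$ of $Y \in \mathcal{F}$ such that $F \colon [\Sigma_{+}^{\infty}G, Y]^G_\ast \to [\Sigma_{+}^{\infty}G, F(Y)]^G_\ast$ is an isomorphism is a localizing subcategory, using compactness of $\Sigma_{+}^{\infty}G$ and the fact that $F$ is exact and preserves coproducts; since it contains $\Sigma_{+}^{\infty}G$ by the first paragraph, $\mathcal{D} = \mathcal{F}$. An analogous argument in the first variable then shows that for fixed $Y \in \mathcal{F}$ the class of $X$ for which $F$ is a bijection $[X, Y]^G_\ast \to [F(X), F(Y)]^G_\ast$ is localizing and contains $\Sigma_{+}^{\infty}G$, so equals $\mathcal{F}$. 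For essential surjectivity I would write an arbitrary $Y \in \mathcal{F}$ as a sequential homotopy colimit $\operatorname{hocolim}_i Y_i$ of finite cell modules obtained by successive cell attachments, lift $Y_0$ via Lemma \ref{techlem}(ii), and then inductively lift each attaching map via fully faithfulness on finite cells and take cofibers; since sequential homotopy colimits are built from coproducts and distinguished triangles, both of which $F$ preserves, the resulting $X$ satisfies $F(X) \cong Y$.

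The only genuine obstacle sits inside Lemma \ref{techlem}: the bound $n \geq 8$ there, which comes through Proposition \ref{factorization} from the Hopf invariant one theorem and $\FF_2$-Adams filtration considerations, is precisely what fixes the base case $\ast \leq 7$ in the present corollary. By contrast, the argument sketched above is purely formal triangulated-category bookkeeping built on top of that lemma and presents no further difficulty.
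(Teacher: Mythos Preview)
Your proposal is correct and matches the paper's intended (implicit) argument: the corollary is stated without proof precisely because it follows from Lemma~\ref{techlem}(iii) by the induction you describe, together with the standard fact that an exact coproduct-preserving endofunctor inducing an isomorphism on the graded endomorphism ring of a compact generator is an equivalence on the localizing subcategory it generates. Your second paragraph, invoking parts~(i) and~(ii) for finite cell modules, is an unnecessary detour---once $F$ is an isomorphism on $[\Sigma_+^\infty G,\Sigma_+^\infty G]^G_*$ in all degrees, the two-variable localizing argument in your third paragraph already gives full faithfulness, and essential surjectivity follows more cleanly by observing that the essential image of a fully faithful exact coproduct-preserving functor is itself a localizing subcategory containing $\Sigma_+^\infty G$---but this extra step does no harm.
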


\subsection{Taking care of the dimensions $\leq 7$} 

In this subsection we show that the map 
$$F \colon [\Sigma_{+}^{\infty} G, \Sigma_{+}^{\infty} G]_*^G \lra [F(\Sigma_{+}^{\infty} G), F(\Sigma_{+}^{\infty} G)]_*^G = [\Sigma_{+}^{\infty} G, \Sigma_{+}^{\infty} G]_*^G$$ 
is an isomorphism for $\ast \leq 7$. By Corollary \ref{redlow} this will imply that the functor $F$ restricts to an equivalence on the full subcategory of free $G$-spectra.

Recall that we have a preferred isomorphism $[\Sigma_{+}^{\infty} G, \Sigma_{+}^{\infty} G]_*^G \cong \pi_*\S[G]$. Since the functor $F$ restricts to the identity on the stable Burnside (orbit) category, $F(g)=g$ for any $g \in G$. On the other hand, the map $F \colon \pi_*\S[G] \lra \pi_*\S[G]$ is a ring homomorphism and thus we conclude that it is an isomorphism for $\ast=0$. Note that $\pi_*\S[G]$ is finite for $\ast > 0$ and the Hopf maps $\eta$, $\nu$ and $\sigma$ multiplicatively generate $\pi_{\ast \leq 7} \S$. Hence, it suffices to show that the Hopf maps (considered as elements of $\pi_*\S[G]$ via the unit map $\S \lra \Sigma_{+}^{\infty} G$) are in the image of $F$.

We start by showing that $F(\eta)=\eta$. Recall that the mod $2$ Moore spectrum $M(2)$ in the $2$-localized (non-equivariant) stable homotopy category is defined by the distinguished triangle
$$\xymatrix{\S \ar[r]^-{2} & \S \ar[r]^-{\iota} & M(2) \ar[r]^-{\partial} & \S^1}$$
and the map $2 \colon M(2) \lra M(2)$ factors as a composite
$$\xymatrix{ M(2) \ar[r]^-{\partial} & \S^1 \ar[r]^-{\eta} & \S \ar[r]^-{\iota} & M(2).}$$
Applying the functor $G_+ \wedge^{\mathbf{L}} - \colon \Ho(\Mod \S) \lra \Ho(\Mod \Sigma_{+}^{\infty} G)$ to the distinguished triangle gives a distinguished triangle
$$\xymatrix{\Sigma_{+}^{\infty} G \ar[r]^-{2} & \Sigma_{+}^{\infty} G \ar[r]^-{1 \wedge \iota} & G_+ \wedge M(2) \ar[r]^-{1 \wedge \partial} & \Sigma\Sigma_{+}^{\infty} G}$$
in $\Ho(\Mod \Sigma_{+}^{\infty} G)$. Further, the map $2 \colon G_+ \wedge M(2) \lra G_+ \wedge M(2)$ factors as
$$\xymatrix{ G_+ \wedge M(2) \ar[r]^-{1 \wedge \partial} & \Sigma \Sigma_{+}^{\infty} G \ar[r]^{\eta} & \Sigma_{+}^{\infty} G \ar[r]^-{1 \wedge \iota} & G_+ \wedge M(2).}$$
One of the axioms for triangulated categories implies that we can choose an isomorphism 
$$F(G_+ \wedge M(2)) \cong G_+ \wedge M(2)$$ 
so that the diagram
$$\xymatrix{\Sigma_{+}^{\infty} G \ar[r]^-{2} \ar@{=}[d] & \Sigma_{+}^{\infty} G \ar[r]^-{1 \wedge \iota} \ar@{=}[d] & G_+ \wedge M(2) \ar[r]^-{1 \wedge \partial} \ar[d]^{\cong} & \Sigma\Sigma_{+}^{\infty} G \ar[d]^{\cong} \\ F(\Sigma_{+}^{\infty} G) \ar[r]^-{2} & F(\Sigma_{+}^{\infty} G) \ar[r]^-{F(1 \wedge \iota)} & F(G_+ \wedge M(2)) \ar[r]^-{F(1 \wedge \partial)} & F(\Sigma\Sigma_{+}^{\infty} G).}$$
commutes. We fix the latter isomorphism once and for all and identify $F(G_+ \wedge M(2))$ with $G_+ \wedge M(2)$. Note that under this identification the morphisms $F(1 \wedge \iota)$ and $F(1 \wedge \partial)$ correspond to $1 \wedge \iota$ and $1 \wedge \partial$, respectively. Next, since $F(2)=2$ and $2=(1 \wedge \iota) \eta (1 \wedge \partial)$, one gets the identity
$$(1 \wedge \iota) F(\eta) (1 \wedge \partial)=2.$$
It is well known that the map $2 \colon M(2) \lra M(2)$ is non-zero (In fact, $[M(2), M(2)] \cong \Z/4$)(see e.g. \cite[Proposition 4]{Sch10}). Hence, $2 \colon G_+ \wedge M(2) \lra G_+ \wedge M(2)$ is non-zero as there is a preferred  ring isomorphism
$$[G_+ \wedge M(2), G_+ \wedge M(2)]_*^G \cong [M(2), M(2)]_* \otimes \Z[G].$$
Now it follows that $F(\eta) \neq 0$. Suppose $F(\eta) = \sum_{g \in A} \eta g$, where $A$ is a non-empty subset of $G$. We want to show that $A=\{1\}$. The identity $(1 \wedge \iota) F(\eta) (1 \wedge \partial)=2$ yields
$$2=(1 \wedge \iota) (\sum_{g \in A} \eta g) (1 \wedge \partial)= \sum_{g \in A} (1 \wedge \iota) \eta (1 \wedge \partial) g=\sum_{g \in A}2g.$$
Once again using the isomorphism $[G_+ \wedge M(2), G_+ \wedge M(2)]_*^G \cong [M(2), M(2)]_* \otimes \Z[G]$ and the fact that $2 \neq 0$, we conclude that $A=\{1\}$ and hence, $F(\eta)=\eta$.

Next, we show that $\nu$ is in the image of $F$. Let
$$F(\nu) = m \nu + \sum_{g \in G\setminus\{1\}}n_g g\nu.$$
Recall that 2-locally we have the identity (see e.g. \cite[14.1 (i)]{Tod62})
$$\eta^3 =4\nu.$$
Since $F(\eta)=\eta$, after applying $F$ to this identity one obtains
$$4\nu =\eta^3=F(\eta^3)=F(4 \nu) = 4m\nu + \sum_{g \in G\setminus\{1\}}4n_g g\nu.$$
As the element $\nu$ is a generator of the group $\pi_3\S_{(2)} \cong \Z/8$, we conclude that $m=2k+1$, for some $k \in \Z$, and for any $g \in G\setminus\{1\}$,  $n_g=2l_g$, $l_g \in \Z$. Hence
$$F(\nu) = (2k+1) \nu + \sum_{g \in G\setminus\{1\}}2l_g g\nu.$$
Using that $F(g)=g$, we also deduce that
$$F(g_0\nu)= (2k+1)g_0 \nu + \sum_{g \in G\setminus\{1\}}2l_g g_0g\nu,$$
for any fixed $g_0 \in G\setminus\{1\}$. Thus the image of $F$ in $\pi_3\S_{(2)}[G] \cong \bigoplus_{G} \Z/8$ is additively generated by the rows a $G \times G$-matrix of the form
\[ \left(
    \begin{array}{cccccccccccc}
      2k+1 &         &     &     & \\ 
            & 2k+1     &     & \huge{\even}    &  \\ 
           & \huge{\even}         & \ddots        &     &  \\
            &           &       & 2k+1   &  \\
            &          &     &       & 2k+1 \\ 
  \end{array}\right), \]
where each diagonal entry is equal to $2k+1$ and all the other entries are even. Since the determinant of this matrix is odd and hence a unit in $\Z/8$, the homomorphism $F \colon \pi_3\S_{(2)}[G] \lra \pi_3\S_{(2)}[G]$ is an isomorphism and hence the element $\nu$ is in the image of $F$.

Finally, it remains to show that $\sigma \in \pi_7\S \subset \pi_7\S[G]$ is in the image of $F$. Recall that $\sigma$ is a generator of $\pi_7\S_{(2)} \cong \Z/16$. We use the Toda bracket relation
$$8\sigma =\langle \nu, 8, \nu \rangle$$
(see e.g. \cite[5.13-14]{Tod62})) in $\pi_*\S_{(2)}$ that holds without indeterminacy as $\pi_4\S=0$. This implies that $$8\sigma =\langle \nu, 8, \nu \rangle$$ in $\pi_*\S[G]$. Now since $F$ is an exact functor, one obtains
$$8 F(\sigma) = \langle F(\nu), 8, F(\nu)\rangle.$$
Recall that 
$$F(\nu) = (2k+1) \nu + \sum_{g \in G\setminus\{1\}}2l_g g\nu.$$
Let $F(\sigma) = m \sigma + \sum_{g \in G\setminus\{1\}}n_g g\sigma$. By \cite[Theorem 1.3]{BM09} and the relation $16\sigma=0$, we get
$$8(m\sigma + \sum_{g \in G\setminus\{1\}}n_g g\sigma)= \langle(2k+1) \nu + \sum_{g \in G\setminus\{1\}}2l_g g\nu, 8, (2k+1) \nu + \sum_{g \in G\setminus\{1\}}2l_g g\nu \rangle = 8(2k+1)^2 \sigma.$$
Hence we see that $m$ is odd and the numbers $n_g$ are even. Now a similar argument as in the case of $\nu$ implies that $F \colon \pi_7\S[G] \lra \pi_7\S[G]$ is surjective and hence $\sigma$ is in the image of $F$.

By combining the results of this subsection with Corollary \ref{redlow} we conclude that under the assumptions of \ref{reduction}, the functor $F \colon \Ho(\Sp_{G, (2)}^O) \lra  \Ho(\Sp_{G, (2)} ^O)$ becomes an equivalence when restricted to the full subcategory of free $G$-spectra, or equivalently, when restricted to $\Ho((\Mod \Sigma_{+}^{\infty} G)_{(2)})$. In fact, we have proved the following more general
\begin{prop} \label{gfree} Let $G$ be any finite group and 
$$F \colon \Ho((\Mod \Sigma_{+}^{\infty} G)_{(2)}) \lra \Ho((\Mod \Sigma_{+}^{\infty} G)_{(2)})$$ 
an exact endofunctor which preserves arbitrary coproducts and such that 
$$F(\Sigma_{+}^{\infty} G) =\Sigma_{+}^{\infty} G,$$ 
and $F(g)=g$ in $[\Sigma_{+}^{\infty} G, \Sigma_{+}^{\infty} G]^G$ for any  $g \in G$. Then $F$ is an equivalence of categories. \end{prop}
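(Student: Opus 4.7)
The plan is to observe that Proposition~\ref{gfree} is a clean abstraction of the preceding arguments of this section, and reduces to checking that those arguments go through verbatim. First, I would verify that the cellular machinery of Proposition~\ref{factorization}, Lemma~\ref{techlem} and Corollary~\ref{redlow} applies to any exact, coproduct-preserving endofunctor $F$ of $\Ho((\Mod \Sigma_{+}^{\infty} G)_{(2)})$ satisfying $F(\Sigma_{+}^{\infty} G) = \Sigma_{+}^{\infty} G$. A direct inspection of those proofs shows that neither the stable orbits $\Sigma_{+}^{\infty} G/H$ for proper $H < G$ nor the compatibility of $F$ with restrictions, conjugations or transfers is ever used; the only ingredients are exactness, preservation of coproducts, and the identity $F(\Sigma_{+}^{\infty} G) = \Sigma_{+}^{\infty} G$. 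Consequently, to prove Proposition~\ref{gfree} it suffices to show that
$$F \colon [\Sigma_{+}^{\infty} G, \Sigma_{+}^{\infty} G]_n^G \lra [\Sigma_{+}^{\infty} G, \Sigma_{+}^{\infty} G]_n^G$$
is an isomorphism for $0 \leq n \leq 7$.

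Next, I would verify these low-degree isomorphisms by transcribing the calculations of Subsection~4.2. The hypothesis $F(g)=g$ handles $n=0$ via the ring structure on $\pi_0 \S[G]$. For $F(\eta)$, one applies $G_+ \wedge^{\mathbf{L}} -$ to the Moore cofiber sequence $\S \xto{2} \S \to M(2)$; exactness of $F$ together with $F(2)=2$ (which holds because exact functors of triangulated categories are additive) produces an isomorphism $F(G_+ \wedge M(2)) \cong G_+ \wedge M(2)$ compatible with the maps from/to $\Sigma_{+}^{\infty} G$, forcing $(1 \wedge \iota)\,F(\eta)\,(1 \wedge \partial) = 2$. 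Writing $F(\eta) \in \pi_1\S[G]$ as a $\Z_{(2)}$-linear combination of $\{\eta g\}_{g \in G}$ and using the nontriviality of $2\colon M(2) \to M(2)$ then forces $F(\eta)=\eta$. The cases $F(\nu)$ and $F(\sigma)$ follow by applying $F$ to the relations $\eta^3=4\nu$ and $8\sigma = \langle \nu, 8, \nu\rangle$ respectively, writing the outputs as $\Z_{(2)}[G]$-linear combinations, and observing that the resulting $|G| \times |G|$ coefficient matrices have odd diagonal entries and even off-diagonal entries, hence odd and therefore invertible determinants in $\Z/8$ and $\Z/16$.

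The main delicate step is the Toda-bracket calculation for $\sigma$: inside $\pi_*\S[G]$ the indeterminacy $\nu\,\pi_4\S[G] + \pi_4\S[G]\,\nu$ of $\langle F(\nu), 8, F(\nu)\rangle$ is \emph{a priori} nonzero even though $\pi_4\S = 0$ non-equivariantly, so one must invoke the bilinearity formula of \cite[Theorem~1.3]{BM09} to expand the bracket explicitly and extract its $\sigma$-coefficient. Once this is in hand, the remainder is straightforward bookkeeping in $\Z_{(2)}[G]$, after which the transcribed Corollary~\ref{redlow} upgrades the isomorphism of graded endomorphism rings to an equivalence of categories on all of $\Ho((\Mod \Sigma_{+}^{\infty} G)_{(2)})$.
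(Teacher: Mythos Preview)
Your proposal is correct and takes essentially the same approach as the paper: Proposition~\ref{gfree} is stated there precisely as the observation that the arguments of Section~\ref{free} (Proposition~\ref{factorization}, Lemma~\ref{techlem}, Corollary~\ref{redlow}, and the low-degree computations) use only the hypotheses listed, so no separate proof is given. One small correction: the indeterminacy of $\langle F(\nu), 8, F(\nu)\rangle$ in $\pi_*\S[G]$ is $F(\nu)\cdot\pi_4\S[G] + \pi_4\S[G]\cdot F(\nu)$, which \emph{does} vanish since $\pi_4\S[G] \cong \pi_4\S \otimes \Z[G] = 0$; the appeal to \cite[Theorem~1.3]{BM09} is needed not to control indeterminacy but to expand the bracket bilinearly over the decomposition $F(\nu) = (2k+1)\nu + \sum_{g\neq 1} 2l_g\, g\nu$.
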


\section{Reduction to endomorphisms} \label{redtrres}

In this section we will show that in order to prove Theorem \ref{reduction} (and hence Theorem \ref{damtkicebuli}), it suffices to check that for any subgroup $L \leq G$, the map of graded endomorphism rings
$$F \colon [\Sigma_{+}^{\infty} G/L, \Sigma_{+}^{\infty} G/L]_*^G \lra [F(\Sigma_{+}^{\infty} G/L), F(\Sigma_{+}^{\infty} G/L)]_*^G = [\Sigma_{+}^{\infty} G/L, \Sigma_{+}^{\infty} G/L]_*^G$$
is an isomorphism. 

\subsection{Formulation}

Let $G$ be a finite group and $H$ and $K$ subgroups of $G$. For the rest of this section we fix once and for all a set $\{g \}$ of double coset representatives for $K \setminus G/ H$. Recall that for any $g \in G$, the conjugated subgroup $gHg^{-1}$ is denoted by ${}^gH$. Further,  
$$\kappa_g \colon [\Sigma_{+}^{\infty} G/H, \Sigma_{+}^{\infty} G/K]_*^G \lra [\Sigma_{+}^{\infty} G/ ({}^g H \cap K), \Sigma_{+}^{\infty} G/({}^g H \cap K) ]_*^G$$ 
will stand for the map which is defined by the following commutative diagram:
$$\xymatrix{[\Sigma_{+}^{\infty} G/H, \Sigma_{+}^{\infty} G/K]_*^G  \ar[d]_-{g^*} \ar[r]^-{\kappa_g} & [\Sigma_{+}^{\infty} G/({}^g H \cap K), \Sigma_{+}^{\infty} G/({}^g H \cap K)]_*^G \\ [\Sigma_{+}^{\infty} G/{}^g H, \Sigma_{+}^{\infty} G/ K]_*^G \ar[r]^-{(\tr^K_{{}^g H \cap K})_*} & [\Sigma_{+}^{\infty} G/{}^g H, \Sigma_{+}^{\infty} G/({}^g H \cap K)]_*^G. \ar[u]_-{(\res^{{}^g H}_{{}^g H \cap K})^*} }$$
The aim of this section is to prove 

\begin{prop} \label{splinj} The map 
$$\xymatrix{[\Sigma_{+}^{\infty} G/H, \Sigma_{+}^{\infty} G/K]_*^G \ar[rrr]^-{(\kappa_g)_{[g] \in K \setminus G /H}} &&& \bigoplus_{[g] \in K \setminus G /H} [\Sigma_{+}^{\infty} G/({}^g H \cap K) , \Sigma_{+}^{\infty} G/({}^g H \cap K)]_*^G}$$
is a split monomorphism.
 
\end{prop}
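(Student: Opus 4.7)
The plan is to use the Wirthm\"uller / tom Dieck decomposition of the stable orbit Hom groups to exhibit an explicit retraction of $(\kappa_g)_{[g]}$. Recall from Subsection~\ref{hosp} the natural identification $[\Sigma_{+}^{\infty} G/H, X]_*^G \cong \pi_*^H X$. Applied to $X = \Sigma_{+}^{\infty} G/K$ together with the splitting of the $H$-set $G/K$ into orbits (indexed, via $[g]\mapsto[g^{-1}]$, by $K \setminus G/H$, with the stabilizer of $g^{-1}K$ conjugate through $g$ to $L_g := {}^gH \cap K$), this produces a natural Wirthm\"uller isomorphism
$$\sigma \colon \bigoplus_{[g] \in K \setminus G/H} \pi_*^{L_g} \S \xrightarrow{\;\cong\;} [\Sigma_{+}^{\infty} G/H, \Sigma_{+}^{\infty} G/K]_*^G,$$
whose $[g]$-component $\sigma_g$ unwinds in the stable Burnside category to $\alpha \mapsto \res^K_{L_g} \circ \widetilde{\alpha} \circ \tr^{{}^gH}_{L_g} \circ g^{-1}$; here $g^{-1} \colon \Sigma_{+}^{\infty} G/H \to \Sigma_{+}^{\infty} G/{}^gH$ is the inverse conjugation, and $\widetilde{\alpha}$ is the image of $\alpha$ under the analogous ``trivial-double-coset'' inclusion $\pi_*^{L_g} \S \hookrightarrow [\Sigma_{+}^{\infty} G/L_g, \Sigma_{+}^{\infty} G/L_g]_*^G$ coming from the decomposition of that endomorphism ring indexed by $L_g \setminus G/L_g$. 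Write $\pi_{[e]}^{[g]}$ for the projection onto this trivial-double-coset summand.

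The next step is to show that the composite
$$\rho \;:=\; \Bigl(\bigoplus_{[g]} \pi_{[e]}^{[g]}\Bigr) \circ (\kappa_g)_{[g]} \colon [\Sigma_{+}^{\infty} G/H, \Sigma_{+}^{\infty} G/K]_*^G \lra \bigoplus_{[g]} \pi_*^{L_g} \S$$
agrees with $\sigma^{-1}$. Once this is established, $\sigma \circ \bigoplus_{[g]} \pi_{[e]}^{[g]}$ is a retraction of $(\kappa_g)_{[g]}$ and the proposition follows. To verify the claim it suffices to check $\rho \circ \sigma = \id$; plugging in $\alpha \in \pi_*^{L_{g'}} \S$ and unwinding,
$$\kappa_g(\sigma_{g'}(\alpha)) \;=\; \tr^K_{L_g} \circ \res^K_{L_{g'}} \circ \widetilde{\alpha} \circ \tr^{{}^{g'}H}_{L_{g'}} \circ \bigl((g')^{-1}g\bigr) \circ \res^{{}^gH}_{L_g}.$$
Expanding both $\tr^K_{L_g} \circ \res^K_{L_{g'}}$ and $\tr^{{}^{g'}H}_{L_{g'}} \circ (g')^{-1}g \circ \res^{{}^gH}_{L_g}$ by the double coset formula in the stable Burnside category yields a sum of composites of the shape $\res \circ (\text{conjugation}) \circ \tr$, indexed by pairs of double cosets. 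Tracking which terms can land in the summand $\pi_*^{L_g} \S$ of $[\Sigma_{+}^{\infty} G/L_g, \Sigma_{+}^{\infty} G/L_g]_*^G$ (that is, those factoring through the identity $L_g$-orbit of $G/L_g$) shows that a nonzero contribution forces $[g] = [g']$ and picks out the unit double coset term in each of the two expansions; after the resulting cancellation, the remaining composite is precisely $\widetilde{\alpha}$, whose image under $\pi_{[e]}^{[g]}$ is $\alpha$. All other summands are supported on non-unit double coset pieces of the endomorphism ring and are annihilated by the projection.

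The principal obstacle is the combinatorial bookkeeping in the double coset expansions above, together with the explicit identification of the Wirthm\"uller section $\sigma_g$ with the Burnside-category formula used; both reduce to standard manipulations using the Mackey-functor structure on $\Ho(\Sp_G^O)$ and the tom Dieck-type orbit decomposition reviewed in Section~\ref{prel}, so no additional equivariant input is required.
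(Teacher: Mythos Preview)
Your approach differs from the paper's. The paper uses the equivariant Spanier--Whitehead self-duality of $\Sigma_{+}^{\infty} G/L$ to identify each $\kappa_g$ with the map induced on $[\Sigma^{\infty}(-),\S]^G_*$ by an explicit map of pointed $G$-sets
$$\theta_g \colon (G/L_g)_+ \wedge (G/L_g)_+ \lra G/H_+ \wedge G/K_+,$$
and then checks that $\bigvee_{[g]} \theta_g$ admits a $G$-equivariant section, namely the diagonals $\Delta_g \colon (G/L_g)_+ \to (G/L_g)_+ \wedge (G/L_g)_+$ composed with the isomorphism $\bigvee_{[g]} (G/L_g)_+ \cong G/H_+ \wedge G/K_+$ of Corollary~\ref{orbisplit}. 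This reduces the entire proposition to a one-line statement about finite $G$-sets; no Mackey-formula bookkeeping is needed. By contrast, you stay on the Burnside-category side and try to verify the retraction directly through Wirthm\"uller decompositions and double coset expansions.

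Your candidate retraction is the right one (it is, under duality, exactly the paper's diagonal section), but the argument as written has a genuine gap: the sentence ``tracking which terms can land in the summand \dots shows that a nonzero contribution forces $[g]=[g']$'' is the entire content of the proposition, and you have not carried it out. Two specific points. First, what you call ``the double coset formula'' for $\tr^K_{L_g}\circ\res^K_{L_{g'}}$ is not the standard Mackey formula (which expands $\res\circ\tr$); the relevant expansion here comes from decomposing the fibre product $G/L_{g'}\times_{G/K} G/L_g$ into orbits, and this should be made explicit rather than invoked by name. Second, once both expansions are performed, the resulting double sum consists of spans through various $G/M$ with $M$ subconjugate to conjugates of $L_g$ and $L_{g'}$; the claim that every such term lies outside the $[e]$-summand of $[\Sigma_{+}^{\infty} G/L_g,\Sigma_{+}^{\infty} G/L_g]^G_*$ unless $[g]=[g']$ and both inner double cosets are trivial is a real combinatorial statement---equivalent, in fact, to the $G$-set section the paper writes down---and cannot be waved away as ``standard manipulations''. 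To complete your route you must either carry out that combinatorics in full, or recognize that your $\sigma$ is dual to the inclusion of Corollary~\ref{orbisplit} and short-circuit the computation that way.
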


The author suspects that this statement is known to the experts. However, since we were unable to find a reference, we decided to provide a detailed proof here. The proof is mainly based on the equivariant Spanier-Whitehead duality (\cite[III.2, V.9]{LMS}, \cite[XVI.7]{alaska}) and on a combinatorial analysis of certain pointed $G$-sets.  

Before starting to prove Proposition \ref{splinj}, we explain how it reduces the proof of Theorem \ref{reduction} to endomorphisms. Indeed, there is a commutative diagram

\vspace{-0.45cm}

$$\xymatrix{[\Sigma_{+}^{\infty} G/H, \Sigma_{+}^{\infty} G/K]_*^G  \ar[d]_-{g^*} \ar[r]^-F  & [\Sigma_{+}^{\infty} G/H, \Sigma_{+}^{\infty} G/K]_*^G  \ar[d]^-{g^*} \\ [\Sigma_{+}^{\infty} G/{}^g H, \Sigma_{+}^{\infty} G/ K]_*^G \ar[d]_-{(\tr^K_{{}^g H \cap K})_*} \ar[r]^-F & [\Sigma_{+}^{\infty} G/{}^g H, \Sigma_{+}^{\infty} G/ K]_*^G \ar[d]^-{(\tr^K_{{}^g H \cap K})_*} \\ [\Sigma_{+}^{\infty} G/{}^g H, \Sigma_{+}^{\infty} G/({}^g H \cap K)]_*^G \ar[d]_-{(\res^{{}^g H}_{{}^g H \cap K})^*} \ar[r]^-F & [\Sigma_{+}^{\infty} G/{}^g H, \Sigma_{+}^{\infty} G/({}^g H \cap K)]_*^G \ar[d]^-{(\res^{{}^g H}_{{}^g H \cap K})^*} \\ [\Sigma_{+}^{\infty} G/({}^g H \cap K), \Sigma_{+}^{\infty} G/({}^g H \cap K)]_*^G  \ar[r]^-F  & [\Sigma_{+}^{\infty} G/({}^g H \cap K), \Sigma_{+}^{\infty} G/({}^g H \cap K)]_*^G.}$$
for any $g \in G$, which implies that the diagram
$$\xymatrix{[\Sigma_{+}^{\infty} G/H, \Sigma_{+}^{\infty} G/K]_*^G  \ar[rr]^-{(\kappa_g)_{[g] \in K \setminus G /H}} \ar[d]_-F & & \bigoplus_{[g] \in K \setminus G /H} [\Sigma_{+}^{\infty} G/({}^g H \cap K) , \Sigma_{+}^{\infty} G/({}^g H \cap K)]_*^G  \ar[d]^-{ \bigoplus_{[g] \in K \setminus G /H} F} \\ [\Sigma_{+}^{\infty} G/H, \Sigma_{+}^{\infty} G/K]_*^G  \ar[rr]^-{(\kappa_g)_{[g] \in K \setminus G /H}}   & &  \bigoplus_{[g] \in K \setminus G /H} [\Sigma_{+}^{\infty} G/({}^g H \cap K) , \Sigma_{+}^{\infty} G/({}^g H \cap K)]_*^G }$$
commutes. If we now assume that for any subgroup $L \leq G$, the map
$$F \colon [\Sigma_{+}^{\infty} G/L, \Sigma_{+}^{\infty} G/L]_*^G \lra [\Sigma_{+}^{\infty} G/L, \Sigma_{+}^{\infty} G/L]_*^G$$
is an isomorphism, then the right vertical map in the latter commutative square is an isomorphism. Proposition \ref{splinj} implies that the horizontal maps are injective. Hence, by a simple diagram chase, it follows that the left vertical morphism is injective as well. But now we know that for $\ast=0$ the morphism 
$$F \colon [\Sigma_{+}^{\infty} G/H, \Sigma_{+}^{\infty} G/K]_*^G \lra [\Sigma_{+}^{\infty} G/H, \Sigma_{+}^{\infty} G/K]_*^G$$ 
is the identity and for $\ast >0$ it has the same finite source and target (Subsection \ref{hosp}). Combining this with the latter injectivity result allows us to conclude that the map
$$F \colon [\Sigma_{+}^{\infty} G/H, \Sigma_{+}^{\infty} G/K]_*^G \lra [\Sigma_{+}^{\infty} G/H, \Sigma_{+}^{\infty} G/K]_*^G$$
is indeed an isomorphism for any integer $\ast$. 

The rest of this section is devoted to the proof of Proposition \ref{splinj}.

\subsection{Induction and coinduction} \label{indcoind}

Let $H$ be a subgroup of $G$ and $ i \colon H \hookrightarrow G$ denote the inclusion. The class of all finite dimensional orthogonal $H$-representations of the form $i^*V$, where $V$ is a finite dimensional orthogonal $G$-representation, contains the trivial representation and is closed under direct sums. Hence, according to \cite[II.2.2, III.4.2]{MM02}, there is a stable model category $\Sp_{H \leq G}^O$ of $H$-equivariant orthogonal spectra indexed on the class of such representations (cf. Subsection \ref{Gorthostab}). Since finite dimensional orthogonal $H$-representations which come from $G$-representations are cofinal in the class of all finite dimensional orthogonal $H$-representations, \cite[V.1.10]{MM02} implies that the Quillen adjunction
$$\xymatrix{ \id  \colon \Sp_{H \leq G}^O \ar@<0.5ex>[r] & \Sp_H^O \cocolon \id \ar@<0.5ex>[l]}$$
is a Quillen equivalence. Next, recall that there is a Quillen adjunction
$$\xymatrix{ G \ltimes_{H} -  \colon \Sp_{H \leq G}^O \ar@<0.5ex>[r] & \Sp_G^O \cocolon \Res_H^G, \ar@<0.5ex>[l]}$$
where $(G \ltimes_{H}X)(V)= G_+ \wedge_{H} X(i^*V)$, for any $X \in \Sp_{H \leq G}^O$ and any finite dimensional orthogonal $G$-representation $V$. The functor $\Res_H^G$ is just the restriction along the map $i \colon H \hookrightarrow G$. In fact, the functor $\Res_H^G$ preserves weak equivalences and moreover, it is also a left Quillen functor as we see from the Quillen adjunction
$$\xymatrix{ \Res_H^G  \colon \Sp_G^O \ar@<0.5ex>[r] & \Sp_{H \leq G}^O  \cocolon \Map_H(G_+,-). \ar@<0.5ex>[l]}$$
The right adjoint $\Map_H(G_+,-)$ is defined by $\Map_H(G_+, X)(V) = \Map_H(G_+, X(i^*V))$. Now since the functor $\id \colon \Sp_{H \leq G}^O \lra \Sp_H^O$ is a left Quillen functor, we also get a Quillen adjunction
$$\xymatrix{ \Res_H^G  \colon \Sp_G^O \ar@<0.5ex>[r] & \Sp_H^O  \cocolon \Map_H(G_+,-). \ar@<0.5ex>[l]}$$
These Quillen adjunctions induce corresponding adjunctions on the derived level:
$$\xymatrix{ G \ltimes_{H} -  \colon \Ho(\Sp_H^O) \sim \Ho(\Sp_{H \leq G}^O) \ar@<0.5ex>[r] & \Ho(\Sp_G^O) \cocolon \Res_H^G, \ar@<0.5ex>[l]}$$
and
$$\xymatrix{ \Res_H^G  \colon \Ho(\Sp_G^O) \ar@<0.5ex>[r] & \Ho(\Sp_H^O)  \cocolon \Map_H(G_+,-). \ar@<0.5ex>[l]}$$
Here we slightly abuse notation by denoting point-set level functors and their associated derived functors with same symbols. Next, note that the equivalence 
$$\Ho(\Sp_H^O) \sim \Ho(\Sp_{H \leq G}^O)$$ 
is a preferred one and is induced from the Quillen equivalence at the very beginning of this subsection.

The adjunctions recalled here are in fact special instances of the ``change of groups'' and ``change of universe'' functors of \cite[V]{MM02}. The functor $G \ltimes_{H} -$ is usually called the \emph{induction} and the functor $\Map_H(G_+,-)$ is called the \emph{coinduction}.

Let now $G \ltimes_H X$ denote the balanced product $G_+ \wedge_H X$ for any pointed $G$-set (space) $X$. Consider the the following natural point-set level map:
$$w_H \colon G \ltimes_{H} X \lra \Map_H(G_+,X)$$
given by
\[ w_H([g,x])(\gamma) = \begin{cases} \gamma g x &\mbox{if } \gamma g \in H \\
\ast & \mbox{if } \gamma g \notin H. \end{cases}\]

We remind the reader of the following result due to Wirthm\"uller:

\begin{prop}[Wirthm\"uller Isomorphism, see e.g. \cite{may03}] \label{wirth} The map $w_H$ induces a natural isomorphism between the derived functors 
$$G \ltimes_{H} - \colon \Ho(\Sp_H^O) \lra \Ho(\Sp_G^O)$$ 
and 
$$\Map_H(G_+,-) \colon \Ho(\Sp_H^O) \lra \Ho(\Sp_G^O).$$ 
That is, the left and right adjoint functors of 
$$\Res_H^G \colon \Ho(\Sp_G^O) \lra \Ho(\Sp_H^O)$$
are naturally isomorphic. \end{prop}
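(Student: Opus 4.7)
The plan is to verify that $w_H$ is a well-defined natural transformation of functors $\Sp_H^O \to \Sp_G^O$, pass to derived functors, and then show it is an isomorphism by reducing to a set of compact generators.

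First, one checks by direct calculation that $w_H$ is well-defined (the formula descends over the balanced product because of the placement of the $H$-factor), pointed, $G$-equivariant, and natural in $X$. The $G$-equivariance uses that the $G$-action on $\Map_H(G_+, X)$ is $(g' \cdot \phi)(\gamma) = \phi(\gamma g')$ while the $G$-action on $G_+ \wedge_H X$ is $g' \cdot [g, x] = [g'g, x]$; substituting into the piecewise formula makes both sides match. This yields a natural transformation of underlying functors $\Sp_H^O \to \Sp_G^O$, and evaluating on a cofibrant-fibrant replacement $X^{cf}$ (which both functors preserve suitably by Ken Brown) produces a natural transformation on derived functors.

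Second, one observes that both derived functors are exact and preserve arbitrary coproducts. Exactness follows because both are derived from Quillen functors between stable model categories. Coproduct preservation is automatic for the left adjoint $G \ltimes_H(-)$. For $\Map_H(G_+, -)$ — a priori only a right adjoint — preservation of coproducts relies crucially on $G$ being finite: as a pointed $H$-space, $G_+$ is a finite wedge of free $H$-orbits, so $\Map_H(G_+, X)$ is a finite product of copies of $X$, and in a stable model category a finite product agrees with a finite coproduct up to stable equivalence.

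Third, because $\{\Sigma_+^\infty H/K \mid K \leq H\}$ is a set of compact generators of $\Ho(\Sp_H^O)$, it suffices to check that $w_H$ is a stable equivalence when $X = \Sigma_+^\infty H/K$. On one side, $G \ltimes_H \Sigma_+^\infty H/K \cong \Sigma_+^\infty(G \times_H H/K) \cong \Sigma_+^\infty G/K$. On the other side, using testing against the generators $\Sigma_+^\infty G/J$ of $\Ho(\Sp_G^O)$ together with the adjunctions
$$[\Sigma_+^\infty G/J, G \ltimes_H X]^G \cong [\Res_H^G \Sigma_+^\infty G/J, X]^H \cong [\Sigma_+^\infty G/J, \Map_H(G_+, X)]^G,$$
one can compute both Hom-groups via the double coset formula applied to $G/J \times_G G/K$; both expressions agree, and tracing through the adjunction units/counits verifies that the resulting composite of natural isomorphisms is precisely postcomposition with $w_H$. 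Alternatively, one can appeal to Spanier–Whitehead self-duality of $\Sigma_+^\infty G/H$ (valid because, for finite $G$, the adjoint $H$-representation on the tangent space of $G/H$ at the identity coset is zero-dimensional), which identifies both functors with smashing against the dual, with $w_H$ realizing the duality pairing.

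The main obstacle is the last step: not simply exhibiting some isomorphism between the two sides, but showing that the natural transformation induced by the specific formula defining $w_H$ is the one that matches the two adjunctions. This is essentially the Pontryagin–Thom content of the Wirthmüller isomorphism, and the finiteness of $G$ is what allows one to avoid any dimension shift by the adjoint representation.
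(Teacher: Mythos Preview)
The paper does not prove this proposition; it is simply cited from \cite{may03} as a classical result. So there is no ``paper's own proof'' to compare against, and your task was really to supply a self-contained argument.

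Your proposal does not do this. The displayed chain
\[
[\Sigma_+^\infty G/J, G \ltimes_H X]^G \cong [\Res_H^G \Sigma_+^\infty G/J, X]^H \cong [\Sigma_+^\infty G/J, \Map_H(G_+, X)]^G
\]
is circular: the second isomorphism is the honest adjunction $(\Res_H^G, \Map_H(G_+,-))$, but the \emph{first} isomorphism is precisely the Wirthm\"uller statement you are trying to prove. The functor $G \ltimes_H -$ is the \emph{left} adjoint of $\Res_H^G$, so the adjunction you actually have available runs the other way, $[G \ltimes_H Y, Z]^G \cong [Y, \Res_H^G Z]^H$, which does not help compute $[\,-\,, G \ltimes_H X]^G$.

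Your alternative route via Spanier--Whitehead self-duality of $\Sigma_+^\infty G/H$ is equally circular: in this paper (and in the standard references) that self-duality is \emph{deduced from} the Wirthm\"uller isomorphism, not the other way around --- see the paragraph immediately following the proposition, where the isomorphism $\Sigma_+^\infty G/H \cong D(\Sigma_+^\infty G/H)$ is obtained by applying $w_H$.

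You are candid that ``the main obstacle is the last step'' and that this is ``essentially the Pontryagin--Thom content,'' but that content is the entire theorem. The reduction to compact generators and the observation that both functors are exact and coproduct-preserving are correct and useful, but they only reduce the problem to checking $w_H$ is an equivalence on $\Sigma_+^\infty H/K$; they do not carry out that check. A genuine proof must at some point produce an inverse (e.g.\ via an equivariant Pontryagin--Thom collapse, as in \cite{may03} or \cite[II.6]{LMS}) or otherwise verify directly that $w_H$ induces isomorphisms on all $\pi_*^L$.
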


As a consequence of the Wirthm\"uller isomorphism, one gets that for any subgroup $L \leq G$, the equivariant spectrum $\Sigma_{+}^{\infty} G/L$ is self-dual. Indeed, the map
$$\xymatrix{\Sigma_{+}^{\infty} G/L \cong G \ltimes_{L} \S \ar[r]^-{w_L} & \Map_L(G_+, \S) \cong \Map(\Sigma_{+}^{\infty} G/L, \S) \cong D(\Sigma_{+}^{\infty} G/L)}$$
is an isomorphism in $\Ho(\Sp_G^O)$, where $D$ is the equivariant Spanier-Whitehead duality functor (\cite[II.6, III.2, V.9]{LMS}, \cite[XVI.7]{alaska}). 

\;

\;

We conclude the subsection with the following well-known lemma and its corollaries. 

\begin{lem}[Double coset formula] \label{splitting} Suppose $G$ is a finite group and $H$ and $K$ arbitrary subgroups of $G$. Let $c_g \colon {}^gH \lra H$ denote the map $c_g(x)=g^{-1}xg$, $g \in G$. Then for any pointed $H$-set $X$, the $K$-equivariant maps
$$K \ltimes_{{}^g H \cap K } \Res^{{}^g H}_{{}^g H \cap K} (c_g^*X) \lra \Res^G_K (G \ltimes_H X), \;\;\;\;\; [k,x] \mapsto [kg, x],$$
induce a natural splitting
$$\bigvee_{[g] \in K \setminus G / H} K \ltimes_{{}^g H \cap K} \Res^{{}^g H}_{{}^g H \cap K}(c_g^*X) \cong \Res^G_K (G \ltimes_H X).$$ \end{lem}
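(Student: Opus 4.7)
My plan is to prove the splitting entirely at the level of pointed $K$-sets, where it is a purely combinatorial consequence of the decomposition of $G$ as a $(K,H)$-biset. Since every functor that appears ($\Res^G_K$, $G \ltimes_H -$, $K \ltimes_{{}^gH \cap K} -$, $\Res^{{}^gH}_{{}^gH \cap K}$ and $c_g^*$) preserves wedges and is natural in the pointed $H$-set $X$, it suffices to produce the splitting on the underlying pointed $K$-sets.

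First, I would use the biset decomposition $G = \bigsqcup_{[g] \in K \setminus G / H} KgH$ and observe that smashing $G_+$ with $X$ over the right $H$-action yields a $K$-equivariant wedge decomposition
\[
\Res^G_K\bigl(G_+ \wedge_H X\bigr) \;\cong\; \bigvee_{[g] \in K \setminus G/H} (KgH)_+ \wedge_H X.
\]
It then remains, for each double-coset representative $g$, to identify the summand $(KgH)_+ \wedge_H X$ with $K \ltimes_{{}^g H \cap K} \Res^{{}^g H}_{{}^g H \cap K}(c_g^* X)$ via the map $[k, x] \mapsto [kg, x]$.

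For this identification I would compute the $K \times H^{\mathrm{op}}$-stabilizer of $g \in KgH$: an equation $kg = gh$ with $k \in K$ and $h \in H$ forces $k = ghg^{-1} \in {}^gH \cap K$ and $h = g^{-1}kg = c_g(k)$. Hence $KgH$ is isomorphic, as a $(K,H)$-biset, to $K \times_{{}^gH \cap K} H$, where ${}^gH \cap K$ acts on the right factor of $H$ from the left through $c_g$ and on the left factor of $K$ by right multiplication. The map $[k, h] \mapsto kgh$ realises this biset isomorphism. Smashing over $H$ with $X$ then collapses the $H$-factor and yields precisely $K \ltimes_{{}^gH \cap K} c_g^* X$; composing with the isomorphism above, one sees that $[k,x] \mapsto [kg, x]$ is exactly the resulting $K$-equivariant bijection. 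The well-definedness check is immediate: for $k_0 \in {}^gH \cap K$ with associated $h_0 = c_g(k_0) = g^{-1}k_0 g \in H$, one has $[k k_0 g, x] = [kg \cdot h_0, x] = [kg, h_0 \cdot x]$ in $G_+ \wedge_H X$, which matches the image of $[k, k_0 \cdot x] = [k, h_0 \cdot x]$.

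The main obstacle, as I see it, is purely bookkeeping: one must keep straight the right $H$-action on $G$ versus the left $H$-action on $X$, and verify that the action of ${}^gH \cap K$ on $X$ through $c_g$ (rather than $c_{g^{-1}}$) is the one that makes the map well-defined. Once these conventions are pinned down, bijectivity on each double-coset summand and $K$-equivariance follow directly from the explicit formula, and naturality in $X$ is automatic.
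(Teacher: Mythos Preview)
The paper states this lemma without proof, treating it as a well-known fact (it is the classical double coset formula for induced sets). Your argument is correct and is exactly the standard proof: decompose $G$ into $(K,H)$-double cosets, identify each $KgH$ as the biset $K \times_{{}^gH\cap K} H$ via $[k,h]\mapsto kgh$ with ${}^gH\cap K$ acting on $H$ through $c_g$, and then collapse the $H$-factor against $X$. Your well-definedness and equivariance checks are right, and the bookkeeping concern you flag (that one must use $c_g$ and not $c_{g^{-1}}$) is the only genuine subtlety, which you resolve correctly.
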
 

\begin{coro} \label{splitting1} Suppose $G$ is a finite group and $H$ and $K$ subgroups of $G$. Then for any $Y \in \Ho(\Sp_H^O)$, there is a natural splitting
$$\bigvee_{[g] \in K\setminus G /H} K \ltimes_{{}^gH  \cap K} \Res^{{}^gH}_{{}^gH  \cap K}(c_g^*Y) \cong \Res^G_K (G \ltimes_H Y).$$ \end{coro}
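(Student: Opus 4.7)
The plan is to upgrade the pointed-$H$-set decomposition of Lemma \ref{splitting} to one for orthogonal $H$-spectra by applying it levelwise, and then pass to homotopy categories using the Quillen properties of the change-of-group functors recorded in Subsection \ref{indcoind}.

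First, I would set up the point-set level isomorphism. Viewing $Y \in \Sp_H^O$ as a $G\Top_*$-enriched functor from (the appropriate analog of) $O_H$ to $G\Top_*$, recall that $(G \ltimes_H Y)(V) = G_+ \wedge_H Y(i^*V)$, and the constructions $K \ltimes_{{}^gH \cap K} -$, $c_g^*$, $\Res^G_K$, $\Res^{{}^gH}_{{}^gH \cap K}$ are all defined levelwise by the analogous operations on pointed spaces. Applied to the pointed $H$-space $Y(i^*V)$, Lemma \ref{splitting} gives an isomorphism of pointed $K$-spaces
$$\bigvee_{[g] \in K\setminus G/H} K \ltimes_{{}^gH \cap K} \Res^{{}^gH}_{{}^gH \cap K}(c_g^* Y(i^*V)) \xrightarrow{\ \cong\ } \Res^G_K (G \ltimes_H Y(i^*V)),$$
natural in the space $Y(i^*V)$. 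Naturality then makes these isomorphisms compatible with the structure maps of $Y$, so they assemble into a natural point-set level isomorphism of orthogonal $K$-spectra $\bigvee_{[g]} K \ltimes_{{}^gH \cap K} \Res^{{}^gH}_{{}^gH \cap K}(c_g^* Y) \cong \Res^G_K (G \ltimes_H Y)$.

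Next, I would pass to the homotopy category by replacing $Y$ by a cofibrant model $Y^c \to Y$ in $\Sp_H^O$ and verifying that both sides of the point-set isomorphism, evaluated at $Y^c$, already compute the corresponding derived functors. By Subsection \ref{indcoind} the inductions $G \ltimes_H -$ and $K \ltimes_{{}^gH \cap K} -$ are left Quillen; the restrictions $\Res^G_K$ and $\Res^{{}^gH}_{{}^gH \cap K}$ are simultaneously left Quillen and preserve \emph{all} weak equivalences; and the conjugation $c_g^*$ is an isomorphism of model categories induced by the group isomorphism ${}^gH \xrightarrow{\cong} H$. Finite wedges in the stable model category $\Sp_K^O$ are finite (co)products and coincide with their derived versions on cofibrant input. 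Hence each composite appearing on either side, evaluated at the cofibrant $Y^c$, computes the value of the corresponding derived composite at $Y$.

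The main point to get right will be the verification that the middle functor $\Res^{{}^gH}_{{}^gH \cap K}$ in the composite $K \ltimes_{{}^gH \cap K} \circ \Res^{{}^gH}_{{}^gH \cap K} \circ c_g^*$ does not destroy cofibrancy: if it did, a cofibrant replacement would be needed before inducing up, and the point-set isomorphism could fail to descend as stated. This is precisely salvaged by the observation, recorded in Subsection \ref{indcoind}, that restriction along a subgroup inclusion is a left Quillen functor (the right adjoint being the coinduction $\Map_{{}^gH \cap K}({}^gH_+,-)$), hence preserves cofibrant objects. Naturality in $Y \in \Ho(\Sp_H^O)$ then follows from point-set naturality together with functoriality of cofibrant replacement, which concludes the argument.
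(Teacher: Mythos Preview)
Your proposal is correct and follows essentially the same approach the paper has in mind: the paper states Corollary~\ref{splitting1} without proof, as an immediate consequence of Lemma~\ref{splitting} applied levelwise (the parenthetical ``(space)'' just before Lemma~\ref{splitting} signals that the $H$-set statement is intended to be read for $H$-spaces as well), together with the left Quillen properties of induction, restriction, and conjugation recorded in Subsection~\ref{indcoind}. Your careful check that restriction preserves cofibrancy, so that no intermediate cofibrant replacement is needed, is exactly the point that justifies passing the point-set isomorphism to the homotopy category.
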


Note that if $X$ is a pointed $G$-set, then there is a natural isomorphism
$$G \ltimes_{H} \Res_H^G X \cong  G/H_+ \wedge X$$
given by $[g,x] \mapsto ([g] \wedge gx)$. 

\begin{coro} \label{orbisplit} The maps
$$G/({}^g H \cap K)_+ \lra G/H_+ \wedge G/K_+, \;\;\;\;\; [x] \mapsto [xg] \wedge [x]$$
of pointed $G$-sets induce a natural splitting
$$\xymatrix{\bigvee_{[g] \in K \setminus G / H} G/({}^g H \cap K)_+ \ar[r]^-{\cong} & G/H_+ \wedge G/K_+.}$$
\end{coro}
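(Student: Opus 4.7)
The plan is to verify the statement at the level of (unpointed) $G$-sets, where it reduces to the classical orbit-stabilizer analysis underlying the double coset formula. Since smashing with a disjoint basepoint converts disjoint unions into wedges, there is a canonical $G$-equivariant identification $G/H_+ \wedge G/K_+ \cong (G/H \times G/K)_+$. Hence it suffices to establish a $G$-equivariant bijection
$$\bigsqcup_{[g] \in K \setminus G / H} G/({}^g H \cap K) \xrightarrow{\;\cong\;} G/H \times G/K,$$
sending $x({}^gH \cap K)$ in the $[g]$-summand to $(xgH, xK)$; adjoining a disjoint basepoint then yields the wedge decomposition of the corollary.

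My first step would be to check that each summand maps $G$-equivariantly and injectively into $G/H \times G/K$. Both well-definedness and injectivity rest on the single observation that for $y \in G$, the equalities $ygH = gH$ and $yK = K$ hold simultaneously exactly when $y \in gHg^{-1} \cap K = {}^gH \cap K$. Consequently the image of the $[g]$-summand is precisely the $G$-orbit of $(gH, K)$, whose stabilizer is ${}^gH \cap K$, and the map identifies the summand with this orbit.

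Next I would show that these orbits exhaust $G/H \times G/K$ disjointly. For exhaustion, an arbitrary point $(\gamma_1 H, \gamma_2 K)$ is moved by $\gamma_2^{-1}$ into the form $(\gamma_2^{-1}\gamma_1 H, K)$, which lies in the orbit indexed by the double coset of $\gamma_2^{-1}\gamma_1$. For disjointness, if some $\gamma \in G$ sends $(gH, K)$ to $(g'H, K)$, then $\gamma K = K$ forces $\gamma \in K$, and $\gamma g H = g'H$ then yields $g' \in KgH$, so the two double cosets agree.

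There is no genuine obstacle here; the proof is a bookkeeping exercise closely paralleling the equivariant double coset formula of Lemma \ref{splitting}. The only point requiring care is to ensure that the conjugation convention ${}^gH = gHg^{-1}$ is compatible with the prescribed formula $[x] \mapsto [xg] \wedge [x]$, which is precisely the content of the equivalence $ygH = gH \Leftrightarrow y \in {}^gH$ used in the first step.
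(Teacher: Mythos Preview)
Your proof is correct. You verify the splitting directly at the level of $G$-sets by carrying out the orbit--stabilizer analysis of $G/H \times G/K$: each $G$-orbit contains a point of the form $(gH,K)$, its stabilizer is ${}^gH \cap K$, and two such points lie in the same orbit precisely when their $g$'s represent the same double coset.

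The paper's proof is organized differently. Rather than argue from scratch, it chains together the categorical isomorphisms $G \ltimes_K \Res^G_K X \cong G/K_+ \wedge X$ and $G \ltimes_K (K \ltimes_{{}^gH \cap K} S^0) \cong G/({}^gH \cap K)_+$ so as to reduce the statement to Lemma~\ref{splitting} (the double coset formula for $\Res^G_K(G \ltimes_H -)$), and only afterwards unwinds the composite to extract the explicit formula $[x]\mapsto [xg]\wedge [x]$. Your argument is more elementary and self-contained, effectively reproving the relevant instance of Lemma~\ref{splitting} by hand; the paper's argument is shorter because the bookkeeping has already been packaged into that lemma. Both routes encode the same underlying combinatorics.
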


\begin{proof} By the last observation and Lemma \ref{splitting}, we have a chain of isomorphisms of pointed $G$-sets:
\begin{align*} \begin{split} \bigvee_{[g] \in K \setminus G / H} G/({}^g H \cap K)_+ \cong \bigvee_{[g] \in K \setminus G / H} G \ltimes_K (K/({}^g H \cap K))_+ \cong \hspace{1.7cm}  \\ G \ltimes_K (\bigvee_{[g] \in K \setminus G / H} K/({}^g H \cap K)_+ ) \cong G \ltimes_K (\bigvee_{[g] \in K \setminus G / H} K \ltimes_{{}^g H \cap K} S^0) \cong \hspace{1.3cm} \\ G \ltimes_{K}\Res_K^G(G \ltimes_H S^0) \cong G \ltimes_K \Res_K^G(G/H_+) \cong G/K_+ \wedge G/H_+ \cong G/H_+ \wedge G/K_+. \end{split} \end{align*} 
Here the last isomorphism is the twist. Going through these explicit isomorphisms we see that any $[x] \in G/({}^g H \cap K)_+$ is sent to $[xg] \wedge [x] \in G/H_+ \wedge G/K_+ $. \end{proof} 

\subsection{Proof of Proposition \ref{splinj}} As we already mentioned after Proposition \ref{wirth}, we have the isomorphisms
$$D(\Sigma_{+}^{\infty} G/L) \cong \Sigma_{+}^{\infty} G/L, \;\; L \leq G,$$ 
in $\Ho(\Sp_G^O)$, where $D$ is the equivariant Spanier-Whitehead duality. It follows from \cite[II.6, III.2, V.9]{LMS} (see also \cite{Lew96}) that under these isomorphisms the transfer maps correspond to restrictions. In particular, for any $g \in G$, the diagram
$$\xymatrix{D(\Sigma_{+}^{\infty} G/({}^g H \cap K)) \ar[rr]^-{D(\tr^K_{{}^g H \cap K})} & & D(\Sigma_{+}^{\infty} G/K)  \\ \Sigma_{+}^{\infty} G/({}^g H \cap K) \ar[rr]^-{\res^{K}_{{}^g H \cap K}} \ar[u]^{\cong} & & \Sigma_{+}^{\infty} G/K_. \ar[u]_{\cong}}$$
commutes. Combining this with the Spanier-Whitehead duality, for any $g \in G$, one gets the following commutative diagram with all vertical maps isomorphisms:
$$\xymatrix{[\Sigma_{+}^{\infty} G/{}^g H, \Sigma_{+}^{\infty} G/ K]_*^G \ar[d]_{\cong} \ar[rr]^-{(\tr^K_{{}^g H \cap K})_*} & & [\Sigma_{+}^{\infty} G/{{}^g H}, \Sigma_{+}^{\infty} G/({}^g H \cap K)]_*^G \ar[d]^{\cong} \\  [\Sigma_{+}^{\infty} G/{}^g H \wedge D(\Sigma_{+}^{\infty} G/ K), \S]_*^G \; \ar[rr]^-{(1 \wedge D(\tr^K_{{}^g H \cap K}))^*} \ar[d]_{\cong} & & [\Sigma_{+}^{\infty} G/{{}^g H} \wedge D(\Sigma_{+}^{\infty} G/({}^g H \cap K)), \S]_*^G \ar[d]^{\cong} \\ [\Sigma_{+}^{\infty} G/{}^g H \wedge \Sigma_{+}^{\infty} G/ K, \S]_*^G  \ar[d]_{\cong} \ar[rr]^-{(1 \wedge \res^{K}_{{}^g H \cap K})^*} & & [\Sigma_{+}^{\infty} G/{{}^g H} \wedge \Sigma_{+}^{\infty} G/({}^g H \cap K), \S]_*^G \ar[d]^{\cong} \\ [\Sigma^{\infty} (G/{}^g H_+ \wedge G/ K_+), \S]_*^G  \ar[rr]^-{(1 \wedge \res^{K}_{{}^g H \cap K})^*} & & [\Sigma^{\infty} (G/{}^g H_+ \wedge G/({}^g H \cap K)_+), \S]_*^G}$$
Using again the Spanier-Whitehead duality and that $\Sigma_{+}^{\infty} G/L$, $L \leq G$, is self-dual, we also have commutative diagrams for every $g \in G$:
$$\xymatrix{[\Sigma_{+}^{\infty} G/H, \Sigma_{+}^{\infty} G/K]_*^G   \ar[rr]^-{g^*} \ar[d]_{\cong} & & [\Sigma_{+}^{\infty} G/{}^g H, \Sigma_{+}^{\infty} G/ K]_*^G \ar[d]^{\cong} \\ [\Sigma_{+}^{\infty} G/H \wedge D(\Sigma_{+}^{\infty} G/K), \S ]_*^G \ar[d]_{\cong} \ar[rr]^-{(g \wedge 1)^*} & & [\Sigma_{+}^{\infty} G/{}^g H \wedge D(\Sigma_{+}^{\infty} G/ K), \S]_*^G \ar[d]^{\cong} \\ [\Sigma_{+}^{\infty} G/H \wedge \Sigma_{+}^{\infty} G/K, \S ]_*^G \ar[rr]^-{(g \wedge 1)^*}  \ar[d]_{\cong} & & [\Sigma_{+}^{\infty} G/{}^g H \wedge \Sigma_{+}^{\infty} G/ K, \S]_*^G \ar[d]^{\cong} \\ [\Sigma^{\infty} (G/H_+ \wedge G/K_+), \S ]_*^G \ar[rr]^-{(g \wedge 1)^*} & & [\Sigma^{\infty} (G/{}^g H_+ \wedge G/K_+), \S]_*^G}$$
and
$$\xymatrix{[\Sigma_{+}^{\infty} G/{}^g H, \Sigma_{+}^{\infty} G/({}^g H \cap K)]_*^G \ar[rr]^-{(\res^{{}^g H}_{{}^g H \cap K})^*} \ar[d]_{\cong} & & [\Sigma_{+}^{\infty} G/({}^g H \cap K), \Sigma_{+}^{\infty} G/({}^g H \cap K)]_*^G \ar[d]^{\cong} \\  [\Sigma_{+}^{\infty} G/{}^g H \wedge D(\Sigma_{+}^{\infty} G/({}^g H \cap K)), \S]_*^G \ar[rr]^-{(\res^{{}^g H}_{{}^g H \cap K} \wedge 1)^*} \ar[d]_{\cong}  & & [\Sigma_{+}^{\infty} G/({}^g H \cap K) \wedge  D(\Sigma_{+}^{\infty} G/({}^g H \cap K)), \S]_*^G \ar[d]^{\cong} \\  [\Sigma_{+}^{\infty} G/{}^g H \wedge \Sigma_{+}^{\infty} G/({}^g H \cap K), \S]_*^G \ar[rr]^-{(\res^{{}^g H}_{{}^g H \cap K} \wedge 1)^*} \ar[d]_{\cong}  & & [\Sigma_{+}^{\infty} G/({}^g H \cap K) \wedge  \Sigma_{+}^{\infty} G/({}^g H \cap K), \S]_*^G \ar[d]^{\cong} \\ [\Sigma^{\infty} (G/{}^g H_+ \wedge G/({}^g H \cap K)_+), \S]_*^G   \ar[rr]^-{(\res^{{}^g H}_{{}^g H \cap K} \wedge 1)^*} & & [\Sigma^{\infty} (G/({}^g H \cap K)_+ \wedge  G/({}^g H \cap K)_+), \S]_*^G.}$$
Hence by definition, for any $g \in G$, the morphism 
$$\kappa_g \colon [\Sigma_{+}^{\infty} G/H, \Sigma_{+}^{\infty} G/K]_*^G \lra [\Sigma_{+}^{\infty} G/ ({}^g H \cap K), \Sigma_{+}^{\infty} G/({}^g H \cap K) ]_*^G$$
is isomorphic to the morphism induced by the composite
$$\xymatrix{G/({}^g H \cap K)_+ \wedge  G/({}^g H \cap K)_+ \ar[d]_-{\res^{{}^g H}_{{}^g H \cap K} \wedge 1} \ar[rr] & & G/H_+ \wedge G/K_+ \\ G/{}^g H_+ \wedge G/({}^g H \cap K)_+  \ar[rr]^-{1 \wedge \res^{K}_{{}^g H \cap K}}  & &  G/{}^g H_+ \wedge G/K_+ \ar[u]_-{g \wedge 1} }$$
after applying the functor $[\Sigma^{\infty}(-), \S]^G_*$. To simplify notations let us denote this composite of maps of pointed $G$-sets by $\theta_g \colon G/({}^g H \cap K)_+ \wedge  G/({}^g H \cap K)_+ \lra  G/H_+ \wedge G/K_+ $. Thus, in order to prove Proposition \ref{splinj}, it suffices to check that the map of pointed $G$-sets
$$\xymatrix{\bigvee_{[g] \in K \setminus G / H } (G/({}^g H \cap K)_+ \wedge  G/({}^g H \cap K)_+)  \ar[rr]^-{(\theta_g)_{[g] \in K \setminus G / H}} & & G/H_+ \wedge G/K_+   }$$
has a $G$-equivariant section. This follows from the commutative diagram of pointed $G$-sets
$$\xymatrix{\bigvee_{[g] \in K \setminus G / H } (G/({}^g H \cap K)_+ \wedge  G/({}^g H \cap K)_+) \ar[rr]^-{(\theta_g)_{[g] \in K \setminus G / H}} & & G/H_+ \wedge G/K_+ \\ & & \bigvee_{[g] \in K \setminus G / H} G/({}^g H \cap K)_+, \ar[ull]^-{\bigvee_{[g] \in K \setminus G / H} \Delta_g \;\;\;} \ar[u]^-{\cong}}$$
where the vertical map is the isomorphism from Corollary \ref{orbisplit} and 
$$\Delta_g \colon G/({}^g H \cap K)_+ \lra G/({}^g H \cap K)_+ \wedge  G/({}^g H \cap K)_+$$ 
is the diagonal defined by $[x] \mapsto [x] \wedge [x]$ for any $g$. \qed

\section{A short exact sequence} \label{geofixsec}

This section constructs a split short exact sequence that will play a fundamental role in the inductive proof of Theorem \ref{reduction}. The author thinks that this short exact sequence is well-known to the experts. However, since we were unable to find a reference, we decided to provide a detailed proof here.

\subsection{Geometric fixed points and the inflation functor}

To construct the desired short exact sequence we need the geometric fixed point functor 
$$\Phi^N \colon \Sp_G^O \lra ~\Sp_J^O$$ 
associated to an extension of finite groups 
$$\xymatrix{E \colon 1 \ar[r] & N \ar[r]^{\iota} & G \ar[r]^{\varepsilon} & J \ar[r] & 1.}$$
This functor is constructed in \cite[V.4]{MM02} and has some useful properties. In particular the following holds:

\vspace{-0.08cm}
\begin{prop}[{\cite[V.4.5]{MM02}}] \label{geofixprop} Let $V$ be a finite dimensional orthogonal $G$-representation and $A$ a pointed $G$-space. Then there is a natural isomorphism of $J$-spectra
$$\Phi^N(F_VA) \cong F_{V^N} A^N.$$
Furthermore, the functor $\Phi^N \colon \Sp_G^O \lra \Sp_J^O$ preserves cofibrations and acyclic cofibrations. \end{prop}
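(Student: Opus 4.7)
The plan is to reduce both assertions to explicit point-set computations, using that $F_V$ is a free functor and that $\Phi^N$ is built from a levelwise categorical fixed-point construction followed by a change-of-universe.

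First I would unpack the formula $F_V A(W) = O_G(V,W) \wedge A$ and the multiplicativity $F_V A \cong F_V S^0 \wedge A$. Since $A$ is a pointed $G$-space and $\Phi^N$ is compatible with smash products with $G$-spaces via the natural isomorphism $\Phi^N(X \wedge A) \cong \Phi^N(X) \wedge A^N$ (a basic property of the construction in \cite{MM02}), it suffices to identify $\Phi^N(F_V S^0)$ with $F_{V^N} S^0$ as $J$-spectra and then smash with $A^N$. The content of the identification thus concentrates at the space of morphisms $O_G(V,W)$, which is the Thom space of the $G$-equivariant bundle $\xi(V,W) \to \mathscr{L}(V,W)$ recalled in Subsection~\ref{difdef}.

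Second, I would compute the $N$-fixed points of this Thom space. A linear isometric embedding $f \colon V \to W$ is $N$-fixed iff it commutes with the $N$-action, and any such $f$ necessarily carries $V^N$ into $W^N$ and the $N$-isotypical complement into itself. Using the orthogonal decompositions $V = V^N \oplus (V^N)^{\perp}$ and $W = W^N \oplus (W^N)^{\perp}$ together with the fact that an $N$-fixed point of $(W^N)^{\perp}$ is zero, one sees that $\mathscr{L}(V,W)^N$ retracts onto $\mathscr{L}(V^N, W^N)$ and that the $N$-fixed part of the orthogonal-complement bundle is exactly $\xi(V^N, W^N)$. Passing to Thom spaces gives $O_G(V,W)^N \cong O_J(V^N, W^N)$, which is functorial in $W$, and this is precisely the identification $\Phi^N(F_V S^0)(W') \cong F_{V^N} S^0(W')$ for $J$-representations $W'$ after the change-of-universe step inherent in $\Phi^N$. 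Naturality in $A$ then delivers the first claim.

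Third, for the preservation of cofibrations and acyclic cofibrations, I would argue on the generating sets of Subsection~\ref{Gortholev}. Every generating (acyclic) cofibration of $\Sp^O_G$ is of the form $F_V(i)$ with $i$ a (acyclic) generating cofibration of $G\Top_*$. By the isomorphism just established, $\Phi^N(F_V i) \cong F_{V^N}(i^N)$. Illman's triangulation theorem (used already in the proof of Proposition~\ref{levmod}) ensures that $i^N$ is a (acyclic) cofibration in $J\Top_*$, so $F_{V^N}(i^N)$ lies in the generating family for $\Sp^O_J$. Finally, $\Phi^N$ is a left adjoint up to a change of universe and in particular commutes with the colimits used to build cell complexes, so the class of (acyclic) cofibrations is preserved under transfinite composition, coproducts, and pushouts.

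The subtle step is the second one: verifying that the geometric-fixed-point functor, which is not simply levelwise categorical fixed points but involves a change of universe, actually realizes $O_G(V,W)^N$ as $O_J(V^N, W^N)$ in the correct levelwise sense. The main technical obstacle is therefore keeping track of the universe choices and confirming that the adjoint description of $\Phi^N$ in \cite[V.4]{MM02} intertwines correctly with the Thom-space formula for $O_G(V,W)$; once this bookkeeping is in hand, the rest of the argument is formal.
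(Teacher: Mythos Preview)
The paper gives no proof of this proposition at all: it is simply quoted from \cite[V.4.5]{MM02}, so there is nothing to compare your argument against on that front.

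That said, your sketch contains a genuine error in the second step. You assert that passing to Thom spaces gives $O_G(V,W)^N \cong O_J(V^N,W^N)$, but this is false in general. The $N$-fixed points of $\LL(V,W)$ are the $N$-equivariant isometric embeddings $V\to W$, and such an embedding records not only a map $V^N\to W^N$ but also an $N$-equivariant embedding of the complements $(V^N)^\perp\to(W^N)^\perp$. Thus $\LL(V,W)^N$ is a bundle over $\LL(V^N,W^N)$ with typically nontrivial fiber, and the $N$-fixed Thom space is the Thom space of the pullback of $\xi(V^N,W^N)$ along this map, not $O_J(V^N,W^N)$ itself. In other words, taking levelwise categorical $N$-fixed points of $F_VS^0$ does not yield $F_{V^N}S^0$; it yields something larger, and it is precisely the left Kan extension (change of universe) built into the definition of $\Phi^N$ in \cite[V.4]{MM02} that collapses this extra data. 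You flag this as ``the subtle step'' at the end, but your step~2 claims to have already carried it out, which it has not. The actual argument in \cite{MM02} proceeds via the universal property of the coend defining $\Phi^N$ and a Yoneda-style identification, not via a direct levelwise fixed-point computation.

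Your third step, reducing preservation of (acyclic) cofibrations to generators and using that $\Phi^N$ commutes with the relevant colimits, is fine once the first isomorphism is in hand.
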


\begin{coro} \label{geofixsusp} For any based $G$-space $A$, there is a natural isomorphism of $J$-spectra
$$\Phi^N(\Sigma^{\infty} A) \cong \Sigma^{\infty} (A^N).$$\end{coro}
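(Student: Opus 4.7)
The plan is to deduce this corollary as a direct specialization of Proposition~\ref{geofixprop} by taking $V=0$, the zero-dimensional trivial $G$-representation. Recall that the suspension spectrum functor coincides with $F_0$: unravelling the definitions of Subsection~\ref{Gortholev}, one has $(F_0 A)(W) = O_G(0,W)\wedge A \cong S^W\wedge A$ with the diagonal $G$-action, which is exactly the orthogonal $G$-spectrum $\Sigma^\infty A$.

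With this identification in hand, I would apply Proposition~\ref{geofixprop} to $V=0$ to obtain a natural isomorphism of $J$-spectra
$$\Phi^N(\Sigma^\infty A) = \Phi^N(F_0 A) \;\cong\; F_{0^N} A^N.$$
Since the $N$-fixed subspace of the zero vector space is again the zero vector space, $0^N=0$ as an orthogonal $J$-representation, so $F_{0^N} A^N = F_0 A^N = \Sigma^\infty(A^N)$. Composing these canonical identifications yields the desired isomorphism. Naturality in $A$ is inherited from the naturality in the input of the isomorphism provided by Proposition~\ref{geofixprop}.

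There is essentially no obstacle here, since the only content beyond Proposition~\ref{geofixprop} is the trivial observation that taking $N$-fixed points of the zero representation gives the zero representation and that $F_0=\Sigma^\infty$. The statement is recorded as a corollary precisely because one wants to use it directly in subsequent arguments (for instance in the induction on subgroup order that drives Section~\ref{prfmain}), where geometric fixed points of stable orbits $\Sigma_+^\infty G/H$ need to be identified with suspension spectra of the $N$-fixed $J$-sets $(G/H)^N$.
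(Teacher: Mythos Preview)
Your argument is correct and is exactly the intended one: the paper states this as an immediate corollary of Proposition~\ref{geofixprop} with no further proof, and the only content is the specialization $V=0$ together with the identifications $F_0=\Sigma^\infty$ and $0^N=0$ that you spell out.
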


Next, we will also need the inflation (change of scalars) functor $\varepsilon^* \colon \Sp^O_J \lra \Sp^O_G$ associated to an extension of finite groups
$$\xymatrix{E \colon 1 \ar[r] & N \ar[r]^{\iota} & G \ar[r]^{\varepsilon} & J \ar[r] & 1.}$$
This functor is a left adjoint to the point-set level (categorical) fixed point functor. In fact it is a left Quillen functor.    

By Proposition \ref{geofixprop} and Ken Brown's Lemma one can derive the functor $\Phi^N$ and get the functor
$$\Phi^N \colon \Ho(\Sp^O_G) \lra \Ho(\Sp^O_J).$$
We can also derive the left Quillen functor $\varepsilon^*$ and obtain the derived inflation
$$\varepsilon^* \colon \Ho(\Sp^O_J) \lra \Ho(\Sp^O_G).$$

The following proposition follows from \cite[II.9.10]{LMS} and \cite[VI.3-5]{MM02}.

\begin{prop}\label{geopbk} There is a triangulated natural isomoprhism
$$\id \cong \Phi^N \varepsilon^*$$
of endofunctors on $\Ho(\Sp^O_J)$.
\end{prop}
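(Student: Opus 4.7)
The strategy is to construct a natural transformation $\eta_X \colon X \to \Phi^N \varepsilon^* X$, verify it is an isomorphism on the compact generators $\Sigma_{+}^{\infty} J/H$, and extend to the whole category using the triangulated structure. For the generator case, first observe the point-set identity $(\varepsilon^* A)^N = A$ for every pointed $J$-space $A$: since $\varepsilon^*$ equips $A$ with the $G$-action $g \cdot a = \varepsilon(g) a$, the subgroup $N = \ker \varepsilon$ acts trivially, so every point is $N$-fixed. Combined with Corollary~\ref{geofixsusp}, this yields a natural point-set isomorphism
$$\Phi^N \varepsilon^* (\Sigma^{\infty} A) \;\cong\; \Sigma^{\infty}((\varepsilon^* A)^N) \;=\; \Sigma^{\infty} A.$$
Applied to $A = (J/H)_+$ (noting that $\varepsilon^*(J/H) = G/\varepsilon^{-1}(H)$, which has trivial $N$-action because $N$ is normal and contained in $\varepsilon^{-1}(H)$), this gives a natural isomorphism $\Phi^N \varepsilon^* (\Sigma_{+}^{\infty} J/H) \cong \Sigma_{+}^{\infty} J/H$ in $\Ho(\Sp_J^O)$.

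For the natural transformation itself, I would use the adjunction $(\varepsilon^* \dashv (-)^N)$ between $\Sp_J^O$ and $\Sp_G^O$ in the categorical sense (whose unit at $X$ is the identity $X \to (\varepsilon^* X)^N$ since $N$ acts trivially on $\varepsilon^* X$), and compose with the natural comparison map from categorical to geometric fixed points that is available on cofibrant $G$-spectra \cite[V.4]{MM02}. More concretely, since Proposition~\ref{geofixprop} gives $\Phi^N(F_V A) \cong F_{V^N} A^N$ and the functor $F_V$ suffices to build up cofibrant replacements via cells, one obtains a coherent natural transformation $\eta$ whose components on each $F_V A$ (for $V, A$ coming from $J$) are the evident isomorphisms described above.

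Next I would verify that both $\id$ and $\Phi^N \circ \varepsilon^*$ are exact triangulated functors on $\Ho(\Sp_J^O)$ that preserve arbitrary coproducts. For $\varepsilon^*$ this is clear from its being a left Quillen functor between stable model categories. For $\Phi^N$ this follows from Proposition~\ref{geofixprop}, which says the underived $\Phi^N$ preserves cofibrations and acyclic cofibrations; combined with the fact that it commutes with smash with spheres and with wedges on cofibrant objects, the derived functor is exact and preserves coproducts. The full subcategory of $\Ho(\Sp_J^O)$ on which $\eta_X$ is an isomorphism is therefore triangulated (Five Lemma on the map of distinguished triangles produced from a cofiber sequence) and closed under coproducts. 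Since $\{\Sigma_{+}^{\infty} J/H\}_{H \leq J}$ is a set of compact generators (Subsection~\ref{hosp}) and $\eta$ is an isomorphism on this set by the above, $\eta$ is an isomorphism on all of $\Ho(\Sp_J^O)$, as required. The naturality with respect to the triangulation (i.e., the ``triangulated'' in the statement) is automatic from the construction via derived Quillen functors.

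\textbf{Main obstacle.} The subtle point is the construction of a genuinely \emph{natural} transformation $\eta \colon \id \Rightarrow \Phi^N \varepsilon^*$, rather than just a family of isomorphisms. On the point-set level the identities $\Phi^N(F_V A) \cong F_{V^N} A^N$ and $(\varepsilon^* A)^N = A$ are tautological, but assembling them into a single natural transformation on the homotopy category requires checking that the comparison map from categorical to geometric fixed points is well-defined and natural on $\varepsilon^*(-)$-type spectra after cofibrant replacement. This is where the detailed point-set apparatus of \cite[II.9.10]{LMS} and \cite[VI.3--5]{MM02} is essential, and I would invoke those references rather than redo the construction from scratch.
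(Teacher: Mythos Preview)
Your proposal is sound, and in fact more detailed than what the paper provides: the paper does not prove this proposition at all but simply asserts that it follows from \cite[II.9.10]{LMS} and \cite[VI.3--5]{MM02}. Your outline---construct a natural comparison map, check it on the compact generators $\Sigma_{+}^{\infty} J/H$ using Corollary~\ref{geofixsusp} and the identity $(\varepsilon^* A)^N = A$, then propagate to all of $\Ho(\Sp^O_J)$ by exactness and coproduct preservation---is exactly the standard way to unpack those references, and you correctly identify that the only genuinely delicate point (coherent naturality of the comparison map) is precisely what those references supply.

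One small remark: in the paper's framework (Definition~\ref{GorthoI}), an orthogonal $G$-spectrum is just an orthogonal spectrum with a $G$-action, so the point-set inflation $\varepsilon^*$ is literally restriction along $\varepsilon$ and the identity $\varepsilon^*(\Sigma^\infty A) = \Sigma^\infty(\varepsilon^* A)$ holds on the nose, with no change-of-universe subtlety. This makes your generator computation immediate. The references you (and the paper) invoke are then really only needed to know that the derived geometric fixed point functor $\Phi^N$ is well-behaved (exact, coproduct-preserving) and that the comparison with categorical fixed points is natural---both of which you flagged.
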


\subsection{Weyl groups} \label{Weyl1} Let $G$ be a finite group and $H$ a subgroup of $G$. Then $H$ is a normal subgroup of its normalizer $N(H)= \{g \in G \; | \; gH=Hg \}$ and the quotient group $W(H) = N(H)/H$ is called the \emph{Weyl group} of $H$. According to the previous subsection, the short exact sequence
$$\xymatrix{1 \ar[r] & H \ar[r]^-{\iota} & N(H) \ar[r]^-{\varepsilon} & W(H) \ar[r] & 1}$$
gives us the geometric fixed point functor
$$\Phi^H \colon \Ho(\Sp_{N(H)}^O)  \lra \Ho(\Sp_{W(H)}^O)$$
and the inflation functor
$$\varepsilon^* \colon \Ho(\Sp_{W(H)}^O) \lra \Ho(\Sp_{N(H)}^O).$$
By a slight abuse of notation, we will denote the composite functor 
$$\Phi^H \circ \Res^G_{N(H)} \colon \Ho(\Sp_G^O) \lra \Ho(\Sp_{W(H)}^O)$$ 
also by $\Phi^H$. It then follows from Corollary \ref{geofixsusp} that there is an isomorphism
$$\Phi^H(\Sigma_{+}^{\infty} G/H) \cong \Sigma^{\infty}_{+} (G/H)^H$$ 
in $\Ho(\Sp_{W(H)}^O)$. (This holds already on the point-set level.) Since $(G/H)^H = W(H)$ as $W(H)$-sets, one in fact gets an isomorphism
$$\Phi^H(\Sigma_{+}^{\infty} G/H) \cong \Sigma^{\infty}_{+} W(H)$$
in $\Ho(\Sp_{W(H)}^O)$. Further, by definition, one has $\varepsilon^*(\Sigma^{\infty}_{+} W(H)) = \Sigma_{+}^{\infty}N(H)/H$ in $\Ho(\Sp_{N(H)}^O)$ and hence we get 
$$G \ltimes_{N(H)} \varepsilon^* (\Sigma^{\infty}_{+} W(H)) = \Sigma_{+}^{\infty} G/H.$$
Having in mind these identifications, we are now ready to formulate the following immediate consequence of Proposition \ref{geopbk}.

\begin{prop} \label{geofixweyl} The composite
$$\hspace{-0.4cm} \xymatrix{[\Sigma^{\infty}_{+} W(H), \Sigma^{\infty}_{+} W(H)]_*^{W(H)} \ar[rr]^-{G \ltimes_{N(H)} \varepsilon^*} & & [\Sigma_{+}^{\infty} G/H, \Sigma_{+}^{\infty} G/H]^G_* \ar[r]^-{\Phi^H} &  [\Sigma^{\infty}_{+} W(H), \Sigma^{\infty}_{+} W(H)]_*^{W(H)}}$$
is an isomorphism. \end{prop}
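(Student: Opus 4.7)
The plan is to show that, up to the identifications specified immediately above the statement, the composite equals the identity on $[\Sigma_{+}^{\infty} W(H), \Sigma_{+}^{\infty} W(H)]_*^{W(H)}$; the proposition then follows immediately. The mechanism is a reduction, via the double coset formula of Corollary \ref{splitting1}, to the identity $\Phi^H \circ \varepsilon^* \cong \id_{\Ho(\Sp^O_{W(H)})}$ of Proposition \ref{geopbk}.

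First I would apply Corollary \ref{splitting1} to $\Sigma_{+}^{\infty} G/H \cong G \ltimes_H \S$ with $K = N(H)$, obtaining
$$\Res^G_{N(H)}\Sigma_{+}^{\infty} G/H \;\cong\; \bigvee_{[g] \in N(H)\setminus G/H} \Sigma_{+}^{\infty} N(H)/(N(H) \cap {}^gH).$$
The trivial double coset $[g]=[1]$ contributes the summand $\Sigma_{+}^{\infty} N(H)/H = \varepsilon^* \Sigma_{+}^{\infty} W(H)$. For any other $[g]$, a containment $H \leq gHg^{-1}$ would force $H = gHg^{-1}$ by cardinality and hence $g \in N(H)$, contradicting $[g]\neq[1]$. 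So $H \not\leq N(H) \cap {}^gH$, and since $H$ is normal in $N(H)$ the equality $n^{-1}Hn = H$ for $n \in N(H)$ gives $(N(H)/(N(H) \cap {}^gH))^H = \emptyset$; Corollary \ref{geofixsusp} then shows that $\Phi^H$ annihilates this summand.

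Consequently, $\Phi^H \Res^G_{N(H)} \Sigma_{+}^{\infty} G/H$ is canonically identified, via projection onto the trivial-coset summand and Proposition \ref{geopbk}, with $\Phi^H \varepsilon^* \Sigma_{+}^{\infty} W(H) \cong \Sigma_{+}^{\infty} W(H)$, and this agrees with the identification fixed above the statement. For a morphism $\alpha \in [\Sigma_{+}^{\infty} W(H), \Sigma_{+}^{\infty} W(H)]_*^{W(H)}$, naturality of the decomposition of Corollary \ref{splitting1} implies that $G \ltimes_{N(H)} \varepsilon^* \alpha$, after restricting to $N(H)$, preserves the wedge splitting and equals $\varepsilon^* \alpha$ on the trivial summand. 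Applying $\Phi^H$ and invoking Proposition \ref{geopbk} once more sends $\varepsilon^* \alpha$ back to $\alpha$, so the displayed composite is the identity and in particular an isomorphism.

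The only point of real content is the morphism-level claim that $G \ltimes_{N(H)} \varepsilon^* \alpha$ restricts orbit-by-orbit with respect to the above wedge splitting, with the identity double coset carrying $\varepsilon^* \alpha$. This is a direct unwinding of the Wirthm\"uller/double coset isomorphism of Corollary \ref{splitting1} on the underlying $N(H)$-equivariant orthogonal spectra, using that the identity double coset in $N(H)\setminus G/H$ corresponds precisely to the $N(H)$-orbit of $H \in G/H$; once this compatibility is verified, the rest of the argument is a formal combination of the two ingredients highlighted above.
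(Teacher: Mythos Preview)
Your approach is correct in outline and considerably more explicit than the paper's, which simply records the proposition as an ``immediate consequence of Proposition \ref{geopbk}'' once the identifications $\Phi^H(\Sigma^\infty_+ G/H)\cong\Sigma^\infty_+ W(H)$ and $G\ltimes_{N(H)}\varepsilon^*\Sigma^\infty_+ W(H)=\Sigma^\infty_+ G/H$ have been fixed. Your object-level computation---decomposing $\Res^G_{N(H)}\Sigma^\infty_+ G/H$ via $N(H)\backslash G/H$ and showing $\Phi^H$ kills every nontrivial summand---is exactly what underlies that one-line claim, and is carried out correctly.

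There is, however, a genuine imprecision in the morphism-level step. You invoke ``naturality of the decomposition of Corollary \ref{splitting1}'' to conclude that $\Res^G_{N(H)}(G\ltimes_{N(H)}\varepsilon^*\alpha)$ preserves the $N(H)\backslash G/H$ wedge splitting. But Corollary \ref{splitting1} is natural in $Y\in\Ho(\Sp^O_H)$, and the map $G\ltimes_{N(H)}\varepsilon^*\alpha$ is not of the form $G\ltimes_H f$ for any $H$-map $f$; in general it need not be diagonal for this splitting (for instance, when $\alpha$ is a nontrivial element of $W(H)$ the restricted map permutes the summands). What is both true and sufficient is weaker: the inclusion $\iota$ of the trivial summand $\Sigma^\infty_+ N(H)/H\hookrightarrow\Res^G_{N(H)}\Sigma^\infty_+ G/H$ is exactly the unit of the adjunction $(G\ltimes_{N(H)}-,\Res^G_{N(H)})$, and naturality of \emph{that} unit yields
\[
\Res^G_{N(H)}(G\ltimes_{N(H)}\varepsilon^*\alpha)\circ\iota=\iota\circ\varepsilon^*\alpha.
\]
Since you have already shown that $\Phi^H$ annihilates the complementary summand, $\Phi^H\iota$ is an isomorphism, and this equation forces $\Phi^H\Res^G_{N(H)}(G\ltimes_{N(H)}\varepsilon^*\alpha)=\Phi^H(\varepsilon^*\alpha)\cong\alpha$ under the fixed identifications. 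Alternatively, apply Corollary \ref{splitting1} to $G\ltimes_{N(H)}X$ with $K=N(H)$ (index set $N(H)\backslash G/N(H)$), which \emph{is} natural in $X\in\Ho(\Sp^O_{N(H)})$ and hence in $\varepsilon^*\alpha$; the trivial summand is again $\varepsilon^*\Sigma^\infty_+ W(H)$ and the rest is killed by $\Phi^H$ for the same reason.
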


\subsection{The short exact sequence} \label{sec1}

Suppose $G$ is a finite group and $\F$ a set of subgroups of $G$. The set $\F$ is said to be a \emph{family of subgroups of $G$} if it is closed under conjugation and taking subgroups.

\;

\;

Recall that for any finite group $G$ and any family $\F$, there is a classifying space ($G$-CW complex) $E\F$ characterized up to $G$-homotopy equivalence by the property that $E\F^H$ is contractible if $H \in \F$ and $E\F^H= \emptyset$ if $H \notin \F$ (see e.g. \cite{Elm83}). 

\;

\;

Let $\P$ denote the family of proper subgroups of $G$.  Consider the equivariant map $\xymatrix{E \P_+ \ar[r]^-{\proj} & S^0}$ which sends the elements of $E\P$ to the non-base point of $S^0$. The mapping cone sequence of this map (called the isotropy separation sequence) combined with the tom Dieck splitting \cite[II.7.7]{tom} gives the following well-known fact:

\begin{prop} \label{tomexact1} Suppose $G$ is a finite group. Then there is a split short exact sequence
$$\xymatrix{0 \ar[r] & [\S, \Sigma_+^{\infty} E \P]_*^G \ar[r]^-{\proj_*} & [\S, \S]^G_* \ar[r]^-{\Phi^G} & [\S, \S]_*  \ar[r] & 0.}$$ \end{prop}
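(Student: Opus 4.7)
The plan is to apply $[\S, -]^G_*$ to the mapping cone sequence of $\proj \colon E\P_+ \to S^0$ and analyze the resulting long exact sequence via the tom Dieck splitting. Let $C$ denote the mapping cone of $\proj$ in based $G$-spaces; then $C^G \cong S^0$ (since $E\P^G = \emptyset$), while $C^H$ is contractible for every proper subgroup $H$ (since $E\P^H$ is contractible in that case). Applying $\Sigma^{\infty}$ yields a distinguished triangle in $\Ho(\Sp_G^O)$:
$$\Sigma_+^{\infty} E\P \xrightarrow{\proj} \S \xrightarrow{\iota} \Sigma^{\infty} C \to \Sigma \Sigma_+^{\infty} E\P,$$
and $[\S,-]^G_*$ converts this into a long exact sequence. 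By Corollary \ref{geofixsusp}, $\Phi^G(\Sigma^{\infty} C) \cong \Sigma^{\infty}(C^G) \cong \S$, and I plan to identify $\iota_* \colon [\S,\S]^G_* \to [\S, \Sigma^{\infty} C]^G_*$ with the geometric fixed point map $\Phi^G \colon [\S,\S]^G_* \to [\S,\S]_*$ under this isomorphism.

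To break the long exact sequence into a short one and obtain the splitting, I would invoke the tom Dieck splitting \cite[II.7.7]{tom}:
$$\pi_n^G(\Sigma_+^{\infty} X) \;\cong\; \bigoplus_{(H) \leq G} \pi_n\bigl(\Sigma_+^{\infty}(EW(H) \times_{W(H)} X^H)\bigr).$$
Applied to $X = \ast$ this gives $[\S, \S]_*^G \cong \bigoplus_{(H)} \pi_*(\Sigma_+^{\infty} BW(H))$, while applied to $X = E\P$ (using that $E\P^H$ is contractible for proper $H$ and empty for $H = G$) it gives $[\S, \Sigma_+^{\infty} E\P]_*^G \cong \bigoplus_{(H) \neq (G)} \pi_*(\Sigma_+^{\infty} BW(H))$. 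By naturality of the tom Dieck splitting in $X$, the map $\proj_*$ respects the summands and embeds the second decomposition as the proper-$(H)$ summands of the first. Consequently the cokernel of $\proj_*$ is the $(H)=(G)$ summand, which equals $\pi_*(\Sigma^{\infty}_+ BW(G)) = \pi_*(\S) = [\S,\S]_*$. This both produces the short exact sequence and exhibits it as split, since the inclusion of a direct summand is always split.

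The remaining step is to identify this top-summand projection with $\Phi^G$. By construction, the $(H) = (G)$ summand of tom Dieck is the one detected by the geometric fixed point homomorphism: since $W(G) = 1$ one has $EW(G) \times_{W(G)} X^G = X^G$, so $\Phi^G$ acts as the identity on this summand, while on any $(H)$-summand with $H$ proper the composite with $\Phi^G$ vanishes because such summands are built from the space $X^H$ rather than $X^G$. The expected main obstacle is this naturality step -- verifying cleanly that the summand projections in tom Dieck agree with the geometric fixed point homomorphism -- but this is standard; once established, both exactness and the splitting are immediate consequences of the direct-sum decomposition.
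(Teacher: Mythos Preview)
Your proposal is correct and follows precisely the approach the paper indicates: the paper does not give a detailed proof but simply states that the proposition is a well-known consequence of the isotropy separation sequence (the mapping cone sequence of $\proj\colon E\P_+\to S^0$) combined with the tom Dieck splitting \cite[II.7.7]{tom}, and your argument spells out exactly these two ingredients. The only step you flag as needing care---that the projection onto the $(H)=(G)$ summand of the tom Dieck splitting agrees with $\Phi^G$---is indeed the standard identification $\pi_*^G(\tilde E\P\wedge X)\cong\pi_*(\Phi^G X)$ applied to $X=\S$, so the proof is complete.
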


Now suppose $H$ is a subgroup of $G$. Then for any $X \in \Ho(\Sp_G^O)$, there is a natural isomorphism
$$G \ltimes_{H} \Res_H^G X \cong  G/H_+ \wedge^{\mathbf{L}} X$$
given on the point-set level by $[g,x] \mapsto ([g] \wedge gx)$. In particular, 
$$G \ltimes_{H} \S \cong \Sigma_{+}^{\infty} G/H.$$
Having in mind this preferred isomorphism, we will once and for all identify $G \ltimes_{H} \S$ with $\Sigma_{+}^{\infty} G/H$. Next, let $\P[H]$ denote the family of proper subgroups of $H$. Note that this is a family with respect to $H$ and not necessarily with respect to the whole group $G$. Here is the main result of this section which is an important tool in the proof of Theorem \ref{reduction}:
\begin{prop} \label{ses} Let $G$ be a finite group and $H$ a subgroup. Then there is a split short exact sequence 
$$\hspace{-0.4cm}\xymatrix{[\Sigma_+^{\infty} G/H, G \ltimes_{H} \Sigma_+^{\infty} E \P[H]]_*^G \; \ar@{>->}[r]^-{\proj_*} & [\Sigma_+^{\infty} G/H, \Sigma_+^{\infty} G/H]_*^G \ar@{->>}[r]^-{\Phi^H} & [\Sigma_+^{\infty} W(H), \Sigma_+^{\infty} W(H)]_*^{W(H)}, }$$
where the morphism $\proj \colon G \ltimes_{H} \Sigma_+^{\infty} E \P[H] \lra \Sigma_+^{\infty} G/H$ is defined as the composite
$$\xymatrix{G \ltimes_{H} \Sigma_+^{\infty} E \P[H] \ar[rr]^-{G \ltimes_{H} \proj} & & G \ltimes_{H} \S \cong \Sigma_+^{\infty} G/H.}$$\end{prop}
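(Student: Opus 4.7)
The plan is to generalize Proposition \ref{tomexact1} (the special case $H=G$) by inducing up from $H$ to $G$. Denote by $\wt{E}\P[H]$ the $H$-equivariant cofiber of the projection $\Sigma_+^{\infty} E\P[H] \xrightarrow{\proj} \S$, so that we have a cofiber sequence
$$\Sigma_+^{\infty} E\P[H] \xrightarrow{\proj} \S \lra \wt{E}\P[H]$$
in $\Ho(\Sp^O_H)$; this cofiber is characterized up to $H$-equivariant weak equivalence by $(\wt{E}\P[H])^K \simeq *$ for $K \subsetneq H$ and $(\wt{E}\P[H])^H \simeq S^0$. Applying the left Quillen functor $G \ltimes_H (-)$ yields a distinguished triangle
$$G \ltimes_H \Sigma_+^{\infty} E\P[H] \xrightarrow{\proj} \Sigma_+^{\infty} G/H \lra G \ltimes_H \wt{E}\P[H]$$
in $\Ho(\Sp_G^O)$, and applying $[\Sigma_+^{\infty} G/H, -]_*^G$ produces a long exact sequence. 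The desired short exact sequence will be extracted from it once we (a) identify its last term with $[\Sigma_+^{\infty} W(H), \Sigma_+^{\infty} W(H)]_*^{W(H)}$, (b) identify the induced map with $\Phi^H$, and (c) show the latter is split surjective.

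For (a), use the induction-restriction adjunction to write
$$[\Sigma_+^{\infty} G/H, G \ltimes_H \wt{E}\P[H]]_*^G \cong [\S, \Res^G_H(G \ltimes_H \wt{E}\P[H])]_*^H,$$
and invoke the double coset formula of Corollary \ref{splitting1} to split $\Res^G_H(G \ltimes_H \wt{E}\P[H])$ into a wedge indexed by $[g] \in H \setminus G / H$. For $g \notin N(H)$ the intersection ${}^gH \cap H$ is a proper subgroup of ${}^gH$; since $c_g^*\wt{E}\P[H]$ satisfies the fixed-point conditions characterizing $\wt{E}\P[{}^gH]$, its restriction to ${}^gH \cap H$ is stably contractible and the corresponding summand vanishes. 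The surviving summands, one for each $w \in W(H)$, are each $H$-equivalent to $\wt{E}\P[H]$, and the formula $\pi_*^H(\wt{E}\P[H]) \cong \pi_*\S$ (a direct consequence of the tom Dieck splitting, since only the top isotropy contributes) assembles these contributions into the desired identification with $\pi_*\S[W(H)] \cong [\Sigma_+^{\infty} W(H), \Sigma_+^{\infty} W(H)]_*^{W(H)}$.

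Step (b) is the main obstacle. I would proceed by applying $\Phi^H$ to the entire cofiber triangle. A direct fixed-point calculation based on $(G \ltimes_H X)^H \cong W(H)_+ \wedge X^H$ (with $W(H)$ acting regularly on the first factor) gives $\Phi^H(G \ltimes_H \Sigma_+^{\infty} E\P[H]) \simeq *$ and an equivalence $\Phi^H(\Sigma_+^{\infty} G/H) = \Sigma_+^{\infty} W(H) \xrightarrow{\sim} \Phi^H(G \ltimes_H \wt{E}\P[H])$. Naturality of $\Phi^H$ then yields a commutative square whose vertical arrows are the $\Phi^H$-induced maps on Hom-groups and whose top edge is our long-exact-sequence map. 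Carefully comparing with the identification from (a), via the adjunction unit $\S \to \Res^G_H(G \ltimes_H \S)$ and its image in $\Res^G_H(G \ltimes_H \wt{E}\P[H])$, shows that the two maps agree: the identity of $\Sigma_+^{\infty} G/H$ is sent under both recipes to the multiplicative identity of $\pi_*\S[W(H)]$, and additivity over the stable Burnside structure extends the agreement to all morphisms.

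Finally, for (c), exactness and splitting are immediate from Proposition \ref{geofixweyl}: since $\Phi^H \circ (G \ltimes_{N(H)} \varepsilon^*)$ is an isomorphism, $\Phi^H$ admits a canonical section in every degree. The resulting surjectivity in every degree forces the long exact sequence to break into the claimed short exact sequence, and the section $G \ltimes_{N(H)} \varepsilon^*$ supplies its splitting.
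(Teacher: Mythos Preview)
Your overall strategy---extract the sequence from the induced cofiber triangle and then identify the third term via the double coset formula---is sound, and step~(a) is carried out correctly. The real issue is step~(b). The sentence ``the identity of $\Sigma_+^{\infty} G/H$ is sent under both recipes to the multiplicative identity \ldots\ and additivity over the stable Burnside structure extends the agreement to all morphisms'' is not a proof: the long-exact-sequence map $q_*$ is post-composition with a fixed morphism, while $\Phi^H$ is a functor applied to morphisms, and there is no a~priori common module structure under which checking one value determines both. Your commutative square from naturality of $\Phi^H$ shows only that $(\Phi^H(q)_*)^{-1}\circ \Phi^H \circ q_* = \Phi^H$; to turn this into the desired identification you would still need to know that the right vertical map $\Phi^H \colon [\Sigma_+^\infty G/H, G\ltimes_H \wt{E}\P[H]]_*^G \to [\Sigma_+^\infty W(H), \Phi^H(G\ltimes_H \wt{E}\P[H])]_*^{W(H)}$ is an isomorphism and agrees with your identification from~(a), neither of which you have established.

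The paper avoids this identification entirely. It never invokes the cofiber $\wt{E}\P[H]$ or a long exact sequence; instead it verifies the short exact sequence by hand. Using exactly the double-coset decomposition you use in~(a) but applied to $E\P[H]$ rather than to $\wt{E}\P[H]$, it shows that $\proj_*$ is injective summand by summand (the summands with $g\notin N(H)$ give isomorphisms, those with $g\in N(H)$ reduce to Proposition~\ref{tomexact1} for $H$). Then $\Imm(\proj_*)\subset\Ker\Phi^H$ is immediate from Proposition~\ref{geoprop1}, and equality is forced by a cardinality count: both $[\Sigma_+^\infty G/H,\Sigma_+^\infty G/H]_*^G/\Imm(\proj_*)$ and $[\Sigma_+^\infty G/H,\Sigma_+^\infty G/H]_*^G/\Ker\Phi^H$ are identified with $\bigoplus_{W(H)}\pi_*\S$, and for $*>0$ everything is finite. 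Your framework can be repaired the same way: your long exact sequence plus step~(a) already shows $\proj_*$ is injective with cokernel abstractly $\pi_*\S[W(H)]$, and then the paper's finiteness comparison with $\Ker\Phi^H$ (using Propositions~\ref{geoprop1} and~\ref{geofixweyl}) closes the argument without ever matching $q_*$ to $\Phi^H$ directly.
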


Before proving this proposition we have to recall some important technical facts.
\subsection{Technical preparation} It follows immediately from the definition that for any $K \in \P[H]$, the set of $H$-fixed points of $G/K$ is empty. Together with Corollary \ref{geofixsusp} this implies that $\Phi^H(\Sigma_{+}^{\infty} G/K) = \ast$ in $\Ho(\Sp_{W(H)}^O)$. Since the classifying space $E\P[H]$ is built out of $H$-cells of orbit type $H/K$ with $K \leq H$ and $K \neq H$ one obtains:

\begin{prop} \label{geoprop1}  Let $G$ be a finite group. For any subgroup $H \leq G$, the $G$-CW complex $G \times_{H} E\P[H]$ is built out of $G$-cells of orbit type $G/K$ with $K \leq H$ and $K \neq H$. Furthermore, $\Phi^H( G \ltimes_{H} \Sigma_+^{\infty} E\P[H]) = \ast$ in $\Ho(\Sp_{W(H)}^O)$. \end{prop}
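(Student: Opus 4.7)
The plan is to reduce both assertions to the $H$-CW structure of $E\P[H]$ together with the elementary group-theoretic observation that $(G/K)^H = \emptyset$ whenever $K \lneq H$.

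First I would address the cell-structure claim. By the characterization of $E\P[H]$ recalled just before the proposition, one can model $E\P[H]$ as an $H$-CW complex whose $H$-cells all have the form $H/K \times D^n$ with $K \in \P[H]$, i.e.\ $K \le H$ and $K \neq H$. Apply the left adjoint $G \times_H (-)$ to this $H$-CW structure. Using the natural isomorphism $G \times_H (H/K \times D^n) \cong G/K \times D^n$ of $G$-spaces (and the fact that $G \times_H (-)$ preserves pushouts and sequential colimits, since it is a left adjoint), the resulting $G$-CW structure on $G \times_H E\P[H]$ has exactly the $G$-cells $G/K \times D^n$ with $K \in \P[H]$. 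This gives the first assertion.

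For the second assertion I would first use the point-set identification $G \ltimes_H \Sigma_+^{\infty} E\P[H] \cong \Sigma_+^{\infty}\bigl(G \times_H E\P[H]\bigr)$ coming from the definition of $G \ltimes_H (-)$ on suspension spectra. Then by Corollary \ref{geofixsusp},
\[
\Phi^H\bigl(G \ltimes_H \Sigma_+^{\infty} E\P[H]\bigr) \;\cong\; \Sigma^{\infty}\bigl((G \times_H E\P[H])^H_{\,+}\bigr),
\]
so it suffices to show $(G \times_H E\P[H])^H = \emptyset$ as an unbased $W(H)$-space. By the first part, $G \times_H E\P[H]$ is a union of cells of orbit type $G/K$ with $K \in \P[H]$, so it is enough to verify that $(G/K)^H = \emptyset$ for every such $K$. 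But $(G/K)^H = \{\,gK : g^{-1}Hg \subseteq K\,\}$, and the inclusion $g^{-1}Hg \subseteq K$ is impossible since $|g^{-1}Hg| = |H| > |K|$. Consequently every $H$-fixed point set of a cell is empty, hence the $H$-fixed point set of the whole $G$-CW complex is empty, and therefore the geometric fixed point spectrum is contractible.

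The argument is essentially a bookkeeping exercise; the only substantive input is the comparison $\Phi^H \Sigma_+^{\infty}(-) \cong \Sigma^{\infty}(-)^H$ from Corollary \ref{geofixsusp} together with the elementary cardinality argument showing $(G/K)^H = \emptyset$ for $K \lneq H$. The mildest potential obstacle is being careful that the filtration of $G \times_H E\P[H]$ by $G$-skeleta passes through $H$-fixed points and suspension spectra compatibly, but since taking $H$-fixed points commutes with the filtered colimits and pushouts appearing in the CW construction (and each cell contributes nothing to the $H$-fixed points), no difficulty actually arises.
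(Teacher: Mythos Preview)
Your proof is correct and follows essentially the same approach as the paper: both arguments use the $H$-CW structure of $E\P[H]$, the identification $G \times_H (H/K) \cong G/K$, the fact that $(G/K)^H = \emptyset$ for $K \in \P[H]$, and Corollary~\ref{geofixsusp}. The only minor organizational difference is that the paper applies $\Phi^H$ cell-by-cell (observing $\Phi^H(\Sigma_+^\infty G/K) = \ast$ and then using that $\Phi^H$ preserves the skeletal filtration), whereas you first identify the whole object as $\Sigma_+^\infty(G \times_H E\P[H])$ and then compute the $H$-fixed points of the space in one step; both are valid and amount to the same computation.
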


Next, we recall that the functor $\Map_H(G_+,-) \colon \Ho(\Sp^O_H) \lra \Ho(\Sp^O_G)$ is right adjoint to $\Res_H^G$. Recall also the map $w_H \colon G \ltimes_{H}(-) \lra \Map_H(G_+,-)$ inducing the Wirthm\"uller isomorphism (Proposition \ref{wirth}). The following proposition is an immediate consequence of the Wirthm\"uller isomorphism.

\begin{prop} \label{wirth1} For any $Y$ in $\Ho (\Sp_{H}^O)$, the natural map
$$\xymatrix{[\S, G \ltimes_{H} Y]^G_* \ar[r]^-{{w_H}_*} &  [\S, \Map_H(G_{+}, Y)]^G_* \cong [\Res^G_H(\S), Y]^H_* = [\S,Y]^H_*}$$ 
is an isomorphism. \end{prop}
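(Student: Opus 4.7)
The plan is to observe that this proposition is essentially a direct reformulation of the Wirthmüller isomorphism (Proposition \ref{wirth}), combined with the derived adjunction associated to the Quillen pair $(\Res^G_H, \Map_H(G_+,-))$ recalled in Subsection \ref{indcoind}. There is nothing genuinely new to prove; what remains is only to assemble the ingredients in the correct order and confirm that the map in the statement is the one induced by $w_H$ followed by the adjunction unit.

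First, I would invoke Proposition \ref{wirth} to upgrade $w_H$ to a natural isomorphism between the derived induction functor $G \ltimes_H(-)$ and the derived coinduction functor $\Map_H(G_+,-)$, viewed as functors $\Ho(\Sp_H^O) \to \Ho(\Sp_G^O)$. Applying the representable functor $[\S,-]^G_*$ to this natural isomorphism yields that the map
$$(w_H)_* \colon [\S, G \ltimes_H Y]^G_* \longrightarrow [\S, \Map_H(G_+, Y)]^G_*$$
is an isomorphism of graded abelian groups for every $Y \in \Ho(\Sp_H^O)$.

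Next, I would identify the target. The Quillen adjunction $(\Res_H^G,\Map_H(G_+,-))$ from Subsection \ref{indcoind} descends to an adjunction
$$\Res_H^G \colon \Ho(\Sp_G^O) \rightleftarrows \Ho(\Sp_H^O) \cocolon \Map_H(G_+,-),$$
since $\Res_H^G$ already preserves weak equivalences and therefore needs no derivation. This gives a natural isomorphism $[\S, \Map_H(G_+, Y)]^G_* \cong [\Res_H^G \S, Y]^H_*$. Because the sphere spectrum $\S = F_0 S^0$ is the monoidal unit, whose underlying $H$-space structure at each level is the trivial one, we have $\Res_H^G \S = \S$ in $\Ho(\Sp_H^O)$, which completes the identification with $[\S, Y]^H_*$.

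Since the whole argument is the composition of these two formal steps, there is no real technical obstacle here; the only small point to be checked is that the natural map written in the statement really is the composite of $(w_H)_*$ with the adjunction bijection, which is immediate from the definition of $w_H$ as the counit-unit construction underlying the Wirthmüller comparison. Thus the main content of the proposition is fully carried by Proposition \ref{wirth}, and the result follows.
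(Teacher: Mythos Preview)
Your proposal is correct and matches the paper's own approach: the paper simply states that the proposition ``is an immediate consequence of the Wirthm\"uller isomorphism'' and gives no further proof. Your write-up spells out exactly the two steps implicit in that remark, namely applying $[\S,-]^G_*$ to the isomorphism of Proposition~\ref{wirth} and then using the $(\Res^G_H,\Map_H(G_+,-))$ adjunction together with $\Res^G_H\S=\S$.
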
 

\begin{coro} \label{wirth2} Let $G$ be a finite group and $H$ and $K$ subgroups of $G$. Then for any spectrum $Y \in \Ho(\Sp_{H}^O)$, there is a natural isomorphism
$$[\Sigma_+^{\infty} G/K, G \ltimes_H Y]_*^G \cong \bigoplus_{[g] \in K \setminus G/H } [\S, \Res_{K \cap {}^g H}^{{}^g H} (c_g^*(Y))]_*^{K \cap {}^g H}.$$ \end{coro}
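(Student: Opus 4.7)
The plan is to chain together three facts already established in the excerpt: the adjunction between $G \ltimes_K(-)$ and $\Res_K^G$ (in the form $[\Sigma_+^\infty G/K, X]_*^G \cong [\S, \Res_K^G X]_*^K$), the double coset decomposition of Corollary \ref{splitting1}, and the Wirthm\"uller isomorphism of Proposition \ref{wirth1}.

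First I would rewrite the left hand side by the standard adjunction isomorphism
\[
[\Sigma_+^{\infty} G/K,\; G \ltimes_H Y]_*^G \;\cong\; [\S,\; \Res_K^G(G \ltimes_H Y)]_*^K,
\]
which is natural in $Y$. Next, Corollary \ref{splitting1} provides a natural splitting
\[
\Res_K^G(G \ltimes_H Y) \;\cong\; \bigvee_{[g] \in K \setminus G/H} K \ltimes_{K \cap {}^gH} \Res^{{}^gH}_{K \cap {}^gH}(c_g^* Y)
\]
in $\Ho(\Sp_K^O)$. Since $\S$ is compact in $\Ho(\Sp_K^O)$ (as noted in Subsection \ref{hosp}), the functor $[\S, -]_*^K$ converts this finite wedge into a direct sum, giving
\[
[\S,\; \Res_K^G(G \ltimes_H Y)]_*^K \;\cong\; \bigoplus_{[g] \in K \setminus G/H} [\S,\; K \ltimes_{K \cap {}^gH} \Res^{{}^gH}_{K \cap {}^gH}(c_g^* Y)]_*^K.
\]

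Finally I would apply the Wirthm\"uller isomorphism of Proposition \ref{wirth1} to each summand, now with the ambient group being $K$ and the subgroup being $K \cap {}^gH$. This produces, for each double coset $[g]$, a natural isomorphism
\[
[\S,\; K \ltimes_{K \cap {}^gH} \Res^{{}^gH}_{K \cap {}^gH}(c_g^* Y)]_*^K \;\cong\; [\S,\; \Res^{{}^gH}_{K \cap {}^gH}(c_g^* Y)]_*^{K \cap {}^gH},
\]
and summing over $[g]$ yields the claimed identification.

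The steps are individually routine, so there is no serious obstacle; the only point that needs a little care is verifying that the indexing and the roles played by $H$, $K$, and the conjugates ${}^gH$ match up correctly when invoking Corollary \ref{splitting1} and Proposition \ref{wirth1} in sequence (in particular, that the $K$-action used in the double coset formula is precisely the one with respect to which the Wirthm\"uller map $w_{K \cap {}^gH}$ is defined). Naturality in $Y$ is inherited from the naturality statements already present in each of the three ingredients.
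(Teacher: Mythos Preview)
Your proof is correct and follows essentially the same approach as the paper: adjunction to pass to $[\S,\Res_K^G(-)]_*^K$, then Corollary~\ref{splitting1} for the double coset splitting, then Proposition~\ref{wirth1} on each summand. The only cosmetic difference is that you invoke compactness of $\S$ for the wedge-to-sum step, whereas the paper simply uses that the wedge is finite (since $G$ is finite).
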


\begin{proof} By adjunction, Corollary \ref{splitting1} and Proposition \ref{wirth1}, one has the following chain of isomorphisms:
\begin{align*} \begin{split} [\Sigma_+^{\infty} G/K, G \ltimes_H Y]_*^G \cong [G \ltimes_K \S, G \ltimes_H Y]_*^G \cong [\S, \Res_K^G(G \ltimes_H Y)]_*^K \cong \hspace{1.5cm} \\ [\S, \bigvee_{[g] \in K \setminus G/H } K \ltimes_{K \cap {}^g H} \Res_{K \cap {}^g H}^{{}^g H} (c_g^*(Y))]_*^K  \cong \bigoplus_{[g] \in K \setminus G/H} [\S, K \ltimes_{K \cap {}^g H} \Res_{K \cap {}^g H}^{{}^g H} (c_g^*(Y))]_*^K  \cong \\ \bigoplus_{[g] \in K \setminus G/H } [\S, \Res_{K \cap {}^g H}^{{}^g H} (c_g^*(Y))]_*^{K \cap {}^g H}. \hspace{4cm}  \end{split}  \end{align*} \end{proof}

\subsection{Proof of Proposition \ref{ses}} It follows from Proposition \ref{geofixweyl} that 
$$\Phi^H \colon [\Sigma_+^{\infty} G/H, \Sigma_+^{\infty} G/H]_*^G \lra [\Sigma_+^{\infty} W(H), \Sigma_+^{\infty} W(H)]_*^{W(H)}$$
is a retraction and thus in particular surjective. Further, Proposition \ref{geoprop1} implies that 
$$\Phi^H \circ \proj_* =0.$$ 
Hence, it remains to show that the map
$$ \proj_* \colon [\Sigma_+^{\infty} G/H, G \ltimes_{H} \Sigma_+^{\infty} E \P[H]]_*^G \lra  [\Sigma_+^{\infty} G/H, \Sigma_+^{\infty} G/H]_*^G $$
is injective and $\Ker \Phi^H \subset \Imm (\proj_*)$. For this we choose a set $\{g \}$ of double coset representatives for $H \setminus G /H$. By Corollary \ref{wirth2}, there is a commutative diagram with all vertical arrows isomorphisms
$$\xymatrix{[\Sigma_+^{\infty} G/H, G \ltimes_{H} \Sigma_+^{\infty} E \P[H]]_*^G \ar[d]_{=} \ar[rrr]^-{\proj_*} & & &  [\Sigma_+^{\infty} G/H, \Sigma_+^{\infty} G/H]_*^G \ar[d]^{\cong} \\ [\Sigma_+^{\infty} G/H, G \ltimes_{H} \Sigma_+^{\infty} E \P[H]]_*^G \ar[d]_{\cong} \ar[rrr]^-{(G \ltimes_H \proj)_*} & & & [\Sigma_+^{\infty} G/H, G \ltimes_{H} \S]_*^G    \ar[d]^{\cong} \\ \bigoplus_{[g] \in H \setminus G /H}  [\S, \Sigma_+^{\infty} \Res_{H \cap {}^g H}^{{}^g H} (c_g^*(E \P[H]))]_*^{H \cap {}^g H} \ar[rrr]^-{\bigoplus\limits_{[g] \in H \setminus G /H}  (\proj)_*} & & & \bigoplus_{[g] \in H \setminus G /H}  [\S, \S ]_*^{H \cap {}^g H}. } $$
%
%{\footnotesize $$\hspace{-0.5cm} \xymatrix{[\Sigma_+^{\infty} G/H, G \ltimes_{H} \Sigma_+^{\infty} E \P[H]]_*^G \ar[d]_{\cong} \ar[rrrrr]^-{\proj_*} & & & & & [\Sigma_+^{\infty} G/H, \Sigma_+^{\infty} G/H]_*^G \ar[d]^{\cong} \\ [G \ltimes_{H} \S, G \ltimes_{H} \Sigma_+^{\infty} E \P[H]]_*^G \ar[d]_{\cong} \ar[rrrrr]^-{(G \ltimes_H \proj)_*} & & & & & [G \ltimes_{H} \S, G \ltimes_{H} \S]_*^G    \ar[d]^{\cong} \\ [\S, \Res_H^G(G \ltimes_{H} \Sigma_+^{\infty} E \P[H])]_*^H \ar[d]_{\cong} \ar[rrrrr]^-{(\Res_H^G(G \ltimes_H \proj))_*} & & & & & [\S, \Res_H^G(G \ltimes_{H} \S)]_*^H \ar[d]^{\cong} \\ \bigoplus_{[g] \in H \setminus G /H}  [\S, H \ltimes_{H \cap {}^g H} \Res_{H \cap {}^g H}^{{}^g H} (c_g^*(\Sigma_+^{\infty} E \P[H]))]_*^H \ar[d]_{\cong} \ar[rrrrr]^-{\bigoplus\limits_{[g] \in H \setminus G /H}  (H \ltimes_{H \cap {}^g H}\Res_{H \cap {}^g H}^{{}^g H} (c_g^*(\proj)))_*} & & & & & \bigoplus_{[g] \in H \setminus G /H}  [\S, H \ltimes_{H \cap {}^g H} \S ]_*^H \ar[d]^{\cong}  \\   \bigoplus_{[g] \in H \setminus G /H}  [\S, \Sigma_+^{\infty} \Res_{H \cap {}^g H}^{{}^g H} (c_g^*(E \P[H]))]_*^{H \cap {}^g H} \ar[rrrrr]^-{\bigoplus\limits_{[g] \in H \setminus G /H}  (\proj)_*} & & & & & \bigoplus_{[g] \in H \setminus G /H}  [\S, \S ]_*^{H \cap {}^g H}. }$$} 

\noindent We will now identify the summands of the lower horizontal map. For this one has to consider two cases:

\textbf{Case 1. $H \cap {}^g H=H$}: In this case $\Res_{H \cap {}^g H}^{{}^g H} (c_g^*(E \P[H])) = c_g^*(E \P[H])$ is a model for classifying space of $\P[H]$ (and hence $G$-homotopy equivalent to $E \P[H])$). By Proposition \ref{tomexact1}, we get a short exact sequence
$$\xymatrix{0 \ar[r] & [\S, \Sigma_+^{\infty} \Res_{H \cap {}^g H}^{{}^g H} (c_g^*(E \P[H]))]_*^{H \cap {}^g H} \ar[r]^-{\proj_*} & [\S, \S ]_*^{H \cap {}^g H} \ar[r]^-{\Phi^{H \cap {}^g H}} & [\S, \S]_* \ar[r] & 0.}$$

\textbf{Case 2. $H \cap {}^g H$ is a proper subgroup of $H$}: In this case $\Res_{H \cap {}^g H}^{{}^g H} (c_g^*(E \P[H]))$ is an $(H \cap {}^g H)$-contractible cofibrant $(H \cap {}^g H)$-space and hence the map
$$\xymatrix{[\S, \Sigma_+^{\infty} \Res_{H \cap {}^g H}^{{}^g H} (c_g^*(E \P[H]))]_*^{H \cap {}^g H} \ar[r]^-{\proj_*} & [\S, \S ]_*^{H \cap {}^g H}}$$
is an isomorphism.

Altogether, after combining the latter diagram with Case 1. and Case 2., we see that the map
$$ \proj_* \colon [\Sigma_+^{\infty} G/H, G \ltimes_{H} \Sigma_+^{\infty} E \P[H]]_*^G \lra  [\Sigma_+^{\infty} G/H, \Sigma_+^{\infty} G/H]_*^G $$ is injective. It still remains to check that $\Ker \Phi^H \subset \Imm (\proj_*)$. For this, first note that $H \cap {}^g H=H$ if and only if $g \in N(H)$. Further, if $g \in N(H)$, then the double coset class $HgH$ is equal to $gH$. Hence, the set of those double cosets $[g] \in H \setminus G /H$ for which the equality $H \cap {}^g H=H$ holds is in bijection with the Weyl group $W(H)$. Consequently, using the latter diagram and Case 1. and Case 2., one gets an isomorphism
$$[\Sigma_+^{\infty} G/H, \Sigma_+^{\infty} G/H]_*^G/ \Imm (\proj_*) \cong \bigoplus_{W(H)}[\S, \S]_* \cong  [\Sigma_+^{\infty} W(H), \Sigma_+^{\infty} W(H)]_*^{W(H)}.$$
On the other hand, we have already checked that
$$\Phi^H \colon [\Sigma_+^{\infty} G/H, \Sigma_+^{\infty} G/H]_*^G \lra [\Sigma_+^{\infty} W(H), \Sigma_+^{\infty} W(H)]_*^{W(H)}$$
is surjective and this yields an isomorphism
$$[\Sigma_+^{\infty} G/H, \Sigma_+^{\infty} G/H]_*^G/\Ker \Phi^H \cong [\Sigma_+^{\infty} W(H), \Sigma_+^{\infty} W(H)]_*^{W(H)}.$$
Combining this with the previous isomorphism implies that the graded abelian group 
$$[\Sigma_+^{\infty} G/H, \Sigma_+^{\infty} G/H]_*^G/ \Imm (\proj_*)$$ 
is isomorphic to 
$$[\Sigma_+^{\infty} G/H, \Sigma_+^{\infty} G/H]_*^G/\Ker \Phi^H.$$ 
Now if the grading $\ast >0$, then $[\Sigma_+^{\infty} G/H, \Sigma_+^{\infty} G/H]_*^G$ is finite and it follows that $\Imm (\proj_*)$ and $\Ker \Phi^H$ are finite groups of the same cardinality (Subsection \ref{hosp}). Since we already know that  $\Imm(\proj_*) \subset \Ker \Phi^H$ (We have already observed that this is a consequence of Proposition \ref{geoprop1}.), one finally gets the equality $\Imm(\proj_*) = \Ker \Phi^H$. For $\ast=0$ a Five lemma argument completes the proof. We do not give here the details of the case $\ast =0$ as it is irrelevant for our proof of Theorem \ref{reduction}. $\Box$

\section{Proof of the main theorem} \label{prfmain} 

In this section we complete the proof of Theorem \ref{reduction} and hence of Theorem \ref{damtkicebuli}.

\;

\;

We start by recalling from \cite[IV.6]{MM02} the $\F$-model structure on the category of $G$-equivariant orthogonal spectra, where $\F$ is a family of subgroups of a finite group $G$.

\subsection{The $\F$-model structure and localizing subcategory determined by $\F$} \label{Fmodel} Let $G$ be a finite group and $\F$ a family of subgroups of $G$. 

\begin{defi} A morphism $f \colon X \lra Y$ of $G$-equivariant orthogonal spectra is called an $\F$-\emph{equivalence} if it induces isomorphisms
$$\xymatrix{f_* \colon \pi_*^H X \ar[r]^{\cong} & \pi_*^H Y}$$
on $H$-equivariant homotopy groups for any $H \in \F$. Similarly, a morphism $g \colon X \lra Y$ in $\Ho(\Sp^O_G)$ is called an $\F$-\emph{equivalence} if it induces an isomorphism on $\pi_*^H$ for any $H \in \F$.\end{defi}

The category of $G$-equivariant orthogonal spectra has a stable model structure with weak equivalences the $\F$-equivalences and with cofibrations the $\F$-cofibrations \cite[IV.6.5]{MM02}. By restricting our attention to those orbits $G/H$ which satisfy $H \in \F$, we can obtain the generating $\F$-cofibrations and acyclic $\F$-cofibrations in a similar way as for the absolute case of $\Sp^O_G$ \cite[III.4]{MM02} (see Subsection \ref{Gortholev} and Subsection \ref{Gorthostab}). We will denote this model category by $\Sp^O_{G, \F}$.
%
%$$\xymatrix{G/H_+ \wedge S^{n-1}_+ \ar[r] & G/H_+ \wedge D^n_+}$$ 
%
%and 
%
%$$\xymatrix{G/H_+ \wedge D^n_+ \ar[r] & G/H_+ \wedge (D^n \times I)_+},$$ 
%
%where $n \geq 0$, $H \in \F$, respectively. This model category will be denoted by $\Sp_{G, \F}^O$.

Any $\F$-equivalence can be detected in terms of geometric fixed points. To see this we need the following proposition which relates the classifying space $E\F$ with the concept of an $\F$-equivalence:

\begin{prop}[{\cite[IV.6.7]{MM02}}] \label{Fequi1} A morphism $f \colon X \lra Y$ of $G$-equivariant orthogonal spectra is an $\F$-equivalence if and only if $1 \wedge f \colon E\F_+ \wedge X \lra E\F_+ \wedge Y$ is a $G$-equivalence, i.e., a stable equivalence of orthogonal $G$-spectra. \end{prop}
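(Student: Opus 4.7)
The plan is to exploit two defining properties of the classifying space $E\F$: (i) if $H \in \F$ then every subgroup $K \leq H$ lies in $\F$ (by closure under taking subgroups), so $(E\F)^K$ is contractible for all such $K$; consequently $\Res^G_H E\F$ is $H$-equivariantly contractible, and the projection $E\F_+ \wedge Z \to Z$ is an $H$-equivalence for every $G$-spectrum $Z$; (ii) the $G$-CW structure of $E\F$ uses only cells of orbit type $G/K$ with $K \in \F$.

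For the direction $(\Leftarrow)$, I fix $H \in \F$ and consider the naturality square
$$\xymatrix{ E\F_+ \wedge X \ar[r]^-{1 \wedge f} \ar[d] & E\F_+ \wedge Y \ar[d] \\ X \ar[r]^-{f} & Y. }$$
By property (i) the two vertical maps are $H$-equivalences; by hypothesis the top map is a $G$-equivalence, hence an $H$-equivalence. Two-out-of-three forces $\pi_*^H(f)$ to be an isomorphism; letting $H$ range over $\F$ shows that $f$ is an $\F$-equivalence.

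For the direction $(\Rightarrow)$, I split into two cases according to whether $H \in \F$. When $H \in \F$ the same square works in reverse: the vertical maps are $H$-equivalences by (i), the bottom $f$ is an $H$-equivalence by hypothesis, so the top $1 \wedge f$ is an $H$-equivalence. When $H \notin \F$, I would use the cellular filtration coming from (ii). Since $-\wedge X$ and $-\wedge Y$ preserve cofiber sequences and filtered homotopy colimits (after a cofibrant replacement of $E\F$), and since $G$-equivalences are closed under these constructions, it suffices to prove that for each $K \in \F$ the map $1 \wedge f \colon G/K_+ \wedge X \to G/K_+ \wedge Y$ is a $G$-equivalence.

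This last reduction is a Mackey-type computation. Using the identification $G/K_+ \wedge X \cong G \ltimes_K \Res_K^G X$ together with the double coset formula of Corollary \ref{splitting1} and the Wirthm\"uller isomorphism, one obtains a natural direct sum decomposition
$$\pi_*^H(G/K_+ \wedge X) \;\cong\; \bigoplus_{[g] \in H \backslash G / K} \pi_*^{H \cap {}^gK}\bigl(c_g^* \Res_K^G X\bigr).$$
Each indexing subgroup $H \cap {}^gK$ sits inside ${}^gK \in \F$ (using conjugation-invariance of $\F$) and hence belongs to $\F$ by subgroup-closure; because $f$ is an $\F$-equivalence it induces an isomorphism on every summand. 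Therefore $\pi_*^H(G/K_+ \wedge f)$ is an isomorphism for every $H \leq G$, completing the reduction. The main obstacle is not conceptual but organizational: one must set up the cellular induction model-categorically (e.g.\ with a cofibrant replacement of $E\F$ so that smashing preserves $G$-equivalences) and then package the Mackey calculation uniformly at each cell.
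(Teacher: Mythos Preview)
The paper does not prove this proposition at all: it is simply quoted from \cite[IV.6.7]{MM02} and used as a black box. So there is no ``paper's own proof'' to compare against.

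Your argument is correct and is essentially the standard one. A few small remarks. For the $(\Leftarrow)$ direction and for the case $H \in \F$ in the $(\Rightarrow)$ direction, note that the projection $E\F_+ \to S^0$ is an honest $H$-equivariant homotopy equivalence (since $\Res^G_H E\F$ is an $H$-CW complex with contractible fixed points for every $K \leq H$, so Whitehead applies), and therefore smashing with an arbitrary $Z$ preserves it without any cofibrancy hypotheses on $Z$; this makes your naturality square go through cleanly. For the case $H \notin \F$, your cellular reduction and Mackey computation are exactly right; one minor point is that $E\F$ is already a $G$-CW complex, so no cofibrant replacement is needed. The double coset decomposition you invoke is precisely Corollary~\ref{wirth2} of the paper (with $Y = \Res^G_K X$ after the identification $G/K_+ \wedge X \cong G \ltimes_K \Res^G_K X$), and your observation that every $H \cap {}^gK$ lies in $\F$ is the key point.
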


\begin{coro} \label{Fequi2} A morphism $f \colon X \lra Y$ of $G$-equivariant orthogonal spectra is an $\F$-equivalence if and only if for any $H \in \F$, the induced map 
$$\Phi^H (\Res^G_H (f)) \colon \Phi^H (\Res^G_H (X)) \lra \Phi^H (\Res^G_H (Y))$$
on $H$-geometric fixed points is a stable equivalence of (non-equivariant) spectra. \end{coro}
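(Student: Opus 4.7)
The plan is to prove both implications by reducing to two standard facts about geometric fixed points: first, that $\Phi^H$ preserves stable equivalences of $H$-spectra, and second, that collectively the geometric fixed points $\{\Phi^K\}_{K \leq H}$ detect stable equivalences of $H$-equivariant orthogonal spectra. The argument hinges on the closure of $\F$ under taking subgroups (and conjugation, which is automatic since $H$-spectra only see the orbit structure after restriction).

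For the forward direction, suppose $f$ is an $\F$-equivalence and fix $H \in \F$. Since every $K \leq H$ lies in $\F$, the map $\pi_*^K(f)$ is an isomorphism for every $K \leq H$, which is exactly the statement that $\Res^G_H(f)$ is a stable equivalence of orthogonal $H$-spectra. Proposition \ref{geofixprop} asserts that $\Phi^H$ preserves cofibrations and acyclic cofibrations, so after a functorial cofibrant replacement Ken Brown's lemma yields that $\Phi^H$ takes stable equivalences to stable equivalences of spectra, giving the desired conclusion.

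For the converse, assume $\Phi^H(\Res^G_H(f))$ is a stable equivalence for every $H \in \F$, and fix $H \in \F$. For any $K \leq H$, closure of $\F$ under subgroups gives $K \in \F$, and using the identity $\Res^G_K(f) = \Res^H_K(\Res^G_H(f))$ one sees that every geometric fixed point $\Phi^K(\Res^H_K(\Res^G_H(f)))$, $K \leq H$, is a stable equivalence of spectra. The joint detection principle then implies that $\Res^G_H(f)$ is a stable equivalence of $H$-spectra, and hence $\pi_*^H(f) = \pi_*^H(\Res^G_H(f))$ is an isomorphism. Since this holds for every $H \in \F$, the map $f$ is an $\F$-equivalence.

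The principal technical point is the joint detection principle invoked in the converse: a map of $H$-equivariant orthogonal spectra is a stable equivalence if and only if all of its geometric fixed points $\Phi^K$ for $K \leq H$ are stable equivalences of spectra. This is a standard fact, but the excerpt does not state it explicitly; I would either cite it from \cite{MM02} or establish it independently by an inductive argument along the isotropy separation sequence in the spirit of Proposition \ref{tomexact1}, combining it with the tom Dieck splitting to recover $\pi_*^H$ from the family $\{\Phi^K\}_{K \leq H}$.
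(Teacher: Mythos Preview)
Your proof is correct, but it takes a different route from the paper's. The paper first invokes Proposition~\ref{Fequi1} to replace the $\F$-equivalence condition by the condition that $E\F_+ \wedge f$ is a genuine $G$-equivalence, then applies the geometric-fixed-point detection principle once, for \emph{all} subgroups $H \leq G$ (citing \cite[XVI.6.4]{alaska}). The defining properties of $E\F$ then make the condition vacuous for $H \notin \F$ and reduce it to $\Phi^H(\Res^G_H f)$ for $H \in \F$, since geometric fixed points and restriction commute with smash products and $(E\F)^H$ is contractible or empty according to whether $H \in \F$.

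Your argument bypasses $E\F$ and Proposition~\ref{Fequi1} entirely: you work directly with $\Res^G_H f$ for each $H \in \F$ and exploit closure of $\F$ under subgroups to invoke the detection principle at the level of $H$ rather than $G$. This is slightly more elementary in that it avoids the classifying space, and it makes the role of the family axioms more transparent. The paper's version, on the other hand, makes the result an immediate corollary of Proposition~\ref{Fequi1} and needs only one invocation of the detection principle. Both arguments rest on the same external input (joint detection of equivalences by geometric fixed points), which, as you note, is not proved in the paper but is standard; the paper's citation for it is \cite[XVI.6.4]{alaska}.
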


\begin{proof} By Proposition \ref{Fequi1}, $f \colon X \lra Y$ is an $\F$-equivalence if and only if 
$$1 \wedge f \colon E\F_+ \wedge X \lra E\F_+ \wedge Y$$ 
is a stable equivalence of orthogonal $G$-spectra. But the latter is the case if and only if
$$\Phi^H (\Res^G_H (1 \wedge f)) \colon \Phi^H (\Res^G_H (E\F_+ \wedge X)) \lra \Phi^H (\Res^G_H (E\F_+ \wedge Y))$$
is a stable equivalence of spectra for any subgroup $H \leq G$ (\cite[XVI.6.4]{alaska}). Now using that the restriction and geometric fixed points commute with smash products as well as the defining properties of $E \F$, we obtain the desired result. \end{proof}

\;

\;

By definition of $\F$-equivalences and $\F$-cofibrations we get a Quillen adjunction
$$\xymatrix{ \id  \colon \Sp_{G, \F}^O \ar@<0.5ex>[r] & \Sp^O_G \cocolon \id. \ar@<0.5ex>[l]}$$
After deriving this Quillen adjunction one obtains an adjunction 
$$\xymatrix{ \LL  \colon \Ho(\Sp_{G, \F}^O) \ar@<0.5ex>[r] & \Ho(\Sp^O_G) \cocolon \R \ar@<0.5ex>[l]}$$
on the homotopy level. We now examine the essential image of the left adjoint functor $\LL$. Since a weak equivalence in $\Sp^O_G$ is also a weak equivalence in $\Sp_{G, \F}^O$, the unit 
$$\id \lra \R \LL$$ 
of the adjunction $(\LL, \R)$ is an isomorphism of functors. Hence the functor 
$$\LL \colon \Ho(\Sp_{G, \F}^O) \lra \Ho(\Sp^O_G)$$ 
is fully faithful. 

\begin{prop} \label{Lcompu} For any $X \in \Sp_{G, \F}^O$, there are natural isomorphisms
$$\LL(X) \cong E\F_+ \wedge ^{\LL} X \cong E\F_+ \wedge X.$$ \end{prop}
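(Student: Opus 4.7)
The plan is to represent $\LL(X)$ by an $\F$-cofibrant replacement and then connect it to $E\F_+ \wedge X$ through a natural zigzag of stable $G$-equivalences; the identification with the derived smash product $E\F_+ \wedge^{\LL} X$ will then be essentially formal. By construction of the derived left adjoint, $\LL(X)$ is represented by an $\F$-cofibrant replacement $q \colon QX \to X$ taken in $\Sp^O_{G,\F}$ and viewed as an object of $\Sp^O_G$. I would then consider the natural diagram
\[
QX \;\xleftarrow{\;\proj \wedge \id\;}\; E\F_+ \wedge QX \;\xrightarrow{\;\id \wedge q\;}\; E\F_+ \wedge X
\]
in $\Sp^O_G$, where $\proj \colon E\F_+ \to S^0$ is the projection collapsing $E\F$ to the non-basepoint, and argue that both arrows are stable $G$-equivalences, which immediately gives a natural isomorphism $\LL(X) \cong E\F_+ \wedge X$ in $\Ho(\Sp^O_G)$.

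For the right-hand arrow, Proposition \ref{Fequi1} applies directly: since $q$ is an $\F$-equivalence by construction of $QX$, smashing with $E\F_+$ promotes it to a stable $G$-equivalence. For the left-hand arrow I would use the standard criterion that a map of orthogonal $G$-spectra is a stable $G$-equivalence if and only if $\Phi^K$ of it is a stable equivalence of spectra for every subgroup $K \leq G$, together with the compatibility $\Phi^K(A \wedge Y) \simeq A^K \wedge \Phi^K(Y)$ for a based $G$-space $A$ and a $G$-spectrum $Y$, which follows from Proposition \ref{geofixprop} and Corollary \ref{geofixsusp}. Applying $\Phi^K$ to $\proj \wedge \id$ yields the map $E\F_+^K \wedge \Phi^K(QX) \to \Phi^K(QX)$. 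If $K \in \F$ then $E\F^K$ is contractible, so $E\F_+^K \to S^0$ is a nonequivariant weak equivalence, and the induced map is a stable equivalence. If $K \notin \F$ then $E\F_+^K = \ast$, so the source is trivial; simultaneously, $QX$ is built cellularly from spectra of the form $F_V((G/H \times D^n)_+)$ with $H \in \F$, and closure of $\F$ under conjugation and subgroups forces $(G/H)^K = \emptyset$ for each such $H$, so Proposition \ref{geofixprop} forces $\Phi^K(QX) \simeq \ast$ and the target is also trivial.

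Finally, the identification $E\F_+ \wedge X \cong E\F_+ \wedge^{\LL} X$ requires no additional work: $E\F_+$ is cofibrant in $G\Top_*$ (a $G$-CW complex with a disjoint basepoint), and the same $\Phi^K$-criterion (using that $(-)^K$ preserves weak equivalences between cofibrant based $G$-spaces) shows that $E\F_+ \wedge (-)$ preserves arbitrary stable $G$-equivalences, so no cofibrant replacement of $X$ is needed. Naturality in $X$ is automatic from the functoriality of cofibrant replacement in $\Sp^O_{G,\F}$. The main technical input, and the step I would verify most carefully, is the vanishing $\Phi^K(QX) \simeq \ast$ for $K \notin \F$; this is the bridge between the $\F$-model structure and the stable model structure on $\Sp^O_G$, and it rests crucially on the closure properties of the family $\F$.
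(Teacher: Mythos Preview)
Your proof is correct and follows essentially the same zigzag as the paper: take an $\F$-cofibrant replacement $X^c \to X$, and argue that both $E\F_+ \wedge X^c \to X^c$ and $E\F_+ \wedge X^c \to E\F_+ \wedge X$ are stable $G$-equivalences, the latter via Proposition~\ref{Fequi1}. The only difference is in the treatment of the first map: the paper simply cites \cite[Theorem IV.6.10]{MM02}, whereas you unpack this by applying the geometric-fixed-point criterion and checking the cases $K \in \F$ (where $E\F^K$ is contractible) and $K \notin \F$ (where $\Phi^K(QX) \simeq \ast$ because $QX$ is built from cells with isotropy in $\F$). Your argument is a valid direct proof of the cited fact, and your additional remark that $E\F_+ \wedge (-)$ preserves all stable $G$-equivalences cleanly justifies the identification $E\F_+ \wedge^{\LL} X \cong E\F_+ \wedge X$, which the paper leaves implicit.
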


\begin{proof} Let $\lambda_X \colon X^c \lra X$ be a (functorial) cofibrant replacement of $X$ in $\Sp_{G, \F}^O$. By \cite[Theorem IV.6.10]{MM02}, the projection map $E\F_+ \wedge X^c \lra X^c$ is a weak equivalence in $\Sp^O_G$. On the other hand, Proposition \ref{Fequi1} implies that the morphism of $G$-spectra $1 \wedge \lambda_X \colon E\F_+ \wedge X^c \lra E\F_+ \wedge X$ is a weak equivalence in $\Sp^O_G$. This completes the proof.  \end{proof}

Next, note that the triangulated category $\Ho(\Sp^O_{G, \F})$ is compactly generated with 
$$\{\Sigma_+^{\infty} G/H \;|\; H \in \F \}$$ 
as a set of compact generators. Indeed, this follows from the following chain of isomorphisms:
\begin{align*} \begin{split} [\Sigma_+^{\infty} G/H, X]_*^{\Ho(\Sp_{G, \F}^O)} \cong [E\F_+ \wedge \Sigma_+^{\infty} G/H, E\F_+ \wedge X]_*^G \cong \\  [\Sigma_+^{\infty} G/H, E\F_+ \wedge X]_*^G \cong \pi_*^H(E\F_+ \wedge X) \cong \pi_*^H X. \end{split} \end{align*}
The first isomorphism in this chain follows from Proposition \ref{Lcompu} and from the fact that $\LL$ is fully faithful. The second isomorphism holds since $H \in \F$. Finally, the last isomorphism is an immediate consequence of Corollary \ref{Fequi2}.
\begin{prop} \label{Limage} The essential image of the functor $ \LL \colon \Ho(\Sp_{G, \F}^O) \lra \Ho(\Sp^O_G)$ is exactly the localizing subcategory generated by $\{\Sigma_+^{\infty} G/H \; |\; H \in \F \}$. \end{prop}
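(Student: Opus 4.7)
The plan is to establish the two inclusions separately; write $\mathcal{T}$ for the localizing subcategory of $\Ho(\Sp^O_G)$ generated by $\{\Sigma_+^\infty G/H \mid H \in \F\}$. The tools already in hand are that $\LL$ is fully faithful, exact, preserves arbitrary coproducts (being a derived left Quillen functor with a right adjoint), and satisfies $\LL(X) \cong E\F_+ \wedge X$ by Proposition \ref{Lcompu}.

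The pivotal preliminary step is the identification $\LL(\Sigma_+^\infty G/H) \cong \Sigma_+^\infty G/H$ in $\Ho(\Sp^O_G)$ for each $H \in \F$. This reduces to showing that the projection $E\F_+ \wedge \Sigma_+^\infty G/H \to \Sigma_+^\infty G/H$ is a $G$-stable equivalence, which I would verify by a levelwise fixed-point analysis: for any $L \le G$, either $(G/H)^L = \emptyset$ (and both sides agree on $L$-fixed points at the basepoint), or there exists $g \in G$ with $g^{-1}Lg \subseteq H$, which forces $L \in \F$ since $\F$ is closed under conjugation and subgroups; in that case $E\F^L$ is contractible, so the projection restricts to a weak equivalence on $L$-fixed points.

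For the inclusion (essential image of $\LL$) $\subseteq \mathcal{T}$, I would use that $\{\Sigma_+^\infty G/H \mid H \in \F\}$ generates $\Ho(\Sp^O_{G,\F})$ as a localizing subcategory (verified just before the statement), so every $X \in \Ho(\Sp^O_{G,\F})$ lies in that localizing subcategory. Applying the exact, coproduct-preserving functor $\LL$ then places $\LL(X)$ into the localizing subcategory of $\Ho(\Sp^O_G)$ generated by the images $\LL(\Sigma_+^\infty G/H)$, which by the preliminary step coincide with $\Sigma_+^\infty G/H$, yielding $\LL(X) \in \mathcal{T}$.

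For the reverse inclusion $\mathcal{T} \subseteq$ (essential image), I would check that the essential image is itself a localizing subcategory containing each $\Sigma_+^\infty G/H$ with $H \in \F$. Closure under exact triangles follows from $\LL$ being exact and full: a morphism between objects of the essential image lifts to $\Ho(\Sp^O_{G,\F})$ by fullness, so the cone in $\Ho(\Sp^O_G)$ is the $\LL$-image of the cone upstairs. Closure under arbitrary coproducts is immediate from $\LL$ preserving coproducts. Closure under retracts follows from the idempotent completeness of $\Ho(\Sp^O_{G,\F})$ (any triangulated category with countable coproducts is idempotent complete) combined with the full faithfulness of $\LL$, which lets idempotents in the essential image lift and split upstairs. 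The argument is essentially formal once the fixed-point computation in the second paragraph is in place, so no serious obstacle is anticipated.
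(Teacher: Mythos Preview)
Your proposal is correct and follows essentially the same approach as the paper: both verify that $\LL(\Sigma_+^\infty G/H)\cong \Sigma_+^\infty G/H$ for $H\in\F$ via the contractibility of $E\F^L$ for $L\in\F$, then use that $\Ho(\Sp^O_{G,\F})$ is generated by these objects together with exactness and fullness of $\LL$. Your version is considerably more detailed than the paper's terse ``the rest follows from the fact that $\LL$ is full,'' in that you spell out explicitly why the essential image is a localizing subcategory (closure under cones via fullness, coproducts, and retracts via idempotent completeness); this is a welcome elaboration rather than a different route.
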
 

\begin{proof} The functor $\LL$ is exact and as we already noted, $\Ho(\Sp_{G, \F}^O)$ is generated by the set $\{\Sigma_+^{\infty} G/H \;|\; H \in \F \}$. Next, by Proposition \ref{Lcompu}, for any $H \in \F$, 
$$\LL(\Sigma_+^{\infty} G/H) \cong E \F_+ \wedge \Sigma_+^{\infty} G/H.$$
The projection map $E \F_+ \wedge \Sigma_+^{\infty} G/H \lra \Sigma_+^{\infty} G/H$ is a weak equivalence in $\Sp^O_G$. The rest of the proof follows from the fact that $\LL$ is full. \end{proof}

\;

\;

Next, we need the following simple lemma from category theory.

\begin{lem} Let 
$$\xymatrix{ \LL \colon \D \ar@<0.5ex>[r] & \E \cocolon \R. \ar@<0.5ex>[l]}$$
be an adjunction and assume that the unit
$$\text{id} \lra \R \LL$$
is an isomorphism (or, equivalently, $\LL$ is fully faithful). Further, suppose we are given morphisms
$$\xymatrix{X \ar[r]^-\alpha & Z & Y \ar[l]_-\beta}$$  
in $\E$ such that $X$ and $Y$ are in the essential image of $\LL$ and $\R(\alpha)$ and $\R(\beta)$ are isomorphisms in $\D$. Then there is an isomorphism $\xymatrix{\gamma \colon X \ar[r]^-{\cong} & Y}$ in $\E$ such that the diagram
$$\xymatrix{X \ar[rr]^{\gamma} \ar[dr]_\alpha & & Y \ar[ld]^\beta \\ & Z }$$
commutes. \end{lem}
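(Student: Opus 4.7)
The plan is to exploit the counit $\epsilon \colon \LL \R \to \id_{\E}$ of the given adjunction. The hypothesis that the unit $\eta \colon \id_{\D} \to \R \LL$ is an isomorphism forces, via the triangle identity $\epsilon_{\LL A} \circ \LL \eta_A = \id_{\LL A}$, that $\epsilon_{\LL A}$ is an isomorphism for every $A \in \D$; naturality of $\epsilon$ then promotes this to any $V$ isomorphic to some $\LL A$, i.e.\ to any $V$ in the essential image of $\LL$. In particular, since $X$ and $Y$ lie in the essential image by assumption, both $\epsilon_X \colon \LL \R X \to X$ and $\epsilon_Y \colon \LL \R Y \to Y$ are isomorphisms in $\E$.

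Given this, I would construct $\gamma$ by first building a candidate downstairs in $\D$ and then transporting it back. The composite
$$\tilde{\gamma} := \R(\beta)^{-1} \circ \R(\alpha) \colon \R X \lra \R Y$$
is an isomorphism in $\D$ because $\R(\alpha)$ and $\R(\beta)$ are. Then define
$$\gamma := \epsilon_Y \circ \LL(\tilde{\gamma}) \circ \epsilon_X^{-1} \colon X \lra Y,$$
which is an isomorphism in $\E$ as a composite of three isomorphisms (using that $\LL$ preserves isomorphisms together with the previous paragraph).

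It remains to verify $\beta \circ \gamma = \alpha$, which is a diagram chase with naturality of $\epsilon$. Naturality applied to $\beta \colon Y \to Z$ gives $\beta \circ \epsilon_Y = \epsilon_Z \circ \LL \R(\beta)$, so
$$\beta \circ \gamma \;=\; \epsilon_Z \circ \LL \R(\beta) \circ \LL(\tilde{\gamma}) \circ \epsilon_X^{-1} \;=\; \epsilon_Z \circ \LL\bigl(\R(\beta) \circ \tilde{\gamma}\bigr) \circ \epsilon_X^{-1} \;=\; \epsilon_Z \circ \LL \R(\alpha) \circ \epsilon_X^{-1}.$$
Naturality applied to $\alpha \colon X \to Z$ reads $\alpha \circ \epsilon_X = \epsilon_Z \circ \LL \R(\alpha)$, and since $\epsilon_X$ is an isomorphism this rearranges to $\epsilon_Z \circ \LL \R(\alpha) \circ \epsilon_X^{-1} = \alpha$. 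Hence $\beta \circ \gamma = \alpha$, as required. There is no genuine obstacle here: the entire argument is formal bookkeeping with the unit and counit, and the only conceptual input is the standard identification of the essential image of a fully faithful left adjoint with the locus where the counit is invertible.
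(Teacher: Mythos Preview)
Your proof is correct and is essentially identical to the paper's: both define $\gamma$ as $\epsilon_Y \circ (\LL\R\beta)^{-1} \circ \LL\R\alpha \circ \epsilon_X^{-1}$ (you compose $\R(\beta)^{-1}\circ\R(\alpha)$ in $\D$ first and then apply $\LL$, which yields the same morphism), and both rely on naturality of the counit together with the fact that the counit is invertible on the essential image of $\LL$. Your write-up is slightly more explicit in justifying the latter via the triangle identity and in spelling out the verification $\beta\circ\gamma=\alpha$, but there is no difference in strategy.
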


\begin{proof} One has the commutative diagram
$$\xymatrix{\LL \R (X) \ar[d]^{\counit}_{\cong} \ar[r]^{\LL \R (\alpha)}_{\cong} & \LL \R (Z) \ar[d]^{\counit} & \LL \R (Y) \ar[l]_{\LL \R (\beta)}^{\cong} \ar[d]^{\counit}_{\cong} \\ X \ar[r]^\alpha & Z & Y, \ar[l]_\beta}$$
where the left and right vertical arrows are isomorphisms since $X$ and $Y$ are in the essential image of $\LL$ and the functor $\LL$ is fully faithful. We can choose $\xymatrix{\gamma \colon X \ar[r]^-{\cong} & Y}$ to be the composite
\[\xymatrix{X \ar[rr]^{\counit^{-1}} & & \LL \R (X) \ar[rr]^{\LL \R (\alpha)} & & \LL \R (Z)  \ar[rr]^{(\LL \R (\beta))^{-1}} & & \LL \R (Y) \ar[rr]^{\counit} & & Y.} \qedhere \] \end{proof}

\begin{coro} \label{zigzag} Let $\F$ be a family of subgroups of $G$ and suppose $X$ and $Y$ are in the essential image of $\LL \colon \Ho(\Sp_{G, \F}^O) \lra \Ho(\Sp^O_G)$ (which is the localizing subcategory generated by $\{\Sigma_+^{\infty} G/H \;|\; H \in \F \}$ according to \ref{Limage}). Further assume that we have maps
$$\xymatrix{X \ar[r]^-\alpha & Z & Y \ar[l]_-\beta}$$
such that $\pi_*^H \alpha$ and $\pi_*^H \beta$ are isomorphisms for any $H \in \F$ (Or, in other words, $\alpha$ and $\beta$ are $\F$-equivalences.). Then there is an isomorphism $\xymatrix{\gamma \colon X \ar[r]^-{\cong} & Y}$ such that the diagram
$$\xymatrix{X \ar[rr]^{\gamma} \ar[dr]_\alpha & & Y \ar[ld]^\beta \\ & Z }$$
commutes. \end{coro}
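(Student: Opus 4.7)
The plan is simply to apply the preceding lemma to the derived adjunction
\[
\xymatrix{ \LL \colon \Ho(\Sp_{G, \F}^O) \ar@<0.5ex>[r] & \Ho(\Sp^O_G) \cocolon \R. \ar@<0.5ex>[l]}
\]
To do this I need to verify the three hypotheses of that lemma in the present setting.

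First, the unit $\id \to \R\LL$ is an isomorphism. This is exactly the observation already recorded in Subsection \ref{Fmodel}: every weak equivalence of $\Sp^O_G$ is an $\F$-equivalence, so the left Quillen functor $\id \colon \Sp_{G,\F}^O \to \Sp^O_G$ preserves and reflects weak equivalences between cofibrant objects in the strong sense needed to make $\LL$ fully faithful, equivalently to make the unit invertible. Second, $X$ and $Y$ are assumed by hypothesis to lie in the essential image of $\LL$ (which by Proposition~\ref{Limage} is the localizing subcategory generated by $\{\Sigma_+^\infty G/H \mid H \in \F\}$).

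The third hypothesis is that $\R(\alpha)$ and $\R(\beta)$ are isomorphisms in $\Ho(\Sp_{G,\F}^O)$. Here is where the $\F$-equivalence assumption enters. The functor $\R$ is the right derived functor of the identity $\id \colon \Sp^O_G \to \Sp^O_{G,\F}$ viewed as a right Quillen functor; on objects it amounts, up to fibrant replacement in $\Sp^O_G$, to regarding a $G$-spectrum in the $\F$-model structure. A morphism $\alpha \colon X \to Z$ with $\pi_*^H(\alpha)$ an isomorphism for every $H \in \F$ is by definition a weak equivalence in $\Sp^O_{G,\F}$, hence becomes an isomorphism in $\Ho(\Sp^O_{G,\F})$. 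The same argument applies to $\beta$.

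With all three hypotheses of the preceding lemma verified, it provides the isomorphism $\gamma \colon X \xrightarrow{\cong} Y$ in $\Ho(\Sp^O_G)$ with $\beta \circ \gamma = \alpha$, as desired. There is essentially no obstacle to this proof beyond cleanly articulating why $\R$ sends $\F$-equivalences to isomorphisms; everything else is a direct specialization of the general categorical lemma already proved just above.
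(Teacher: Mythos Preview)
Your proof is correct and follows the same approach as the paper: apply the preceding categorical lemma to the adjunction $(\LL,\R)$ and verify that $\R(\alpha)$, $\R(\beta)$ are isomorphisms. The paper phrases the last verification via the isomorphism $\pi_*^H \R(T) \cong \pi_*^H T$ for $H \in \F$ (established just before), while you argue that $\R$ is induced by the identity and hence inverts $\F$-equivalences; these are the same point. One small imprecision: $\alpha$ lives in $\Ho(\Sp^O_G)$, not in $\Sp^O_G$, so saying it ``is by definition a weak equivalence in $\Sp^O_{G,\F}$'' is a slight abuse---the clean statement is that since $\id \colon \Sp^O_G \to \Sp^O_{G,\F}$ preserves all weak equivalences, $\R$ is simply the induced functor on homotopy categories, and a morphism in $\Ho(\Sp^O_{G,\F})$ is an isomorphism exactly when it induces isomorphisms on $\pi_*^H$ for all $H \in \F$ (these being the generators).
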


\begin{proof} We apply the previous lemma to the adjunction 
$$\xymatrix{ \LL  \colon \Ho(\Sp_{G, \F}^O) \ar@<0.5ex>[r] & \Ho(\Sp^O_G) \cocolon \R \ar@<0.5ex>[l]}$$
and use the isomorphism $\pi_*^H \R(T) \cong \pi_*^HT$, $H \in \F$. \end{proof}

\subsection{Inductive strategy and preservation of induced classifying spaces} \label{indstrat} Recall that we are given an exact functor of triangulated categories 
$$\xymatrix{ F \colon \Ho(\Sp_G^O,_{(2)}) \ar[r] & \Ho(\Sp_G^O,_{(2)})}$$
that preserves arbitrary coproducts and such that 
$$F(\Sigma_{+}^{\infty} G/H) = \Sigma_{+}^{\infty} G/H, \;\; H \leq G,$$ 
and
$$F(g)=g, \;\; F(\res_{K}^H)= \res_{K}^H, \;\; F(\tr_{K}^H)=\tr_{K}^H, \;\; g \in G, \;\; K \leq H \leq G.$$
We want to show that $F$ is an equivalence of categories. Proposition \ref{splinj} implies that in order to prove that $F$ is an equivalence, it suffices to check that for any subgroup $H \leq G$, the map between graded endomorphism rings
$$F \colon [\Sigma_{+}^{\infty} G/H, \Sigma_{+}^{\infty} G/H]_*^G \lra [F(\Sigma_{+}^{\infty} G/H), F(\Sigma_{+}^{\infty} G/H)]_*^G = [\Sigma_{+}^{\infty} G/H, \Sigma_{+}^{\infty} G/H]_*^G$$
is an isomorphism. The strategy is to do this inductively. We proceed by induction on the cardinality of $H$.  The induction starts with the case $H=e$. Proposition \ref{gfree} tells us that the map
$$F \colon [\Sigma_{+}^{\infty} G, \Sigma_{+}^{\infty} G]_*^G \lra [\Sigma_{+}^{\infty} G, \Sigma_{+}^{\infty} G]_*^G$$
is an isomorphism and hence the basis step is proved. The induction step follows from the next proposition which is one of the main technical results of this paper:

\begin{prop} \label{imptech} Let $G$ be a finite group and $H$ a subgroup of $G$. Assume that for any subgroup $K$ of $G$ which is proper subconjugate to $H$, the map
$$F \colon [\Sigma_+^{\infty} G/K, \Sigma_+^{\infty} G/K]_*^G \lra [F(\Sigma_+^{\infty} G/K), F(\Sigma_+^{\infty} G/K)]_*^G = [\Sigma_+^{\infty} G/K, \Sigma_+^{\infty} G/K]_*^G$$
is an isomorphism. Then the map
$$F \colon [\Sigma_+^{\infty} G/H, \Sigma_+^{\infty} G/H]_*^G \lra [F(\Sigma_{+}^{\infty} G/H), F(\Sigma_{+}^{\infty} G/H)]_*^G = [\Sigma_+^{\infty} G/H, \Sigma_+^{\infty} G/H]_*^G$$
is an isomorphism. \end{prop}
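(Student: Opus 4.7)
The plan is to compare both sides of the split short exact sequence of Proposition~\ref{ses}. Let $\F_H$ denote the family of subgroups of $G$ that are subconjugate to a proper subgroup of $H$. The first move is to strengthen the induction hypothesis: combining it with Proposition~\ref{splinj}, one checks that $F$ induces an isomorphism on every Hom group $[\Sigma_+^{\infty} G/K_1, \Sigma_+^{\infty} G/K_2]_*^G$ in which at least one of $K_1, K_2$ lies in $\F_H$, since the intersections ${}^g K_1 \cap K_2$ controlling the Proposition~\ref{splinj} embedding are then automatically in $\F_H$. Standard devissage---using that $\Sigma_+^{\infty} G/H$ and $\Sigma_+^{\infty} G/K$ are compact and that $F$ is exact and coproduct-preserving---extends this to isomorphisms $F \colon [Z, X]_*^G \xrightarrow{\cong} [Z, F(X)]_*^G$ for every $X$ in the localizing subcategory $\mathrm{Loc}(\F_H)$, whenever $Z$ is $\Sigma_+^{\infty} G/H$ or $\Sigma_+^{\infty} G/K$ with $K \in \F_H$. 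In particular this applies to $X = G \ltimes_H \Sigma_+^{\infty} E\P[H]$, which lies in $\mathrm{Loc}(\F_H)$ by Proposition~\ref{geoprop1}.

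To make the sequence of Proposition~\ref{ses} natural with respect to $F$, I would first check that $\proj \colon G \ltimes_H \Sigma_+^{\infty} E\P[H] \to \Sigma_+^{\infty} G/H$ is an $\F_H$-equivalence: for $K \in \F_H$ and any double-coset representative $g \in K \setminus G / H$, every subgroup $M$ of $K \cap {}^g H$ satisfies $|M| < |H|$, so $g^{-1} M g$ lies in $\P[H]$ and the fixed-point space $(c_g^* E\P[H]_+)^M$ is contractible; the Wirthm\"uller decomposition of Corollary~\ref{wirth2} then gives the claim. A short diagram chase combining this with the Hom isomorphisms from the previous paragraph shows that $F(\proj)$ is also an $\F_H$-equivalence, and since $F$ preserves cells in $\F_H$ the target $F(G \ltimes_H \Sigma_+^{\infty} E\P[H])$ also lies in $\mathrm{Loc}(\F_H)$. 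Corollary~\ref{zigzag} therefore produces a compatible isomorphism $\gamma \colon G \ltimes_H \Sigma_+^{\infty} E\P[H] \xrightarrow{\cong} F(G \ltimes_H \Sigma_+^{\infty} E\P[H])$ with $F(\proj) \circ \gamma = \proj$. Consequently $F$ preserves the subgroup $\Ker(\Phi^H) = \Imm(\proj_*)$ of $[\Sigma_+^{\infty} G/H, \Sigma_+^{\infty} G/H]_*^G$, and the previous paragraph forces its restriction to this subgroup to be an isomorphism.

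For the Weyl quotient I would introduce the composite endofunctor
$$\bar F \;=\; \Phi^H \circ \Res_{N(H)}^G \circ F \circ \bigl(G \ltimes_{N(H)} \ve^*(-)\bigr) \colon \Ho(\Sp_{W(H)}^O,_{(2)}) \lra \Ho(\Sp_{W(H)}^O,_{(2)}),$$
which is exact and preserves coproducts. Using the identifications of Subsection~\ref{Weyl1}, one checks $\bar F(\Sigma_+^{\infty} W(H)) = \Sigma_+^{\infty} W(H)$ and $\bar F(w) = w$ for every $w \in W(H)$: lifting $w$ to $\tilde w \in N(H)$, the image $G \ltimes_{N(H)} \ve^*(w)$ is the conjugation map $\tilde w$, which $F$ fixes by hypothesis, and $\Phi^H$ sends $\tilde w$ back to $w$. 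Proposition~\ref{gfree} applied to $W(H)$ then shows that $\bar F$ is an equivalence on free $W(H)$-spectra, and Proposition~\ref{geofixweyl} identifies the resulting isomorphism with the map induced by $F$ on the quotient $[\Sigma_+^{\infty} G/H, \Sigma_+^{\infty} G/H]_*^G / \Imm(\proj_*) \cong [\Sigma_+^{\infty} W(H), \Sigma_+^{\infty} W(H)]_*^{W(H)}$. A five-lemma applied to the split short exact sequence of Proposition~\ref{ses} then completes the proof. The step I expect to be the main obstacle is the compatibility argument in the second paragraph---verifying that $F(\proj)$ is an $\F_H$-equivalence and then invoking Corollary~\ref{zigzag} to genuinely intertwine the two short exact sequences---since the remaining ingredients are either the induction hypothesis together with standard triangulated-category devissage or the already-proved free case (Proposition~\ref{gfree}) in disguise.
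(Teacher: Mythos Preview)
Your proposal is correct and follows essentially the same route as the paper: the split short exact sequence of Proposition~\ref{ses}, Lemma~\ref{presclass} (which you rederive inline in your second paragraph), and Proposition~\ref{gfree} applied to the auxiliary endofunctor on $\Ho(\Sp_{W(H)}^O)$ (your $\bar F$ is the paper's $\hat F$). The only organizational difference is the endgame: the paper runs a diagram chase to get surjectivity of $F$ and then invokes finiteness of $[\Sigma_+^\infty G/H,\Sigma_+^\infty G/H]_*^G$ in positive degrees, whereas you assemble a genuine map of short exact sequences and apply the five lemma---this is a mild streamlining, since it handles all degrees uniformly without the cardinality trick. One small point to make explicit in your first paragraph: Proposition~\ref{splinj} by itself only gives \emph{injectivity} of $F$ on $[\Sigma_+^\infty G/K_1,\Sigma_+^\infty G/K_2]_*^G$; the passage to an isomorphism uses the finiteness argument already recorded after Proposition~\ref{splinj} in Section~\ref{redtrres}, so be sure to cite that.
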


Before starting to prove this proposition, one has to show that under its assumptions the functor $F$ preserves the object $G \ltimes_H \Sigma_+^{\infty} E\P[H]$. More precisely, let $\P[H]$ denote the family of proper subgroups of $H$. This is a family with respect to $H$ and not necessarily with respect to the whole group $G$. Next, let 
$$\proj \colon G \ltimes_H \Sigma_+^{\infty} E\P[H] \lra \Sigma_+^{\infty} G/H$$ 
be the projection (as in Subsection \ref{sec1}). The following holds:

\begin{lem} \label{presclass} Suppose $G$ is a finite group and $H$ a subgroup of $G$. Assume that for any subgroup $K$ of $G$ which is proper subconjugate to $H$, the map
$$F \colon [\Sigma_+^{\infty} G/K, \Sigma_+^{\infty} G/K]_*^G \lra [\Sigma_+^{\infty} G/K, \Sigma_+^{\infty} G/K]_*^G$$
is an isomorphism. Then there is an isomorphism 
$$\xymatrix{\gamma \colon F(G \ltimes_H \Sigma_+^{\infty} E\P[H]) \ar[r]^-{\cong} & G \ltimes_H \Sigma_+^{\infty} E\P[H]}$$ 
such that the diagram 
$$\xymatrix{F(G \ltimes_H \Sigma_+^{\infty} E\P[H]) \ar[d]_{F(\proj)} \ar[r]_-{\gamma}^-{\cong} & G \ltimes_H \Sigma_+^{\infty} E\P[H] \ar[d]^{\proj} \\ F(\Sigma_+^{\infty} G/H) \ar@{=}[r] & \Sigma_+^{\infty} G/H}$$
commutes. \end{lem}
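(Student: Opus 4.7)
The plan is to apply Corollary \ref{zigzag} to the zigzag
\[\xymatrix{F(G \ltimes_H \Sigma_+^\infty E\P[H]) \ar[r]^-{F(\proj)} & \Sigma_+^\infty G/H & G \ltimes_H \Sigma_+^\infty E\P[H] \ar[l]_-{\proj}}\]
with respect to the family $\F$ consisting of all proper subconjugates of $H$ in $G$, i.e., subgroups $K \leq G$ satisfying $K \leq {}^gH$ and $|K| < |H|$ for some $g \in G$. Write $\L_\F \subset \Ho(\Sp^O_G)$ for the localizing subcategory generated by $\{\Sigma_+^\infty G/K : K \in \F\}$. By Proposition \ref{geoprop1}, $G \times_H E\P[H]$ is a $G$-CW complex with cells of orbit type $G/K$ for $K$ a proper subgroup of $H$, so $\E := G \ltimes_H \Sigma_+^\infty E\P[H]$ lies in $\L_\F$. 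Since $F$ is exact, preserves coproducts, and fixes every generator $\Sigma_+^\infty G/K$ of $\L_\F$, the full subcategory $\{X : F(X) \in \L_\F\}$ is localizing and contains all generators; hence $F(\E) \in \L_\F$ as well.

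Next, I verify that $\proj \colon \E \to \Sigma_+^\infty G/H$ is an $\F$-equivalence. For $K \in \F$, Corollary \ref{splitting1} gives
\[\pi_*^K \E \cong \bigoplus_{[g] \in K \setminus G / H} \pi_*^{K \cap {}^gH}(c_g^* \Sigma_+^\infty E\P[H]),\]
together with the analogous splitting for $\pi_*^K(\Sigma_+^\infty G/H)$ obtained by replacing $E\P[H]$ with a one-point space. Because $|K| < |H|$, the intersection $K \cap {}^gH$ is strictly smaller than ${}^gH$, so the corresponding fixed-point space of $E\P[H]$ is contractible and the projection is summand-wise an equivalence.

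The main step, and the principal obstacle, is to check that $F(\proj)$ is likewise an $\F$-equivalence. For $K \in \F$ consider the naturality square
\[\xymatrix{[\Sigma_+^\infty G/K, \E]_*^G \ar[r]^-{F} \ar[d]_-{\proj_*} & [\Sigma_+^\infty G/K, F(\E)]_*^G \ar[d]^-{F(\proj)_*} \\ [\Sigma_+^\infty G/K, \Sigma_+^\infty G/H]_*^G \ar[r]^-{F} & [\Sigma_+^\infty G/K, \Sigma_+^\infty G/H]_*^G.}\]
The left vertical arrow is an isomorphism by the previous paragraph. For the bottom horizontal, Proposition \ref{splinj} applied to the pair $(K, H)$ identifies the source as a split summand of a direct sum of endomorphism groups $[\Sigma_+^\infty G/({}^gK \cap H), \Sigma_+^\infty G/({}^gK \cap H)]_*^G$; since $|{}^gK \cap H| \leq |K| < |H|$, each ${}^gK \cap H$ lies in $\F$, so the hypothesis of the lemma gives $F$-isomorphism on every summand, and the naturality of $\kappa_g$ with respect to $F$ (valid because $F$ fixes restrictions, transfers and conjugations) combined with split injectivity and the finiteness argument of Subsection \ref{indstrat} forces $F$ to be an isomorphism on the bottom row. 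The same reasoning applied to all pairs $(K, K')$ in $\F \times \F$ (noting ${}^gK \cap K' \in \F$) shows that $F \colon [\Sigma_+^\infty G/K, \Sigma_+^\infty G/K']_*^G \to [\Sigma_+^\infty G/K, \Sigma_+^\infty G/K']_*^G$ is an isomorphism for all $K, K' \in \F$. Consequently the full subcategory of $\L_\F$ consisting of those $Z$ for which $F \colon [\Sigma_+^\infty G/K, Z]_*^G \to [\Sigma_+^\infty G/K, F(Z)]_*^G$ is an isomorphism for every $K \in \F$ is localizing (by the Five Lemma, compactness of $\Sigma_+^\infty G/K$, and exactness plus coproduct-preservation of $F$) and contains every generator of $\L_\F$; it therefore equals $\L_\F$ and in particular contains $\E$, so the top horizontal is an isomorphism as well. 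Thus $F(\proj)_*$ is an isomorphism on $\pi_*^K$ for each $K \in \F$.

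Applying Corollary \ref{zigzag} to $(F(\E), \E, \Sigma_+^\infty G/H, F(\proj), \proj)$ now produces an isomorphism $\gamma \colon F(\E) \to \E$ in $\Ho(\Sp^O_G)$ with $\proj \circ \gamma = F(\proj)$, which is exactly the required conclusion.
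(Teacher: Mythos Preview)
Your proof is correct and follows essentially the same approach as the paper's: you identify the relevant family $\F=\P[H|G]$, place both $\E$ and $F(\E)$ in the corresponding localizing subcategory, verify that $\proj$ is an $\F$-equivalence via the double coset decomposition, then use Proposition~\ref{splinj} together with the inductive hypothesis and the finiteness of the relevant groups to deduce that $F$ is bijective on the needed morphism groups and hence that $F(\proj)$ is an $\F$-equivalence, finally invoking Corollary~\ref{zigzag}. The only organisational differences are that the paper appeals to Corollary~\ref{wirth2} rather than Corollary~\ref{splitting1} directly, and it runs the localizing argument over $\{\Sigma_+^\infty G/L : L \leq H\}$ instead of your $\L_\F$; both variants contain $\E$, so this is immaterial.
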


\begin{proof} Let $\P[H|G]$ denote the family of subgroups of $G$ which are proper subconjugate to $H$. By Proposition \ref{Limage}, the essential image of the fully faithful embedding
$$\LL \colon \Ho(\Sp_{G, \P[H|G]}^O) \lra \Ho(\Sp_G^O).$$
is the localizing subcategory generated by the set $\{\Sigma_+^{\infty} G/K \; | \; K \in \P[H|G] \}$. Obviously, the spectrum $G \ltimes_H \Sigma_+^{\infty} E\P[H]$ is an object of this localizing subcategory as the $H$-CW complex $E \P[H]$ is built out of $H$-cells of orbit type $H/K$ with $K \leq H$ and $K \neq H$. Next, since the endofunctor $F \colon \Ho(\Sp^O_G) \lra \Ho(\Sp^O_G)$ is exact, preserves infinite coproducts and $F(\Sigma_+^{\infty} G/L) = \Sigma_+^{\infty} G/L$ for any $L \leq G$, the spectrum $F(G \ltimes_H \Sigma_+^{\infty} E\P[H])$ is contained in the essential image of $\LL \colon \Ho(\Sp_{G, \P[H|G]}^O) \lra \Ho(\Sp_G^O)$ as well. Hence by Corollary~\ref{zigzag}, it suffices to show the maps in the zigzag
$$\xymatrix{F(G \ltimes_H \Sigma_+^{\infty} E\P[H]) \ar[r]^-{F(\proj)} & F(\Sigma_+^{\infty} G/H) = \Sigma_+^{\infty} G/H & G \ltimes_H \Sigma_+^{\infty} E\P[H] \ar[l]_-{\proj}}$$
are $\P[H|G]$-equivalences (which means that they induce isomorphisms on $\pi_*^K(-)$ for any subgroup $K \in \P[H|G]$.). It is easy to see that the map 
$$\proj \colon G \ltimes_H \Sigma_+^{\infty} E\P[H] \lra \Sigma_+^{\infty} G/H$$
is $\P[H|G]$-equivalence. Indeed, by Corollary \ref{wirth2}, for any $K \in \P[H|G]$, one has a commutative diagram
$$\xymatrix{ \pi_*^K(G \ltimes_H \Sigma_+^{\infty} E\P[H]) \ar[rrr]^-{\proj_*} \ar[d]_{\cong} & & & \pi_*^K(\Sigma_+^{\infty} G/H) \ar[d]^{\cong} \\ [\Sigma_+^{\infty} G/K, G \ltimes_H \Sigma_+^{\infty} E\P[H]]^G_* \ar[rrr]^-{\proj_*} \ar[d]_{\cong} & & & [\Sigma_+^{\infty} G/K, G \ltimes_H \S]^G_* \ar[d]^{\cong} \\ \bigoplus_{[g] \in K \setminus G /H}  [\S, \Sigma_+^{\infty} \Res_{K \cap {}^g H}^{{}^g H} (c_g^*(E \P[H]))]_*^{K \cap {}^g H} \ar[rrr]^-{\bigoplus\limits_{[g] \in K \setminus G /H}  (\proj)_*} & & & \bigoplus_{[g] \in K \setminus G /H}  [\S, \S ]_*^{K \cap {}^g H}. } $$
If $L$ is a subgroup of $K \cap {}^g H$, then $g^{-1} L g$ is a subgroup of $H$. In fact, $g^{-1} L g$ is a proper subgroup of $H$ since $K \in \P[H|G]$. This implies that for any subgroup $L \leq K \cap {}^g H$ the space $(\Res_{K \cap {}^g H}^{{}^g H} (c_g^*(E \P[H])))^L= (E\P[H])^{g^{-1}Lg}$ is contractible. Hence, $\Res_{K \cap {}^g H}^{{}^g H} (c_g^*(E \P[H]))$  is a $(K \cap {}^g H)$-contractible cofibrant $(K \cap {}^g H)$-space and we see that the map  
$$\proj \colon \Sigma_+^{\infty} \Res_{K \cap {}^g H}^{{}^g H} (c_g^*(E \P[H])) \lra \S$$ 
is a $(K \cap {}^g H)$-equivalence. From this we conclude that the lower horizontal map in the latter commutative diagram is an isomorphism. Hence, the upper horizontal map is an isomorphism for any subgroup $K \in \P[H|G]$ and one concludes that the map 
$$\proj \colon G \ltimes_H \Sigma_+^{\infty} E\P[H] \lra \Sigma_+^{\infty} G/H$$ 
is a $\P[H|G]$-equivalence. 

It remains to show that the morphism $F(\proj) \colon F(G \ltimes_H \Sigma_+^{\infty} E\P[H]) \lra F(\Sigma_+^{\infty} G/H)$ is a $\P[H|G]$-equivalence as well. We first note that the assumptions imply that for any $K \in \P[H|G]$ and any (not necessarily proper) subgroup $L \leq H$, the map
$$F \colon [\Sigma_+^{\infty} G/K, \Sigma_+^{\infty} G/L]^G_* \lra [F(\Sigma_+^{\infty} G/K), F(\Sigma_+^{\infty} G/L)]^G_* = [\Sigma_+^{\infty} G/K, \Sigma_+^{\infty} G/L]^G_*$$
is an isomorphism. Indeed, this follows from Proposition \ref{splinj} as well as from the commutative diagram
$$\xymatrix{[\Sigma_{+}^{\infty} G/K, \Sigma_{+}^{\infty} G/L]_*^G  \ar[rr]^-{(\kappa_\lb)_{[\lb] \in K \setminus G /L}} \ar[d]_-F & & \bigoplus_{[\lb] \in K \setminus G /L} [\Sigma_{+}^{\infty} G/({}^{\lb} L \cap K) , \Sigma_{+}^{\infty} G/({}^{\lb} L \cap K)]_*^G  \ar[d]^-{ \bigoplus_{[\lb] \in K \setminus G /L} F} \\ [\Sigma_{+}^{\infty} G/K, \Sigma_{+}^{\infty} G/L]_*^G  \ar[rr]^-{(\kappa_\lb)_{[\lb] \in K \setminus G /L}} &  &  \bigoplus_{[\lb] \in K \setminus G /L} [\Sigma_{+}^{\infty} G/({}^{\lb} L \cap K) , \Sigma_{+}^{\infty} G/({}^{\lb} L \cap K)]_*^G }$$
where the right vertical map is an isomorphism since ${}^{\lb} L \cap K$ is proper subconjugate to $H$ for any $\lb$. In particular, the map
$$F \colon [\Sigma_+^{\infty} G/K, \Sigma_+^{\infty} G/H]^G_* \lra [\Sigma_+^{\infty} G/K, \Sigma_+^{\infty} G/H]^G_*$$
is an isomorphism. Next, using a standard argument on triangulated categories, we see that for any $K \in \P[H|G]$ and any $X$ from the localizing subcategory of $\Ho(\Sp^O_G)$ generated by $\{\Sigma_+^{\infty} G/L \; |\; L \leq H \}$, the map
$$F \colon [\Sigma_+^{\infty} G/K, X]^G_* \lra [F(\Sigma_+^{\infty} G/K), F(X)]^G_*$$
is an isomorphism (recall $F(\Sigma_+^{\infty} G/L)=\Sigma_+^{\infty} G/L$ for any $L \leq G$). As a consequence, we see that the morphism
$$F \colon [\Sigma_+^{\infty} G/K, G \ltimes_H \Sigma_+^{\infty} E\P[H]]^G_* \lra [F(\Sigma_+^{\infty} G/K), F(G \ltimes_H \Sigma_+^{\infty} E\P[H])]^G_*$$
is an isomorphism. Finally, for any $K \in \P[H|G]$, consider the commutative diagram
$$\xymatrix{[\Sigma_+^{\infty} G/K, G \ltimes_H \Sigma_+^{\infty} E\P[H]]^G_* \ar[rr]^-{\proj_*} \ar[d]^F_{\cong} & & [\Sigma_+^{\infty} G/K, \Sigma_+^{\infty} G/H]^G_* \ar[d]^-F_-{\cong} \\ [F(\Sigma_+^{\infty} G/K), F(G \ltimes_H \Sigma_+^{\infty} E\P[H])]^G_* \ar[rr]^-{F(\proj)_*} \ar@{=}[d] & & [F(\Sigma_+^{\infty} G/K), F(\Sigma_+^{\infty} G/H )]^G_* \ar@{=}[d] \\  [\Sigma_+^{\infty} G/K, F(G \ltimes_H \Sigma_+^{\infty} E\P[H])]^G_* \ar[rr]^-{F(\proj)_*} & & [\Sigma_+^{\infty} G/K, F(\Sigma_+^{\infty} G/H )]^G_*. }$$
As we already explained, the upper horizontal map is an isomorphism. Thus the lower horizontal map in this diagram is an isomorphism as well and therefore, the map 
$$F(\proj) \colon F(G \ltimes_H \Sigma_+^{\infty} E\P[H]) \lra F(\Sigma_+^{\infty} G/H)$$ 
is a $\P[H|G]$-equivalence. \end{proof} 

\subsection{Completing the proof of Theorem \ref{reduction}} In this subsection we continue the induction started in the previous subsection and prove Proposition \ref{imptech}. Finally, at the end, we complete the proof of Theorem \ref{reduction} and hence prove the main Theorem \ref{damtkicebuli}. 

\bigskip

\begin{proof_ohne_pt} {\bf of Proposition \ref{imptech}.} Recall (Section \ref{geofixsec}) that the extension
$$\xymatrix{1 \ar[r] & H \ar[r]^-{\iota} & N(H) \ar[r]^-{\varepsilon} & W(H) \ar[r] & 1.}$$
determines the inflation functor
$$\varepsilon^* \colon \Ho(\Sp^O_{W(H)}) \lra \Ho(\Sp^O_{N(H)})$$
and the geometric fixed point functor
$$\Phi^H \colon \Ho(\Sp^O_{N(H)}) \lra \Ho(\Sp^O_{W(H)}).$$
Let $\hat{F} \colon \Ho(\Sp^O_{W(H)}) \lra \Ho(\Sp^O_{W(H)})$ denote the composite
{\footnotesize $$\xymatrix{\Ho(\Sp^O_{W(H)}) \ar[r]^-{\varepsilon^*} & \Ho(\Sp^O_{N(H)}) \ar[r]^-{G \ltimes_{N(H)} -} & \Ho(\Sp_G^O) \ar[r]^F & \Ho(\Sp^O_G)  \ar[r]^-{\Res_{N(H)}^G} & \Ho(\Sp_{N(H)}^O) \ar[r]^{\Phi^H} & \Ho(\Sp^O_{W(H)}).}$$}
\hspace{-0.3cm} It follows from the identifications we did in Subsection \ref{Weyl1} and from the properties of $F$ that the functor $\hat{F}$ is exact, preserves infinite coproducts and sends $\Sigma_+^{\infty} W(H)$ to itself. Moreover, it also follows that the restriction
$$\hat{F}|_{\Ho(\Mod \Sigma_+^{\infty} W(H))} \colon \Ho(\Mod \Sigma_+^{\infty} W(H)) \lra \Ho(\Mod \Sigma_+^{\infty} W(H))$$
of $\hat{F}$ on the localizing subcategory of $\Ho(\Sp^O_{W(H)})$ generated by $\Sigma_+^{\infty} W(H)$ satisfies the assumptions of Proposition \ref{gfree}. Hence, the map
$$\hat{F} \colon [\Sigma_+^{\infty} W(H), \Sigma_+^{\infty} W(H)]^{W(H)}_* \lra  [\Sigma_+^{\infty} W(H), \Sigma_+^{\infty} W(H)]^{W(H)}_*$$
is an isomorphism. Next, by the assumptions and Proposition \ref{splinj} (like in the proof of Lemma \ref{presclass}), we see that for any proper subgroup $L$ of $H$, the map
$$F \colon [\Sigma_+^{\infty} G/H, \Sigma_+^{\infty} G/L]^G_* \lra [\Sigma_+^{\infty} G/H, \Sigma_+^{\infty} G/L]^G_*$$
is an isomorphism. This, using a standard argument on triangulated categories, implies that for any $X$ which is contained in the localizing subcategory of $\Ho(\Sp^O_G)$ generated by $\{\Sigma_+^{\infty} G/L \;|\; L \in  \P[H] \}$, the map
$$F \colon [\Sigma_+^{\infty} G/H, X]^G_* \lra [F(\Sigma_+^{\infty} G/H), F(X)]^G_*$$
is an isomorphism and hence, in particular, so is the morphism
$$F \colon [\Sigma_+^{\infty} G/H, G \ltimes_H \Sigma_+^{\infty} E\P[H]]^G_* \lra [F(\Sigma_+^{\infty} G/H), F(G \ltimes_H \Sigma_+^{\infty} E\P[H])]^G_*.$$

Finally, we have the following important commutative diagram
{\footnotesize $$\hspace{-0.8cm}\xymatrix{[\Sigma_+^{\infty} G/H, G \ltimes_H \Sigma_+^{\infty} E\P[H]]_*^G \ar[r]^-{\proj_*} \ar[d]^-F_{\cong} & [\Sigma_+^{\infty} G/H, \Sigma_+^{\infty} G/H]^G_* \ar[d]^F & [\Sigma_+^{\infty} W(H), \Sigma_+^{\infty} W(H)]^{W(H)}_* \ar[l]_-{G\ltimes_{N(H)} \varepsilon^*} \ar[d]^-{\hat{F}}_-{\cong}\\  [F(\Sigma_+^{\infty} G/H), F(G \ltimes_H \Sigma_+^{\infty} E\P[H])]_*^G \ar[r]^-{F(\proj)_*} \ar[d]_-{\cong} & [F(\Sigma_+^{\infty} G/H), F(\Sigma_+^{\infty} G/H)]^G_* \ar[r]^-{\Phi^H} \ar@{=}[d] & [\hat{F}(\Sigma_+^{\infty} W(H)), \hat{F}(\Sigma_+^{\infty} W(H))]^{W(H)}_* \ar@{=}[d] \\ [\Sigma_+^{\infty} G/H, G \ltimes_H \Sigma_+^{\infty} E\P[H]]_*^G \; \ar@{>->}[r]^-{\proj_*} & [\Sigma_+^{\infty} G/H, \Sigma_+^{\infty} G/H]^G_* \ar@{->>}[r]^-{\Phi^H} & [\Sigma_+^{\infty} W(H), \Sigma_+^{\infty} W(H)]^{W(H)}_*.}$$}
\hspace{-0.38cm} Lemma \ref{presclass} implies that the lower left square commutes and the lower left vertical map is an isomorphism. Other squares commute by definitions. Further, according to Proposition \ref{ses}, the lower row in this diagram is a short exact sequence and hence so is the middle one.

Now a simple diagram chase shows that the map
$$F \colon [\Sigma_+^{\infty} G/H, \Sigma_+^{\infty} G/H]^G_* \lra [F(\Sigma_+^{\infty} G/H), F(\Sigma_+^{\infty} G/H)]^G_* =[\Sigma_+^{\infty} G/H, \Sigma_+^{\infty} G/H]^G_*$$
is an isomorphism. Indeed, assume that $\ast > 0$ (the case $\ast=0$ is obvious by the assumptions on $F$). Then the latter map has the same finite source and target and hence it suffices to show that it is surjective. Fix $\ast > 0$ and take any $\alpha \in [F(\Sigma_+^{\infty} G/H), F(\Sigma_+^{\infty} G/H)]^G_*$. Since the map 
$$\hat{F} \colon  [\Sigma_+^{\infty} W(H), \Sigma_+^{\infty} W(H)]^{W(H)}_* \lra [\hat{F}(\Sigma_+^{\infty} W(H)), \hat{F}(\Sigma_+^{\infty} W(H))]^{W(H)}_*$$ 
is an isomorphism, there exists $\beta \in [\Sigma_+^{\infty} W(H), \Sigma_+^{\infty} W(H)]^{W(H)}_*$ such that
$$\hat{F}(\beta) = \Phi^H(\alpha).$$
By definition of the functor $\hat{F}$, the element 
$$F(G \ltimes_{N(H)} \varepsilon^*(\beta)) - \alpha \in [F(\Sigma_+^{\infty} G/H), F(\Sigma_+^{\infty} G/H)]^G_*$$ 
is in the kernel of
$$\Phi^H \colon [F(\Sigma_+^{\infty} G/H), F(\Sigma_+^{\infty} G/H)]^G_* \lra [\hat{F}(\Sigma_+^{\infty} W(H)), \hat{F}(\Sigma_+^{\infty} W(H))]^{W(H)}_*. $$
But the kernel of this map is contained in the image of 
$$F \colon [\Sigma_+^{\infty} G/H, \Sigma_+^{\infty} G/H]^G_* \lra [F(\Sigma_+^{\infty} G/H), F(\Sigma_+^{\infty} G/H)]^G_*$$
since the middle row in the commutative diagram above is exact and the upper left vertical map is an isomorphism. Consequently, $F(G \ltimes_{N(H)} \varepsilon^*(\beta)) - \alpha$ is in the image of $F$ and this completes the proof. \end{proof_ohne_pt}

\begin{proof_ohne_pt} {\bf of Theorem \ref{reduction}.} Now we continue with the induction. Recall, that our aim is to show that for any subgroup $H \in G$, the map 
$$F \colon [\Sigma_+^{\infty} G/H, \Sigma_+^{\infty} G/H]^G_* \lra [\Sigma_+^{\infty} G/H, \Sigma_+^{\infty} G/H]^G_*$$
is an isomorphism. The strategy that was indicated at the beginning of Subsection \ref{indstrat} is to proceed by induction on the cardinality of $H$. The induction basis follows from Proposition \ref{gfree} as we already explained. Now suppose $n > 1$, and assume that the claim holds for all subgroups of $G$ with cardinality less than or equal to $n-1$. Let $H$ be any subgroup of $G$ that has cardinality equal to $n$. Then, by the induction assumption, for any subgroup $K$ which is proper subconjugate to $H$, the map
$$F \colon [\Sigma_+^{\infty} G/K, \Sigma_+^{\infty} G/K]^G_* \lra [\Sigma_+^{\infty} G/K, \Sigma_+^{\infty} G/K]^G_*$$
is an isomorphism. Proposition \ref{imptech} now implies that 
$$F \colon [\Sigma_+^{\infty} G/H, \Sigma_+^{\infty} G/H]^G_* \lra [\Sigma_+^{\infty} G/H, \Sigma_+^{\infty} G/H]^G_*$$
is an isomorphism and this completes the proof of the claim.

The rest follows from Proposition \ref{splinj} as already explained in Section \ref{redtrres}. \end{proof_ohne_pt}

\section*{References}

  \begin{biblist}
	
	\addcontentsline{toc}{section}{\refname}% Add references to ToC (and bookmarks)

\bib{Ada60}{article}{
   author={Adams, J. F.},
   title={On the non-existence of elements of Hopf invariant one},
   journal={Ann. of Math. (2)},
   volume={72},
   date={1960},
   pages={20--104},
   %issn={0003-486X},
   %review={\MR{0141119 (25 \#4530)}},
}

\bib{Ang08}{article}{
   author={Angeltveit, Vigleik},
   title={Topological Hochschild homology and cohomology of $A_\infty$
   ring spectra},
   journal={Geom. Topol.},
   volume={12},
   date={2008},
   number={2},
   pages={987--1032},
   %issn={1465-3060},
   %review={\MR{2403804 (2009e:55012)}},
   %doi={10.2140/gt.2008.12.987},
}

\bib{BR12}{article}{
      author={Barnes, David},
      author={Roitzheim, Constanze},
      title={Rational Equivariant Rigidity},
      date={2012},
      eprint={http://arxiv.org/abs/1009.4329v2},
      journal={preprint},
    }
		
		\bib{BM09}{article}{
   author={Baues, Hans-Joachim},
   author={Muro, Fernando},
   title={Toda brackets and cup-one squares for ring spectra},
   journal={Comm. Algebra},
   volume={37},
   date={2009},
   number={1},
   pages={56--82},
   %issn={0092-7872},
   %review={\MR{2482810 (2009i:55015)}},
   %doi={10.1080/00927870802241188},
}

 \bib{Blum05}{article}{
   author={Blumberg, Andrew J.},
   title={Continuous functors as a model for the equivariant stable homotopy
   category},
   journal={Algebr. Geom. Topol.},
   volume={6},
   date={2006},
   pages={2257--2295}, }
%   issn={1472-2747}.
  % review={\MR{2286026 (2008a:55006)}},
   %doi={10.2140/agt.2006.6.2257},
%}

 \bib{Boa64}{article}{
  author={Boardman, J. Michael},
   title={On stable homotopy theory and some applications},
   journal={PhD thesis, University of Cambridge},
    date={1964},

%   issn={1472-2747}.
  % review={\MR{2286026 (2008a:55006)}},
}

\bib{Bou01}{article}{
   author={Bousfield, A. K.},
   title={On the telescopic homotopy theory of spaces},
   journal={Trans. Amer. Math. Soc.},
   volume={353},
   date={2001},
   number={6},
   pages={2391--2426 (electronic)},
   %issn={0002-9947},
   %review={\MR{1814075 (2001k:55030)}},
   %doi={10.1090/S0002-9947-00-02649-0},
}

\bib{BF78}{article}{
   author={Bousfield, A. K.},
   author={Friedlander, E. M.},
   title={Homotopy theory of $\Gamma $-spaces, spectra, and bisimplicial
   sets},
   conference={
      title={Geometric applications of homotopy theory (Proc. Conf.,
      Evanston, Ill., 1977), II},
   },
   book={
      series={Lecture Notes in Math.},
      volume={658},
      publisher={Springer},
      place={Berlin},
   },
   date={1978},
   pages={80--130},
   %review={\MR{513569 (80e:55021)}},
}

\bib{Coh68}{article}{
   author={Cohen, Joel M.},
   title={The decomposition of stable homotopy},
   journal={Ann. of Math. (2)},
   volume={87},
   date={1968},
   pages={305--320},
   %issn={0003-486X},
   %review={\MR{0231377 (37 \#6932)}},
}

\bib{DugShi04}{article}{
   author={Dugger, Daniel},
   author={Shipley, Brooke},
   title={$K$-theory and derived equivalences},
   journal={Duke Math. J.},
   volume={124},
   date={2004},
   number={3},
   pages={587--617},
   %issn={0012-7094},
   %review={\MR{2085176 (2005e:19005)}},
   %doi={10.1215/S0012-7094-04-12435-2},
}

\bib{DugShi09}{article}{
   author={Dugger, Daniel},
   author={Shipley, Brooke},
   title={A curious example of triangulated-equivalent model categories
   which are not Quillen equivalent},
   journal={Algebr. Geom. Topol.},
   volume={9},
   date={2009},
   number={1},
   pages={135--166},
   %issn={1472-2747},
   %review={\MR{2482071 (2009m:18009)}},
   %doi={10.2140/agt.2009.9.135},
}

\bib{DS95}{article}{
   author={Dwyer, W. G.},
   author={Spali{\'n}ski, J.},
   title={Homotopy theories and model categories},
   conference={
      title={Handbook of algebraic topology},
   },
   book={
      publisher={North-Holland},
      place={Amsterdam},
   },
   date={1995},
   pages={73--126},
   %review={\MR{1361887 (96h:55014)}},
   %doi={10.1016/B978-044481779-2/50003-1},
}

\bib{Elm83}{article}{
   author={Elmendorf, A. D.},
   title={Systems of fixed point sets},
   journal={Trans. Amer. Math. Soc.},
   volume={277},
   date={1983},
   number={1},
   pages={275--284},
   %issn={0002-9947},
   %review={\MR{690052 (84f:57029)}},
   %doi={10.2307/1999356},
}

\bib{EKMM}{book}{
   author={Elmendorf, A. D.},
   author={Kriz, I.},
   author={Mandell, M. A.},
   author={May, J. P.},
   title={Rings, modules, and algebras in stable homotopy theory},
   series={Mathematical Surveys and Monographs},
   volume={47},
   note={With an appendix by M. Cole},
   publisher={American Mathematical Society},
   place={Providence, RI},
   date={1997},
   pages={xii+249},
   %isbn={0-8218-0638-6},
   %review={\MR{1417719 (97h:55006)}},
}

\bib{GM}{book}{
   author={Gelfand, Sergei I.},
   author={Manin, Yuri I.},
   title={Methods of homological algebra},
   series={Springer Monographs in Mathematics},
   edition={2},
   publisher={Springer-Verlag},
   place={Berlin},
   date={2003},
   pages={xx+372},
   %isbn={3-540-43583-2},
   %review={\MR{1950475 (2003m:18001)}},
}

\bib{GJ99}{book}{
   author={Goerss, Paul G.},
   author={Jardine, John F.},
   title={Simplicial homotopy theory},
   series={Progress in Mathematics},
   volume={174},
   publisher={Birkh\"auser Verlag},
   place={Basel},
   date={1999},
   pages={xvi+510},
   %isbn={3-7643-6064-X},
   %review={\MR{1711612 (2001d:55012)}},
   %doi={10.1007/978-3-0348-8707-6},
}

\bib{GM95}{article}{
   author={Greenlees, J. P. C.},
   author={May, J. P.},
   title={Generalized Tate cohomology},
   journal={Mem. Amer. Math. Soc.},
   volume={113},
   date={1995},
   number={543},
   pages={viii+178},
   %issn={0065-9266},
   %review={\MR{1230773 (96e:55006)}},
}

\bib{MG11}{article}{
      author={Guillou, Bertrand J.},
      author={May, J. P.},
      title={Enriched model categories and diagram categories},
      date={2011},
      eprint={http://arxiv.org/abs/1110.3567},
      journal={preprint},
    }

\bib{Haus}{article}{
   author={Hausmann, M.},
   title={Global equivariant homotopy theory of symmetric spectra},
   journal={Master thesis, Universit\"at Bonn},
   date={2013},

}

\bib{HHR}{article}{
      author={Hill, Michael A},
      author={Hopkins, Michael J},
      author={Ravenel, Douglas C}
      title={On the non-existence of elements of Kervaire invariant one},
      date={2009},
      eprint={http://arxiv.org/abs/0908.3724},
      journal={preprint},
    }

\bib{Hir}{book}{
   author={Hirschhorn, Philip S.},
   title={Model categories and their localizations},
   series={Mathematical Surveys and Monographs},
   volume={99},
   publisher={American Mathematical Society},
   place={Providence, RI},
   date={2003},
   pages={xvi+457},
   %isbn={0-8218-3279-4},
   %review={\MR{1944041 (2003j:18018)}},
}

\bib{Hov99}{book}{
   author={Hovey, Mark},
   title={Model categories},
   series={Mathematical Surveys and Monographs},
   volume={63},
   publisher={American Mathematical Society},
   place={Providence, RI},
   date={1999},
   %pages={xii+209},
   %isbn={0-8218-1359-5},
   %review={\MR{1650134 (99h:55031)}},
}

\bib{HSS00}{article}{
   author={Hovey, Mark},
   author={Shipley, Brooke},
   author={Smith, Jeff},
   title={Symmetric spectra},
   journal={J. Amer. Math. Soc.},
   volume={13},
   date={2000},
   number={1},
   pages={149--208},
   %issn={0894-0347},
   %review={\MR{1695653 (2000h:55016)}},
   %doi={10.1090/S0894-0347-99-00320-3},
}

\bib{Hut12}{article}{
   author={Hutschenreuter, Katja},
   title={On rigidity of the ring spectra $P_m\S_{(p)}$ and $ko$},
   journal={PhD thesis, Universit\"at Bonn},
   date={2012},
   eprint={http://hss.ulb.uni-bonn.de/2012/2974/2974.htm}, 
}

\bib{Ill}{article}{
   author={Illman, S{\"o}ren},
   title={The equivariant triangulation theorem for actions of compact Lie
   groups},
   journal={Math. Ann.},
   volume={262},
   date={1983},
   number={4},
   pages={487--501},
   %issn={0025-5831},
   %review={\MR{696520 (85c:57042)}},
   %doi={10.1007/BF01456063},
}

\bib{Kanss}{article}{
   author={Kan, Daniel M.},
   title={Semisimplicial spectra},
   journal={Illinois J. Math.},
   volume={7},
   date={1963},
   pages={463--478},
   %issn={0019-2082},
   %review={\MR{0153017 (27 \#2986)}},
}

\bib{Kel05}{article}{
   author={Kelly, G. M.},
   title={Basic concepts of enriched category theory},
   note={Reprint of the 1982 original [Cambridge Univ. Press, Cambridge]},
   journal={Repr. Theory Appl. Categ.},
   number={10},
   date={2005},
   %pages={vi+137},
   %review={\MR{2177301}},
}

\bib{Lew96}{article}{
   author={Lewis, L. Gaunce, Jr.},
   title={The category of Mackey functors for a compact Lie group},
   conference={
      title={Group representations: cohomology, group actions and topology
      (Seattle, WA, 1996)},
   },
   book={
      series={Proc. Sympos. Pure Math.},
      volume={63},
      publisher={Amer. Math. Soc.},
      place={Providence, RI},
   },
   date={1998},
   pages={301--354},
   %review={\MR{1603183 (99b:19001)}},
}
		
\bib{LMS}{book}{
   author={Lewis, L. G., Jr.},
   author={May, J. P.},
   author={Steinberger, M.},
   author={McClure, J. E.},
   title={Equivariant stable homotopy theory},
   series={Lecture Notes in Mathematics},
   volume={1213},
   note={With contributions by J. E. McClure},
   publisher={Springer-Verlag},
   place={Berlin},
   date={1986},
   pages={x+538},
   %isbn={3-540-16820-6},
   %review={\MR{866482 (88e:55002)}},
}

\bib{Lyd}{article}{
  author={Lydakis, Manos},
   title={Simplicial functors and stable homotopy theory},
   journal={preprint},
   eprint={http://hopf.math.purdue.edu/Lydakis/s_functors.pdf},
   date={1998},

}

\bib{Man04}{article}{
   author={Mandell, Michael A.},
   title={Equivariant symmetric spectra},
   conference={
      title={Homotopy theory: relations with algebraic geometry, group
      cohomology, and algebraic $K$-theory},
   },
   book={
      series={Contemp. Math.},
      volume={346},
      publisher={Amer. Math. Soc.},
      place={Providence, RI},
   },
   date={2004},
   pages={399--452},
   %review={\MR{2066508 (2005d:55019)}},
   %doi={10.1090/conm/346/06297},
}

\bib{MM02}{article}{
   author={Mandell, M. A.},
   author={May, J. P.},
   title={Equivariant orthogonal spectra and $S$-modules},
   journal={Mem. Amer. Math. Soc.},
   volume={159},
   date={2002},
   number={755},
   pages={x+108},
   %issn={0065-9266},
  % review={\MR{1922205 (2003i:55012)}},
}

\bib{MMSS}{article}{
   author={Mandell, M. A.},
   author={May, J. P.},
   author={Schwede, S.},
   author={Shipley, B.},
   title={Model categories of diagram spectra},
   journal={Proc. London Math. Soc. (3)},
   volume={82},
   date={2001},
   number={2},
   pages={441--512},
   %issn={0024-6115},
   %review={\MR{1806878 (2001k:55025)}},
   %doi={10.1112/S0024611501012692},
}

\bib{alaska}{book}{
   author={May, J. P.},
   title={Equivariant homotopy and cohomology theory},
   series={CBMS Regional Conference Series in Mathematics},
   volume={91},
   note={With contributions by M. Cole, G. Comeza$\tilde{\text{n}}$a, S. Costenoble,
   A. D. Elmendorf, J. P. C. Greenlees, L. G. Lewis, Jr., R. J. Piacenza, G.
   Triantafillou, and S. Waner},
   publisher={Published for the Conference Board of the Mathematical
   Sciences, Washington, DC},
   date={1996},
   pages={xiv+366},
   %isbn={0-8218-0319-0},
   %review={\MR{1413302 (97k:55016)}},
}

\bib{may03}{article}{
   author={May, J. P.},
   title={The Wirthm\"uller isomorphism revisited},
   journal={Theory Appl. Categ.},
   volume={11},
   date={2003},
   pages={No. 5, 132--142},
   %issn={1201-561X},
   %review={\MR{1988073 (2004m:55012)}},
}

\bib{Pat12}{article}{
   author={Patchkoria, Irakli},
   title={On the algebraic classification of module spectra},
   journal={Algebr. Geom. Topol.},
   volume={12},
   date={2012},
   number={4},
   pages={2329--2388},
   %issn={1472-2747},
   %review={\MR{2846913}},
   %doi={10.2140/agt.2011.11.2829},
}

\bib{Q67}{book}{
   author={Quillen, Daniel G.},
   title={Homotopical algebra},
   series={Lecture Notes in Mathematics, No. 43},
   publisher={Springer-Verlag},
   place={Berlin},
   date={1967},
   %pages={iv+156 pp. (not consecutively paged)},
   %review={\MR{0223432 (36 \#6480)}},
}

\bib{Roi07}{article}{
   author={Roitzheim, Constanze},
   title={Rigidity and exotic models for the $K$-local stable homotopy
   category},
   journal={Geom. Topol.},
   volume={11},
   date={2007},
   pages={1855--1886},
   %issn={1465-3060},
   %review={\MR{2350470 (2009a:55003)}},
   %doi={10.2140/gt.2007.11.1855},
}

\bib{Schlich}{article}{
   author={Schlichting, Marco},
   title={A note on $K$-theory and triangulated categories},
   journal={Invent. Math.},
   volume={150},
   date={2002},
   number={1},
   pages={111--116},
   %issn={0020-9910},
   %review={\MR{1930883 (2003h:18015)}},
   %doi={10.1007/s00222-002-0231-1},
}

\bib{Sch01}{article}{
   author={Schwede, Stefan},
   title={The stable homotopy category has a unique model at the prime 2},
   journal={Adv. Math.},
   volume={164},
   date={2001},
   number={1},
   pages={24--40},
   %issn={0001-8708},
   %review={\MR{1870511 (2003a:55013)}},
   %doi={10.1006/aima.2001.2009},
}

\bib{Sch07}{article}{
   author={Schwede, Stefan},
   title={The stable homotopy category is rigid},
   journal={Ann. of Math. (2)},
   volume={166},
   date={2007},
   number={3},
   pages={837--863},
   %issn={0003-486X},
   %review={\MR{2373374 (2009g:55009)}},
   %doi={10.4007/annals.2007.166.837},
}

\bib{Sch10}{article}{
   author={Schwede, Stefan},
   title={Algebraic versus topological triangulated categories},
   conference={
      title={Triangulated categories},
   },
   book={
      series={London Math. Soc. Lecture Note Ser.},
      volume={375},
      publisher={Cambridge Univ. Press},
      place={Cambridge},
   },
   date={2010},
   pages={389--407},
   %review={\MR{2681714 (2012i:18012)}},
   %doi={10.1017/CBO9781139107075.010},
}

\bib{Scheq}{article}{
      author={Schwede, Stefan},
      title={Lectures on equivariant stable homotopy theory},
      date={2013},
      eprint={http://www.math.uni-bonn.de/~schwede/equivariant.pdf},
      }

\bib{SS02}{article}{
   author={Schwede, Stefan},
   author={Shipley, Brooke},
   title={A uniqueness theorem for stable homotopy theory},
   journal={Math. Z.},
   volume={239},
   date={2002},
   number={4},
   pages={803--828},
   %issn={0025-5874},
   %review={\MR{1902062 (2003f:55027)}},
   %doi={10.1007/s002090100347},
}

\bib{SS03}{article}{
   author={Schwede, Stefan},
   author={Shipley, Brooke},
   title={Stable model categories are categories of modules},
   journal={Topology},
   volume={42},
   date={2003},
   number={1},
   pages={103--153},
   %issn={0040-9383},
   %review={\MR{1928647 (2003g:55034)}},
   %doi={10.1016/S0040-9383(02)00006-X},
}

\bib{Sto11}{article}{
   author={Stolz, Martin},
   title={Equivariant Structures on Smash Powers of Commutative Ring Spectra},
   journal={PhD thesis, University of Bergen},
   date={2011},
   eprint={http://folk.uib.no/hus001/data/thesismartinstolz.pdf}, 
}
		
\bib{Tod62}{book}{
   author={Toda, Hirosi},
   title={Composition methods in homotopy groups of spheres},
   series={Annals of Mathematics Studies, No. 49},
   publisher={Princeton University Press},
   place={Princeton, N.J.},
   date={1962},
   pages={v+193},
   %review={\MR{0143217 (26 \#777)}},
}

\bib{tom}{book}{
   author={tom Dieck, Tammo},
   title={Transformation groups},
   series={de Gruyter Studies in Mathematics},
   volume={8},
   publisher={Walter de Gruyter \& Co.},
   place={Berlin},
   date={1987},
   pages={x+312},
   %isbn={3-11-009745-1},
  % review={\MR{889050 (89c:57048)}},
  % doi={10.1515/9783110858372.312},
}

\bib{Wei}{book}{
   author={Weibel, Charles A.},
   title={An introduction to homological algebra},
   series={Cambridge Studies in Advanced Mathematics},
   volume={38},
   publisher={Cambridge University Press},
   place={Cambridge},
   date={1994},
   pages={xiv+450},
   %isbn={0-521-43500-5},
   %isbn={0-521-55987-1},
   %review={\MR{1269324 (95f:18001)}},
}

\end{biblist}

\vspace{0.5cm}

\noindent Department of Mathematical Sciences \\ University of Copenhagen \\
Universitetsparken 5 \\ 2100 Copenhagen $\emptyset$\\ Denmark

\;

\;

\noindent E-mail address: \texttt{irakli.p@math.ku.dk}

\end{document}